\def\R{\mathbb{R}}
\def\Z{\mathbb{Z}}
\def\N{\mathbb{N}}
\def\eps{\varepsilon}
\def\oG{\mathfrak{G}}
\def\oL{\mathfrak{L}}
\DeclareMathOperator{\Lip}{Lip}
\newtheorem{theo}{Theorem}[section]
\newtheorem{lem}[theo]{Lemma}
\newtheorem{pro}[theo]{Proposition}
\newtheorem{cor}[theo]{Corollary}
\theoremstyle{definition}
\newtheorem{defi}[theo]{Definition}
\theoremstyle{remark}
\newtheorem{rem}[theo]{Remark}
\newtheorem{counterex}{Counter-example}
\numberwithin{equation}{section}
\begin{document}

\title{\bf Flux-limited solutions for quasi-convex Hamilton-Jacobi equations on networks}

\author{C. Imbert\footnote{CNRS UMR 8553, D\'epartement de Math\'ematiques et
    Applications, \'Ecole Normale Sup\'erieure (Paris), 45 rue d'Ulm,
    75230 Paris cedex 5, France}~ and R. Monneau\footnote{Universit\'e
    Paris-Est, CERMICS (ENPC), 6-8 Avenue Blaise Pascal, Cit\'e
    Descartes, Champs-sur-Marne, F-77455 Marne-la-Vall\'ee cedex 2,
    France} }

\maketitle

%%%%%%%%%%%%%%%%%%%%%%%%%%%%%%%%%%%%%%%%%%%%%%%%%%%%%%%%%%%%%%%%%%%%%%%%%%
%%%%%%%%%%%%%%%%%%%%%%%%%%%%%%%%%%%%%%%%%%%%%%%%%%%%%%%%%%%%%%%%%%%%%%%%%%

\begin{abstract}
  We study Hamilton-Jacobi equations on networks in the case where
  Hamiltonians are quasi-convex with respect to the gradient variable
  and can be discontinuous with respect to the space variable at
  vertices.  First, we prove that imposing a general \emph{vertex
    condition} is equivalent to imposing a specific one which only
  depends on Hamiltonians and an additional free parameter, the
  \emph{flux limiter}. Second, a general method for proving comparison
  principles is introduced. This method consists in constructing a
  \emph{vertex test function} to be used in the doubling variable
  approach. With such a theory and such a method in hand, we present
  various applications, among which a very general existence and
  uniqueness result for quasi-convex Hamilton-Jacobi equations on
  networks.
\end{abstract}

\paragraph{AMS Classification:} 35F21, 49L25, 35B51.

\paragraph{Keywords:} Hamilton-Jacobi equations, networks,
quasi-convex Hamiltonians, discontinuous Hamiltonians, viscosity solutions, flux-limited
solutions, comparison principle, vertex test function, homogenization,
optimal control, discontinuous running cost.

\setcounter{tocdepth}{1}
\tableofcontents

%%%%%%%%%%%%%%%%%%%%%%%%%%%%%%%%%%%%%%%%%%%%%%%%%%%%%%%%%%%%%%%%%%%%%%%%%%
%%%%%%%%%%%%%%%%%%%%%%%%%%%%%%%%%%%%%%%%%%%%%%%%%%%%%%%%%%%%%%%%%%%%%%%%%%
\section{Introduction}

This paper is concerned with Hamilton-Jacobi (HJ) equations on
networks associated with Hamiltonians that are quasi-convex and
coercive in the gradient variable and possibly discontinuous at the
vertices of the network in the space variable. 

Space discontinuous Hamiltonians have been identified as both
important/relevant and difficult to handle; in particular, a few
theories/approaches (see below) were developed to study the associated
HJ equations. In this paper, we show that if they are assumed to be
quasi-convex and coercive in the gradient variable, then not only
uniquess can be proved for very general conditions at discontinuities
(referred to as \emph{junction conditions}), but such conditions can
even be classified: imposing a general junction condition reduce to
impose a junction condition of optimal control type, referred to as
\emph{a flux-limited junction condition.} As far as uniqueness is
concerned, a comparison principle is proved. We show that the doubling
variable approach can be adapted to the discontinuous setting if we go
beyond the classical test function $|x-y|^2/2$ by using a \emph{vertex
  test function} instead. This vertex test function can be used to do
much more, like dealing with second order terms \cite{in} or getting
error estimates for monotone schemes \cite{ik}.

We  point out that the present article is written in the
one-dimensional setting for pedagogical reasons but our theory extends
readily to higher dimensions \cite{higher}.

\subsection{The junction framework}

We focus in this introduction and in most of the article on the
simplest network, referred to as a \emph{junction}, and on
Hamiltonians which are constant with respect to the space variable on
each edge. Indeed, this simple framework leads us to the main
difficulties to be overcome and allows us to present the main
contributions.  We will see in Section~\ref{s.n} that the case of a
general network with $(t,x)$-dependent Hamiltonians is only an
extension of this special case.

A \emph{junction} is a network made of one vertex and a finite number of
infinite edges. It is endowed with a flat metric on each edge. It can
be viewed as the set of $N$ distinct copies ($N\ge 1$) of the
half-line which are glued at the origin.  For $i=1,...,N$, each branch
$J_i$ is assumed to be isometric to $[0,+\infty)$ and
\begin{equation}\label{eq::J}
J=\bigcup_{i=1,...,N} J_i \quad \quad \mbox{with}\quad J_i\cap J_j 
=\left\{0\right\} \quad \mbox{for}\quad i\not=j
\end{equation}
where the origin $0$ is called the \emph{junction point}. For points
$x,y\in J$,  $d(x,y)$ denotes the geodesic distance on $J$ defined
as
\[d(x,y)=\begin{cases}
|x-y| & \quad \mbox{if $x,y$ belong to the same branch},\\
|x|+|y| & \quad \mbox{if $x,y$ belong to different branches}.
\end{cases}\]
For a smooth real-valued function $u$ defined on $J$, $\partial_i
u(x)$ denotes the (spatial) derivative of $u$ at $x\in J_i$ and the
``gradient'' of $u$ is defined as follows,
\begin{equation}\label{eq::18}
u_x(x):=\begin{cases}
\partial_i u(x) & \quad \mbox{if} \quad x\in 
J_i^*:= J_i\setminus \{0\},\\
(\partial_1 u(0),...,\partial_N u(0)) & \quad \mbox{if} \quad x=0.
\end{cases}
\end{equation}
With such a notation in hand, we consider the following
Hamilton-Jacobi equation on the junction $J$
\begin{equation}\label{eq:hj-f}
\left\{\begin{array}{lll}
u_t + H_i(u_x)= 0  &\mbox{for}\quad t\in (0,+\infty) &\quad \mbox{and}\quad x\in J_i^*,\\
u_t + F(u_x)=0   &\mbox{for}\quad  t\in (0,+\infty) &\quad \mbox{and}\quad x=0
\end{array}\right.
\end{equation}
subject to the initial condition
\begin{equation}\label{eq::2}
u(0,x)=u_0(x) \quad \mbox{for}\quad x\in J.
\end{equation}
The second equation in \eqref{eq:hj-f} is referred to as \emph{the
  junction condition}. In general, minimal assumptions are required in
order to get a good notion of weak (\textit{i.e.} viscosity) solutions. We shed
some light on the fact that Equation~\eqref{eq:hj-f} can be thought
as a \emph{system} of Hamilton-Jacobi equations associated with $H_i$
coupled through a ``dynamical'' boundary condition involving $F$. This
point of view can be useful, see Subsection~\ref{ss.kr}. As far as
junction functions are concerned, we will construct below some special
ones (denoted by $F_A$) from the Hamiltonians $H_i$ ($i=1,...,N$) and
a real parameter $A$.

We consider the important case of Hamiltonians $H_i$ satisfying the
following structure condition: 
\begin{equation}\label{assum:H}
\text{For } i =1,\dots,N, \quad H_i \text{ continuous, quasi-convex and coercive}.
\end{equation}
We recall that $H_i$ is quasi-convex if its sub-level sets $\{ p :  H_i(p) \le \lambda \}$ are convex.
In particular, since $H_i$ is also assumed to be coercive, there exist numbers $p_i^0\in\R$ such that
\[ \left\{\begin{array}{l} 
H_i  \text{ nonincreasing in } (-\infty,p_i^0] \\
 H_i  \text{ nondecreasing in }  [p_i^0,+\infty).
\end{array}\right. \]

\subsection{First main new idea: classification of junction conditions}

In the present paper, two notions of viscosity solutions are
introduced: \emph{relaxed (viscosity) solutions} (see
Definition~\ref{defi:relaxed}), which can be used to deal with all
junction conditions, and \emph{flux-limited (viscosity) solutions}
(see Definition~\ref{defi::1}) which are associated with flux-limited
junction conditions. Relaxed solutions are used to prove existence and
ensure stability. Flux-limited solutions satisfy the junction
condition in a stronger sense and are used in order to prove
uniqueness. Our first main result states that relaxed solutions for
general junction conditions are in fact flux-limited solutions for
some junction conditions of optimal-control type.

We now introduce the notion of flux-limited junction condition.
Given a \emph{flux limiter} $A \in \R \cup \{-\infty\}$, the
$A$-limited flux through the junction point is defined for
$p=(p_1,\dots,p_N)$ as
\begin{equation}\label{eq::4}
F_A(p)=\max \left(A,\quad  \max_{i=1,...,N} H_i^-(p_i)\right)
\end{equation}
where $H_i^-$ is the nonincreasing part of $H_i$ defined by
\[H_i^-(q)=\begin{cases}
H_i(q) &\quad \mbox{if}\quad q\le p_i^0,\\
H_i(p_i^0) &\quad \mbox{if}\quad q > p_i^0.
\end{cases}\]
We now consider the following important special case of \eqref{eq:hj-f},
\begin{equation}\label{eq::1bis}
\left\{\begin{array}{lll}
u_t + H_i(u_x)= 0  &\mbox{for}\quad t\in (0,+\infty) &\quad \mbox{and}\quad x\in J_i^*,\\
u_t + F_A(u_x)=0   &\mbox{for}\quad  t\in (0,+\infty) &\quad \mbox{and}\quad x=0.
\end{array}\right.
\end{equation}

We point out that the flux functions $F_A$  associated with $A\in
[-\infty, A_0]$ coincide if one chooses 
\begin{equation}\label{eq::A_0}
A_0=\max_{i=1,...,N} \min_{\R} H_i.
\end{equation}

As announced above, general junction conditions are proved to be
equivalent to those flux-limited junction conditions. Let us be more
precise: a \emph{junction function} is an $F\colon\R^N \to \R$ satisfying
\begin{equation}\label{assum:F}
F:\R^N \to \R \text{ is continuous and non-increasing with respect to
  all variables}. 
\end{equation}
%---------------------------------------------------------------------
\begin{theo}[General junction conditions reduce to flux-limited ones]\label{th:fa}
  Assume that the Hamiltonians satisfy \eqref{assum:H} and that the
  junction function satisfies \eqref{assum:F}.  Then there exists $A_F \in \R$
  such that any continuous relaxed (viscosity) solution of \eqref{eq:hj-f} is in
  fact a flux-limited (viscosity) solution of \eqref{eq::1bis} with $A=A_F$.
\end{theo}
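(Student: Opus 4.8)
The plan is to characterize the parameter $A_F$ through a local analysis at the junction point and then show the two notions of solution coincide. First I would note that a relaxed viscosity solution of \eqref{eq:hj-f} with junction function $F$ satisfying \eqref{assum:F} is automatically a viscosity solution of the equations $u_t+H_i(u_x)=0$ on each open branch $J_i^*$; the only issue is what happens at $x=0$. So the whole matter reduces to comparing, for test functions touching from above or below at the junction point, the junction inequality generated by $F$ with the one generated by $F_A$. Since both $F$ and each $F_A$ are continuous and nonincreasing in all variables, the set of admissible ``test gradients'' at the junction splits naturally according to the sign structure imposed by quasi-convexity: for a subsolution test one only needs to control gradients $p_i$ on the increasing side (where $H_i^-(p_i)=H_i(p_i^0)$), and for a supersolution test one works with gradients on the decreasing side.

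\smallskip

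Second, I would define the candidate flux limiter explicitly. The natural guess is
\[
A_F = \max\Bigl(A_0,\ \overline A_F\Bigr),\qquad
\overline A_F := \sup\Bigl\{\, F(p_1,\dots,p_N)\ :\ p_i \ge p_i^0,\ H_i(p_i)\le F(p)\ \text{for all }i \,\Bigr\},
\]
or an equivalent expression selecting the value of $F$ along the ``stationary'' germ; here $A_0$ is given by \eqref{eq::A_0}. One first checks that this supremum is finite (coercivity of the $H_i$ forces the $p_i$ to stay bounded once $F(p)$ is bounded, and $F$ is continuous), and that it is attained, so $A_F\in\R$. The key algebraic lemma to establish is that for every $p=(p_1,\dots,p_N)$ one has the equivalence: $F(p)\ge\lambda$ holds together with $H_i(p_i)\le\lambda$ for all $i$ \emph{if and only if} $F_{A_F}(p)\ge\lambda$ holds together with $H_i(p_i)\le\lambda$ for all $i$, and symmetrically with the reverse inequalities — this is precisely the statement that $F$ and $F_{A_F}$ cut out the same germ $\mathcal G_{A_F}$ and, more importantly, impose the same one-sided constraints on test functions.

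\smallskip

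Third, with this lemma in hand I would run the standard verification that a relaxed $F$-solution is an $F_{A_F}$-solution. Take $\varphi$ smooth touching $u$ from above at $(t_0,0)$ (the subsolution case). By the relaxed definition, either $\varphi_t + H_i(\varphi_x) \le 0$ on some branch, or $\varphi_t + F(\partial_1\varphi(0),\dots,\partial_N\varphi(0)) \le 0$; in the first case one uses that touching from above on a branch forces the gradient onto the nonincreasing side and the inequality $H_i^-(p_i)\le H_i(p_i)$ together with monotonicity to deduce the $F_{A_F}$ inequality; in the second case one invokes the algebraic equivalence. The supersolution case is symmetric, using that $F$ and $F_{A_F}$ agree on the relevant region and that for supersolutions the gradients relevant at the junction sit on the nonincreasing branches where $H_i^-=H_i$. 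The role of the initial datum's uniform continuity is only to ensure that the relaxed solution class is non-empty and well-posed, so it plays no role in this pointwise argument.

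\smallskip

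The main obstacle I expect is the algebraic lemma identifying $A_F$: one must carefully disentangle how $F$, being merely continuous and monotone (not quasi-convex, not built from the $H_i$), can nonetheless be ``read off'' by a single scalar $A$ once it is tested only against gradients lying in the germ and against one-sided perturbations thereof. The subtlety is that away from the germ $F$ and $F_{A_F}$ may differ wildly, so one has to argue that viscosity test functions at the junction can always be reduced — via the quasi-convexity-induced sign conditions and an optimization in the slopes $p_i$ — to the germ-adjacent configurations where the two functions necessarily coincide. Handling the degenerate cases ($N=1$, or Hamiltonians with $\min H_i$ all equal so that $A_F=A_0$) and checking that $A_F\ge A_0$ always (which is what makes $F_{A_F}$ well-defined and the germ nonempty) will require a bit of care but no new ideas.
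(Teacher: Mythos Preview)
Your proposal has the right general shape --- reduce to the junction point, identify $A_F$, then verify --- but the verification step has a genuine gap, and the missing ingredient is exactly what makes the paper's argument work.

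The problem is in your ``third step''. Take the sub-solution case: with $\varphi$ touching $u^*$ from above at $(t_0,0)$, the relaxed condition gives you either $\varphi_t+F(\varphi_x)\le 0$ or $\varphi_t+H_i(\partial_i\varphi)\le 0$ for \emph{some} $i$. In the latter case you claim that ``touching from above on a branch forces the gradient onto the nonincreasing side'', but this is not true: the slopes $\partial_i\varphi(0)$ are arbitrary real numbers, unconstrained by the contact at the junction point. And knowing $H_{i_0}(\partial_{i_0}\varphi)\le -\varphi_t$ for a single index $i_0$ tells you nothing about $A_F$ or about $H_j^-(\partial_j\varphi)$ for $j\ne i_0$, so you cannot conclude $F_{A_F}(\varphi_x)\le -\varphi_t$. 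Your algebraic lemma does not help here either: its hypothesis requires the $H_i$ constraint to hold for \emph{all} $i$ simultaneously, which is not what the relaxed condition hands you.

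What the paper does instead is prove a reduction (Theorem~\ref{th::gr1}) showing that to verify the $F_A$-junction condition it suffices to test against a \emph{single} spatial profile $\phi_0$ with prescribed slopes $\partial_i\phi_0(0)=p_i^A$ satisfying $H_i(p_i^A)=H_i^+(p_i^A)=A$. This reduction is not algebra: it uses the equation on the open branches through the ``critical slope'' Lemmas~\ref{lem:ip0-gr}--\ref{lem:ip0-2-gr}, which extract information about $H_i(\partial_i\varphi+\bar p_i)$ at the boundary point from the interior sub/super-solution property. Once you have this reduction, $A_F$ is defined (Lemma~\ref{lem::gr51}) by a fixed-point condition: a vector $\bar p$ with $\bar p_i\ge p_i^0$ and $H_i(\bar p_i)=F(\bar p)=:A_F$. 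For the specific test function with slopes $\bar p_i$, the relaxed inequality $\varphi_t+\max\bigl(F(\bar p),\max_i H_i(\bar p_i)\bigr)\ge 0$ collapses to $\varphi_t+A_F\ge 0$, which is exactly $F_{A_F}(\bar p)=A_F$.

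You actually anticipate this difficulty in your last paragraph (``viscosity test functions at the junction can always be reduced \dots\ to the germ-adjacent configurations''), but you treat it as a technicality. It is the heart of the proof, and it is a PDE argument using the equation on the branches, not a monotonicity/algebra argument at the junction alone.
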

%---------------------------------------------------------------------
\begin{rem}
Assumption~\eqref{assum:F} is minimal, at least ``natural''; indeed,
monotonicity is related to the notion of viscosity solutions that will
be introduced. In particular, it is needed in order to construct
solutions through the Perron method \cite{I}. 
\end{rem}
\begin{rem}
  Relaxed and flux-limited solutions are respectively
  introduced in Definitions~\ref{defi:relaxed} and \ref{defi::1}.
\end{rem}
\begin{rem}
Relaxed solutions of \eqref{eq:hj-f} are assumed to be continuous in Theorem~\ref{th:fa}. 
This assumption can be weakened, see Proposition~\ref{pro:fa-gen-sup} below. 
\end{rem}

\paragraph{The special case of convex Hamiltonians.}
In the special case of convex Hamiltonians $H_i$ with different
minimum values, Problem~\eqref{eq::1bis} can be viewed as the
Hamilton-Jacobi-Bellman equation satisfied by the value function of an
optimal control problem; see for instance \cite{imz} when
$A=-\infty$. In this case, existence and uniqueness of viscosity
solutions for \eqref{eq::1bis}-\eqref{eq::2} (with $A=-\infty$) have
been established either with a very rigid method \cite{imz} based on
an explicit Oleinik-Lax formula which does not extend easily to
networks, or in cases reducing to $H_i=H_j$ for all $i,j$ if
Hamiltonians do not depend on the space variable
\cite{schieborn,acct}. In such an optimal control framework,
trajectories can stay for a while at the junction point. In this case,
the running cost at the junction point equals $-\max_i (\min H_i)$. In
this special case, the parameter $A$ consists in replacing the
previous running cost at the junction point by $\min (-A,\min_i
L_i(0))$. In Section~\ref{s:oct}, the link between our results and
optimal control theory will be investigated.

%%%%%%%%%%%%%%%%%%%%%%%%%%%%%%%%%%%%%%%%%%%%%%%%%%%%%%%%%%%%%%%%%%%%%%%%%%
\subsection{Second main new idea: the vertex test function}

The second main contribution of this paper is to provide the reader
with a general yet handy and flexible method to prove a comparison
principle, allowing in particular to deal with Hamiltonians that are
quasi-convex and coercive with respect to the gradient variable and
are possibly discontinuous with respect to the space variable at the
vertices.

It is known that the core of the theory for HJ equations lies in the
proof of a strong uniqueness result, \textit{i.e.} of a comparison
principle.  It is also known that it is difficult to get uniqueness
results for discontinuous Hamiltonians. Indeed, the standard proof of the
comparison principle in the Euclidian setting is based on the
so-called \emph{doubling variable technique}; and such a method, even
in the monodimensional case, generally fails for piecewise
constant (in $x$) Hamiltonians at discontinuities (see the last
paragraph of Subsection~\ref{ss.kr}). Since the network setting
contains the previous one, the classical doubling variable technique
is known to fail at vertices \cite{schieborn,acct,imz}.

Before discussing the method we develop to prove it,
we state the comparison principle. 
%---------------------------------------------------------------------
\begin{theo}[Comparison principle on a junction]\label{th::2}
  Assume that the Hamiltonians satisfy \eqref{assum:H}, the junction
  function satisfies \eqref{assum:F} and that the initial datum $u_0$
  is uniformly continuous.  Then for all (relaxed) sub-solution $u$
  and (relaxed) super-solution $v$ of \eqref{eq::1bis}-\eqref{eq::2}
  satisfying for some $T>0$ and $C_T>0$,
\begin{equation}\label{eq::27}
u(t,x)\le C_T (1+ d(0,x)),\quad v(t,x)\ge -C_T(1+d(0,x)),\quad
\text{for all}\quad (t,x) \in [0,T)\times J,
\end{equation}
we have
\[u\le v \quad \text{in}\quad [0,T)\times J.\]
\end{theo}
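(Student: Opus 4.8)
\textbf{Proof proposal for Theorem~\ref{th::2}.}

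The plan is to reduce the general junction condition $F$ to a flux-limited one and then to run the doubling-variable method with the vertex test function. First, by Theorem~\ref{th:fa}, the relaxed super-solution $v$ is in fact an $A_F$-flux-limited super-solution, and similarly (after the analogous statement for sub-solutions, which is the easy half of Theorem~\ref{th:fa}) the relaxed sub-solution $u$ is an $A_F$-flux-limited sub-solution, with $A=A_F$ the same for both. Thus it suffices to prove the comparison principle for \eqref{eq::1bis}. One reduces to a finite horizon and, by the standard trick, it is enough to prove $u\le v$ after replacing $u$ by $u-\eta/(T-t)$ (so that $u$ becomes a strict sub-solution and $u\to-\infty$ as $t\to T$) and letting $\eta\to 0$; the linear growth bound \eqref{eq::27} is what prevents the sup of $u-v$ from being attained at spatial infinity.

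The core step is the doubling of variables. I would consider, for $\eps,\alpha>0$, the function
\[
M_{\eps,\alpha}=\sup_{(t,x),(s,y)\in[0,T)\times J}\left\{u(t,x)-v(s,y)-\eps\, G\!\left(\tfrac{x}{\eps},\tfrac{y}{\eps}\right)-\tfrac{(t-s)^2}{2\alpha}-\text{(growth-control and localization terms)}\right\},
\]
where $G$ is the vertex test function built from the germ $\mathcal G_{A_F}$ as in \eqref{eq:defi-g0} (suitably regularized near the diagonal). Assuming $M_{\eps,\alpha}>0$ for contradiction, I would first use the properties of $G$ — most importantly that $G\ge 0$, that $G(z,z)=0$ (up to the $A$ normalization, recalling $A+G^0(x,x)=0$), and the growth/gradient estimates on $G$ — to show that the maximum point $(t,x,s,y)$ satisfies $t,s\to t_0>0$, $x,y\to x_0$, and that the penalization term stays bounded, so that by compactness $x,y$ remain in a fixed compact subset of $J$. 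Then I split into cases according to whether $x_0$ lies on an open edge $J_i^*$ or is the junction point $0$.

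If $x_0\in J_i^*$, then for $\eps$ small both $x$ and $y$ lie on the same open branch and the classical Crandall--Ishii argument with the single Hamiltonian $H_i$ applies verbatim, using coercivity/continuity from \eqref{eq::3bis} to get the contradiction. The delicate case is $x_0=0$: here $x$ may sit on $J_i$ and $y$ on $J_j$ with $i\neq j$, and the test function in the $x$-variable is $\eps G(\cdot/\eps, y/\eps)$ — one uses $\partial_x\big(\eps G(x/\eps,y/\eps)\big)=G_x(x/\eps,y/\eps)$ as the candidate gradient for $u$ and $-G_y(x/\eps,y/\eps)$ as that for $v$. The viscosity inequalities (in the flux-limited sense, i.e.\ testing against $F_{A_F}$ at the junction when $x=0$ or $y=0$, and against $H_i$, $H_j$ on the branches) combined with the defining inequality of the vertex test function,
\[
H_j\big(-G_y(x/\eps,y/\eps)\big)-H_i\big(G_x(x/\eps,y/\eps)\big)\le 0
\]
(valid near the vertex, which is why the localization to a neighborhood of $0$ is needed), yield $u_t-v_s\le 0$ along the maximizing sequence, contradicting the strict-sub-solution property coming from the $\eta/(T-t)$ perturbation. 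This last step — checking that the flux-limited viscosity inequalities at $x=0$ (respectively $y=0$) interface correctly with the two-Hamiltonian inequality satisfied by $G$, in every sub-case ($x=0$ and $y\neq 0$; $x\neq0$ and $y=0$; $x=y=0$; $x,y$ both off the vertex but on distinct branches) — is the main obstacle, and is exactly where the construction and the fine properties of the vertex test function $G$ (established earlier in the paper) do the essential work. Finally one sends $\eps\to 0$, then $\alpha\to0$, then removes the localization and the $\eta$-perturbation to conclude $u\le v$ on $[0,T)\times J$.
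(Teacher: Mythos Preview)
Your overall strategy is right and matches the paper: reduce to the flux-limited junction condition via Proposition~\ref{pro:fa-gen-sup} (and Proposition~\ref{pro::1}), double variables with the penalization $\eps G(\eps^{-1}x,\eps^{-1}y)$, and use the a priori control (Lemma~\ref{lem::3}) together with the superlinearity \eqref{eq::20} of $G$ to ensure the supremum is achieved. Where you diverge from the paper is in the case analysis and in the form of the key inequality for $G$.

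You write the crucial property of the vertex test function as $H_j(-G_y)-H_i(G_x)\le 0$, i.e.\ with branch Hamiltonians only, and then propose to treat separately the sub-cases $x=0$, $y=0$, $x=y=0$, and $x,y$ on distinct branches. The paper's point is precisely that this case distinction is unnecessary: the compatibility condition \eqref{eq::17bis} in Theorem~\ref{th::G} is stated with the \emph{full} Hamiltonian $H(x,p)$ from \eqref{eq::17}, which equals $F_A(p)$ when $x=0$. Thus the single inequality
\[
H(y,-G_y(x,y))-H(x,G_x(x,y))\le \gamma
\]
already covers every configuration (junction or branch, same or different edges) at once. In the proof one simply writes the two viscosity inequalities at the achieved maximizer using the shorthand $H(x,\cdot)$, subtracts, applies \eqref{eq::17bis}, and obtains
\[
\frac{\eta}{2T^2}\le H(x,G_x)-H(x,G_x+\alpha d(0,x)),
\]
which is a contradiction as $\alpha\to 0$ (for $\eps$ fixed, so that $G_x$ is bounded by \eqref{eq::19} and \eqref{eq::42}). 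No limit point $x_0$ and no case-splitting are needed.

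Two smaller corrections: the compatibility inequality holds only up to $\gamma>0$ (not with $0$ on the right), and one chooses $\gamma<\eta/(2T^2)$; and $\eps$ is not sent to $0$ --- it is fixed small enough to ensure both this $\gamma$-condition and the exclusion of the initial-time case (Step~2 in the paper), after which only $\alpha\to 0$ is taken.
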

%---------------------------------------------------------------------
Combining Theorems~\ref{th:fa} and \ref{th::2}, we get the following one. 
%---------------------------------------------------------------------
\begin{theo}[Existence and uniqueness on a junction]\label{th::1}
  Assume that the Hamiltonians satisfy \eqref{assum:H}, that $F$
  satisfies \eqref{assum:F} and that the initial datum $u_0$ is
  uniformly continuous.  Then there exists a unique continuous
  (relaxed) viscosity solution $u$ of \eqref{eq:hj-f}, \eqref{eq::2}
  such that for every $T>0$, there exists a constant $C_T>0$ such that
\[|u(t,x)-u_0(x)|\le C_T \quad \text{for all}\quad (t,x)\in [0,T)\times J.\]
\end{theo}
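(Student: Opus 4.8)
The plan is to deduce this theorem directly from the two previous main results, namely Theorem~\ref{th:fa} (general junction conditions reduce to flux-limited ones) and Theorem~\ref{th::2} (the comparison principle), together with a Perron-type construction of a solution. First I would address existence. Since $F$ satisfies \eqref{assum:F} and $u_0$ is uniformly continuous, the classical Perron method of \cite{I}, adapted to the junction setting, produces a (relaxed) viscosity solution $u$ of \eqref{eq:hj-f}, \eqref{eq::2}; the only point requiring care is the construction of suitable barriers. The coercivity assumption in \eqref{eq::3bis} forces each $H_i$ to grow at infinity, so one can build barriers of the form $u_0(x_0) \pm (C d(x_0,x) + C' t)$ for $C$ large enough (depending on the modulus of continuity of $u_0$ and on the $H_i$ and $F$), which are respectively super- and sub-solutions of \eqref{eq:hj-f}. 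Taking the supremum over $x_0$ of such barriers yields global sub- and super-solutions that are continuous, satisfy the initial condition, and differ from $u_0$ by at most $C_T$ on $[0,T)\times J$. Perron's method then yields a solution $u$ sandwiched between these barriers, which is precisely the bound $|u(t,x)-u_0(x)|\le C_T$ claimed in the statement.

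Next I would prove uniqueness. Suppose $u$ and $v$ are two (relaxed) viscosity solutions of \eqref{eq:hj-f}, \eqref{eq::2} both satisfying, for every $T>0$, a bound of the form $|u(t,x)-u_0(x)|\le C_T$ (and similarly for $v$). Since $u_0$ is uniformly continuous, it satisfies $|u_0(x)| \le C(1 + d(0,x))$ for some constant $C$; combining this with the bound on $|u-u_0|$ shows that $u$ satisfies the growth condition \eqref{eq::27}, and likewise $v$. Then Theorem~\ref{th::2} applies with the roles of sub- and super-solution played by $u$ and $v$ respectively, giving $u \le v$ on $[0,T)\times J$; exchanging the roles of $u$ and $v$ gives $v \le u$, hence $u = v$ on $[0,T)\times J$. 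As $T>0$ is arbitrary, uniqueness follows on all of $(0,+\infty)\times J$.

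The main obstacle, and the only genuinely non-routine part, is the Perron construction itself in the junction setting: one must verify that the supremum (resp.\ infimum) of sub-solutions (resp.\ super-solutions) between the barriers is again a sub-solution (resp.\ super-solution), \emph{including at the junction point $x=0$ and with the relaxed junction condition} in force. This requires the stability of relaxed sub- and super-solutions under sup/inf operations and an upper/lower semicontinuous envelope argument at the vertex; here the monotonicity of $F$ in \eqref{assum:F} is exactly what makes the junction condition behave well under these operations. Once Perron's method is granted in this framework — which is by now standard given \cite{I} and the relaxed-solution formalism of Section~\ref{s.v} — the rest of the argument is the short deduction above. I would also remark that, thanks to Theorem~\ref{th:fa}, one may equivalently work throughout with the flux-limited problem \eqref{eq::1bis} for $A = A_F$, which is sometimes technically more convenient for the barrier construction.
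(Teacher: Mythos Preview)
Your plan is correct and follows the same strategy as the paper: existence via Perron's method between explicit barriers (this is the paper's Theorem~\ref{th::3}), then uniqueness from the comparison principle Theorem~\ref{th::2} applied in both directions. One small technical point: the paper's barrier construction regularizes $u_0$ by convolution to a Lipschitz $u_0^\eps$ and uses $u_0^\eps(x)\pm C_\eps t\pm\eps$ (then takes $\inf/\sup$ over $\eps$), which cleanly handles $u_0$ merely uniformly continuous; with your cone barriers $u_0(x_0)\pm(C\,d(x_0,x)+C't)$, the envelope over $x_0$ at a fixed $C$ will not in general match $u_0$ exactly at $t=0$ unless $u_0$ is $C$-Lipschitz, so you would likewise need a family in $C$ and a final $\inf/\sup$ to recover the initial condition.
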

%---------------------------------------------------------------------
As we previously mentioned it, we prove Theorem~\ref{th::2} by
remarking that the doubling variable approach can still be used if a
suitable \emph{vertex test function} $G$ at each vertex is
introduced. Roughly speaking, such a test function will allow the
edges of the network to exchange the necessary information. More
precisely, the usual penalization term, $\frac{(x-y)^2}{\eps}$ with
$\eps>0$, is replaced with $\eps G\left(\eps^{-1}x,\eps^{-1}y\right)$.
For a general HJ equation
\[u_t+H(x,u_x)=0,\]
the vertex test function has to (almost) satisfy,
\[H(y,-G_y(x,y))-H(x,G_x(x,y)) \le 0\] (at least close to the vertex
$x=0$).  This key inequality compensates for the lack of compatibility
between Hamiltonians\footnote{Compatibility conditions are assumed in
  \cite{schieborn,acct} for instance.}.  The construction of a
(vertex) test function satisfying such a condition allows us to
circumvent the discontinuity of $H(x,p)$ at the junction point.

As explained above, this method consists in combining the doubling
variable technique with the construction of a vertex test function
$G$.  We took our inspiration for the construction of this function
from papers like \cite{gnpt,akr} dealing with scalar conservation laws
with discontinuous flux functions. In such papers, authors stick to
the case $N=2$.

A natural family of explicit solutions of \eqref{eq::1bis} is given by
\[u(t,x)=p_i x - \lambda t \quad \mbox{if}\quad x\in J_i\]
for $(p,\lambda)$ in the \emph{germ} ${\mathcal G}_A$  
defined as follows,
\begin{equation}\label{eq::50}
{\mathcal G}_A = 
\begin{cases}
\left\{(p,\lambda)\in\R^N\times \R,
\quad H_i(p_i)=F_A(p)=\lambda \quad \mbox{for all}\quad
i=1,...,N\right\} & \text{ if } N \ge 2, \\
 \left\{(p_1,\lambda)\in\R\times \R,\quad H_1(p_1)=\lambda\ge
 A\right\}
& \text{ if } N=1.
\end{cases}
\end{equation}
In the special case of convex Hamiltonians satisfying $H_i''>0$ the
vertex test function $G$ is a regularized version\footnote{Such a
  function should indeed be regularized since it is not $C^1$ on the
  diagonal $\{ x = y \}$ of $J^2$.}  of the function $A+G^0$, where $G^0$ is defined as
follows: for $(x,y)\in J_i\times J_j$,
\begin{equation}\label{eq:defi-g0}
G^0(x,y)=\sup_{(p,\lambda)\in {\mathcal G}_A}\left(p_i x -p_j y -
\lambda\right).
\end{equation}
In particular, we have $A+G^0(x,x)=0$.

%%%%%%%%%%%%%%%%%%%%%%%%%%%%%%%%%%%%%%%%%%%%%%%%%%%%%%%%%%%%%%%%%%%%%%%%%%
\subsection{The network setting}

We will extend our results to the case of networks and quasi-convex
Hamiltonians depending on time and space and to flux limiters $A$
depending on time and vertex, see Section~\ref{s.n}.  Noticeably, a
localization procedure allows us to use the vertex test function
constructed for a single junction.

In order to state the results in the network setting, we need to make
precise the assumptions satisfied by the Hamiltonians associated with
each edge and the flux limiters associated with each vertex.
This results in a rather long list of assumptions. Still, when reading
the proof of the comparison principle in this setting, the reader may
check that the main structure properties used in the proof are
gathered in the technical Lemma~\ref{lem::l146}.

As an application of the comparison principle, we consider a
  model case for homogenization on a network. The network
  $\mathcal{N}_\eps$ whose vertices are $\eps \Z^d$ is naturally
  embedded in $\R^d$. We consider for all edges a Hamiltonian only
  depending on the gradient variable but which is ``repeated $\eps
  \Z^d$-periodically with respect to edges''. We prove that when $\eps
  \to 0$, the solution of the ``oscillating'' Hamilton-Jacobi equation
  posed in $\mathcal{N}_\eps$ converges toward the unique solution of
  an ``effective'' Hamilton-Jacobi equation posed in $\R^d$.

\paragraph{A first general comment about the main results.} 
Our proofs do not rely on optimal control interpretation (there is no
representation formula of solutions for instance) but on PDE methods.
We believe that the construction of a vertex test function is flexible
and opens many perspectives. It also sheds light on the fact that the
framework of quasi-convex Hamiltonians, which is slightly more general
than the one of convex ones (at least in the evolution case), deserves
special attention.

\subsection{Comparison with known results}
\label{ss.kr}

\paragraph{Hamilton-Jacobi equations on networks.} 
There is a growing interest in the study of Hamilton-Jacobi equations
on networks. The first results were obtained in \cite{schieborn} for
eikonal equations. Several years after this first contribution, the
three papers \cite{acct,imz,sc13} were published more or less
simultaneously. In these three papers, the Hamiltonians are always
\emph{convex} with respect to the gradient variables and optimal
control plays in important role (at least in \cite{acct,imz}). Still,
frameworks are significantly different.

Recently, a general approach of eikonal equations in metric
spaces has been proposed in \cite{ghn,af,gs} (see also \cite{nakayasu}).  

In \cite{acct}, the authors study an optimal control problem in $\R^2$
and impose a \emph{state constraint}: the trajectories of the
controlled system have to stay in the embedded network. From this
point of view, \cite{acct} is related to \cite{frp98,frp00} where
trajectories in $\R^N$ are constrained to stay in a closed set $K$
which can have an empty interior.  But as pointed out in \cite{acct},
the framework from \cite{frp98,frp00} implies some restricting
conditions on the geometry of the embedded networks. Our approach can
now handle the general case for networks. 

Our approach is also used to reformulate ``state
constraint'' solutions by Ishii and Koike \cite{ik} (see
Proposition~\ref{pro::gr7}).

The reader is referred to \cite{cm} where the different notions of
viscosity solutions used in \cite{acct,imz,sc13} are compared; in the
few cases where frameworks coincide, they are proved to be equivalent. 

In \cite{imz}, the comparison principle was a consequence of a
super-optimality principle (in the spirit of \cite{ls85} or
\cite{soravia99a,soravia99b}) and the comparison of sub-solutions with
the value function of the optimal control problem. Still, the idea of
using the ``fundamental solution'' $\mathcal{D}$ to prove a comparison
principle originates in the proof of the comparison of sub-solutions
and the value function. Moreover, as explained in
Subsection~\ref{ss.godo}, the comparison principle obtained in this
paper could also be proved, for $A =-\infty$ and under more restrictive
assumptions on the Hamiltonians, by using this fundamental solution.

The reader is referred to \cite{acct,imz,sc13} for further references
about Hamilton-Jacobi equations on networks.

\paragraph{Networks, regional optimal control and stratified spaces.} 
We already pointed out that the Hamilton-Jacobi equation on a network
can be regarded as a system of Hamilton-Jacobi equations coupled
through vertices. In this perspective, our work can be compared with
studies of Hamilton-Jacobi equations posed on, say, two domains
separated by a frontier where some \emph{transmission conditions}
should be imposed.  Contributions to such problems are
\cite{bbc,bbc2,rz,rsz,aot}.  This can be even more general by
considering equations in stratified spaces \cite{bh07,bc15}.

We first point out that the framework of these works is genuinely
multi-dimensional while in this paper we stick to a monodimensional
setting; still, our method generalizes to a higher dimensional setting
\cite{higher}. Another difference between their approach and the one
presented in the present work and in papers like \cite{acct,sc13,imz}
is that these authors write a Hamilton-Jacobi equation on the frontier
(which is lower-dimensional). Another difference is that techniques
from dynamical systems play also an important role. We mention that
the techniques from \cite{aot} can be applied to treat the cases
considered in our work.

Still, results can be compared. Precisely, considering a framework
were both results can be applied, that is to say the monodimensional
one, we will prove in Section~\ref{sec:bbc} that the value function
$U^-$ from \cite{bbc2} coincides with the solution of
\eqref{eq::1bis} for some constant $A$ that is determined. 
And we prove more (in the monodimensional setting; see also extensions
below): we prove that the value function $U^+$ from
\cite{bbc2} coincides with the solution of \eqref{eq::1bis} for some
(distinct) constant $A$ which is also computed. 

\paragraph{Hamilton-Jacobi equations with discontinuous source terms.}
There are numerous papers about Hamilton-Jacobi equations with
discontinuous Hamiltonians. The first contribution is due to Dupuis
\cite{dupuis}; see also \cite{de,gso,cr,dzs}.  The recent paper
\cite{gh13} considers a Hamilton-Jacobi equation where specific
solutions are expected. In the one-dimensional space, it can be proved
that these solutions are in fact flux-limited solutions in the sense
of the present paper with $A=c$ where $c$ is a constant appearing in
the HJ equation at stake in \cite{gh13}. The introduction of
\cite{gh13} contains a rather long list of results for HJ equations
with discontinuous Hamiltonians; the reader is referred to it for
further details.

\paragraph{Contributions of the paper.} In light of the review we made
above, we can emphasize the main contributions of the paper: compared
to \cite{schieborn,sc13}, we deal not only with eikonal equations but
with general Hamilton-Jacobi equations. In contrast to \cite{acct}, we
are able to deal with networks with infinite number of edges, that are
not embedded. In constrast to \cite{acct,imz,schieborn,sc13}, we can
deal with quasi-convex (but not necessarily convex) discontinuous
Hamilton-Jacobi equations with general junctions conditions. For such
equations, flux-limited solutions are introduced and a flexible PDE
framework is developed instead of an optimal control
approach. Eventhough, the link with optimal control (in the spirit of
\cite{acct,bbc,bbc2}) and with regional control (in the spirit of
\cite{bbc,bbc2}) are thoroughly investigated. In particular, a PDE
characterization of the two value functions introduced in \cite{bbc2}
is provided, one of the two characterizations being new.

Several applications are also developed: the extension to the network
setting and some homogenization results. 

\paragraph{Perspectives.}

More homogenization results were recently obtained  in \cite{gim}. An
example of applications of this result is the case where a periodic
Hamiltonian $H(x,p)$ is perturbed by a compactly supported function of
the space variable $f(x)$, say. Such a situation is considered in
lectures by Lions at Coll\`ege de France \cite{lions-cdf}. Rescaling
the solution, the expected effective Hamilton-Jacobi equation is
supplemented with a junction condition which keeps memory of the
compact perturbation.

We would also like to mention that our results extend to a higher
dimensional setting (in the spirit of \cite{bbc,bbc2}) for
quasi-convex Hamiltonians \cite{higher}.

\subsection{Organization of the article and notation}

\paragraph{Organization of the article.} 
The paper is organized as
follows. In Section~\ref{s.v}, we introduce the notion of viscosity
solution for Hamilton-Jacobi equations on junctions, we prove that
they are stable (Proposition~\ref{pro::2}) and we give an existence
result (Theorem~\ref{th::3}). In Section~\ref{s.c}, we prove the
comparison principle in the junction case (Theorem~\ref{th::2}). In
Section~\ref{s.G}, we construct the vertex test function
(Theorem~\ref{th::G}). In Section~\ref{s:oct}, a general optimal
control problem on a junction is considered and the associated value
function is proved to be a solution of \eqref{eq::1bis} for some
computable constant $A$. 
In Section~\ref{sec:bbc}, the two value
functions introduced in  \cite{bbc2} are shown to be solutions of
\eqref{eq::1bis}  for two explicit (and distinct) constants $A$.
In Section~\ref{s.n}, we explain how to
generalize the previous results (viscosity solutions, HJ equations,
existence, comparison principle) to the case of networks. In
Section~\ref{s.h}, we present a straightforward application of our
results by proving a homogenization result passing from an
``oscillating'' Hamilton-Jacobi equation posed in a network embedded
in an Euclidian space to a Hamilton-Jacobi equation in the whole
space.  Finally, we prove several technical results in
Appendix~\ref{s.a} and we state results for stationary Hamilton-Jacobi
equations in Appendix~\ref{s.stat}.

\paragraph{Notation for a junction.} 
A junction is denoted by $J$. It is made of a finite number of edges
and a junction point. The $N$ edges of a junction, $J_1,\dots,J_N$
($N \in \N \setminus\{0\}$) are isometric to $[0,+\infty)$. The open
edge is denoted by $J_i^* = J_i \setminus \{0\}$.  Given a final time
$T>0$, $J_T$ denotes $(0,T) \times J$.

The Hamiltonians on the branches $J_i$ of the junction are denoted by
$H_i$; they only depend on the gradient variable.  The Hamiltonian at
the junction point is denoted by $F_A$ and is defined from all $H_i$
and a constant $A$ which ``limits'' the flux of information at the
junction.

Given a function $u:J \to \R$, its gradient at $x$ is denoted by
$u_x$; it is a real number if $x \neq 0$ but it is a vector of $\R^N$
at $x=0$. We let $|u_x|$ denote $|\partial_i u|$ outside the junction point
and $\max_{i=1,...,N} |\partial_i u|$ at the junction point.  If now
$u(t,x)$ also depends on the time $t\in (0,+\infty)$, $u_t$ denotes
the time derivative.

\paragraph{Notation for networks.} 
A network is denoted by $\mathcal{N}$.  It is made of vertices $n \in
\mathcal{V}$ and edges $e \in \mathcal{E}$. Each edge is either
isometric to $[0,+\infty)$ or to a compact interval whose length is
  bounded from below; hence a network
  is naturally endowed with a metric.  The associated open
  (resp. closed) balls are denoted by $B (x,r)$ (resp. $\bar B (x,r)$)
  for $x \in \mathcal{N}$ and $r>0$.

In the network case, an Hamiltonian is associated with each edge $e$ and
is denoted by $H_e$. It depends on time and space; moreover, the
limited flux functions $A$ can depend on time $t$ and the vertex $n$: 
$A_n (t)$.

\paragraph{Further notation.}
Given a metric space $E$, $C(E)$ denotes the space of continuous
real-valued functions defined in $E$. A modulus of continuity is a
function $\omega: [0,+\infty) \to [0,+\infty)$ which is non-increasing
    and $\omega (0+)=0$.

%%%%%%%%%%%%%%%%%%%%%%%%%%%%%%%%%%%%%%%%%%%%%%%%%%%%%%%%%%%%%%%%%%%%%%%%%%
%%%%%%%%%%%%%%%%%%%%%%%%%%%%%%%%%%%%%%%%%%%%%%%%%%%%%%%%%%%%%%%%%%%%%%%%%%
\section{Relaxed and flux-limited  solutions}
\label{s.v}

This section starts with the introduction of two notions of
viscosity solutions in the junction case and of their studies. Relaxed
(viscosity) solutions are first introduced; they are defined for
general junction conditions. They naturally satisfy good stability
properties (see for instance Proposition~\ref{pro::2}).  Flux-limited
solutions are associated with flux-limited junction conditions.  They
satisfy the junction condition in a stronger sense (see
Proposition~\ref{pro::1}). The main contribution of this section is
the proof of Theorem~\ref{th:fa}. It relies on the observation that
the set of test functions for flux-limited solutions can be reduced
drastically: it is enough to consider test functions with fixed space
slopes (Theorem~\ref{th::gr1}). 

%%%%%%%%%%%%%%%%%%%%%%%%%%%%%%%%%%%%%%%%%%%%%%%%%%%%%%%%%%%%%%%%%%%%%%%%%%
\subsection{Definitions}

In order to introduce the two notions of viscosity solution which will
be used in the remaining of the paper, we first introduce the class of
test functions. For $T>0$, set $J_T= (0,T)\times J$. We define the
class of test functions on $(0,T)\times J$ by
\[C^1(J_T)=\left\{\varphi\in C(J_T),\; \text{the restriction of
    $\varphi$ to $(0,T)\times J_i$ is $C^1$ for
    $i=1,...,N$}\right\}.\]

We (classically) say that a test function $\phi$ touches a function
$u$ from below (respectively from above) at $(t,x)$ if $u-\phi$
reaches a minimum (respectively maximum) at $(t,x)$ in a neighborhood
of it.

We recall
the definition of upper and lower semi-continuous envelopes $u^*$ and
$u_*$ of a (locally bounded) function $u$ defined on $[0,T)\times J$,
\[u^*(t,x)=\limsup_{(s,y)\to (t,x)} u(s,y)
\qquad \text{and}\qquad u_*(t,x)=\liminf_{(s,y)\to (t,x)} u(s,y).\] 

%-----------------------------------------------------------------------
\begin{defi}[Relaxed  solutions]\label{defi:relaxed}
Assume that the Hamiltonians satisfy \eqref{assum:H} and that $F$
satisfies \eqref{assum:F} and let $u:[0,T)\times J\to \R$.
\begin{enumerate}[i)]
\item We say that $u$ is a \emph{relaxed sub-solution}
  (resp. \emph{relaxed super-solution}) of \eqref{eq:hj-f} in
  $(0,T)\times J$ if for all test function $\varphi\in C^1(J_T)$
  touching $u^*$ from above (resp. from below) at
  $(t_0,x_0) \in J_T$, we have
\[\varphi_t + H_i(\varphi_x) \le 0  \quad (\text{resp.}\quad \ge 0) 
\quad \text{at } (t_0,x_0)\]
 if $x_0 \neq 0$, and 
\[\left.\begin{array}{lll}
\text{either } & 
\varphi_t + F(\varphi_x) \le 0  &\quad (\text{resp.}\quad \ge 0)  \\
\text{or } &
\varphi_t + H_i(\partial_i \varphi) \le 0 & \quad (\text{resp.}\quad
\ge 0) \quad \text{ for some } i
\end{array} \right| 
\quad \text{at } (t_0,x_0)\]
 if $x_0 = 0$.
\item We say that $u$ is a \emph{relaxed sub-solution}
  (resp. \emph{relaxed super-solution}) of \eqref{eq:hj-f},
  \eqref{eq::2} on $[0,T)\times J$ if additionally
\[u^*(0,x) \le u_0(x) \quad (\text{resp.}\quad u_*(0,x) \ge u_0(x))
\quad \text{for all}\quad x\in J.\]
\item  We say that $u$ is a \emph{relaxed  solution} if $u$ is
  both a relaxed sub-solution and a relaxed super-solution.
\end{enumerate}
\end{defi}
%---------------------------------------------------------------------
We give a second definition of viscosity solutions in the case of
flux-limited junction functions $F_A$: the junction condition is
satisfied ``in a classical sense'' for test functions touching sub-
and super-solutions at the junction point.
%-----------------------------------------------------------------------
\begin{defi}[Flux-limited  solutions]\label{defi::1}
Assume that the Hamiltonians satisfy \eqref{assum:H} and let
$u:[0,T)\times J\to \R$.
\begin{enumerate}[i)]
\item We say that $u$ is a \emph{flux-limited sub-solution}
  (resp. \emph{flux-limited super-solution}) of \eqref{eq::1bis} in
  $(0,T)\times J$ if for all test function $\varphi\in C^1(J_T)$
  touching $u^*$ from above (resp. from below) at $(t_0,x_0) \in J_T$,
  we have
\begin{eqnarray}
\nonumber 
\varphi_t + H_i(\varphi_x) \le 0 & \quad (\text{resp.}\quad \ge 0) 
\quad \text{at $(t_0,x_0)$ \qquad if $x_0\in J_i^*$}\\
\label{eq::10}
\varphi_t + F(\varphi_x) \le 0 & \quad (\text{resp.}\quad \ge 0) \quad
\text{at $(t_0,x_0)$ \qquad if $x_0 = 0$}.
\end{eqnarray}
\item We say that $u$ is a \emph{flux-limited sub-solution}
  (resp. \emph{flux-limited super-solution}) of \eqref{eq::1bis},
  \eqref{eq::2} on $[0,T)\times J$ if additionally
\[u^*(0,x) \le u_0(x) \quad (\text{resp.}\quad u_*(0,x) \ge u_0(x))
\quad \text{for all}\quad x\in J.\]
\item  We say that $u$ is a \emph{flux-limited solution} if $u$ is
  both a flux-limited sub-solution and a flux-limited super-solution.
\end{enumerate}
\end{defi}
%----------------------------------------------------------------------

\subsection{The ``weak continuity'' condition for sub-solutions}

If $F$ not only satisfies \eqref{assum:F} but is also \emph{semi-coercive}, 
that is to say if 
\begin{equation}\label{assum:semi-coercive}
F(p) \to +\infty \quad \text{ as } \quad \max_i \ (\max(0, -p_i)) \to +\infty 
\end{equation}
then any $F$-relaxed sub-solution satisfies a ``weak continuity'' condition at the
junction point. Precisely, the following lemma holds true.
%------------------
\begin{lem}[``Weak continuity'' condition at the junction point]\label{lem:weak-cont}
Assume that the Hamiltonians satisfy \eqref{assum:H} and that $F$ satisfies
\eqref{assum:F} and \eqref{assum:semi-coercive}. Then any relaxed sub-solution 
$u$ of \eqref{eq:hj-f} satisfies for all $t \in (0,T)$ and all $i \in \{1,\dots,N\}$,
\[ u(t,0) = \limsup_{(s,y) \to (t,0), y \in J_i^*} u (s,y).\]
\end{lem}
%-------------------
\begin{proof}
Since $u$ is upper semi-continuous, we know that for all $t\in (0,T)$ and $i$, 
\[ u(t,0) \ge \limsup_{(s,y) \to (t,0), y \in J_i^*} u (s,y).\]
Assume that there exists $t^*$ and $i_0$ such that 
\[u(t^*,0) > \limsup_{(s,y) \to (t^*,0), y \in J_{i_0}^*} u (s,y).\]

Since $u$ is upper semi-continuous, we know that we can find
$t_0$ arbitrarily close to $t^*$ such that $u(t_0,0)$ is
arbitrarily close to $u (t^*,0)$ and such that there exists a
$C^1$ function $\Psi(t)$ (strictly) touching  $u(t,0)$ from above at
$t_0$. In particular, we can ensure 
\begin{equation}\label{eq::rtrac-1}
%\label{eq:contra}
 u (t_0,0) > \limsup_{(s,y) \to (t_0,0), y \in J_{i_0}^*} u (s,y)
\end{equation}
and 
\[ 
\begin{cases}
u (t,0) & < \Psi (t) \quad \text{ for } t \in [t_0-r_0,t_0+r_0] \setminus \{t_0\} \\
u(t_0,0) &= \Psi (t_0) .
\end{cases}
\] 
In particular, since $(\Psi-u)(t_0\pm r_0,0) >0$, 
there exist $\delta_1>0$ and $r_1>0$ small enough such that 
\begin{equation}\label{eq:control-bd}
 u (t_0\pm r_0,x)+\delta_1 \le \Psi(t_0\pm r_0) \quad \text{ for } x \in B(0,r_1) \subset J.
\end{equation}

We now consider the test function $\phi (t,x) = \Psi (t) + p_i x$ for
$x \in J_i$.  We claim that for $i\not= i_0$ and for $p_i=p_i(r_1)$ large enough, $u - \phi$ reaches
its maximum $M_i$ on
$Q_0 = [t_0-r_0,t_0+r_0] \times [0,r_1] \subset (0,T) \times J_i$ at $(t_0,0)$.
We first remark that $M_i \ge u(t_0,0) - \Psi (t_0) =0$. Moreover, for
$(t_0\pm r_0,x)$ and $x \in [0,r_1]$, \eqref{eq:control-bd} implies that
\[ u(t_0\pm r_0,x) - \Psi (t_0\pm r_0) - p_i x \le -\delta_1 <M_i. \]
For $(t,x) \in Q_0$ and $x=r_1$, we have for $p_i$ large enough
\[ u(t,x) - \Psi (t) - p_i x \le \|u^+\|_{L^\infty (Q_0)} + \| \Psi \|_{L^\infty ([t_0-r_0,t_0+r_0])} 
- p_i r_1 < M_i. \]
Hence the supremum is reached either for $x=0$ or $x$ in the interior of $Q_0$. 
In the latter case, this yields the viscosity inequality
\[ \Psi'(t) + H_i (p_i) \le 0\]
which cannot hold true for large $p_i$. We conclude that 
\[
\begin{cases}
 u (t,x) &< \Psi (t) + p_i x \quad \text{ in } Q_0 \setminus \{ (t_0,0) \} \\
 u(t_0,0) &= \Psi (t_0) .
\end{cases}
\]
We now  get 
\[
\begin{cases}
 u (t,x)  < \Psi (t) + p_i x  &  \text{ in } [t_0-r_0,t_0+r_0] \times [0,r_1]
 \setminus \{ (t_0,0) \}  \text{  with $p_i>0$  if } i \neq i_0\\
 u (t,x)  < \Psi (t) + p_{i_0} x  & \text{ in } [t_0-r_0,t_0+r_0] \times [0,r_1]
 \setminus \{ (t_0,0) \}  \text{  with $p_{i_0}<0$ if } i = i_0\\
 u(t_0,0) = \Psi (t_0) .&
\end{cases}
\]
where we have used \eqref{eq::rtrac-1}
for any negative $p_{i_0}$ and any small enough $r_1=r_1(p_{i_0})$. 
This implies that 
\[ \Psi'(t_0) + F (p_1,\dots,p_{i_0}, \dots,p_N) \le 0 \]
which cannot hold true for $p_{i_0}$ very negative because of \eqref{assum:semi-coercive}. 
The proof is now complete. 
\end{proof}

%%%%%%%%%%%%%%%%%%%%%%%%%%%%%%%%%%%%%%%%%%%%%%%%%%%%%%%%%%%%%%%%%%%%%%%%%%
\subsection{General junction conditions and stability}

The first stability result is concerned with the supremum of relaxed
sub-solutions.  Such a result is used in the Perron process to
construct relaxed solutions. Its proof is standard so we skip it.
%---------------------------------------------------------------------
\begin{pro}[Stability by supremum/infimum]\label{pro::2}
Assume that the Hamiltonians $H_i$ satisfy \eqref{assum:H} and that
$F$ satisfies  \eqref{assum:F}. Let ${\mathcal
  A}$ be a nonempty set and let $(u_a)_{a\in {\mathcal A}}$ be a
familly of relaxed sub-solutions (resp. relaxed super-solutions) of
\eqref{eq:hj-f} on $(0,T)\times J$.  Let us assume that
\[u=\sup_{a\in {\mathcal A}} u_a \quad (\text{resp.}\quad u=\inf_{a\in {\mathcal A}} u_a)\]
is locally bounded on $(0,T)\times J$. Then $u$ is a relaxed
sub-solution (resp. relaxed super-solution) of \eqref{eq:hj-f} on
$(0,T)\times J$.
\end{pro}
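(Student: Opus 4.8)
The plan is to prove stability by supremum for relaxed sub-solutions; the super-solution case for the infimum follows by the symmetric argument (or by applying the sub-solution result to $-u_a$ with reversed Hamiltonians). The strategy is the classical one for upper envelopes of viscosity sub-solutions, adapted to the junction so that the \emph{relaxed} structure of the junction condition is exploited at $x_0 = 0$.

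\textbf{Reduction to the relaxed upper semicontinuous envelope.} First I would replace $u$ by $u^*$; since each $u_a \le (u_a)^*$ and $\sup_a (u_a)^* \le u^*$, it suffices to work with $u^*$, which is upper semicontinuous and locally bounded. Fix a test function $\varphi \in C^1(J_T)$ touching $u^*$ from above at $(t_0,x_0)$ with $t_0 > 0$, the inequality being strict in a punctured neighborhood (a standard normalization: subtract a small multiple of $|t-t_0|^2 + d(x,x_0)^2$ so that the contact point is strict, then pass to the limit in that perturbation at the end). By definition of $u^*$ there is a sequence $(s_k, y_k) \to (t_0, x_0)$ with $u_{a_k}(s_k, y_k) \to u^*(t_0,x_0)$ for some $a_k \in \mathcal{A}$. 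On a small closed neighborhood $\overline{V}$ of $(t_0,x_0)$, the function $(u_{a_k})^* - \varphi$ attains a maximum at some point $(t_k, x_k) \in \overline{V}$; a now-classical argument (using that $(u_{a_k})^*(s_k,y_k) - \varphi(s_k,y_k)$ is close to $0 = u^*(t_0,x_0) - \varphi(t_0,x_0)$, together with the strict-maximum normalization) shows $(t_k,x_k) \to (t_0,x_0)$, that $(t_k,x_k)$ is interior for $k$ large so $t_k > 0$, and that $((u_{a_k})^* - \varphi)(t_k,x_k) \to 0$.

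\textbf{Passing to the limit in the viscosity inequality.} For each $k$, $\varphi + c_k$ (with $c_k = ((u_{a_k})^* - \varphi)(t_k,x_k)$) touches $(u_{a_k})^*$ from above at $(t_k, x_k)$, so the relaxed sub-solution property of $u_{a_k}$ applies there. Two cases. If $x_0 \ne 0$, then $x_k$ lies in the same open branch $J_i^*$ for $k$ large, we get $\varphi_t + H_i(\varphi_x) \le 0$ at $(t_k, x_k)$, and continuity of $H_i$ and of $\varphi_t, \varphi_x$ (on the branch) gives the desired inequality at $(t_0, x_0)$. If $x_0 = 0$, there are two subcases along a subsequence. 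Either infinitely many $x_k$ lie in some fixed branch $J_i^*$, in which case we obtain $\varphi_t + H_i(\partial_i \varphi) \le 0$ at $(t_k,x_k)$ and, letting $k \to \infty$, the second alternative of the relaxed junction condition $\varphi_t + H_i(\partial_i\varphi) \le 0$ holds at $(0,x_0)$ — note $\partial_i\varphi$ is continuous up to $x=0$ on $\overline{J_i}$, which is exactly what the class $C^1(J_T)$ guarantees. Or infinitely many $x_k$ equal $0$, in which case the relaxed condition for $u_{a_k}$ gives at $(t_k,0)$ either $\varphi_t + F(\varphi_x) \le 0$ or $\varphi_t + H_i(\partial_i\varphi) \le 0$ for some $i$; pigeonholing on which alternative occurs infinitely often and using continuity of $F$ (assumption \eqref{assum:F}) and of the $H_i$, we recover one of the two alternatives at $(0,x_0)$. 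In every case the relaxed sub-solution inequalities hold for $\varphi$ at $(t_0,x_0)$.

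\textbf{Main obstacle.} The one genuinely junction-specific point — and the step I would be most careful about — is the bookkeeping at $x_0 = 0$: a priori the maximizers $(t_k,x_k)$ need not sit at the junction point nor in a single fixed branch, so one must extract subsequences and argue that whichever of the finitely many possibilities (branch $J_1^*, \dots, J_N^*$, or $\{x=0\}$ with either alternative of the relaxed condition) recurs infinitely often already yields a valid relaxed inequality at the limit. The relaxed formulation is precisely what makes this work: even if every $x_k = 0$ and the $H_i$-alternative is selected for a different $i$ each time but settles on one $i$ along a subsequence, we land in the ``or'' branch of Definition~\ref{defi:relaxed}; and if $x_k$ drifts onto a branch, the equation on that branch is itself one of the permitted alternatives at the junction point. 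Everything else — the strict-maximum normalization, convergence of maximizers, passage to the limit via continuity — is routine and identical to the Euclidean theory. A final remark: since the class of relaxed sub-solutions is stable under this operation, one also gets for free (used later via Perron's method) that the upper envelope of all sub-solutions below a given super-solution is again a sub-solution.
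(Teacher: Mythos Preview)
The paper states this proposition without proof, treating it as a classical stability result for viscosity solutions. Your argument is correct and is exactly the standard one: approximate the supremum by a diagonal sequence, localize the maxima of $(u_{a_k})^* - \varphi$, and pass to the limit; the only junction-specific wrinkle is the pigeonholing step at $x_0=0$, which you handle correctly by extracting a subsequence on which one of the finitely many alternatives (one of the $N$ branch inequalities or the $F$-inequality) recurs.
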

%---------------------------------------------------------------------
In the following proposition, we assert that, for the special junction
functions $F_A$, the junction condition is in fact always satisfied
\emph{in the classical (viscosity) sense}, that is to say in the sense of
Definition~\ref{defi::1} (and not Definition~\ref{defi:relaxed}).
%----------------------------------------------------------------------
\begin{pro}[flux-limited junction conditions are satisfied in the
    classical sense]
\label{pro::1}
As\-sume that the Hamiltonians satisfy \eqref{assum:H} and consider $A
\in \R$. If $F=F_A$, then relaxed  super-solutions
(resp. relaxed  sub-solutions) coincide with flux-limited
super-solutions (resp. flux-limited sub-solutions).
\end{pro}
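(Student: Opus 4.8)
The implication ``classical $\Rightarrow$ relaxed'' is trivial, since the first alternative in Definition~\ref{defi:relaxed} is exactly the condition of Definition~\ref{defi::1}; so only the converse needs a proof, and it suffices to treat it at the junction point since the two notions coincide away from it. Thus the whole content reduces to: let $u$ be a relaxed sub-solution (resp. super-solution) of \eqref{eq::1bis} and let $\varphi\in C^1(J_T)$ touch $u^*$ from above (resp. $u_*$ from below) at some $(t_0,0)$ with $t_0>0$; writing $\lambda=\varphi_t(t_0,0)$ and $q=(q_1,\dots,q_N)$ with $q_i=\partial_i\varphi(t_0,0)$, we must show $\lambda+F_A(q)\le 0$ (resp. $\ge 0$), with $F_A$ given by \eqref{eq::4}. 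I will use two elementary modifications of the test function near the origin, both leaving $\lambda$ unchanged: replacing $\varphi$ on each branch $J_i$ near $0$ by $\varphi(t,0)+r_i\,x$ (glued to $\varphi$ by a spatial cut-off) yields again a test function touching $u^*$ from above when $r_i\ge q_i$ for all $i$, and one touching $u_*$ from below when $r_i\le q_i$ for all $i$; this follows from $\varphi(t,x)-\varphi(t,0)=\partial_i\varphi(t,0)\,x+o(x)$ together with $\partial_i\varphi(t,0)\to q_i$.

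\emph{Super-solutions.} Lower the slopes to $\tilde q_i:=\min(q_i,p_i^0)$, which is admissible. Since $H_i^-$ is constant on $[p_i^0,+\infty)$, one has $H_i^-(\tilde q_i)=H_i^-(q_i)$ for every $i$, hence $F_A(\tilde q)=F_A(q)$. Applying the relaxed super-solution inequality at $(t_0,0)$ to the modified test function gives either $\lambda+F_A(\tilde q)\ge 0$, i.e. $\lambda+F_A(q)\ge 0$, or $\lambda+H_{i_0}(\tilde q_{i_0})\ge 0$ for some $i_0$; in the latter case $\tilde q_{i_0}\le p_{i_0}^0$, so $H_{i_0}(\tilde q_{i_0})=H_{i_0}^-(\tilde q_{i_0})\le F_A(\tilde q)=F_A(q)$, and again $\lambda+F_A(q)\ge 0$. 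The branch equations are not used here.

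\emph{Sub-solutions.} The extra ingredient is the one-sided boundary regularity of branch sub-solutions: \emph{if $w$ is a viscosity sub-solution of $w_t+H_i(w_x)=0$ in $(0,T)\times J_i^*$ and $\varphi$ touches $w^*$ from above at $(t_0,0)$, then $\varphi_t(t_0,0)+H_i^-(\partial_i\varphi(t_0,0))\le 0$.} A relaxed sub-solution is in particular a branch sub-solution, so this applied on each branch gives $\lambda+H_i^-(q_i)\le 0$ for all $i$; since $H_i^-\ge\min_\R H_i$, it also gives $\lambda+\min_\R H_i\le 0$ for all $i$, hence $\lambda+A_0\le 0$ with $A_0$ as in \eqref{eq::A_0}. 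If $A\le A_0$ this yields $\lambda\le-A$ as well, hence $\lambda+F_A(q)=\lambda+\max\big(A,\max_i H_i^-(q_i)\big)\le 0$. If $A>A_0$, it remains to upgrade $\lambda+A_0\le 0$ to $\lambda+A\le 0$, and only here does the relaxed junction condition enter: since $A>A_0\ge\min_\R H_i$, coercivity and continuity provide $p_i^A\ge p_i^0$ with $H_i(p_i^A)=A$; raise the slopes to $\tilde q_i:=\max(q_i,p_i^A)$ (admissible for a sub-solution) and apply the relaxed sub-solution inequality to the modified test function. Either $\lambda+F_A(\tilde q)\le 0$ with $F_A(\tilde q)=\max(A,\max_i\min_\R H_i)=A$ (as each $\tilde q_i\ge p_i^0$), or $\lambda+H_{i_0}(\tilde q_{i_0})\le 0$ for some $i_0$ with $H_{i_0}(\tilde q_{i_0})\ge H_{i_0}(p_{i_0}^A)=A$ (as $H_{i_0}$ is nondecreasing on $[p_{i_0}^0,+\infty)$). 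Either way $\lambda+A\le 0$, and combined with $\lambda+H_i^-(q_i)\le 0$ for all $i$ this gives $\lambda+F_A(q)\le 0$.

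\emph{The main obstacle} is the boundary regularity statement used in the sub-solution case; note the asymmetry, the super-solution case requiring nothing analogous, which mirrors the fact that $F_A$ is built from the \emph{nonincreasing} parts $H_i^-$. It is proved by penalization: adding $-|t-t_0|^2-x^2$ to $w^*-\varphi$ we may assume the contact at $(t_0,0)$ is strict, and then for $\eta>0$ the function $w^*-\varphi-\eta/x$ attains its maximum over a fixed neighbourhood at an interior point $(t_\eta,x_\eta)$ with $x_\eta>0$, with $(t_\eta,x_\eta)\to(t_0,0)$ as $\eta\to 0$. The branch sub-solution inequality at $(t_\eta,x_\eta)$ reads $\varphi_t(t_\eta,x_\eta)+H_i\big(\partial_i\varphi(t_\eta,x_\eta)-\eta\,x_\eta^{-2}\big)\le 0$; coercivity of $H_i$ forces $\eta\,x_\eta^{-2}$ to stay bounded, so along a subsequence $\partial_i\varphi(t_\eta,x_\eta)-\eta\,x_\eta^{-2}\to q_i-c$ for some $c\ge 0$, whence $\lambda+H_i(q_i-c)\le 0$. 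Since $H_i$ is nonincreasing on $(-\infty,p_i^0]$ and $H_i\ge\min_\R H_i=H_i^-(p)$ for $p\ge p_i^0$, one checks $H_i(q_i-c)\ge H_i^-(q_i)$ whether $q_i-c\le p_i^0$ or not, so $\lambda+H_i^-(q_i)\le 0$. (This one-sided behaviour of sub-solutions of quasi-convex Hamilton--Jacobi equations on a half-line is classical and akin to relaxed boundary conditions in the sense of \cite{bp}.)
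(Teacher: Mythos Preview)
Your super-solution argument is correct and matches the paper's: lower each slope to $\min(q_i,p_i^0)$, note that $F_A$ is unchanged, and observe that either alternative in the relaxed inequality yields $\lambda+F_A(q)\ge 0$.

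Your sub-solution argument takes a different route from the paper and contains a gap. The paper's proof is a single test-function modification at the junction, symmetric to the super-solution case: with $I=\{i:H_i^-(q_i)<F_A(q)\}$, choose for each $i\in I$ some $q_i'\ge p_i^0$ with $H_i(q_i')=F_A(q)$ and raise $\partial_i\varphi$ to $\max(q_i,q_i')$. One checks that $F_A$ is preserved and that $H_i(\partial_i\tilde\varphi)\ge F_A(q)$ for \emph{every} $i$, so whichever alternative of the relaxed junction inequality fires, it gives $\lambda+F_A(q)\le 0$ directly. The branch equations are never used.

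You instead try to extract $\lambda+H_i^-(q_i)\le 0$ for each $i$ from the branch equation on $J_i^*$ alone, via the $\eta/x$ penalization. The flaw is in the claimed convergence $(t_\eta,x_\eta)\to(t_0,0)$: it tacitly assumes that $u^*(t_0,0)$ is approached by values of $u^*$ along $J_i^*$. On a junction this need not hold, since $u^*(t_0,0)$ is the limsup over \emph{all} branches and may strictly exceed $\limsup_{(s,y)\to(t_0,0),\,y\in J_i^*}u^*(s,y)$. Concretely, take $N=2$, $H_1(p)=|p-2|-\tfrac32$, $H_2(p)=|p|$, and set $u\equiv 0$ on $J_2\cup\{0\}$ while $u(t,x)=-1+x$ on $J_1^*$; this is upper semi-continuous and a sub-solution of \eqref{eq::gr2}, yet with $\varphi\equiv 0$ one has $\lambda=0$, $q_1=0$ and $H_1^-(0)=\tfrac12>0$. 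For this $u$, the maximum of $u-\varphi-|t-t_0|^2-x^2-\eta/x$ over a small neighbourhood in $J_1$ sits at the spatial boundary $x=r$ (the function is still increasing in $x$ there), so no interior viscosity inequality is available and your argument stalls. The ``classical'' half-line boundary regularity you invoke is valid when the USC envelope at the endpoint coincides with the interior limsup---precisely the identity that can fail in a multi-branch junction.
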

%----------------------------------------------------------------------
\begin{proof}[Proof of Proposition~\ref{pro::1}]
The proof was done in \cite{imz} for the case $A=-\infty$, using the
monotonicities of the $H_i$. We follow the same proof and omit
details. \bigskip

\textsc{The super-solution case.} 
Let $u$ be a relaxed super-solution satisfying the junction condition in the
viscosity sense and let us assume by contradiction that there exists a
test function $\varphi$ touching $u$ from below at $P_0=(t_0,0)$ for
some $t_0\in (0,T)$, such that
\begin{equation}\label{eq::11}
\varphi_t + F_A(\varphi_x) <0 \quad \text{at}\quad P_0.
\end{equation}
Then we can construct a test function $\tilde{\varphi}$ satisfying
$\tilde{\varphi}\le \varphi$ in a neighborhood of $P_0$, with equality
at $P_0$ such that
\[\tilde{\varphi}_t(P_0) = \varphi_t(P_0) \quad \text{and}\quad 
\partial_i \tilde{\varphi}(P_0) = \min(p^0_i, \partial_i \varphi(P_0))
\quad \text{for}\quad i=1,...,N.\] Using the fact that
$F_A(\varphi_x)=F_A(\tilde{\varphi}_x)\ge H_i^-(\partial_i
\tilde{\varphi}) = H_i(\partial_i \tilde{\varphi})$ at $P_0$, we
deduce a contradiction with \eqref{eq::11} using the viscosity
inequality satisfied by $\varphi$ for some $i \in \{1,\dots,N\}$.
\medskip

\textsc{The sub-solution case.}
Let now $u$ be a sub-solution satisfying the junction condition in the
viscosity sense and let us assume by contradiction that there exists a
test function $\varphi$ touching $u$ from above at $P_0=(t_0,0)$ for
some $t_0\in (0,T)$, such that
\begin{equation}\label{eq::12}
\varphi_t + F_A(\varphi_x) >0 \quad \text{at}\quad P_0.
\end{equation}
Let us define
\[I=\left\{i\in \left\{1,...,N\right\},\quad H_i^-(\partial_i \varphi) <
F_A(\varphi_x) \quad \text{at}\quad P_0\right\}\]
and for $i\in I$, let $q_i\ge p_i^0$ be such that
\[H_i(q_i)=F_A(\varphi_x(P_0))\]
where we have used the fact that $H_i(+\infty)=+\infty$.  Then we can
construct a test function $\tilde{\varphi}$ satisfying
$\tilde{\varphi}\ge \varphi$ in a neighborhood of $P_0$, with equality
at $P_0$, such that
\[\tilde{\varphi}_t(P_0) = \varphi_t(P_0)\quad \text{and}\quad 
\partial_i \tilde{\varphi}(P_0) = \left\{\begin{array}{ll}
\max(q_i, \partial_i \varphi(P_0)) &\quad \text{if}\quad i\in I,\\
\partial_i \varphi(P_0) &\quad \text{if}\quad i\not\in I.
\end{array}\right.\]
Using the fact that $F_A(\varphi_x)=F_A(\tilde{\varphi}_x)\le
H_i(\partial_i \tilde{\varphi})$ at $P_0$, we deduce a contradiction
with \eqref{eq::12} using the  viscosity
inequality for $\varphi$ for some $i \in \{1,\dots,N\}$.
\end{proof}
%---------
The last stability result is concerned with sub-solutions of 
the Hamilton-Jacobi equation away from the junction point and
which satisfy the ``weak continuity'' condition. The following 
proposition asserts that such a ``weak continuity'' is stable
under upper semi-limit. 
%--------
\begin{pro}[Stability of the ``weak continuity'' condition]\label{prop:stab-weak}
Consider a family of Hamiltonians $H^\eps$ satisfying \eqref{assum:H}. We also assume
that the coercivity of the Hamiltonians is uniform in $\eps$. Let $u^\eps$ be a family
of subsolutions of 
\[ u_t + H_i^\varepsilon (u_x)= 0 \quad \text{ in } (0,T) \times J_i^* \]
for all $i=1,\dots, N$ such that, for all $i$,
\begin{equation}\label{eq:wc-eps}
 u^\eps (t,0) = \limsup_{(s,y) \to (t,0), y \in J_i^*} u^\eps (s,y).
\end{equation}
If the upper semi-limit $\bar u = \limsup^* u^\eps$ is everywhere finite,
then it satisfies for all $i$
\[ \bar u (t,0) = \limsup_{(s,y) \to (t,0), y \in J_i^*} \bar u (s,y).\]
\end{pro}
%--------------
\begin{proof}
We argue by contradiction by assuming that there exists $i_0$ and $t^* \in (0,T)$ such that 
\[ \bar u (t^*,0) > \limsup_{(s,y) \to (t_0,0), y \in J_{i_0}^*} \bar
u (s,y).\]

Our goal is first to use a perturbation argument to get a test
function $\Psi (t)$ touching strictly $\bar u$ from above at a time
$t_0$ where the previous inequality still hold true. Using the upper
semi-continuity of $\bar u$, we can keep $\bar u$ away from $\Psi (t)$
in a neighborhood of the point corresponding to the boundary of the
time interval where $\bar u$ and $\Psi$ are strictly separed. From the
definition of $\bar u$, we also get a sequence of points
$(t_\eps,x_\eps)$ realizing the value $\bar u (t_0,0)$. Considering
now $\Psi (t) + px$ for $p$ positive and very large, we use the
sequence $(t_\eps,x_\eps)$ in order to get a contact point of $u^\eps$
with this test-function away from $x=0$. This will lead to the desired
contradiction since $p$ is arbitrarily large. 
\medskip

We now make precise how to use the previous strategy. 
Since $\bar u$ is upper semi-continuous, we know that we can find
$t_0$ arbitrarily close to $t^*$ such that $\bar u(t_0,0)$ is
arbitrarily close to $\bar u (t^*,0)$ and such that there exists a
$C^1$ function $\psi(t)$ (strictly) touching  $\bar u(t,0)$ from above at
$t_0$. In particular, we can ensure 
\begin{equation}\label{eq:contra}
 \bar u (t_0,0) > \limsup_{(s,y) \to (t_0,0), y \in J_{i_0}^*} \bar u (s,y)
\end{equation}
and 
\[ 
\begin{cases}
\bar u (t,0) & < \Psi (t) \quad \text{ for } t \in [t_0-r_0,t_0+r_0] \setminus \{t_0\} \\
\bar u(t_0,0) &= \Psi (t_0) .
\end{cases}
\] 
In particular, since $(\Psi-\bar u)(t_0\pm r_0,0) >0$, 
there exist $\delta_1>0$ and $r_1>0$ such that 
\[ \bar u (t_0\pm r_0,x)+2\delta_1 \le \Psi(t_0\pm r_0) \quad \text{ for } x \in B(x_0,r_1) \subset J.\]
Since $\bar u$ is the upper relaxed-limit of $u^\eps$, this implies in particular that for $\eps$ small
enough, 
\begin{equation}\label{eq:bd1}
 u^\eps (t_0\pm r_0,x) + \delta_1 \le \Psi (t_0\pm r_0) \quad \text{ for } x \in B(x_0,r_1) \subset J.
\end{equation}
We claim that 
\[ \Psi (t_0,0) = \bar u(t_0,0) > \limsup_{\eps \to 0,s \to t_0} u^\eps (s,0).\]
Indeed, if the previous inequality is replaced with an equality, this
would contradict \eqref{eq:wc-eps}. In particular, reducing $r_0$ and
$\delta_0$ if necessary, we can further assume that for $\eps \in ]0,\eps_0[$,
\begin{equation}\label{eq:strict2}
 \forall t \in [t_0-r_0,t_0+r_0] \setminus \{t_0\}, 
 \quad  u^\eps (t,0) + \delta_0 \le \Psi(t_0). 
\end{equation}

Let $(t_\eps,x_\eps) \to (t_0,0)$ be such that 
\[ \bar u (t_0,0) = \lim_{\eps \to 0} u^\eps (t_\eps,x_\eps).\]
By \eqref{eq:strict2}, we know that $x_\eps \neq 0$ for $\eps$ small
enough.  We also know that there exists $j_0$ such that
$x_\eps \in J_{j_0}^*$ for $\eps$ small enough (along a subsequence)
with $j_0 \neq i_0$. Indeed, if $x_\eps \in J_{i_0}^*$ (at least along
a subsequence), then
\[ \bar u (t_0,0) = \lim u^\eps (t_\eps,x_\eps) \le \limsup \bar u (t_\eps,x_\eps)
\le  \limsup_{(s,y) \to (t_0,0), y \in J_{i_0}^*} \bar u (s,y)\]
which is in contradiction with \eqref{eq:contra}.
\bigskip

We now consider $\Psi(t) + px$ with $p>0$ and we consider 
the point $(s^\eps,y^\eps)$ where the maximum of $u^\eps - \Psi (t) - px$ is
reached in $Q_0= [t_0-r_0,t_0+r_0] \times [0,r_1] \subset (0,T) \times J_{j_0}$.  
Remark that for $x=0$ and $(t,x) \in Q_0$, \eqref{eq:strict2} implies that 
\[ u^\eps (t,0) - \Psi (t) \le - \delta_0<0.\]
Analogously, for $t=t_0\pm r_0$ and $(t,x) \in Q_0$, \eqref{eq:bd1} implies 
that 
\[ u^\eps (t_0\pm r_0,x) - \Psi (t_0 \pm r_0) - px \le -\delta_1 <0.\]
Finally, for $x=r_1$ and $(t,x) \in Q_0$, we have for $\eps$ small and some $\delta_2 >0$, 
\[ u^\eps (t,0) - \Psi (t) - p r_1 \le \bar u(t,0) + \delta_2 + \|
\Psi \|_\infty - p r_1.\]
Since $\bar u$ is locally bounded from above (because it is upper
semi-continuous), we conclude that we can choose $p$ large (depending
on $\delta_2 + \|\Psi\|_\infty$ and a local bound of $\bar u$ from
above) such that for $x=r_1$ and $(t,x) \in Q_0$, we have for $\eps$
small and some $\delta_2 >0$,
\[ u^\eps (t,0) - \Psi (t) - p r_1 \le  - \delta_1.\]
Finally, the maximum $M^\eps$ of  $u^\eps - \Psi (t) - px$ in $Q_0$ satisfies 
\[ M^\eps  \le u^\eps (t_\eps,x_\eps) - \Psi (t_\eps) - p x_\eps \to \bar u (t_0,0) - \Psi (t_0) =0.\]
We conclude that $(s^\eps, y^\eps)$ belongs to the interior of $Q_0$ which entails 
\[ \Psi'(s_\eps) + H_{j_0}^\eps (p) \le 0\]
which cannot hold true for $p$ very large because of the uniform coercivity of $H_{j_0}^\eps$. 
The proof is now complete. 
\end{proof}

%%%%%%%%%%%%%%%%%%%%%%%%%%%%%%%%%%%%%%%%%%%%%%%%%%
\subsection{Reducing the set of test functions}
\label{svs}

We show in this subsection, that to check the flux-limited junction
condition, it is sufficient to consider very specific test functions.
This important property is useful both from a theoretical point
of view and from the point of view of applications. 

We consider functions satisfying a Hamilton-Jacobi equation in $J
\setminus \{ 0 \}$, that is to say, solutions of
\begin{equation}\label{eq::gr2}
u_t + H_i(u_x) =0 \quad \mbox{for}\quad (t,x)\in (0,T)\times 
J_i^* 
\end{equation}
for $i=1,\dots,N$. The non-increasing part $H_i^-$ of the Hamiltonian
$H_i$ is used in the definition of flux-limited junction
conditions. In the next theorem, the non-decreasing part $H_i^+$ is needed. 
It is defined by 
\[H_i^+(q)=\begin{cases}
H_i(q) &\quad \mbox{if}\quad q\ge p_i^0,\\
H_i(p_i^0) &\quad \mbox{if}\quad q < p_i^0
\end{cases}\]
where we recall that $p_i^0$ is a point realizing the minimum of $H_i$. 
%------------------------------------------------
\begin{theo}[Reduced set of test functions]\label{th::gr1}
Assume that the Hamiltonians satisfy \eqref{assum:H} and
consider $A \in [A_0,+\infty[$ with $A_0$ given in
    \eqref{eq::A_0}. Given arbitrary solutions $p_i^A\in \R$, $i=1,\dots,N$, of
\begin{equation}\label{eq::gr15}
H_i(p^A_i)=H_i^+(p^A_i)=A,
\end{equation}
let us fix any time independent test function $\phi_0(x)$ satisfying
\[ \partial_i \phi_0 (0)= p_i^A.\] 
Given a function $u:(0,T)\times J\to \R$, the following properties
hold true.
\begin{enumerate}[\upshape i)]
\item \label{thgr1-i} If for all $i=1,\dots,N$, $u$ is an upper
  semi-continuous sub-solution of \eqref{eq::gr2} and satisfies
\begin{equation}\label{eq:weak-cont}
u(t,0) = \limsup_{(s,y) \to (t,0), y \in J_i^*} u(s,y),
\end{equation} 
 then $u$ is a $A_0$-flux limited sub-solution.
\item \label{thgr1-ii} Given $A>A_0$ and $t_0\in (0,T)$, if for all
  $i=1,\dots,N$, $u$ is an upper semi-continuous sub-solution of
  \eqref{eq::gr2} and satisfies \eqref{eq:weak-cont} and for any test
  function $\varphi$ touching $u$ from above at $(t_0,0)$ with
\begin{equation}\label{eq::gr4}
\varphi (t,x) = \psi (t) + \phi_0 (x)
\end{equation}
for some $\psi \in C^1(0;+\infty)$, we have
\[\varphi_t + F_A(\varphi_x) \le 0 \quad \mbox{at}\quad (t_0,0),\]
  then $u$ is a  $A$-flux-limited sub-solution at $(t_0,0)$.
\item \label{thgr1-iii} Given $t_0 \in (0,T)$, if $u$ is lower
  semi-continuous super-solution of \eqref{eq::gr2} and if for any
  test function $\varphi$ touching $u$ from below at $(t_0,0)$
  satisfying \eqref{eq::gr4}, we have
  \begin{equation}\label{eq::rtrac-2}
\varphi_t + F_A(\varphi_x) \ge 0 \quad \mbox{at}\quad (t_0,0),
  \end{equation}
 then $u$ is a $A$-flux-limited super-solution at $(t_0,0)$.
\end{enumerate}
\end{theo}
%-------------------------------------------------------------------------------------------------
\begin{rem}
  Theorem~\ref{th::gr1} exhibits (necessary and) sufficient conditions
  for sub- and super-solutions of \eqref{eq::gr2} to be flux-limited
  solutions.  After proving Theorem \ref{th::gr1}, we realized that
  this result shares some similarities with the way of checking the
  entropy condition at the junction for conservation law equations
  associated to bell-shaped fluxes.  Indeed it is known that it is
  sufficient to check the entropy condition only with one particular
  stationary solution of the Riemann solver (see \cite{BKT,akr,AC}).
\end{rem}
\begin{counterex}
  The set of test functions can be reduced to a single one for
  flux-limited sub-solution only if the ``weak continuity''
  condition~\eqref{eq:weak-cont} is imposed. Indeed, if this condition
  is not satisfied, then the conclusion is false. Consider for instance
Hamiltonians reaching their minimum at $p_i^0=0$ and such that $A_0=0$
and consider $A \ge A_0 =0$ such that $A T<1$ and 
consider
\[ u (t,x) = \begin{cases} 1 -A t & \text{ for } (t,x) \in (-T,T) \times \{ 0 \} \\
  0 & \text{ elsewhere.} \end{cases} \]
We remark that $u$ does not satisfy \eqref{eq:weak-cont} but it
trivially satisfies \eqref{eq::gr2}.  Now consider $p_i^\eps \le 0$
such that $H_i (p_i^\eps) = \eps^{-1}$; the test function defined as 
\[ \phi (t,x) = 1 -A t + p_i^\eps x \quad \text{ for } \quad x \in J_i \]
touches $u$ from above at any $(t,0)$ and if $u$ were a $A$-flux-limited
solution, we would get 
\[ -A + A \vee \eps^{-1} \le 0 \]
which is false for $\eps$ small enough. If now $u$ is touched from above 
by a test function $\psi (t) + \phi_0(x)$ at $(t,0)$, then $\psi'(t) = -A$ so that
\[ \psi'(t) + A \le 0.\]
\end{counterex}
%---------
In order to prove this result, the two following technical lemmas are needed.
%-------------------------------
\begin{lem}[Super-solution property for the critical slope on each branch]\label{lem:ip0-gr}
Let $u:(0,T)\times J_i \to \R$ be a lower super-solution of \eqref{eq::gr2} for some $i =1,\dots,N$.
Let $\phi$ be a test function touching $u$ from below at  some point $(t_0,0)$ with  $t_0\in (0,T)$. 
Consider the following \emph{critical slope}
\[ \bar p_{i} = \sup \{  \bar p \in \R: \exists r>0, \, \phi (t,x) + \bar  p x  \le u (t,x) \text{ for } (t,x)
\in  (t_0-r,t_0+r) \times [0,r) \mbox{ with } x\in J_i \}. \]
If $\bar p_i < +\infty$, then we have 
\begin{equation}\label{i-ineq+-gr} 
\phi_t+ H_{i} (\partial_i \phi+\bar p_i)  \ge 0 \quad \mbox{at}\quad  (t_0,0)\quad \mbox{with}\quad \bar p_{i}\ge 0.
\end{equation}
\end{lem}
%-------------------------------
\begin{proof}
From the definition of $\bar p_{i}$, we know that, for all $\eps>0$ small enough, there exists $\delta = \delta (\eps) \in (0,\eps)$ such that
\[ u(s,y) \ge \phi (s,y) + (\bar p_{i} -\eps) y  \quad
\text{ for all } (s,y) \in (t-\delta,t+\delta) \times [0,\delta)\mbox{ with }y\in J_{i}\]
and there exists $(t_\eps,x_\eps) \in B_{\delta/2} (t,0)$ such that 
\[ u(t_\eps,x_\eps) < \phi (t_\eps,x_\eps) + (\bar p_{i} +\eps) x_\eps.\]
Now consider a smooth function $\Psi:\R^2 \to [-1,0]$ such that 
\[ \Psi \equiv \begin{cases} 0 & \text{ in } B_{\frac12}(0), \\ 
-1 & \text{ outside  } B_1(0)\end{cases} \]
and define 
\[ \Phi (s,y) = \phi (s,y) + 2 \eps \Psi_\delta (s,y) + \begin{cases} 
(\bar p_{i} + \eps) y  & \text{ if } y \in
  J_{i} \\
0 & \text{ if not}
\end{cases} \]
with $\Psi_\delta (s,y) = \delta \Psi (s/\delta,y/\delta)$. We have 
\[ \Phi (s,y) \le \phi (s,y) \le u(s,y) \quad \text{ for } (s,y) \in
B_\delta (t,0) \text{ and } y \notin J_{i} \]
and 
\[\left\{\begin{array}{ll}
\Phi (s,y) = \phi (s,y) - 2 \eps \delta + (\bar p_{i}+\eps)y \le u (s,y) 
&\quad \text{ for } (s,y) \in \left(\partial  B_\delta
(t,0)\right)\cap \left(\R \times J_{i}\right),\medskip \\
\Phi (s,0) \le  \phi (s,0) \le u (s,0) &\quad \text{ for } s\in (t-\delta,t+\delta)
\end{array}\right.\]
and 
\[ \Phi (t_\eps,x_\eps) = \phi (t_\eps,x_\eps) + (\bar p_{i}+\eps)x_\eps > u (t_\eps,x_\eps).\]
We conclude that there exists a point $(\bar t_\varepsilon,\bar
x_\varepsilon) \in B_\delta (t,0) \cap \left(\R \times
J_{i}^*\right)$ such that $u - \Phi$ reaches a minimum in
$\overline{B_\delta(t,0)}\cap \left(\R \times
J_i\right)$. Consequently,
\[ \Phi_t (\bar t_\eps,\bar x_\eps) +  H_{i} (\partial_{i} \Phi
(\bar t_\eps,\bar x_\eps))
 \ge 0\]
which implies 
\[ \phi_t (\bar t_\eps,\bar x_\eps) + 2\eps (\Psi_\delta)_t (\bar t_\eps,\bar x_\eps) +  H_{i} (\partial_{i} \phi
(\bar t_\eps,\bar x_\eps) + 2 \varepsilon \partial_y (\Psi_\delta) (\bar t_\eps,\bar x_\eps) + \bar p_{i}+\eps) \ge 0.\]
Letting $\eps$ go to $0$ yields \eqref{i-ineq+-gr}.
This ends the proof of the lemma.
\end{proof}
%-------------------------------
\begin{lem}[Sub-solution property  for the critical slope on each branch]\label{lem:ip0-2-gr}
Let  $u:(0,T)\times J_i \to \R$ be a  sub-solution of \eqref{eq::gr2} for some $i=1,\dots,N$.
Let $\phi$ be a test function touching $u$ from above at  some point $(t_0,0)$ with  $t_0\in (0,T)$. 
Consider the following \emph{critical slope},
\[ \bar p_{i} = \inf \{  \bar p \in \R: \exists r>0,\;  \phi (t,x) + \bar  p x  \ge u (t,x) \text{ for } (t,x)
\in  (t_0-r,t_0+r) \times [0,r) \mbox{ with } x\in J_i \}. \]
If $u$ satisfies \eqref{eq:weak-cont} then $-\infty < \bar p_i \le 0$ and  
\begin{equation}\label{i-ineq+2-gr} 
\phi_t+ H_{i} (\partial_i \phi+\bar p_i)  \le 0 \quad \mbox{at}\quad  (t_0,0).
\end{equation}
\end{lem}
%-------------------------------
\begin{proof}
We only prove that $\bar p_i > -\infty$ since this is the only main difference with the proof of the previous lemma. 

Assume that $\overline{p}_i = -\infty$.  This implies that there
exists $p_n \to - \infty$ and $r_n >0$ such that $\phi + p_n x \ge u$
in $B_n = (t_0-r_n,t_0+r_n) \times [0,r_n) \subset \R \times J_i$.
Remark first that, replacing $\phi$ with $\phi+ (t-t_0)^2+x^2$ if
necessary, we can assume that 
\begin{equation}\label{eq:strict} 
u(t,x) < \phi(t,x) + p_n x \text{ if } (t,x) \neq (t_0,0).
\end{equation}
 In particular, there exits $\delta_n >0$ such
that $\phi +p_n x \ge u + \delta_n$ on on
$\partial B_n \setminus \{ (t_0,0) \}$,
where we recall that by definition of $\partial B_n$ (inside $J_T$) does not contain $(t_0-r_0,t_0+r_0)\times \left\{0\right\}$.
Since $u$ satisfies \eqref{eq:weak-cont}, there exists $(t_\eps,x_\eps) \to (t_0,0)$ such 
that $x_\eps \in J_i^*$ and $u(t_0,0) = \lim_{\eps \to 0} u (t_\eps,x_\eps)$. 

We now introduce the following perturbed test function 
\[ \Psi (t,x) = \phi (t,x) + p_n x + \frac{\eta}x \]
where $\eta=\eta (\eps)$ is a small parameter to be chosen later. 
Let $(s_\eps,y_\eps)$ realizing the infimum of $\Psi-u$ in $B_n$. In particular,
\begin{equation}\label{estim:pen}
(\phi +p_n (\cdot) -u)(s_\eps,y_\eps) \le  \Psi (s_\eps,y_\eps) - u(s_\eps,y_\eps) 
\le  \Psi (t_\eps,x_\eps) - u(t_\eps,x_\eps) \to 0 \quad \text{ as } \quad \eps \to 0
\end{equation}
as soon as $\eta(\eps) = o (x_\eps)$. In particular, in view of \eqref{eq:strict}, this implies 
that $(s_\eps,y_\eps) \to (t_0,0)$ as $\eps \to 0$. Since $u$ is a subsolution of \eqref{eq::gr2}, 
we know that 
\[ \phi_t (s_\eps,y_\eps) + H_i \left(\phi_x (s_\eps,y_\eps) + p_n -
\frac{\eta}{y_\eps^2} \right) \le 0.
\]
Hence we can pass to the limit as $\eps \to 0$ in the viscosity inequality and get 
\[ 
\phi_t (t_0,0) + H_i (\phi_x (t_0,0) + p_n^0) \le 0
\]
where $p_n^0 = \liminf_{\eps \to 0} p_n - \frac{\eta}{y_\eps^2} \in [-\infty,0]$. The 
previous inequality implies in particular that $p_n^0 > - \infty$ and $p_n^0$ is bounded 
from below by a constant $C$ which only depends on $H_i$ and $\phi_t,\phi_x$ at $(t_0,0)$. 
But this also implies that $p_n \ge C$ which is the desired contradiction. The proof of the
finiteness of $\overline{p}_i$ is now complete. 
\end{proof}
%--------------------

We are now ready to make the proof of Theorem \ref{th::gr1}.
%-----------------------------------------------------
\begin{proof}[Proof of Theorem \ref{th::gr1}]
We first prove the results concerning sub-solutions and then turn to
super-solutions. 

\paragraph{Sub-solutions.}
Let $u$ be a sub-solution of \eqref{eq::gr2}.  Let $\phi$ be a test
function touching $u$ from above at $(t_0,0)$.  Let
$\phi_t(t_0,0)=-\lambda$. We want to show
\begin{equation}\label{eq::gr10}
F_A(\phi_x)\le \lambda \quad \mbox{at}\quad (t_0,0).
\end{equation}
Notice that by Lemma~\ref{lem:ip0-2-gr}, for all $i=1,\dots,N$, there
exists $\bar p_i\le 0$ such that
\begin{equation}\label{eq::gr11}
H_i(\partial_i \phi  +\bar p_i)\le \lambda \quad \mbox{at}\quad (t_0,0).
\end{equation}
In particular, we deduce that
\begin{equation}\label{eq::gr12}
A_0\le \lambda.
\end{equation}
Inequality (\ref{eq::gr11}) also implies that at  $(t_0,0)$
\begin{align*}
F_A(\phi_x) & = \max(A,\displaystyle \max_{i=1,\dots,N} H_i^-(\partial_i \phi))\\
& \le \max(A, \displaystyle\max_{i=1,\dots,N} H_i^-(\partial_i \phi +\bar p_i))\\
& \le  \max(A, \displaystyle \max_{i=1,\dots,N} H_i(\partial_i \phi +\bar p_i))\\
& \le \max (A,\lambda).
\end{align*}
In particular for $A=A_0$, this implies the desired inequality
\eqref{eq::gr10}.  Assume now that \eqref{eq::gr10} does not hold
true. Then we have
\[A_0\le \lambda< A.\]
Then \eqref{eq::gr11} implies that 
\[\partial_i \phi(t_0,0)  +\bar p_i < p^A_i=\partial_i\phi_0(0).\]
Let us consider the modified test function
\[\varphi(t,x)= \phi (t,0) +\phi_0(x) \quad \mbox{for}\quad x\in J\]
which is still a test function touching $u$ from above at $(t_0,0)$
(in a small neighborhood).  This test function $\varphi$ satisfies in
particular \eqref{eq::gr4}.  Because $A>A_0$, we then conclude that
\[\varphi_t + F_A(\varphi_x) \le 0 \quad \mbox{at}\quad (t_0,0)\]
\textit{i.e.}
\[-\lambda + A\le 0\]
which gives a contradiction. Therefore (\ref{eq::gr10}) holds true.

%-----------------------------
\paragraph{Super-solutions.}
Let $u$ be a super-solution of \eqref{eq::gr2}.  Let $\phi$ be a test
function touching $u$ from below at $(t_0,0)$.  Let
$\phi_t(t_0,0)=-\lambda$. We want to show
\begin{equation}\label{eq::gr20}
F_A(\phi_x)\ge \lambda \quad \mbox{at}\quad (t_0,0).
\end{equation}
Notice that by Lemma~\ref{lem:ip0-gr}, there exists $\bar p_i\ge 0$ for $i=1,\dots,N$ such that
\begin{equation}\label{eq::gr21}
H_i(\partial_i \phi  +\bar p_i)\ge \lambda \quad \mbox{at}\quad (t_0,0).
\end{equation}
Note that \eqref{eq::gr20} holds true if $\lambda\le A$ or if there
exists one index $i$ such that $H^-_i(\partial_i \phi +\bar
p_i)=H_i(\partial_i \phi +\bar p_i)$.  Assume by contradiction that
\eqref{eq::gr20} does not hold true. Then we have in particular
\begin{equation}\label{eq::gr22}
A_0\le A < \lambda \le  H^+_i(\partial_i \phi  +\bar p_i) \quad \mbox{at}\quad (t_0,0),\quad \mbox{for}\quad i=1,\dots,N.
\end{equation}
From the fact that $H^-_i(\partial_i \phi +\bar
p_i)<H_i(\partial_i \phi +\bar p_i)$ for all index $i$, we deduce in particular that
$$\partial_i \phi(t_0,0)  +\bar p_i >p^A_i=\partial_i\phi_0(0).$$
We then introduce the modified test function
\[\varphi(t,x)=\phi(t_0,0) +\phi_0(x) \quad \mbox{for}\quad x\in J\]
which is a test function touching $u$ from below at $(t_0,0)$ (this is
a test function below $u$ in a small neighborhood of $(t_0,0)$).  This
test function $\varphi$ satisfies in particular (\ref{eq::gr4}).  We
then conclude that
\[\varphi_t + F_A(\varphi_x) \ge 0 \quad \mbox{at}\quad (t_0,0)\]
\textit{i.e.}
\[-\lambda + A\ge 0\]
which gives a contradiction. Therefore \eqref{eq::gr20} holds true.
This ends the proof of the theorem.
\end{proof}

\subsection{An additional characterization of flux-limited sub-solutions}

As an application of Theorem \ref{th::gr1}, we give an equivalent
characterization of sub-solutions in terms of the properties of its
trace at the junction point $x=0$.
%-------------------
\begin{theo}[Equivalent characterization of flux-limited sub-solutions] \label{th::gr6}
Assume that the Hamiltonians $H_i$ satisfy \eqref{assum:H}.  Let
$u:(0,T)\times J\to \R$ be an upper semi-continuous sub-solution of
\eqref{eq::gr2}.  If $u$ is a $A$-flux-limited sub-solution then for
any function $\psi \in C^1(0,T)$ such that $\psi$ touches $u(\cdot,0)$
from above at $t_0\in (0,T)$, we have
\begin{equation}\label{eq::gr26}
\psi_t + A \le 0 \quad \mbox{at}\quad t_0.
\end{equation}
Conversely, if \eqref{eq::gr26} holds true for any $\psi$ as above and 
if $u$ satisfies for all $i$,
\[ u (t,0 ) = \limsup_{(s,y) \to (t,0), y \in J_i^*} u(s,y), \]
then $u$ is a $A$-flux-limited sub-solution. 
\end{theo}
%-------------------
\begin{proof}[Proof of Theorem \ref{th::gr6}]
We successively prove that the condition is necessary and sufficient. 

\paragraph{Necessary condition.}
Let $\psi\in C^1(0,T)$ touching $u(\cdot,0)$ from above at $(t_0,0)$ with $t_0\in (0,T)$. 
As usual, we can assume without loss of generality that the contact point is strict.
Let $\varepsilon>0$ small enough in order to satisfy
\begin{equation}\label{eq::gr25}
\frac{1}\varepsilon >p^A_i 
\end{equation}
where $p^A_i$ is chosen as in \eqref{eq::gr15}. Let
\[\phi(t,x)=\psi(t) +\frac{x}{\varepsilon} \quad \mbox{for}\quad x\in J_i \quad \mbox{for}\quad i=1,\dots,N.\]
For $r>0,\delta>0$, let
\[\Omega:=\left(t_0-r,t_0+r\right)\times B_\delta(0)\]
where $B_\delta(0)$ is the ball in $J$ centered at $0$ and of radius $\delta$.
From the upper semi-continuity of $u$, we can choose $r,\delta$  small
enough, and then $\varepsilon$  small enough, so that
\[\sup_\Omega (u-\phi) > \sup_{\partial \Omega} (u-\phi).\]
Therefore there exists a point $P_\varepsilon=(t_\varepsilon,x_\varepsilon)\in \Omega$ such that we have
\[\sup_\Omega (u-\phi) = (u-\phi)(P_\varepsilon).\]
If $x_\varepsilon\in J_i^*$, then we have
\[\phi_t + H_i(\partial_i\phi) \le 0 \quad \mbox{at}\quad P_\varepsilon\]
\textit{i.e.}
\[\psi'(t_\varepsilon) + H_i(\varepsilon^{-1})\le 0.\]
This is impossible for $\varepsilon$ small enough, because of the coercivity of $H_i$.
Therefore we have $x_\varepsilon=0$, and get
\[\phi_t + F_A(\phi_x) \le 0 \quad \mbox{at}\quad P_\varepsilon.\]
Because of \eqref{eq::gr25}, we deduce that $F_A(\phi_x) =A$ and then
\[\psi'(t_\varepsilon) +A \le 0 \quad \mbox{with}\quad t_\varepsilon\in (t_0-r,t_0+r).\]
In the limit $r\to 0$, we get the desired inequality \eqref{eq::gr26}.

%----------------------------------------------------------
\paragraph{Sufficient condition.}
Let $\phi(t,x)$ be a test function touching $u$ from above at
$(t_0,0)$ for some $t_0\in (0,T)$. From Theorem \ref{th::gr1}, we know
that we can assume that $\phi$ satisfies \eqref{eq::gr4}.  Then
$\phi(t,0)$ touches $u(t,0)$ from above at $t_0$. Therefore we have by
assumption
\[\phi_t(t_0,0) +A \le 0.\]
Because of \eqref{eq::gr4}, we get the desired inequality
\[\phi_t +F_A(\phi_x) \le 0 \quad \mbox{at}\quad (t_0,0).\]
This ends the proof of the theorem.
\end{proof}

%%%%%%%%%%%%%%%%%%%%%%%%%%%%%%%%%%%%%%%%%%%%%%%%%%
\subsection{General junction conditions reduce to flux-limited ones}

%--------------------------------------------------------------------------------
\begin{pro}[General junction conditions reduce to flux-limited ones]
\label{pro:fa-gen-sup}
Let the Hamiltonians satisfy \eqref{assum:H} and $F$ satisfy
\eqref{assum:F}. There exists $A_F \in \R$ such that
\begin{itemize}
\item any relaxed
super-solution of \eqref{eq:hj-f} is an $A_F$-flux-limited super-solution
 and any relaxed sub-solution of
\eqref{eq:hj-f} such that for all $i=1,\dots,N$, 
\[ u(t,0) = \limsup_{(s,y) \to (t,0), y \in J_i^*} u(s,y) \]
is a $A_F$-flux-limited sub-solution;
\item any $A_F$-flux-limited sub-solution (resp. super-solution) is a
  relaxed sub-solution (resp. super-solution) of \eqref{eq:hj-f}.
\end{itemize}
\end{pro}
%--------------------------------------------------------------------------------
\begin{counterex}
  If the ``weak continuity'' condition does not hold, then the
  conclusion of the proposition is false. Indeed, consider $N=1$ and
  $H_1(p)= |p|$ and $F \equiv 0$. In this case $A_0 = 0$ and
  $A_F = 0$.  Then the function
  \[ u (t,x) = \begin{cases} 1 & \text{ if } x =0, \\ 0 & \text{ if }
    x >0 \end{cases}\]
  is a relaxed solution of \eqref{eq:hj-f} but it does not satisfy the
  ``weak continuity'' condition. Moreover, it is not a
  $0$-flux-limited sub-solution: indeed, $\phi (t,x)= 1 + p_i x$ for
  $x \in J_i$ touches $u$ from above and $\phi_t + F_A (\phi_x) = F_A (p)$ 
which is not necessarily non-positive since $p$ can be chosen arbitrarily.
\end{counterex}
The flux limiter $A_F$ is given by the following lemma. 
%----------------------------
\begin{lem}[Definitions of $A_F$ and $\bar p$]\label{lem::gr51}
Let $\bar p^0=(\bar p_1^0,\dots,\bar p_N^0)$ with $\bar p_i^0\ge p_i^0$ be
the minimal real number such that $H_i(\bar p_i^0)=A_0$ with $A_0$
given in (\ref{eq::A_0}).
\begin{description}
\item If $F(\bar p^0)\ge A_0$, then there exists a unique $A_F \in \R$
  such that there exists $\bar p=(\bar p_1,\dots,\bar p_N)$ with $\bar
  p_i\ge \bar p_i^0\ge p_i^0$ such that
\[H_i(\bar p_i)=H_i^+(\bar p_i)=A_F=F(\bar p).\]
\item If $F(\bar p^0)< A_0$, we set $A_F=A_0$ and $\bar p=\bar p^0$.
\end{description}
In particular, we have
\begin{eqnarray}\label{eq::pr9}
 \{ \forall i :  p_i\ge \bar p_i \} & \Rightarrow F(p) \le A_F, \\
\label{eq::pr10}
 \{ \forall i : p_i \le \bar p_i\} & \Rightarrow F(p) \ge A_F.
\end{eqnarray} 
\end{lem}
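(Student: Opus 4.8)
The plan is to reduce the statement to an elementary monotone intermediate value argument. For $a\ge A_0$ and $i=1,\dots,N$, let $q_i^-(a)$ (resp.\ $q_i^+(a)$) be the smallest (resp.\ largest) real number $p\ge p_i^0$ with $H_i(p)=a$. By \eqref{eq::3bis}, $H_i$ is continuous, coercive, nondecreasing on $[p_i^0,+\infty)$, and $H_i(p_i^0)=\min_{\R}H_i\le A_0\le a$, so $\{p\ge p_i^0:H_i(p)=a\}$ is a nonempty compact interval $[q_i^-(a),q_i^+(a)]$ on which $H_i\equiv a$. Moreover $q_i^-(A_0)=\bar p_i^0$, the map $q_i^-$ is nondecreasing and left-continuous, $q_i^+$ is nondecreasing and right-continuous, $q_i^+(a)=\lim_{b\downarrow a}q_i^-(b)$, and $q_i^-(a)<q_i^+(a)$ only where $H_i$ is locally constant at the level $a$.

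First I would prove the existence of $A_F$ in the main case $F(\bar p^0)\ge A_0$. Write $q^-(a)=(q_1^-(a),\dots,q_N^-(a))$ and set $g(a)=a-F(q^-(a))$ for $a\ge A_0$. Then $g$ is nondecreasing (it is $a$ minus the non-increasing function $a\mapsto F(q^-(a))$), left-continuous, satisfies $g(A_0)=A_0-F(\bar p^0)\le 0$, and $g(a)\ge a-F(\bar p^0)\to+\infty$; so $A_F:=\sup\{a\ge A_0:g(a)\le 0\}$ is finite. By left-continuity $g(A_F)\le 0$, i.e.\ $F(q^-(A_F))\ge A_F$; and since $g(a)>0$ for every $a>A_F$, letting $a\downarrow A_F$ and using $\lim_{a\downarrow A_F}q^-(a)=q^+(A_F)$ together with the continuity of $F$ gives $F(q^+(A_F))\le A_F$. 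On the connected box $K=\prod_{i=1}^N[q_i^-(A_F),q_i^+(A_F)]$, every $H_i$ is constant equal to $A_F$, while $F$ is continuous and non-increasing with $\max_KF=F(q^-(A_F))\ge A_F\ge F(q^+(A_F))=\min_KF$; hence the intermediate value theorem provides $\bar p\in K$ with $F(\bar p)=A_F$. Then $\bar p_i\ge q_i^-(A_F)\ge\bar p_i^0\ge p_i^0$ and $H_i(\bar p_i)=A_F=F(\bar p)$, as wanted. (Conceptually, this is an intermediate value argument for $F(p)-H_1(p_1)$ along the connected set $\Gamma=\{p\ge\bar p^0:H_1(p_1)=\cdots=H_N(p_N)\}$ — whose level slices are exactly the boxes above — from $\bar p^0$, where the quantity equals $F(\bar p^0)-A_0\ge 0$, to a point far out on $\Gamma$, where it is negative by coercivity.)

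Uniqueness of $A_F$ is purely order-theoretic: if $A_F<A_F'$ were both admissible, with associated $\bar p$ and $\bar p'$, then $H_i(\bar p_i)=A_F<A_F'=H_i(\bar p_i')$ and the monotonicity of $H_i$ on $[p_i^0,+\infty)$ (using $\bar p_i,\bar p_i'\ge p_i^0$) force $\bar p_i<\bar p_i'$ for all $i$, whence $F(\bar p)\ge F(\bar p')$, i.e.\ $A_F\ge A_F'$, a contradiction. When $F(\bar p^0)<A_0$ one simply sets $A_F:=A_0$ and $\bar p:=\bar p^0$, for which $H_i(\bar p_i^0)=A_0=A_F$ by definition of $\bar p_i^0$. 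Finally \eqref{eq::pr9} and \eqref{eq::pr10} follow at once from the monotonicity of $F$: if $p_i\ge\bar p_i$ for all $i$ then $F(p)\le F(\bar p)\le A_F$, and, in the case $F(\bar p^0)\ge A_0$, if $p_i\le\bar p_i$ for all $i$ then $F(p)\ge F(\bar p)=A_F$.

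The step I expect to be the main obstacle is handling the possible non-injectivity of the $H_i$ on their increasing branches: where some $H_i$ is locally constant, its ``right inverse'' $q_i^-$ jumps, so the equal-value configurations are only one-sidedly continuous in the level $a$ and the scalar equation ``$F(q^-(a))=a$'' need not have a solution. This is exactly why the construction is split into two steps — first locate the level $A_F$ via the one-sided limits of $q_i^\pm$ (equivalently, via connectedness of $\Gamma$), and only then extract a witness $\bar p$ by a further intermediate value argument inside the flat box $K$, on which all the $H_i$ coincide with $A_F$. The remaining verifications use only \eqref{eq::3bis} and \eqref{assum:F}.
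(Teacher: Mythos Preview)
The paper does not give a proof of this lemma; it is stated without justification and then used in the proof of Proposition~\ref{pro:fa-gen-sup}. Your argument is correct and considerably more detailed than anything the paper provides. The careful handling of the possible flats of the $H_i$ via the two one-sided inverses $q_i^\pm$, together with the two-step intermediate value argument (first locating the level $A_F$ through the one-sided limits, then extracting $\bar p$ inside the box $K$ on which all $H_i$ are constant), is exactly the right way to make the construction rigorous under the mere quasi-convexity assumption~\eqref{eq::3bis}.

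One remark: you are right to restrict your verification of \eqref{eq::pr10} to the case $F(\bar p^0)\ge A_0$. In the second case it is actually false as written (take $p=\bar p^0$: then $F(\bar p^0)<A_0=A_F$). This is a harmless imprecision in the paper's statement rather than a gap in your proof --- in the application to Proposition~\ref{pro:fa-gen-sup} only the facts $H_i(\bar p_i)=A_F$ and $F(\bar p)\le A_F$ are used, and both hold in either case.
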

%----------------------------
\begin{proof}[Proof of Proposition~\ref{pro:fa-gen-sup}]
  Let $A$ denote $A_F$. We first prove that relaxed super-solutions
  are flux-limited solutions. We only do the proof for super-solutions
  since it is very similar for sub-solutions.

Without loss of generality, we assume that $u$ is lower
semi-continuous.  Consider a test function $\phi$ touching $u$ from
below at $(t,x) \in (0,+\infty) \times J$,
\[ \phi \le u \text{ in } B_R(t,x) \quad \text{ and } \quad \phi (t,x)= u(t,x)\] 
for some $R>0$. If $x \neq 0$, there is
nothing to prove. We therefore assume that $x=0$. In particular, we
have
\begin{equation}\label{ineq}
 \phi_t (t,0) + \max (F(\phi_x (t,0)), \max_i H_i (\partial_i \phi (t,0)))
\ge 0.
\end{equation}
By Theorem \ref{th::gr1}, we can assume that the test function
satisfies
\begin{equation}\label{eq::gr50}
\partial_i \phi (t,0) = \bar p_i 
\end{equation}
where $\bar p_i$ is given in Lemma \ref{lem::gr51}. 
We now want to prove that
\[ \phi_t (t,0) + A \ge 0.\]
This follows immediately from \eqref{ineq}, \eqref{eq::gr50} and the
definition of $\bar p_i$ in Lemma~\ref{lem::gr51}.  \medskip

We now prove that flux-limited sub-solutions are relaxed sub-solutions. 
Once again, we only do the proof for sub-solutions since it is very similar
for super-solutions. Consider a test function $\phi$ touching $u$ from above
at $(t,0)$. Then 
\[ A_F \vee \max_i H_i^-(p_i) \le \lambda \]
with $p_i = \partial_i \phi (t,0)$ and $\lambda = - \phi_t (t,0)$. 
We distinguish three cases. 

Assume first that for all $i$, $p_i \ge \pi_i^+ (A_F)$. 
Then $F(p) \le F(\pi^+(A_F)) \le A_F \le \lambda.$

If there exists $i_0$ such that $p_{i_0} < \pi_{i_0}^+(A_F)$ and $H_{i_0} (p_{i_0}) \le A_F$,
we have $H_{i_0} (p_{i_0}) \le \lambda$. 

If there exists $i_0$ such that $p_{i_0} < \pi_{i_0}^+(A_F)$ and $H_{i_0} (p_{i_0}) > A_F$,
then we have $H_{i_0} (p_{i_0}) = H_{i_0}^- (p_{i_0}) \le \lambda$.
The proof is now complete.
\end{proof}

%%%%%%%%%%%%%%%%%%%%%%%%%%%%%%%%%%%%%%%%%%%%%%%%%%%%%%%%%%%%%%%%%%%%%%%%%%
\subsection{Existence of solutions}

%----------------------------------------------------------------------------
\begin{theo}[Existence]\label{th::3}
  Let $T>0$ and $J$ be the junction defined in \eqref{eq::J}.  Assume
  that Hamiltonians satisfy \eqref{assum:H}, that the junction
  function $F$ satisfies \eqref{assum:F} and that the initial datum
  $u_0$ is uniformly continuous. Then there exists a relaxed viscosity
  solution $u$ of \eqref{eq:hj-f}-\eqref{eq::2} in $[0,T)\times J$ and
  a constant $C_T>0$ such that
\[|u(t,x)-u_0(x)|\le C_T \quad \text{for all}\quad (t,x)\in [0,T)\times J.\]
\end{theo}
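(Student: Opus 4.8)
The plan is to build the solution by Perron's method, combining barriers for the time-oscillation estimate with the stability result (Proposition~\ref{pro::2}) and the comparison principle. \emph{First}, I would construct global sub- and super-solutions $u^\pm$ of \eqref{eq:hj-f}--\eqref{eq::2} of the form $u^\pm(t,x) = u_0^\pm(x) \pm C_T t$, where $u_0^\pm$ are suitable uniformly continuous modifications of $u_0$. Since $u_0$ is only uniformly continuous (not Lipschitz), the usual trick is to approximate it: for $\eta>0$ introduce the sup-/inf-convolutions $u_0^{\eta,\pm}(x) = \sup_y (u_0(y) \mp \frac{1}{\eta}d(x,y))$ and $\inf_y(u_0(y)\pm\frac1\eta d(x,y))$, which are Lipschitz (with constant $1/\eta$) on $J$, converge uniformly to $u_0$ as $\eta\to 0$, and satisfy $|u_0^{\eta,\pm}-u_0|\le\omega(\eta)$ where $\omega$ is the modulus of continuity of $u_0$. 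For such Lipschitz data, the coercivity of the $H_i$ (and the structure of $F_A$, or of $F$) gives a finite bound $C_T^\eta = \sup_i \sup_{|p|\le 1/\eta}|H_i(p)| \vee |F(\cdot)|$-type constant so that $u_0^{\eta,\pm}(x) \pm C_T^\eta t$ are a super- and a sub-solution, respectively, that sandwich the data up to $\omega(\eta)$.

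\emph{Second}, having these barriers, I would run the standard Perron construction: set
\[ u = \sup\{\, w : w \text{ is a relaxed sub-solution of } \eqref{eq:hj-f}, \ u^- - \omega(\eta) \le w \le u^+ + \omega(\eta) \,\}, \]
and use Proposition~\ref{pro::2} (stability by supremum) to conclude that the upper envelope $u^*$ is a relaxed sub-solution. The usual bump-construction argument then shows $u_*$ is a relaxed super-solution: if it failed the super-solution property at some interior point, one could strictly increase $u$ near that point while staying below the barriers, contradicting maximality. The one point requiring the junction structure is the bump argument at $x=0$, but the relaxed definition (Definition~\ref{defi:relaxed}) is tailored precisely so that the standard Perron argument goes through at the vertex as well. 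The initial condition $u^*(0,\cdot)\le u_0$ and $u_*(0,\cdot)\ge u_0$ is inherited from the barriers up to $\omega(\eta)$, and then cleaned up by letting $\eta\to 0$ after invoking comparison.

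\emph{Third}, to upgrade to an actual solution and to pin down the bound, I would invoke Theorem~\ref{th::2} (comparison principle): the relaxed sub-solution $u^*$ and relaxed super-solution $u_*$ both satisfy the linear growth condition \eqref{eq::27} (they are trapped between the barriers, which have linear growth in $d(0,x)$ because $u_0$ is uniformly continuous hence of at most linear growth), and both have initial trace within $\omega(\eta)$ of $u_0$; comparison then yields $u^* \le u_* + 2\omega(\eta)$, hence, letting $\eta\to 0$, $u^*\le u_*$, so $u = u^* = u_*$ is continuous and is a relaxed viscosity solution. Finally the bound $|u(t,x)-u_0(x)|\le C_T$ is read off from the barriers: $u_0^-(x) - C_T t \le u(t,x) \le u_0^+(x) + C_T t$ with $C_T = C_T^\eta$ appropriately chosen and $u_0^\pm$ close to $u_0$; a bit of care with the order of limits gives a $C_T$ independent of $\eta$ on $[0,T)$ by taking $C_T = \sup_i \sup_{|p|\le L}|H_i(p)|$ for $L$ the Lipschitz constant of a fixed Lipschitz approximation, plus the approximation error.

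\emph{The main obstacle} I expect is the non-Lipschitz initial datum: constructing barriers that are genuine sub-/super-solutions at the junction point $x=0$ requires checking the junction condition (or the flux-limited condition $F_A$) for the spatial profile $u_0^{\eta,\pm}$, and the sup-/inf-convolutions need not be differentiable at $x=0$, so one must argue in the viscosity sense there — using that $F$ is nonincreasing in all variables (assumption~\eqref{assum:F}), a function like $x\mapsto -\tfrac1\eta d(0,x)$ (concave kink pointing down at $0$) is automatically a super-solution of the junction condition, and its positive counterpart a sub-solution. Handling this kink carefully, together with keeping the constant $C_T$ uniform as $\eta\to 0$, is the technical heart; everything else (Perron, stability, comparison) is by now standard machinery available from the results stated above.
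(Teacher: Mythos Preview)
Your overall plan — barriers from a Lipschitz regularization of $u_0$, then Perron's method with the bump argument at the junction — is the same as the paper's. Two points deserve correction, one a genuine gap and one an unnecessary detour.

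\textbf{The initial-condition gap.} With barriers $u_0^{\eta,\pm}(x)\pm C_T^\eta t$ for a \emph{fixed} $\eta$, the Perron maximal sub-solution $u$ you build satisfies only $u^*(0,x)\le u_0(x)+\omega(\eta)$ and $u_*(0,x)\ge u_0(x)-\omega(\eta)$, so it is \emph{not} a sub-/super-solution for the initial datum $u_0$ in the sense of Definition~\ref{defi::1}~ii). Your proposed fix — invoke comparison and then let $\eta\to0$ — does not work as stated, because the Perron function $u$ itself depends on the barriers, hence on $\eta$; you cannot send $\eta\to0$ in an inequality about a single fixed $u$. The paper resolves this cleanly: add the offset $\pm\eps$ to the barriers, so $u_\eps^\pm(t,x)=u_0^\eps(x)\pm C_\eps t\pm\eps$ satisfy $u_\eps^-(0,\cdot)\le u_0\le u_\eps^+(0,\cdot)$, and then set
\[
u^+=\inf_{\eps\in(0,1]}u_\eps^+,\qquad u^-=\sup_{\eps\in(0,1]}u_\eps^-.
\]
By Proposition~\ref{pro::2} these are still a super- and a sub-solution, and now $u^+(0,\cdot)=u_0=u^-(0,\cdot)$ exactly. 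Running Perron between $u^-$ and $u^+$ then gives a solution with the correct initial trace with no limiting argument needed.

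\textbf{The comparison detour.} Your Step~3 is superfluous. The statement only asks for a \emph{relaxed} solution together with the bound $|u(t,x)-u_0(x)|\le C_T$; it does not claim continuity. Once the barriers $u^\pm$ above are in place, the Perron $u$ already satisfies $u^-\le u\le u^+$, hence the bound, and the bump argument you describe in Step~2 (which is exactly what the paper does, including at $x=0$ using the monotonicity of $F$) gives the super-solution property for $u_*$. No appeal to Theorem~\ref{th::2} is needed, and the paper deliberately avoids it so that existence is established independently of the comparison principle proved in Section~\ref{s.c}.
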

%----------------------------------------------------------------------------
\begin{proof}[Proof of Theorem~\ref{th::3}]
The proof follows classically along the lines of Perron's method (see
\cite{I,CGG}), and then we omit details. 

\paragraph{Step 1: Barriers.}
Because of the uniform continuity of $u_0$, for any $\eps \in (0,1]$,
it can be regularized by convolution to get a modified initial data
$u_0^\eps$ satisfying
\begin{equation}\label{eq::34}
|u_0^\eps-u_0|\le \eps \quad \text{and}\quad |(u_0^\eps)_x|\le L_\eps
\end{equation}
with $\displaystyle L_\eps \ge \max_{i=1,...,N} |p_i^0|$. Indeed, if
we consider $u_i: \R \to \R$ such that $u_i(x) = (u_0)|_{J^i} (x)$ for $x\ge 0$
and $u_i (x) = u_i(0)$ for $x<0$, we can get $u_i^\eps$ such that $|u_i^\varepsilon -u_0|\le \eps/2$ on ${J_i}$
and $|(u_i^\eps)_x |\le L_\eps$. In particular, $|u_i(0)-u_0(0)| \le \eps/2$. We can now
define $u_0^\eps (x) = u_i(x) -u_i(0)+ u_0(0)$ and get $u_0^\eps$ such that \eqref{eq::34} holds true. 
Let 
\[C_\eps = \max \left(\max_{i=1,...,N} \max_{|p_i|\le L_\eps}
|H_i(p_i)|, \max_{|p_i|\le L_\eps} F(p_1,\dots,p_N)\right).\]
Then the functions 
\begin{equation}\label{eq::35}
u_\eps^\pm(t,x)=  u_0^\eps(x)\pm C_\eps t \pm \eps
\end{equation}
are global super and sub-solutions with respect to the initial data
$u_0$. We then define
\[u^+(t,x)=\inf_{\eps\in (0,1]} u^+_\eps(t,x)\quad
\text{and}\quad u^-(t,x)=\sup_{\eps\in (0,1]} u^-_\eps(t,x).\]
Then we have $u^-\le u^+$ with $u^-(0,x)= u_0(x) = u^+(0,x)$.
Moreover, by stability of sub/super-solutions (see Proposition~\ref{pro::2}),
we get that $u^+$ is a super-solution and $u^-$ is a sub-solution of
\eqref{eq:hj-f} on $(0,T)\times J$.

\paragraph{Step 2: Maximal sub-solution and preliminaries.}
Consider the set
\[S=\left\{w:[0,T)\times J\to \R,\quad 
\text{$w$ is a sub-solution of \eqref{eq:hj-f} 
on $(0,T)\times J$}, \quad u^-\le w\le u^+\right\}.\] It contains
  $u^-$. Then the function
  \[u(t,x)=\sup_{w\in S} w(t,x)\] is a sub-solution of \eqref{eq:hj-f}
  on $(0,T)\times J$ and satisfies the initial condition.  It remains
  to show that $u$ is a super-solution of \eqref{eq:hj-f} on
  $(0,T)\times J$.  This is classical for a Hamilton-Jacobi equation
  on an interval, so we only have to prove it at the junction point.
  We assume by contradiction that $u$ is not a super-solution at
  $P_0=(t_0,0)$ for some $t_0\in (0,T)$. This implies that there
  exists a test function $\varphi$ satisfying $u_*\ge \varphi$ in a
  neighborhood of $P_0$ with equality at $P_0$, and such that
\begin{equation}\label{eq::13}
\left\{\begin{array}{l}
\varphi_t + F(\varphi_x) <0,\\
\varphi_t + H_i(\partial_i \varphi) <0 ,\quad \text{for}\quad i=1,...,N
\end{array}\right|  \quad \text{at}\quad P_0.
\end{equation}
We also have $\varphi\le u_*\le u^+_*$. As usual, the fact that $u^+$
is a super-solution and condition~\eqref{eq::13} imply that we cannot
have $\varphi=(u^+)_*$ at $P_0$. Therefore we have for some $r>0$ small
enough
\begin{equation}\label{eq::15}
\varphi< (u^+)_* \quad \text{on}\quad \overline{B_r(P_0)}
\end{equation}
where we define the ball $B_r(P_0)=\left\{(t,x)\in (0,T)\times J,\quad
|t-t_0|^2 + d^2(0,x)<r^2\right\}$.  Substracting $|(t,x)-P_0|^2$ to
$\varphi$ and reducing $r>0$ if necessary, we can assume that
\begin{equation}\label{eq::14}
\varphi < u_* \quad \text{on}\quad \overline{B_r(P_0)}\setminus \left\{P_0\right\}.
\end{equation}
Further reducing $r>0$, we can also assume that \eqref{eq::13} still
holds in $\overline{B_r(P_0)}$.

\paragraph{Step 3: Sub-solution property and contradiction.}
We claim that $\varphi$ is a sub-solution of \eqref{eq:hj-f} in
$B_r(P_0)$.  Indeed, if $\psi$ is a test function  touching
$\varphi$ from above at $P_1=(t_1,0)\in B_r(P_0)$, then
\[\psi_t(P_1) = \varphi_t(P_1) \quad \text{and}\quad 
\partial_i \psi(P_1)\ge \partial_i \varphi(P_1) \quad \text{for}\quad
i=1,...,N.\] 
Using the fact that $F$ is non-increasing with respect
to all variables, we deduce that
\[\psi_t + F(\psi_x) <0 \quad \text{at}\quad P_1\]
as desired. Defining for $\delta>0$,
\[u_\delta = \begin{cases}
\max(\delta+ \varphi, u) &\quad \text{in }  B_r(P_0),\\
u &\quad \text{outside}
\end{cases}\]
and using \eqref{eq::14}, we can check that $u_\delta
= u> \delta+ \varphi$ on $\partial B_r(P_0)$ for $\delta>0$ small
enough.  This implies that $u_\delta$ is a sub-solution lying above
$u^-$. Finally \eqref{eq::15} implies that $u_\delta\le u^+$ for
$\delta>0$ small enough.  Therefore $u_\delta\in S$, but is is
classical to check that $u_\delta$ is not below $u$ for $\delta>0$,
which gives a contradiction with the maximality of $u$.
\end{proof}

%%%%%%%%%%%%%%%%%%%%%%%%%%%%%%%%%%%%%%%%%%%%%%%%%%%%%%%%%%%%%%%%%%%%%%%%%%
\subsection{Further properties of flux-limited solutions}

In this section, we focus on properties of solutions of the following equation
\begin{equation}\label{eq::gr56}
u_t + H(u_x)=0 
\end{equation}
for a single Hamiltonian satisfying \eqref{assum:H}. We start with
the following result, which is strongly related to the reformulation
of state constraints from \cite{ik}, and its use in \cite{at}.
%---------------------------------------------------------------------------
\begin{pro}[Reformulation of state constraints]\label{pro::gr7}
Assume that $H$ satisfies \eqref{assum:H}.
Let  $u:(0,T)\times [a,b] \to \R$. If $u$ satisfies
\begin{equation}\label{eq::gr8}
\left\{\begin{array}{l}
u_t + H(u_x)= 0 \quad \mbox{for} \quad (t,x)\in (0,T)\times (a,b),\\
u_t + H^-(u_x)= 0 \quad \mbox{for} \quad (t,x)\in (0,T)\times \left\{a\right\},\\
u_t + H^+(u_x)= 0 \quad \mbox{for} \quad (t,x)\in (0,T)\times \left\{b\right\}
\end{array}\right.
\end{equation}
in the viscosity sense if and only if
\begin{equation}\label{eq::gr7}
\left\{\begin{array}{l}
u_t + H(u_x)\ge 0 \quad \mbox{for} \quad (t,x)\in (0,T)\times \overline{\Omega},\\
u_t + H(u_x)\le 0 \quad \mbox{for} \quad (t,x)\in (0,T)\times \Omega
\end{array}\right.
\end{equation}
in the viscosity sense and 
\begin{equation}\label{eq:abc}
 u(t,c) = \limsup_{(s,y) \to (t,c), y \in ]a,b[} u(s,y) \quad \text{ for } \quad c=a,b.
\end{equation}
\end{pro}
%---------------------------------------------------------------------------
\begin{proof}[Proof of Proposition \ref{pro::gr7}]
Remark first that only  boundary conditions should be studied.

We first prove that \eqref{eq::gr7} implies \eqref{eq::gr8}. 
From Theorem~\ref{th::gr1}-\ref{thgr1-i}), we deduce that the viscosity
sub-solution inequality is satisfied on the boundary for
\eqref{eq::gr8} with the choice $A=A_0=\min H$.

Let us now consider a test function $\varphi$ touching $u_*$ from
below at the boundary $(t_0,x_0)$.  We want to show that $u_*$ is a
viscosity super-solution for \eqref{eq::gr8} at $(t_0,x_0)$.  By
Theorem~\ref{th::gr1}, it is sufficient to check the inequality
assuming that
\[ \varphi (t,x) = \psi (t) + \phi (x) \]
with
$$\left\{\begin{array}{ll}
H(\phi_x)=H^+(\phi_x)=A_0 & \quad \mbox{at}\quad x_0 \quad \mbox{if}\quad x_0=a,\\
H(\phi_x)=H^-(\phi_x)=A_0 &  \quad \mbox{at}\quad x_0\quad \mbox{if}\quad x_0=b.
\end{array}\right.$$
(The second equality involves $H^-$ instead of $H^+$ because, locally
around $b$, the domain looks like $]b-\eps,b]$ and not $[b,b+\eps[$.)
Remark that we have in all cases $H(\phi_x)=H^+(\phi_x)=H^-(\phi_x)$
at $x_0$.  We then deduce from the fact that $u_*$ is a viscosity
super-solution of \eqref{eq::gr7}, that $u_*$ is also a viscosity
super-solution of \eqref{eq::gr8} at $(t_0,x_0)$.

We now prove that \eqref{eq::gr8} implies \eqref{eq::gr7}.  The second
line of \eqref{eq::gr7} is easy to get. As far as the first line is
concerned, it follows from the fact that $H\ge H^\pm$. This ends the
proof of the proposition.
\end{proof}

%-----------------------------------------------------------------------------
\begin{pro}[Classical viscosity solutions are also solutions ``at one point''] \label{pro::gr42}
Assume that $H$ satisfies \eqref{assum:H} and consider a classical
Hamilton-Jacobi equation posed in the whole line,
\begin{equation}\label{eq::gr41}
u_t+H(u_x) = 0 \quad \mbox{for all}\quad (t,x)\in (0,T)\times \R
\end{equation} 
\begin{enumerate}[\upshape i)]
\item {\upshape (Sub-Solutions)}
Let $u:(0,T)\times \R\to \R$ be a sub-solution of (\ref{eq::gr41}).
Then $u$ satisfies
\begin{equation}\label{eq::rtrac-3}
u_t (t,0)+ \max(H^+(u_x(t,0^-)),H^-(u_x(t,0^+)))\le 0.
\end{equation}
\item {\upshape (Super-Solutions)}
Let $u:(0,T)\times \R\to \R$ be a super-solution of (\ref{eq::gr41}).
Then $u$ satisfies
\begin{equation}\label{eq::rtrac-4}
u_t (t,0) + \max(H^+(u_x(t,0^-)),H^-(u_x(t,0^+)))\ge 0.
\end{equation}
\end{enumerate}
\end{pro}
%---------------------
\begin{rem}
We remark that the reverse implication holds true since, when testing with $C^1$ function,
$u_x( t,0^-)=u_x (t,0^+)$ and $H = \max (H^+,H^-)$. 
\end{rem}
\begin{proof} {\bf Sub-solutions.} 
In order to apply  Theorem~\ref{th::gr1}-\ref{thgr1-i}), 
we first remark that the following lemma, whose proof is postponed, 
 implies that $u$ satisfies the ``weak continuity''
condition (\ref{eq:weak-cont}) with the choice $H_2=H_3=H$ and $H_1 (p)= H(-p)$.
%-----------------
\begin{lem}[``weak continuity'' condition with $C^1$ test
  functions] \label{lem:wc-c1} Given two Hamiltonians $H_1$, $H_2$
  satisfying \eqref{assum:H} and $H_3$ continuous and coercive, let
  $u : (0,T) \times \R \to \R$ be upper semi-continuous such that
  all $C^1$ function $\phi$ touching $u$ from above at $(t,x)$
  satisfies
\[\begin{cases} 
\phi_t (t,x) + H_1 (\phi_x (t,x)) \le 0 & \text{ if } x <0, \\
\phi_t (t,x) + H_2 (\phi_x (t,x)) \le 0 & \text{ if } x >0, \\
\phi_t (t,x) + H_3 (\phi_x (t,x)) \le 0 & \text{ if } x =0.
\end{cases}\]
Then for all $t \in (0,T)$, 
\[ u (t,0) = \limsup_{(s,y) \to (t,0), y>0} u(s,y) = \limsup_{(s,y) \to (t,0), y<0} u(s,y).\]
\end{lem}
%----------------
Thanks to Theorem~\ref{th::gr1}-\ref{thgr1-i}, 
we deduce that $u$ is a $A_0$-flux-limited sub-solution with $A_0=\min H$,
which implies (\ref{eq::rtrac-3}).\\

\noindent {\bf Super-solutions.} We do not have to use Lemma \ref{lem:wc-c1}, but instead we have to check 
(\ref{eq::rtrac-2}) with $A=A_0$ and a good choice of a test function $\phi_0$ on $J=J_1\cup J_2$.
Indeed, we simply choose
$$\phi(x)=\left\{\begin{array}{lll}
\phi_0(y) & \quad \mbox{for}\quad y=x\in J_1 & \quad \mbox{if}\quad x\ge 0,\\
\phi_0(y) & \quad \mbox{for}\quad y=-x\in J_2 & \quad \mbox{if}\quad x\le 0,\\
\end{array}\right.$$
such that $\phi$ is $C^1$ on $\R$ and $H(\phi_0'(0))=\min H=A_0$.
This implies (\ref{eq::rtrac-4}) and ends the proof of the proposition.
\end{proof}
%----------------------------------
We now prove Lemma~\ref{lem:wc-c1}.
\begin{proof}[Proof of Lemma~\ref{lem:wc-c1}]
Assume first that there exists $t^*$ such that 
\[ u(t^*,0) > \limsup_{(s,y) \to (t^*,0), y>0} u(s,y) 
\quad \text{ and } \quad u(t^*,0) > \limsup_{(s,y) \to (t^*,0), y<0} u(s,y).\]
Since $u(t,0)$ is upper semi-continuous, there exists $t_0$ arbitrarily
close to $t^*$ with $u(t_0,0)$ arbitrarily close to $u(t^*,0)$ such that 
there exists a $C^1$ function $\Psi (t)$ (strictly) touching $u(t,0)$ from 
above at $(t_0,0)$. In particular, we can get $\delta_0$ and $r_0$ such that 
\[ u (t_0,0) \ge u(s,y) + \delta_0 \text{ for } (s,y) \in B_{r_0}(t,0), y \neq 0.\]
In this first case, the test function $\Psi (t) + px$ (with $p$ arbitrary) 
touches $u$ from above at $(t_0,0)$. This implies 
\[ \Psi'(t_0) + H_3 (p) \le 0 \]
which contradicts the coercivity of $H_3$. 

Assume now that 
\[ u(t^*,0) = \limsup_{(s,y) \to (t^*,0), y\ge 0} u(s,y) \quad \text{ and
} \quad u(t^*,0) > \limsup_{(s,y) \to (t^*,0), y<0} u(s,y).\]
In this case, we can argue as in the proof of
Lemma~\ref{lem:weak-cont}, the intervals $(-\infty,0]$ and
$[0,+\infty)$ playing the role of $J_{i}$ for $i\not= i_0$ and $J_{i_0}$
respectively; in particular, we construct a test function
$\Psi (t) + px $ with $p$ very negative and get a contradiction 
with the coercivity of $H_2$.

The remaining case is similar to the previous one. The proof is now complete. 
\end{proof}
%--------------------------------
\begin{pro}[Restriction of sub-solutions are sub-solutions] \label{pro::gr40}
Assume that $H$ satisfies \eqref{assum:H}. Let $u:(0,T)\times \R\to
\R$ be upper semi-continuous satisfying
\begin{equation}\label{eq::gr43}
u_t+H(u_x) \le 0 \quad \mbox{for all}\quad (t,x)\in (0,T)\times \R
\end{equation} 
Then the restriction $w$ of $u$ to $(0,T)\times \left[0,+\infty\right)$ satisfies
\[\left\{\begin{array}{ll}
w_t + H(w_x) \le 0 \quad \mbox{for all}\quad (t,x)\in (0,T)\times \left(0,+\infty\right),\\
w_t + H^-(w_x)\le 0 \quad \mbox{for all}\quad (t,x)\in (0,T)\times \left\{0\right\}.
\end{array}\right.\]
\end{pro}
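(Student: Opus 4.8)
The plan is to reduce the statement to a one-sided version of the state-constraint reformulation that was just proved in Proposition~\ref{pro::gr7}, but now only at the boundary point $x=0$ of the half-line, and only for sub-solutions. First I would observe that away from $x=0$ there is nothing to check: the restriction $w$ of $u$ to $(0,T)\times[0,+\infty)$ trivially inherits the sub-solution property in $(0,T)\times(0,+\infty)$ since any test function touching $w$ from above at an interior point $(t_0,x_0)$ with $x_0>0$ is (locally) a test function for $u$ at the same point, and $u$ satisfies \eqref{eq::gr43} there. So the only issue is the junction-type inequality $w_t+H^-(w_x)\le 0$ at $x=0$.

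For the boundary point, the cleanest route is to invoke Theorem~\ref{th::gr1}-\ref{thgr1-i}), which tells us that to establish that $w$ is an $A_0$-flux-limited sub-solution on the half-line (with $A_0=\min H$, and noting that on a single branch $N=1$ and $F_{A_0}(w_x)=\max(A_0,H^-(w_x))=H^-(w_x)$ since $H^-\ge\min H$), it suffices to check that $w$ is an upper semi-continuous sub-solution of the equation in $(0,T)\times(0,+\infty)$ — which we just verified — provided $w$ is an upper semi-continuous sub-solution of the single-branch equation \eqref{eq::gr2}. Actually, the content of Theorem~\ref{th::gr1}-\ref{thgr1-i}) is precisely that any u.s.c. sub-solution of the PDE off the junction point is automatically an $A_0$-flux-limited sub-solution, so no further work on the boundary condition is needed once we know $w$ solves the equation on $(0,T)\times(0,+\infty)$ in the viscosity sense. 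Hence the whole proof collapses to: (i) $w$ is u.s.c. as a restriction of an u.s.c. function; (ii) $w$ is a sub-solution of $w_t+H(w_x)\le 0$ on $(0,T)\times(0,+\infty)$; (iii) apply Theorem~\ref{th::gr1}-\ref{thgr1-i}) to conclude the flux-limited inequality $w_t+H^-(w_x)\le 0$ at $x=0$.

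The one point that deserves a sentence of care — and the closest thing to an obstacle — is step (ii): when a test function $\phi$ touches $w$ from above at an interior point $x_0>0$, we must be able to use it as a test function for $u$ on all of $(0,T)\times\R$. This is immediate because the touching is a local statement: $\phi\le w=u$ in a small neighborhood contained in $\{x>0\}$ with equality at $(t_0,x_0)$, so $(t_0,x_0)$ is a local max of $u-\phi$ and the sub-solution inequality \eqref{eq::gr43} for $u$ gives $\phi_t+H(\phi_x)\le 0$ there. There is no need to extend $\phi$ globally; the viscosity sub-solution notion only involves local test functions. So the argument is genuinely short.

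In summary, I would write: \emph{Off the junction point the restriction $w$ is a sub-solution of $w_t+H(w_x)\le 0$ because the touching of test functions from above is a local property; since $w$ is moreover upper semi-continuous, Theorem~\ref{th::gr1}-\ref{thgr1-i}) applies on the half-line $[0,+\infty)$ (where $F_{A_0}=H^-$ since $N=1$ and $A_0=\min H\le H^-$), yielding that $w$ is an $A_0$-flux-limited sub-solution, i.e.\ $w_t+H^-(w_x)\le 0$ at $x=0$ in the viscosity sense. This proves the claim.} I expect the referee's only possible objection to be whether Theorem~\ref{th::gr1}-\ref{thgr1-i}) was stated for a genuine junction rather than the degenerate case $N=1$; since the germ $\mathcal{G}_{A_0}$ and the statement of that theorem both explicitly allow $N=1$, this is not an issue, but it may be worth a parenthetical remark that here we apply it with $N=1$ and the branch $J_1=[0,+\infty)$.
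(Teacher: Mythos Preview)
Your argument is correct and takes a more direct route than the paper's. The paper first invokes Proposition~\ref{pro::gr42} to view $\R$ as a two-branch junction and obtain the inequality $u_t+\max(H^+(u_x(t,0^-)),H^-(u_x(t,0^+)))\le 0$ at $x=0$; then it applies the \emph{necessary} direction of Theorem~\ref{th::gr6} with $N=2$ to extract the trace inequality $v_t+\min H\le 0$ for $v(t)=u(t,0)$; and finally it applies the \emph{sufficient} direction of Theorem~\ref{th::gr6} with $N=1$ to pass from this trace inequality back to the flux-limited boundary condition for $w$. You short-circuit this entire detour by applying Theorem~\ref{th::gr1}-\ref{thgr1-i}) directly on the single branch $J_1=[0,+\infty)$: once $w$ is an u.s.c.\ sub-solution on the open half-line, that result immediately yields the $A_0$-flux-limited sub-solution property at $x=0$, which for $N=1$ is exactly $w_t+H^-(w_x)\le 0$. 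Your approach is more economical; the paper's route has the (mainly expository) merit of showing how Proposition~\ref{pro::gr42} and the trace characterization of Theorem~\ref{th::gr6} fit together, and makes explicit that the relevant information is carried by the trace $u(\cdot,0)$.
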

%----------------------------------
\begin{proof}[Proof of Proposition \ref{pro::gr40}]
We simply have to study $w$ at the boundary. From
Proposition~\ref{pro::gr42}, we know that $u$ satisfies in the
viscosity sense
\[u_t + \max(H^+(u_x(t,0^-)),H^-(u_x(t,0^+)))\le 0.\]
By Theorem~\ref{th::gr6} with two branches, we deduce that $v(t)=u(t,0)$ satisfies
\[v_t + \min H \le 0.\]
Again by Theorem~\ref{th::gr6} (now with one branch) and because $v(t)=w(t,0)$, we deduce  that $w$ satisfies
\[w_t + H^-(w_x)\le 0 \quad \mbox{for all}\quad (t,0) \in (0,T)\times \left\{0\right\}\]
which ends the proof.
\end{proof}
%----------------
\begin{rem}\label{eq::gr55}
Notice that the restriction of a super-solution of (\ref{eq::gr56}) may
not be a super-solution on the boundary, as shown by the following example:
for $H(p)=|p|-1$, the solution $u(t,x)=x$ solves $u_t
+ H(u_x)=0$ in $\R$ but does not solve $u_t + H^- (u_x) \ge 0$ at $x=0$.
\end{rem}

%%%%%%%%%%%%%%%%%%%%%%%%%%%%%%%%%%%%%%%%%%%%%%%%%%%%%%%%%%%%%%%%%%%%%%%%%%
%%%%%%%%%%%%%%%%%%%%%%%%%%%%%%%%%%%%%%%%%%%%%%%%%%%%%%%%%%%%%%%%%%%%%%%%%%
\section{Comparison principle on a junction}
\label{s.c}

This section is devoted to the proof of the comparison principle in
the case of a junction (see Theorem~\ref{th::2}). In view of
Propositions~\ref{pro:fa-gen-sup} and \ref{pro::1}, it is enough to
consider sub- and super-solutions (in the sense of
Definition~\ref{defi::1}) of \eqref{eq::1bis} for some $A=A_F$.

It is convenient to introduce the following shorthand notation
\begin{equation}\label{eq::17}
H(x,p)=\left\{\begin{array}{lll}
H_i(p) &\quad \text{for} \quad p=p_i &\quad \text{if} \quad x\in J_i^*,\\
F_A(p) &\quad \text{for} \quad p=(p_1,...,p_N) &\quad \text{if} \quad x=0.
\end{array}\right.
\end{equation}
In particular, keeping in mind the definition of $u_x$ (see \eqref{eq::18}),
Problem~\eqref{eq::1bis} on the junction  can be
rewritten as follows
\[u_t + H(x,u_x)=0 \quad \text{for all}\quad (t,x)\in (0,+\infty)\times J.\]

We next make a trivial but useful observation.
%-----------------------------
\begin{lem} 
\label{pi0zero}
It is enough to prove Theorem~\ref{th::2} further assuming that
\begin{equation}\label{eq::24}
p_i^0=0 \quad \text{for}\quad i=1,...,N\quad \text{and}\quad
0=H_1(0)\ge H_2(0)\ge ...\ge H_N(0).
\end{equation}
\end{lem}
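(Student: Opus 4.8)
The plan is to reduce the general comparison statement to the normalized one via two elementary changes of unknown that do not affect the notion of sub-/super-solution nor the growth condition~\eqref{eq::27}: a shift in the gradient variable, and the addition of an affine-in-time function. First I would handle the normalization $p_i^0 = 0$. Given data satisfying only~\eqref{eq::3bis}, set $\tilde H_i(p) = H_i(p + p_i^0)$; this Hamiltonian is nonincreasing on $(-\infty,0]$, nondecreasing on $[0,+\infty)$, continuous and coercive, so it satisfies~\eqref{eq::3bis} with critical slope $0$. The corresponding change of unknown is $\tilde u(t,x) = u(t,x) - \sum$-type correction; more precisely, on each branch $J_i$ one substitutes $\tilde u(t,x) = u(t,x) - p_i^0 x$, which is well defined on the whole junction because all the correction terms vanish at $x=0$, so $\tilde u$ is continuous on $J$ and belongs to the same test-function class $C^1(J_T)$. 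One checks directly that $\tilde u_x = u_x - p_i^0$ on $J_i^*$ and $\partial_i\tilde u(0) = \partial_i u(0) - p_i^0$ at the vertex, hence $\tilde u$ solves the HJ equation with Hamiltonians $\tilde H_i$ and junction function $\tilde F_A = \tilde F_{A}$ built from the $\tilde H_i^-$; in particular flux-limited sub-/super-solutions are mapped to flux-limited sub-/super-solutions. The growth bound~\eqref{eq::27} is preserved up to enlarging $C_T$ since $|p_i^0 x| \le (\max_i |p_i^0|)\, d(0,x)$, and $u \le v$ is equivalent to $\tilde u \le \tilde v$. This step justifies assuming $p_i^0 = 0$ for all $i$.

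Next I would arrange $0 = H_1(0) \ge H_2(0) \ge \dots \ge H_N(0)$. Relabelling the branches so that $H_1(0) = \max_i H_i(0)$ is free; it only permutes the equations. To move this common maximal value to $0$, subtract a time-affine function: set $\hat u(t,x) = u(t,x) + H_1(0)\, t$ and likewise $\hat v$, and replace each $H_i$ by $\hat H_i(p) = H_i(p) - H_1(0)$. Since $\hat u_t = u_t + H_1(0)$ and the spatial gradient is unchanged, $\hat u$ is a sub-solution (resp. $\hat v$ a super-solution) of the shifted equation; the junction function transforms as $\hat F_A = F_{A - H_1(0)} - H_1(0)$, which is again of the admissible form, and the monotonicity/continuity assumptions on the Hamiltonians are untouched. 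Now $\hat H_1(0) = 0$ and $\hat H_i(0) = H_i(0) - H_1(0) \le 0$ for all $i$ by the choice of labelling, i.e. exactly~\eqref{eq::24}. The growth condition~\eqref{eq::27} is preserved because adding $H_1(0)\,t$ only changes the constant $C_T$ on the finite interval $[0,T)$, and $u \le v \iff \hat u \le \hat v$.

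Combining the two reductions (first the gradient shift, then the time-affine shift and relabelling), from a pair $(u,v)$ of sub-/super-solutions for general data satisfying~\eqref{eq::3bis} one produces a pair $(\hat{\tilde u}, \hat{\tilde v})$ of sub-/super-solutions for data additionally satisfying~\eqref{eq::24}, with the growth bound still in force; the conclusion $u \le v$ follows from $\hat{\tilde u} \le \hat{\tilde v}$ by inverting the two changes of unknown. There is essentially no obstacle here: the only thing to be mildly careful about is that the branchwise correction $-p_i^0 x$ glues to a globally continuous (indeed $C^1(J_T)$-compatible) function on $J$ because the corrections vanish at the junction point, and that the transformed junction functions $\tilde F_A$, $\hat F_A$ are still of the flux-limited form $F_{A'}$ for an appropriate $A'$ — both of which are immediate from the definition~\eqref{eq::4} of $F_A$ and the fact that the nonincreasing part transforms as $\widetilde{H_i^-}(q) = H_i^-(q + p_i^0)$ under the gradient shift.
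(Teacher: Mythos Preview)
Your proposal is correct and follows essentially the same approach as the paper: the paper combines your two steps into a single change of unknown $u(t,x)=\tilde u(t,x)+p_i^0 x - t\,H_1(p_1^0)$ after relabelling, but the content is identical. One small imprecision: you only arrange $H_1(0)=\max_i H_i(0)$, whereas~\eqref{eq::24} asks for the full chain $H_1(0)\ge H_2(0)\ge\cdots\ge H_N(0)$; of course the same relabelling argument gives this just as freely.
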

%-----------------------------
\begin{proof}
We can assume without loss of generality that 
\[ H_1(p_1^0) \ge ... \ge H_N(p_N^0).\]  
Let us define
\[u(t,x)=\tilde{u}(t,x)+ p_i^0 x - t H_1(p_1^0) \quad \text{for}\quad x\in J_i.\]
Then $u$ is a solution of \eqref{eq::1bis} if and only if $\tilde{u}$
is a solution of \eqref{eq::1bis} with each $H_i$ replaced with
$\tilde{H}_i(p)=H_i(p+p_i^0) -H_1(p_1^0)$ and $F_A$ replaced with
$\tilde{F}_{\tilde{A}}$ constructed using the Hamiltonians
$\tilde{H}_i$ and the parameter $\tilde{A}=A-H_1(p_1^0)$. 
\end{proof}

\subsection{The vertex test function}

Then our key result is the following one.
%----------------------------------------------------------------------
\begin{theo}[The vertex test function -- general case]\label{th::G}
Let $A\in \R\cup \left\{-\infty\right\}$ and $\gamma>0$. Assume the
Hamiltonians satisfy \eqref{assum:H} and \eqref{eq::24}.  Then there
exists a function $G:J^2\to \R$ enjoying the following properties.
\begin{enumerate}[\upshape i)]
\item \emph{(Regularity)}
\[G\in C(J^2)\quad \text{and}\quad \left\{\begin{array}{l}
G(x,\cdot)\in C^1(J) \quad \text{for all}\quad x\in J,\\
G(\cdot,y)\in C^1(J) \quad \text{for all}\quad y\in J.
\end{array}\right.\]
\item \emph{(Bound from below)} $G\ge 0=G(0,0)$.
\item \emph{(Compatibility condition on the diagonal)}
For all $x\in J$,
\begin{equation}\label{eq::85}
0\le G(x,x) -G(0,0)  \le \gamma.
\end{equation}
\item \emph{(Compatibility condition on the gradients)}
For all $(x,y)\in J^2$,
\begin{equation}\label{eq::17bis}
H(y,-G_y(x,y))-H(x,G_x(x,y)) \le \gamma
\end{equation}
where notation introduced in \eqref{eq::18} and \eqref{eq::17} are used.
\item \emph{(Superlinearity)}
There exists $g:[0,+\infty)\to \R$ nondecreasing and s.t. for $(x,y)\in J^2$
\begin{equation}\label{eq::20}
g(d(x,y))\le G(x,y) \quad \text{and}\quad \lim_{a\to +\infty} \frac{g(a)}{a} = +\infty.
\end{equation}
\item \emph{(Gradient bounds)} For all $K>0$, there exists  $C_K>0$
  such that for all $(x,y)\in J^2$,
\begin{equation}\label{eq::19}
d(x,y)\le K \quad \Longrightarrow \quad |G_x(x,y)|+|G_y(x,y)|\le C_K.
\end{equation}
\end{enumerate}
\end{theo}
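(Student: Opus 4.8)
The plan is to construct $G$ explicitly from the germ ${\mathcal G}_A$ and then regularize. First, I would reduce to the case where $A \ge A_0$: since $F_A = F_{A_0}$ for $A \le A_0$ (by \eqref{eq::A_0}), and in the normalization \eqref{eq::24} we have $A_0 = 0 = \max_i H_i(0)$, the case $A = -\infty$ is covered by $A = A_0 = 0$. So assume $A \in [0,+\infty)$. For each branch, split $H_i$ into its nonincreasing part $H_i^-$ and nondecreasing part $H_i^+$; by coercivity and \eqref{eq::24}, for any $\lambda \ge A$ there is a unique $\pi_i^+(\lambda) \ge 0$ with $H_i^+(\pi_i^+(\lambda)) = \lambda$ and a unique $\pi_i^-(\lambda) \le 0$ with $H_i^-(\pi_i^-(\lambda)) = \lambda$. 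The germ ${\mathcal G}_A$ at level $\lambda$ then consists of $(p,\lambda)$ with $p_i \in \{\pi_i^-(\lambda), \pi_i^+(\lambda)\}$ and $\max_i H_i^-(p_i) \le \lambda$ (equivalently $p_i = \pi_i^+(\lambda)$ whenever $H_i^-(p_i) > \lambda$ would be forced, which never happens, so the constraint just reads $\lambda \ge A$ plus at least consistency). The natural candidate is
\[
G^0(x,y) = \sup_{(p,\lambda)\in {\mathcal G}_A} \bigl( p_i x - p_j y - \lambda \bigr) \quad \text{for } (x,y) \in J_i \times J_j,
\]
and I would take $G = A + G^0$ after a regularization near the diagonal and a rescaling to ensure the $\gamma$ in \eqref{eq::85}.

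The key properties to verify, in order: \emph{(Bound from below and diagonal)} taking the trajectory that stays at the junction, i.e.\ $p_i = \pi_i^+(A)$ for all $i$ and $\lambda = A$, gives $G^0(x,x) \ge \sum \cdots$; more carefully, choosing $p$ with all positive components on the $x$-branch and the same on the $y$-branch when $i = j$ gives $G^0(x,x) = 0$, and one shows $0 \le G^0(x,x)$ in general — after rescaling $G \rightsquigarrow \delta G(\cdot/\delta \cdot)$-type adjustments one gets the $\gamma$-bound \eqref{eq::85}. \emph{(Superlinearity)} because the slopes $\pi_i^\pm(\lambda)$ blow up as $\lambda \to \infty$ (coercivity), taking $\lambda$ large in the sup forces $G^0(x,y) \ge g(d(x,y))$ for a superlinear $g$. \emph{(Gradient bounds)} conversely, on $d(x,y) \le K$ the sup is attained at bounded $\lambda$, hence bounded slopes, giving \eqref{eq::19}; this is also what forces the sup to be finite and $G^0$ to be well-defined and locally Lipschitz. \emph{(Compatibility on gradients)} this is the heart: if $(x,y) \in J_i^* \times J_j^*$ and the sup defining $G^0$ is attained at $(p,\lambda)$, then $G^0_x = p_i$ and $G^0_y = -p_j$, so $H(x, G_x) = H_i(p_i)$ and $H(y,-G_y) = H_j(p_j)$; since $(p,\lambda) \in {\mathcal G}_A$ one has $H_i(p_i) = H_j(p_j) = \lambda$ when $N \ge 2$, hence the difference is $0 \le \gamma$. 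The delicate cases are when $x = 0$ or $y = 0$ (where one uses that $G^0(0,\cdot)$ and $G^0(\cdot,0)$ have gradients lying in the germ and that $F_A \le H_i$ on the relevant slopes because of the $H_i^-$ structure) and when the sup is \emph{not} attained or is attained on a face where $G^0$ fails to be $C^1$ (the diagonal), which is precisely why a regularization is needed.

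The main obstacle I expect is the regularization on the diagonal $\{x=y\}$: $G^0$ is only Lipschitz there and the two compatibility conditions \eqref{eq::85} and \eqref{eq::17bis} must survive smoothing. The plan is to use a sup-convolution (inf-convolution) in an appropriate scale, exploiting that $G^0$ is a sup of affine functions to control the regularized gradients inside the convex hull of $\{\pi_i^\pm(\lambda)\}$, so that the key inequality $H(y, -G_y) - H(x, G_x) \le \gamma$ only degrades by a controlled amount which one absorbs into $\gamma$ by choosing the regularization parameter small (and, if necessary, rescaling $G \mapsto \theta\, G^0(x/\theta, y/\theta)$-style to keep the diagonal bound at most $\gamma$ while keeping superlinearity). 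A secondary subtlety is the bookkeeping across the $N$ branches at the vertex: one must check \eqref{eq::17bis} when $x$ and $y$ lie on different branches and at least one equals $0$, using that $F_A(p) = \max(A, \max_i H_i^-(p_i))$ and that the germ slopes are chosen with $H_i^-(p_i) = \lambda \ge A$ on the "incoming" branch and $H_i^+(p_i) = \lambda$ on the "outgoing" one; the monotonicity structure \eqref{eq::3bis} together with \eqref{eq::24} is exactly what makes these inequalities go through. The case $N = 1$ is a simpler special case handled by the second line of \eqref{eq::50}.
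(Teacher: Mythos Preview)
Your overall architecture --- define $G^0$ as a supremum over the germ $\mathcal{G}_A$, read off the compatibility condition from the fact that optimizers lie in the germ, and then regularize near the diagonal --- matches the paper's strategy. However, there is a genuine gap in your plan: you attempt to run the construction directly under the bare quasi-convexity assumption \eqref{eq::3bis}, and this does not work as stated.

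The problem is your claim that ``for any $\lambda \ge A$ there is a unique $\pi_i^+(\lambda) \ge 0$ with $H_i^+(\pi_i^+(\lambda)) = \lambda$.'' Under \eqref{eq::3bis} the Hamiltonians are only \emph{non}-increasing then \emph{non}-decreasing; they may have flat pieces, so the level sets $\{p \ge 0 : H_i(p) = \lambda\}$ can be nondegenerate intervals. Consequently $\pi_i^\pm$ are not single-valued, the optimizer $\lambda$ in the supremum defining $G^0$ need not be unique, and the subdifferential of $G^0$ can fail to be a singleton \emph{even away from the diagonal}. You then lose the $C^1$ regularity in i), and with it the pointwise identification $G^0_x = p_i$, $-G^0_y = p_j$ that drives your verification of \eqref{eq::17bis}. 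A sup- or inf-convolution at the end will not repair this: it smooths in one variable at a time and does not force the gradients back into the germ, so the compatibility inequality can degrade by an uncontrolled amount.

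The paper handles this by a two-step reduction that you are missing. First, one proves the result under the much stronger hypothesis \eqref{eq::16} (each $H_i$ is $C^2$, strictly convex, superlinear), where the inverse function theorem makes $\pi_i^\pm$ genuinely $C^2$ and the strict convexity makes the optimizing $\lambda$ unique; this yields $G^0 \in C^1$ off the diagonal and an explicit ``saturation'' structure near the diagonal that one regularizes by hand (not by convolution). Second --- and this is the key device --- one observes (Lemma~\ref{lem::1}) that for Hamiltonians satisfying the intermediate condition \eqref{eq::3} there exists a smooth, convex, uniformly increasing $\beta:\R\to\R$ such that every $\beta\circ H_i$ satisfies \eqref{eq::16}; one builds the vertex test function for the transformed problem and then undoes $\beta$, the uniform lower bound $\beta' \ge \delta$ converting the $\hat\gamma$-compatibility for $\beta\circ H$ into a $\hat\gamma/\delta$-compatibility for $H$. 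Finally, Hamiltonians satisfying only \eqref{eq::3bis} are approximated uniformly within $\gamma$ by Hamiltonians satisfying \eqref{eq::3}, which costs at most $2\gamma$ in \eqref{eq::17bis}. Without this $\beta$-reduction (or an equivalent trick), your direct construction does not produce a $C^1$ object and the argument stalls.
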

%----------------------------------------------------------------------
\begin{rem}\label{rem:g-less-reg}
  The vertex test function $G$ is obtained as a regularized version of
  a function $G^0$ which is $C^1$ except on the diagonal $x=y$.
  It is in fact possible to check directly that $G^0$ does not
    satisfy the viscosity inequalities on the diagonal in the sense of
    Proposition~\ref{pro::gr42} (when it is not $C^1$ on the
    diagonal).
\end{rem}

\subsection{Proof of the comparison principle}

We will also need the following result whose classical proof is given
in Appendix for the reader's convenience. 
%----------------------------------------------------------------------
\begin{lem}[A priori control]\label{lem::3}
Let $T>0$ and let $u$ be a sub-solution and $v$ be a super-solution as
in Theorem~\ref{th::2}. Then there exists a constant $C=C(T)>0$ such
that for all $(t,x),(s,y)\in [0,T)\times J$, we have
\begin{equation}\label{eq::29}
u(t,x)\le v(s,y) + C(1 + d(x,y)).
\end{equation}
\end{lem}
%----------------------------------------------------------------------

We are now ready to make the proof of comparison principle.

\begin{proof}[Proof of Theorem~\ref{th::2}] 
  As explained at the beginning of the current section, in view of
  Propositions~\ref{pro:fa-gen-sup} and \ref{pro::1}, it is enough to
  consider sub- and super-solutions (in the sense of
  Definition~\ref{defi::1}) of \eqref{eq::1bis} for some $A=A_F$.

  The remaining of the proof proceeds in several steps.

\paragraph{Step 1: the penalization procedure.}
We want to prove that
\[M=\sup_{(t,x)\in [0,T)\times J} (u(t,x)-v(t,x))\le 0.\] 
Assume by contradiction that $M>0$.  Then for $\alpha,\eta >0$ small
enough, we have $M_{\eps,\alpha}\ge 3M/4>0$ for all
$\eps,\nu >0$ with
\begin{equation}\label{eq::43}
M_{\eps,\alpha}=\sup_{(t,x),(s,y)\in [0,T)\times J}\left\{u(t,x)-v(s,y)
-\eps G\left(\frac{x}{\eps},\frac{y}{\eps}\right)-\frac{(t-s)^2}{2\nu}-\frac{\eta}{T-t}
-\alpha\frac{d^2(0,x)}{2}\right\}
\end{equation}
where the vertex test function $G\ge 0$ is given by
Theorem~\ref{th::G} for a parameter $\gamma$ satisfying
\[ 0<\gamma <\min \left(\frac{\eta}{2T^2}, \frac{M}{8 \eps}\right).\]
Since $M_{\eps,\alpha} \ge 3M/4$, the supremum can be taken over points  
 $(x,y)$ such that the corresponding value is larger than $M/2$. 
Thanks to Lemma~\ref{lem::3} and \eqref{eq::20}, these points satisfy
\begin{equation}\label{eq::40}
0<\frac{M}2 \le C(1+d(x,y)) -\eps
g\left(\frac{d(x,y)}{\eps}\right)
-\frac{(t-s)^2}{2\nu}-\frac{\eta}{T-t} -\alpha\frac{d^2(0,x)}{2}
\end{equation}
which implies in particular that
\begin{equation}\label{eq::41}
\eps g\left(\frac{d(x,y)}{\eps}\right)\le C(1+d(x,y)).
\end{equation}
Because of the superlinearity of $g$ appearing in \eqref{eq::20}, we
know that for any $K>0$, there exists a constant $C_K>0$ such that for
all $a\ge 0$
\[Ka -C_K\le g(a).\]
For $K\ge 2C$, we deduce from \eqref{eq::41} that
\begin{equation}\label{eq::42}
d(x,y) \le \inf_{K\ge 2C} \left\{\frac{C}{K-C} +
\frac{C_K}{C}\eps\right\} =: \omega(\eps)
\end{equation}
where $\omega$ is a concave, nondecreasing function satisfying
$\omega(0)=0$.  We deduce from \eqref{eq::40} and \eqref{eq::42} that
the supremum in \eqref{eq::43} is reached at some point
$(t,x,s,y)=(t_\nu,x_\nu,s_\nu,y_\nu)$.

\paragraph{Step 2: use of the initial condition.} 
We first treat the case where $t_\nu=0$ or $s_\nu=0$.  If there exists a
sequence $\nu\to 0$ such that $t_\nu=0$ or $s_\nu=0$, then calling
$(x_0,y_0)$ any limit of subsequences of $(x_\nu,y_\nu)$, we get from
\eqref{eq::43} and the fact that $M_{\eps,\alpha} \ge M/2$ that
\[0 < \frac{M}2\le u_0(x_0)-u_0(y_0)\le \omega_0(d(x_0,y_0))\le
\omega_0\circ \omega (\eps)\] where $\omega_0$ is the modulus of
continuity of the initial data $u_0$ and $\omega$ is defined in
\eqref{eq::42}. This is impossible for $\eps$ small enough.

\paragraph{Step 3: use of the equation.} 
We now treat the case where $t_\nu>0$ and $s_\nu>0$.  Then we can
write the viscosity inequalities with
$(t,x,s,y)=(t_\nu,x_\nu,s_\nu,y_\nu)$ using the shorthand
notation~\eqref{eq::17} for the Hamiltonian,
\begin{align*}
\frac{\eta}{(T-t)^2} + \frac{t-s}{\nu}+ H(x,G_x(\eps^{-1}x,\eps^{-1}y)+ \alpha d(0,x))\le 0,\\
 \frac{t-s}{\nu}  + H(y,-G_y(\eps^{-1}x,\eps^{-1}y))\ge 0.
\end{align*}
Substracting these two inequalities, we get 
\[\frac{\eta}{T^2}\le H(y,-G_y(\eps^{-1}x,\eps^{-1}y))
- H(x,G_x(\eps^{-1}x,\eps^{-1}y)+ \alpha d(0,x)).\] Using
\eqref{eq::17bis} with $\gamma\in \left(0,\frac{\eta}{2T^2}\right)$,
we deduce for $p=G_x(\eps^{-1}x,\eps^{-1}y)$
\begin{equation}\label{eq::45}
\frac{\eta}{2T^2}\le H(x,p)- H(x,p+ \alpha d(0,x)).
\end{equation}
Because of \eqref{eq::19} and \eqref{eq::42}, we see that $p$ is
bounded for $\eps$ fixed by $|p|\le
C_{\frac{\omega(\eps)}{\eps}}$.  Finally, for
$\eps>0$ fixed and $\alpha\to 0$, we have $\alpha d(0,x)\to 0$,
and using the uniform continuity of $H(x,p)$ for $x\in J$ and $p$
bounded, we get a contradiction in \eqref{eq::45}. The proof is now
complete. 
\end{proof}

\subsection{The vertex test function versus the fundamental solution} \label{ss.godo}

Recalling the definition of the germ ${\mathcal G}_A$ (see
\eqref{eq::50}), let us associate with  any $(p,\lambda)\in
      {\mathcal G}_A$ the following functions for $i,j = 1,...,N$,
\[u^{p,\lambda}(t,x,s,y)=p_i x -p_j y -\lambda (t-s) 
\quad \text{for}\quad (x,y)\in J_i\times J_j,\quad t,s\in\R.\]
The reader can check that they solve the following system,
\begin{equation}\label{eq::60}
\left\{\begin{array}{r}
u_t+ H(x,u_x)=0,\\
-u_s + H(y,-u_y)=0.
\end{array}\right.
\end{equation}
Then, for $N\ge 2$, the function
$\tilde{G}^0(t,x,s,y)=(t-s)G^0\left(\frac{x}{t-s},\frac{y}{t-s}\right)$
can be rewritten as
\begin{equation}\label{eq::62}
\tilde{G}^0(t,x,s,y)=\sup_{(p,\lambda)\in {\mathcal G}_A}
u^{p,\lambda}(t,x,s,y) \quad \text{for}\quad (x,y)\in J\times J,\quad
t-s\ge 0
\end{equation}
which satisfies
\begin{equation}\label{eq::61}
\tilde{G}^0(s,x,s,y)=\begin{cases}
0& \quad \text{if}\quad x=y,\\
+\infty & \quad \text{otherwise.}
\end{cases}
\end{equation}
For $N\ge 2$ and $\displaystyle A> A_0$, it is possible to check (at
least in the smooth convex case -- see \eqref{eq::16} below) that
$\tilde{G}^0$ is a viscosity solution of \eqref{eq::60} for $t-s>0$,
only outside the diagonal $\left\{x=y\not=0\right\}$. Therefore, even
if \eqref{eq::62} appears as a kind of (second) Hopf formula (see for
instance \cite{be,abi}), this formula does not provide a true solution
on the junction.

On the other hand, under more restrictive assumptions on the
Hamiltonians and for $A=A_0$ and $N\ge 2$ (see \cite{imz}), there is a
natural viscosity solution of \eqref{eq::60} with the same initial
conditions \eqref{eq::61}, which is $\mathcal
D(t,x,s,y)=(t-s){\mathcal
  D}_0\left(\frac{x}{t-s},\frac{y}{t-s}\right)$ where ${\mathcal D}_0$
is a cost function defined in \cite{imz} following an optimal control
interpretation.  The function ${\mathcal D}_0$ is not $C^1$ in general
(but it is semi-concave) and it is much more difficult to study it and
to use it in comparison with $G^0$.  Nevertheless, under suitable
restrictive assumptions on the Hamiltonians, it would be also possible
to replace in our proof of the comparison principle the term
$\eps G(\eps^{-1}x,\eps^{-1}y)$ in \eqref{eq::43}
by $\eps {\mathcal D}_0(\eps^{-1}x,\eps^{-1}y)$.

%%%%%%%%%%%%%%%%%%%%%%%%%%%%%%%%%%%%%%%%%%%%%%%%%%%%%%%%%%%%%%%%%%%%%%%%%%
%%%%%%%%%%%%%%%%%%%%%%%%%%%%%%%%%%%%%%%%%%%%%%%%%%%%%%%%%%%%%%%%%%%%%%%%%%
\section{Construction of the vertex test function}
\label{s.G}

This section is devoted to the proof of Theorem~\ref{th::G}.  Our
construction of the vertex test function $G$ is follows the same
pattern as the particular subcase of normalized convex Hamiltonians
$H_i$.

\subsection{The case of smooth convex Hamiltonians}

Assume that the Hamiltonians $H_i$ satisfy the following assumptions
for $i=1,...,N$,
\begin{equation}\label{eq::16}
\left\{\begin{array}{l}
H_i \in C^2(\R) \quad \text{with}\quad H_i''>0 \quad \text{on}\quad \R,\\
H_i'<0 \quad \text{on}\quad (-\infty,0) \quad \text{and}\quad H_i'>0 
\quad \text{on}\quad (0,+\infty),\\
\displaystyle \lim_{|p|\to +\infty} \frac{H_i(p)}{|p|} =+\infty.
\end{array}\right.
\end{equation}
It is useful to associate with each $H_i$ satisfying \eqref{eq::16}
its partial inverse functions $\pi^\pm_i$: 
\begin{equation}\label{eq::21}
\text{ for } \lambda\ge H_i(0), \quad H_i(\pi^\pm_i(\lambda))=\lambda 
\quad \text{such that}\quad \pm \pi^\pm_i(\lambda) \ge 0.
\end{equation}
Assumption~\eqref{eq::16} implies that $\pi^\pm_i \in C^2(\min H_i,
+\infty)\cap C([\min H_i, +\infty))$ thanks to the inverse function theorem.

We recall that $G^0$ is defined, for $i,j=1,...,N$, by
\[G^0(x,y)=\sup_{(p,\lambda)\in \mathcal{G}_A} (p_i x -p_j y
-\lambda) \quad \text{if}\quad (x,y)\in J_i\times J_j\] where
$\mathcal{G}_A$ is defined in \eqref{eq::50}. Replacing $A$
with $\max(A,A_0)$ if necessary, we can always assume that $A\ge A_0$
with $A_0$ given by \eqref{eq::A_0}.

%----------------------------------------------------------------------
\begin{pro}[The vertex test function -- the smooth convex case]\label{pro:vtf}
Let $A\ge A_0$ with $A_0$ given by \eqref{eq::A_0} and assume that the
Hamiltonians satisfy \eqref{eq::16}.  Then $G^0$ satisfies 
\begin{enumerate}[\upshape i)]
\item \emph{(Regularity)}
\[G^0\in C(J^2)\quad \mbox{and}\quad \left\{\begin{array}{l}
G^0\in C^1(\left\{(x,y)\in J\times J,\quad x\not=y\right\}),\\
G^0(0,\cdot)\in C^1(J) \quad \mbox{and}\quad G^0(\cdot,0)\in C^1(J);
\end{array}\right.\]
\item \emph{(Bound from below)} $G^0 \ge G^0 (0,0)=-A;$
\item \label{item-vtf} \emph{(Compatibility conditions)} \eqref{eq::85} holds with
  $\gamma =0$ for all $x\in J$ and \eqref{eq::17bis} holds with
  $\gamma =0$ for $(x,y)$ such that either $x\not= y$ or $x =y=0$;
\item \emph{(Superlinearity)} \eqref{eq::20} holds for some $g=g^0$;
\item \label{item-bg} \emph{(Gradient bounds)} \eqref{eq::19} holds only for $(x,y) \in J^2$
  such that $x \neq y$  or $(x,y)=(0,0)$;
\item \label{saturation}
\emph{(Saturation close to the diagonal)} For $i\in \left\{1,...,N\right\}$
  and for $(x,y)\in J_i\times J_i$, we have $G^0(x,y)=\ell_i(x-y)$
  with $\ell_i \in C(\R) \cap C^1(\R\setminus \left\{0\right\})$ and
\[\ell_i(a)=\left\{\begin{array}{lll}
a\pi^+_i(A) -A & \quad \text{if}\quad & 0\le a\le z_i^+\\
a \pi^-_i(A) -A & \quad \text{if}\quad & z_i^- \le a\le 0\\
\end{array}\right.\]
where $(z_i^-,z_i^+):=(H_i'(\pi^-_i(A)),H_i'(\pi^+_i(A)))$ and the
functions $\pi^\pm_i$ are defined in \eqref{eq::21}.  Moreover $G^0\in
C^1(J_i\times J_i)$ if and only if $\pi_i^+(A)=0=\pi_i^-(A)$.
\end{enumerate}
\end{pro}
%----------------------------------------------------------------------
\begin{rem}
The compatibility condition~\eqref{eq::17bis} for $x\not= y$, is in fact an equality
with $\gamma=0$ when $N \ge 2$. 
\end{rem}
The proof of this proposition is postponed until
Subsection~\ref{subsec:pro80}. With such a result in hand, we can now prove
Theorem~\ref{th::G} in the case of smooth convex Hamiltonians. 
%----------------------------------------------------------------------
\begin{lem}[The case of smooth convex Hamiltonians]\label{lem:case-convex}
Assume that the Hamiltonians satisfy \eqref{eq::16}. Then the conclusion of
Theorem~\ref{th::G} holds true. 
\end{lem}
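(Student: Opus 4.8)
The plan is to deduce Lemma~\ref{lem:case-convex} from Proposition~\ref{pro::80} by a localized regularization of $G^0$ near the diagonal. First one reduces to the case $A\ge A_0$ (and in particular disposes of $A=-\infty$) by replacing $A$ with $\max(A,A_0)$, which leaves $F_A$ unchanged. Set $\Phi=A+G^0$. By Proposition~\ref{pro::80} one has $\Phi\ge 0=\Phi(0,0)$, $\Phi$ is $C^1$ on $\{x\neq y\}$, the slices $G^0(0,\cdot)$ and $G^0(\cdot,0)$ are $C^1$, and on each product branch $J_i^*\times J_i^*$ one has $\Phi(x,y)=L_i(x-y)$ with $L_i:=A+\ell_i\in C(\R)\cap C^1(\R\setminus\{0\})$, $L_i\ge 0$, $L_i(0)=0$, and $L_i$ affine near $0$ with the two slopes $\pi_i^\pm(A)$. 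Thus the only failure of the regularity required in Theorem~\ref{th::G} is the convex corner of each $L_i$ along the open diagonal ray $\{(x,x):x\in J_i^*\}$.

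Next I would smooth these corners by mollifying only inside a thin tube around the open diagonal rays whose radius shrinks at the junction point. Concretely, fix a symmetric mollifier, write $L_i^\eps=L_i*\phi_\eps$, and set, on $\bigcup_i J_i^*\times J_i^*$,
\[ G(x,y)=L_i^{\eps(x,y)}(x-y),\qquad \eps(x,y)=\min\!\big(c\, d\big((x,y),(0,0)\big),\,\eps_0\big) \]
(suitably smoothed), and $G=\Phi$ elsewhere, where $c,\eps_0>0$ are small constants to be fixed. For $c$ small one checks that the set $\{|x-y|<2\eps(x,y)\}$ where $G$ differs from $\Phi$ is contained in $\bigcup_i J_i^*\times J_i^*$ and stays away from the axes $\{x=0\}$ and $\{y=0\}$ except at the origin; since $L_i^\eps(a)=L_i(a)$ for $|a|\ge\eps$, this makes $G$ well defined, continuous, and equal to $\Phi$ (hence $C^1$ in each variable) near those axes and off the tube, while on $J_i^*\times J_i^*$ it is $C^1$ because $\eps(\cdot,\cdot)>0$ there and $(\eps,a)\mapsto L_i^\eps(a)$ is $C^1$ for $\eps>0$.

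It then remains to check properties ii)--vi) of Theorem~\ref{th::G}. The bound from below and the superlinearity follow from $G\ge\Phi$ (Jensen, since $\phi_\eps$ has mean zero) together with Proposition~\ref{pro::80}~ii), iv); the diagonal compatibility holds because $G(x,x)=L_i^{\eps(x,x)}(0)\in[0,C\eps_0]$, so one takes $\eps_0$ with $C\eps_0\le\gamma$; the gradient bounds follow from those of $G^0$ off the tube and from uniform bounds on $\partial_a L_i^\eps$, $\partial_\eps L_i^\eps$ and $\nabla\eps$ inside it. The compatibility on the gradients \eqref{eq::17bis} is inherited with $\gamma=0$ from Proposition~\ref{pro::80}~iii) outside the tube (there either $x\neq y$ or $(x,y)=(0,0)$); inside the tube $H(x,\cdot)=H(y,\cdot)=H_i$ and a direct computation gives $G_x+G_y=\partial_\eps L_i^\eps(x-y)\,(\eps_x+\eps_y)$, of modulus $\le M_i c\sqrt2$ for a constant $M_i$ depending only on $\pi_i^\pm(A)$, so $|H_i(-G_y)-H_i(G_x)|\le\Lambda_i M_i c\sqrt2\le\gamma$ once $c$ is chosen small, $\Lambda_i$ being a Lipschitz bound for $H_i$ on the bounded range of the arguments.

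The main obstacle is exactly the behaviour at the junction point: a tube of fixed width would make $G$ multivalued, since for $(0,y)$ with $y$ small the regularized value $L_i^{\eps_0}(-y)$ read from $J_i^*\times J_i^*$ would clash with the value $\Phi(0,y)$ read from the cross-branch pieces $J_k^*\times J_i^*$. Letting the radius $\eps(x,y)$ vanish as $(x,y)\to(0,0)$ removes this conflict, but one must then verify carefully that $G$ is still $C^1$ in each variable up to $x=0$ and $y=0$, and that the now position-dependent smoothing scale perturbs \eqref{eq::17bis} only by $O(c)$; everything else is routine.
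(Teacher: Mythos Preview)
Your strategy---smooth the convex corner of each $L_i$ along the open diagonal rays with a position-dependent scale that collapses at the origin, so that the modified function still agrees with $\Phi=A+G^0$ on the axes---is exactly the paper's. The paper takes the scale to be $\eps\,\zeta(x+y)$ for a smooth cut-off $\zeta$ (vanishing near $0$, equal to $1$ for large argument), which has the small advantage of being smooth without your parenthetical ``suitably smoothed'' for the $\min$.

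There is, however, a genuine gap. The assertion ``$L_i^\eps(a)=L_i(a)$ for $|a|\ge\eps$'' is false for $L_i^\eps=L_i*\phi_\eps$: convolution with a mollifier leaves a function unchanged at $a$ only when the function is \emph{affine} on $[a-\eps,a+\eps]$, and Proposition~\ref{pro::80}~\ref{saturation}) tells you that $\ell_i$ is affine only on $[z_i^-,0]$ and $[0,z_i^+]$; beyond $z_i^\pm$ it is strictly convex in general. Consequently your $G=L_i^{\eps(x,y)}(x-y)$, being defined on \emph{all} of $J_i^*\times J_i^*$, differs from $\Phi$ not only in the tube but also near the lines $x-y=z_i^\pm$ and further out. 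Those regions meet the axes: for instance at $(z_i^+,0)$ the limit of $G$ from inside $J_i^*\times J_i^*$ equals $L_i^{\eps_0}(z_i^+)>L_i(z_i^+)=\Phi(z_i^+,0)$, so your $G$ is not even continuous on $J^2$, and item~i) of Theorem~\ref{th::G} fails.

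The remedy is to localize the smoothing to the corner at $a=0$, which is possible precisely because $\ell_i$ is piecewise affine on $[z_i^-,z_i^+]$. Either (a) keep mollification but set $G:=\Phi$ outside the tube $\{|x-y|<2\eps(x,y)\}$; for $3\eps_0<\min(|z_i^-|,z_i^+)$ one then has $[a-\eps,a+\eps]\subset(0,z_i^+)$ or $(z_i^-,0)$ on the tube boundary, so $L_i^\eps(a)=L_i(a)$, $(L_i^\eps)'(a)=L_i'(a)$ and $\partial_\eps L_i^\eps(a)=0$ there, and the two pieces of $G$ glue in $C^1$. Or (b)---as the paper does---drop mollification and build directly a convex $C^1$ function $\tilde L_i$ equal to the bare V-shape outside $(z_i^-,z_i^+)$, then set $\ell_i^\eps(a):=\eps\tilde L_i(a/\eps)-A$ on $[\eps z_i^-,\eps z_i^+]$ and $\ell_i^\eps:=\ell_i$ otherwise. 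With either fix the rest of your verification (the bound $G\ge\Phi$ by convexity, the diagonal estimate $G(x,x)\le C\eps_0$, and the gradient computation $G_x+G_y=\partial_\eps L_i^\eps\cdot(\eps_x+\eps_y)$ giving \eqref{eq::17bis} with $\gamma=O(c)$ inside the tube) goes through as you wrote it.
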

%----------------------------------------------------------------------
\begin{proof}
We note that the function $G^0+A$ satisfies all the properties required
for $G$, except on the diagonal $\left\{(x,y)\in J\times J,x=y\not=
0\right\}$ where $G^0$ may not be $C^1$.  To this end, we first
introduce the set $I$ of indices such that $G^0\not\in C^1(J_i\times
J_i)$.  We know from Proposition~\ref{pro:vtf}~\ref{saturation}) that
\[I=\left\{i\in \left\{1,...,N\right\},\quad \pi_i^+(A)
>\pi_i^-(A)\right\}.\]
For $i\in I$, we are going to contruct a regularization
$\tilde{G}^{0,i}$ of $G^0$ 
in a neighbourhood of the diagonal $\left\{(x,y)\in J_i\times J_i, \; 
x=y \not=0\right\}$.

\paragraph{Step 1: Construction of $\tilde{G}^{0,i}$ for $i\in I$.}
Let us define
\[L_i(a)=\begin{cases}
a\pi^+_i(A) & \quad \text{if}\quad a\ge 0,\\
a \pi^-_i(A) & \quad \text{if}\quad a\le 0.
\end{cases}\]
We first consider a convex $C^1$ function $\tilde{L}_i:\R\to \R$
coinciding with $L_i$ outside $(z_i^-,z_i^+)$, that we choose such that
\begin{equation}\label{eq::l9}
0\le \tilde{L}_i -L_i \le 1.
\end{equation}
Then for $\eps\in (0,1]$, we define
\[\ell_i^\eps(a):=\begin{cases}
\eps \tilde{L}_i\left(\frac{a}{\eps}\right) -A 
& \quad \text{if}\quad a \in [\eps z_i^-,\eps z_i^+],\\
\ell_i(a)  & \quad \text{otherwise.}
\end{cases}\]
which is a $C^1(\R)$ (and convex) function.
We now consider a cut-off function $\zeta$ satisfying for some constant $B>0$
\begin{equation}\label{eq::l8}
\left\{\begin{array}{l}
\zeta\in C^\infty(\R),\\
\zeta'\ge 0,\\
\zeta > 0 \quad \text{in} \quad (0,+\infty), \\
\zeta = 0 \quad \text{in}\quad (-\infty,0],\\
\zeta = 1 \quad \text{in}\quad [B,+\infty),\\
\pm z_i^\pm \zeta'  < 1 \quad \text{in}\quad  (0,+\infty)
\end{array}\right.
\end{equation}
and for $\eps \in (0,1]$, we define for $(x,y)\in J_i\times J_i$:
\[\tilde{G}^{0,i}(x,y)=\ell_i^{\eps \zeta(x+y)}(x-y).\]

\paragraph{Step 2: First properties of $\tilde{G}^{0,i}$.}
By construction, we have $\tilde{G}^{0,i}\in C^1((J_i\times J_i)
\setminus \left\{(0,0)\right\})$.  Moreover we have
\[\tilde{G}^{0,i}=G^0 \quad \text{on}\quad (J_i\times J_i) 
\setminus \delta_i^\eps\]
where
\[\delta_i^\eps = \left\{(x,y)\in J_i\times J_i,\quad  
\eps z_i^- \zeta(x+y) <  x-y < \eps z_i^+ \zeta(x+y)\right\}\]
is a neighborhood of the diagonal
\[\left\{(x,y)\in J_i\times J_i,\quad x=y\not= 0\right\}.\]
Because of \eqref{eq::l9}, we also have
\begin{equation}\label{eq::l12}
0 \le G^0-  \tilde{G}^{0,i} \le \eps .
\end{equation}
As a consequence of \eqref{eq::l8}, we have in particular
\[(J_i\times J_i) \setminus \delta_i^\eps \quad 
\supset \quad (J_i \times \left\{0\right\})\cup (\left\{0\right\}\times J_i)\]
and moreover $\tilde{G}^{0,i}$ coincides with $G^0$ on a neighborhood
of $(J_i^* \times \left\{0\right\})\cup (\left\{0\right\}\times
J_i^*)$, which implies that
\begin{equation}\label{eq::l10}
  \tilde{G}^{0,i}= G^0,\quad \tilde{G}^{0,i}_x = G^0_x \quad \text{and}\quad \tilde{G}^{0,i}_y = G^0_y 
\quad \text{on}\quad (J_i \times \left\{0\right\})\cup (\left\{0\right\}\times J_i).
\end{equation}

\paragraph{Step 3: Computation of the gradients of $\tilde{G}^{0,i}$.}
For $(x,y)\in \delta^\eps_i$, we have
\[\left\{\begin{array}{rl}
\tilde{G}^{0,i}_x(x,y)&=\displaystyle (\ell_i^{\eps \zeta(x+y)})'(x-y) + \eps \zeta'(x+y)\  \xi_i\left(\frac{x-y}{\eps \zeta(x+y)}\right)\\
-\tilde{G}^{0,i}_y(x,y)&=\displaystyle (\ell_i^{\eps \zeta(x+y)})'(x-y) - \eps \zeta'(x+y)\  \xi_i\left(\frac{x-y}{\eps \zeta(x+y)}\right)
\end{array}\right.\]
with
\[\xi_i(b)=\tilde{L}_i (b) - b \tilde{L}_i'(b)\]
while if $(x,y)\in (J_i\times J_i)\setminus \delta^\eps_i$ we have
\[\tilde{G}^{0,i}_x(x,y) = -\tilde{G}^{0,i}_y(x,y).\]
Given $\gamma>0$, and using the local uniform  continuity of $H_i$, we
see that we have for $\eps$ small enough
\[H_i(\tilde{G}^{0,i}_x))\le H_i(-\tilde{G}^{0,i}_y) + \gamma \quad \text{in}\quad J_i^*\times J_i^*\]
and using \eqref{eq::l10}, we get
\begin{equation}\label{eq::l11}
H(x,\tilde{G}^{0,i}_x(x,y))- H(y,-\tilde{G}^{0,i}_y(x,y)) \le \gamma \quad \text{for all}\quad (x,y)\in J_i\times J_i.
\end{equation}

\paragraph{Step 4: Definition of $G$.} We set for $(x,y)\in J_i\times J_j$:
\[G(x,y)=\begin{cases}
G^0(x,y)+A &\quad \text{if}\quad i\not=j \quad \text{or}
\quad i=j\not\in I,\\
\tilde{G}^{0,i}(x,y)+A &\quad \text{if}\quad i=j\in I.
\end{cases}\]
From the properties of $G^0$, we recover all the expected properties
of $G$ with $g(a)=g^0(a)+A$.  In particular from
Proposition~\ref{pro:vtf}-\eqref{item-vtf}, \eqref{eq::l11} and
\eqref{eq::l12}, we respectively get the compatibility condition for
the Hamiltonians \eqref{eq::17bis} and the compatibility condition on
the diagonal \eqref{eq::85} for $\eps$ small enough. As far as
\eqref{eq::20} is concerned, we remark that $G(x,y)$ coincide with
$G^0(x,y)+A$ when $d(x,y)$ is large. As far as \eqref{eq::19} is
concerned, $G_x$ and $G_y$ coincide with $G^0_x$ and $G^0_y$ if
$x \in J_i$ and $y\in J_j$ with $i \neq j$; hence we can apply
Proposition~\ref{pro:vtf}-\eqref{item-bg}.  In the case where $x$ and
$y$ belongs to the same branch, $G (x,y)$ is a smooth function of
$x-y$ when $x+y \ge 1$ (since $\zeta(r) =1$ for $r \ge 1$). In
particular, $G_x$ and $G_y$ are bounded as soon as $|x-y|$ is so.
Finally, when $x+y \le 1$, $(x,y)$ is in a compact set and $G_x$ and
$G_y$ are also bounded. 
\end{proof}

\subsection{The general case}

Let us consider a slightly stronger assumption than \eqref{assum:H},
namely
\begin{equation}\label{eq::3}
\left\{\begin{array}{l}
H_i \in C^2(\R) \quad \text{with}\quad H_i''(p_i^0)>0,\\
H_i'<0  \quad  \text{on}\quad  (-\infty,p_i^0)\quad \text{and}\quad  H_i'>0 \quad \text{on}\quad  (p_i^0,+\infty),\\
\displaystyle \lim_{|q|\to +\infty} H_i(q)=+\infty.
\end{array}\right.
\end{equation}

We will also use the following technical result which allows us 
to reduce certain non-convex Hamiltonians to convex Hamiltonians.
%------------------------------------------------------------------------
\begin{lem}[From non-convex to convex Hamiltonians]\label{lem::1}
Given Hamiltonians $H_i$ satisfying \eqref{eq::3} and
\eqref{eq::24}, there exists a function $\beta:\R\to \R$ such that
the functions $\beta \circ H_i$ satisfy \eqref{eq::16} for
$i=1,...,N$.  Moreover, we can choose $\beta$ such that
\begin{equation}\label{eq::23}
\beta \quad \text{is convex},\quad \beta\in C^2(\R), \quad \beta(0)=0 
\quad \text{and}\quad \beta'\ge \delta >0.
\end{equation}
\end{lem}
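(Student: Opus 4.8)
The plan is to build $\beta$ by prescribing its logarithmic derivative. Write $m:=\min_{1\le i\le N}\min_{\R}H_i$, which is $\le 0$ by \eqref{eq::24}, and look for
$$\beta(\lambda)=\int_0^\lambda \delta\,\exp\!\Big(\int_{m-1}^{t}\bar\rho(s)\,ds\Big)\,dt$$
for a suitable nonnegative smooth function $\bar\rho:\R\to[0,+\infty)$ that vanishes on $(-\infty,m-1]$. With this ansatz $\beta\in C^\infty(\R)$, $\beta(0)=0$, $\beta'(\lambda)=\delta\exp\big(\int_{m-1}^{\lambda}\bar\rho(s)\,ds\big)$ is nondecreasing in $\lambda$ and $\ge\delta>0$ (its exponent is $\ge 0$ for $\lambda\ge m-1$ and vanishes below), and $\beta''=\bar\rho\,\beta'\ge 0$; hence \eqref{eq::23} holds for every such $\bar\rho$. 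Moreover $\beta\circ H_i\in C^2(\R)$ because $\beta,H_i\in C^2$ by \eqref{eq::3}, and since $\beta'>0$ and $p_i^0=0$ the chain rule $(\beta\circ H_i)'=\beta'(H_i)\,H_i'$ together with \eqref{eq::3} makes $\beta\circ H_i$ decreasing on $(-\infty,0)$ and increasing on $(0,+\infty)$. So it only remains to choose $\bar\rho$ so that $\beta\circ H_i$ is strictly convex and superlinear, i.e.\ satisfies the first and third lines of \eqref{eq::16}.

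From $(\beta\circ H_i)''(p)=\beta''(H_i(p))\,(H_i'(p))^2+\beta'(H_i(p))\,H_i''(p)$ and $H_i'(0)=0$, $H_i''(p_i^0)>0$, $\beta'>0$, we get $(\beta\circ H_i)''(0)>0$; and for $p\neq 0$ we have $(H_i'(p))^2>0$, so $(\beta\circ H_i)''(p)>0$ holds as soon as $\beta''(H_i(p))>0$ and $\frac{\beta''}{\beta'}(H_i(p))\ge\frac{(-H_i''(p))_+}{(H_i'(p))^2}$. The crucial point, which I expect to be the main obstacle, is that the function
$$\rho(\lambda):=\max\Big(0,\ \max_{1\le i\le N}\ \sup\big\{\,-H_i''(p)/(H_i'(p))^2\ :\ p\neq 0,\ H_i(p)=\lambda\,\big\}\Big)$$
is finite and locally bounded on $\R$. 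Indeed, fixing $\Lambda$, the relevant $p$ lie in the compact set $\{H_i\le\Lambda\}$ by coercivity; on it $H_i'$ vanishes only at $0$, and $H_i''>0$ near $0$ by continuity, so $\{H_i\le\Lambda\}\cap\{H_i''\le 0\}$ is compact and avoids $0$; there $|H_i'|$ is bounded below and $|H_i''|$ above, which bounds the ratio. This is precisely where the nondegeneracy $H_i''(p_i^0)>0$ and the coercivity in \eqref{eq::3} are used.

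I would then fix a smooth $\bar\rho$ that is $\ge\rho$ on $\R$, vanishes on $(-\infty,m-1]$, and is $>0$ on $(m-1,+\infty)$, for instance a mollified step-function majorant of $\rho$, cut off below $m-1$, plus a strictly positive smooth bump on $(m-1,+\infty)$; this keeps $\bar\rho\ge\rho$ because $\rho\equiv 0$ on $\{\lambda<m\}$ for lack of level sets there. Since $\min_{\R}H_i\ge m$ is attained only at $0$, the strict monotonicity of $H_i$ on each side of $0$ gives $\{H_i(p):p\neq 0\}=(\min_{\R}H_i,+\infty)\subseteq(m-1,+\infty)$; on this set $\beta''/\beta'=\bar\rho>0$ and $\bar\rho\ge\rho\ge(-H_i'')_+/(H_i')^2$, so together with $(\beta\circ H_i)''(0)>0$ we conclude $(\beta\circ H_i)''>0$ on all of $\R$.

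It remains to force superlinearity. Set $\mu(r)=\min_{1\le i\le N}\inf_{|q|\ge r}H_i(q)$, which is nondecreasing with $\mu(r)\to+\infty$ by coercivity, and $\mu^{-1}(s):=\sup\{r\ge 0:\mu(r)\le s\}$, which is finite with $\mu^{-1}(s)\to+\infty$. For $|p|$ large, $H_i(p)\ge\mu(|p|)$, hence $\beta(H_i(p))\ge\beta(\mu(|p|))\ge\frac12\mu(|p|)\,\beta'\big(\frac12\mu(|p|)\big)$ since $\beta$ and $\beta'$ are nondecreasing; as $|p|\le\mu^{-1}(\mu(|p|))$, it suffices that $\beta'(s/2)\ge(\mu^{-1}(s))^2/s$ for large $s$, i.e.\ that $\int_{m-1}^{s/2}\bar\rho$ eventually dominates $\log\big((\mu^{-1}(s))^2/(\delta s)\big)$. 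This is merely a lower bound on one locally bounded function of $s$, so it can be met by enlarging $\bar\rho$ (adding one more nonnegative smooth summand supported in $(m-1,+\infty)$), which does not disturb anything established above. With such a $\bar\rho$, $\beta(H_i(p))/|p|\to+\infty$, so $\beta\circ H_i$ satisfies \eqref{eq::16} for every $i$ and $\beta$ satisfies \eqref{eq::23}, which proves the lemma.
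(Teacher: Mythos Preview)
Your approach is essentially the same as the paper's: both reduce strict convexity of $\beta\circ H_i$ to the differential inequality $(\ln\beta')'(\lambda)>-H_i''/(H_i')^2$ on level sets, observe that the right-hand side stays locally bounded (negative near the minimum thanks to $H_i''(0)>0$, and controlled away from $0$ by coercivity and $H_i'\neq 0$), and then pick $\beta$ accordingly. The paper sketches this in two lines and handles superlinearity by post-composing with a further convex increasing superlinear map; you instead build $\beta$ explicitly via its logarithmic derivative and enlarge $\bar\rho$ to force superlinearity directly. Both routes are fine.

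One small correction: your displayed sufficient condition ``$\beta''(H_i(p))>0$ and $\tfrac{\beta''}{\beta'}(H_i(p))\ge\tfrac{(-H_i''(p))_+}{(H_i'(p))^2}$'' does not quite give strict convexity when $H_i''(p)<0$ and equality holds, since then $(\beta\circ H_i)''(p)=\beta'(H_i')^2\big[\bar\rho+H_i''/(H_i')^2\big]=0$. You need $\bar\rho$ to be a \emph{strict} majorant of $\rho$ on $(m-1,+\infty)$ (e.g.\ take your mollified majorant plus a smooth function that is everywhere positive on that interval, not just a ``bump''), which matches the strict inequality the paper writes in \eqref{eq::22}. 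With that tweak your argument goes through.
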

%------------------------------------------------------------------------
\begin{proof}
Recalling \eqref{eq::21}, it is easy to check that $(\beta \circ
H_i)''> 0$ if and only if we have
\begin{equation}\label{eq::22}
(\ln \beta')'(\lambda) > -\frac{H_i''}{(H_i')^2}\circ
  \pi_i^\pm(\lambda)\quad \text{for}\quad \lambda\ge H_i(0).
\end{equation}
Because $H_i''(0)>0$, we see that the right hand side is negative for
$\lambda$ close enough to $H_i(0)$.  

Then it is easy to choose a function $\beta$ satisfying \eqref{eq::22}
and \eqref{eq::23}. Indeed, since we impose $\beta (0)=0$, we only
need to find a non-decreasing $C^1$ function $\beta'$ bounded from
below by some $\delta >0$. Let $\beta'$ be written in the form
$e^B$. We impose $(e^B)(0)=\delta$ and \eqref{eq::23} is satisfied if
$B'$ is bounded from below in $[H_i(0),+\infty)$ by a given function
which is negative at $H_i (0)$.  The subtle point is that $\beta$
should not depend on $i$. It is enough to take the supremum of these
lower bounds, add a small constant which preserves the ``room'' at
$H_i (0)$ and consider a smooth function above this supremum.

Finally, compositing $\beta$ with another convex increasing function
which is superlinear at $+\infty$ if necessary, we can ensure that
$\beta\circ H_i$ is superlinear.
\end{proof}
%------------------------------------------------------------------------
\begin{lem}[The case of smooth Hamiltonians]\label{lem:case-smooth}
Theorem~\ref{th::G} holds true if the Hamiltonians satisfy
\eqref{eq::3}. 
\end{lem}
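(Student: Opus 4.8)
The plan is to reduce the statement to the smooth convex case, which is exactly Lemma~\ref{lem:case-convex}, by a scalar reparametrization of the Hamiltonians. First I would apply Lemma~\ref{lem::1}: since the $H_i$ satisfy \eqref{eq::3} and (as in the hypotheses of Theorem~\ref{th::G}) also \eqref{eq::24}, this produces a convex $\beta\in C^2(\R)$ with $\beta(0)=0$ and $\beta'\ge\delta>0$ (and we may freely assume $\delta\le 1$) such that each $\tilde H_i:=\beta\circ H_i$ satisfies \eqref{eq::16}. Replacing $A$ by $\max(A,A_0)$ with $A_0$ given by \eqref{eq::A_0}, we may also assume $A\ge A_0=0$.

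The key point is that composition with the increasing function $\beta$ intertwines the two junction Hamiltonians. Indeed, $\beta$ being nondecreasing and each $H_i$ having critical point $p_i^0=0$, one checks that $\tilde H_i$ has the same critical point and that $(\beta\circ H_i)^-=\beta\circ H_i^-$; hence, setting $\tilde A:=\beta(A)$,
\[\tilde F_{\tilde A}(p)=\max\Big(\tilde A,\ \max_i(\beta\circ H_i)^-(p_i)\Big)=\beta\Big(\max\big(A,\ \max_i H_i^-(p_i)\big)\Big)=\beta\big(F_A(p)\big).\]
In the shorthand notation \eqref{eq::17} (written for both systems) this says precisely $\tilde H(x,p)=\beta(H(x,p))$ for every $x\in J$, on the branches as well as at the junction point. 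Moreover $\tilde A\ge\beta(A_0)=0$, so Lemma~\ref{lem:case-convex} applies to the Hamiltonians $\tilde H_i$ with flux limiter $\tilde A$.

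Given the target parameter $\gamma>0$, I would set $\tilde\gamma:=\delta\gamma\le\gamma$ and let $\tilde G$ be the vertex test function furnished by Lemma~\ref{lem:case-convex} for the $\tilde H_i$, the flux limiter $\tilde A$ and the parameter $\tilde\gamma$; the claim is then that $G:=\tilde G$ works. The regularity, the bound $G\ge 0=G(0,0)$, the superlinearity (with the same auxiliary function $g$) and the gradient bounds \eqref{eq::19} are inherited verbatim from $\tilde G$, and the diagonal compatibility \eqref{eq::85} holds because $0\le\tilde G(x,x)-\tilde G(0,0)\le\tilde\gamma\le\gamma$.

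The only property that still needs an argument is the gradient compatibility \eqref{eq::17bis}, and this is exactly where the lower bound $\beta'\ge\delta$ from Lemma~\ref{lem::1} is used. Fixing $(x,y)\in J^2$ and abbreviating $a:=H(y,-G_y(x,y))$, $b:=H(x,G_x(x,y))$, there is nothing to prove if $a\le b$, while if $a>b$ the mean value theorem together with $\beta'\ge\delta$ gives
\[a-b\le\frac1\delta\big(\beta(a)-\beta(b)\big)=\frac1\delta\Big(\tilde H(y,-G_y(x,y))-\tilde H(x,G_x(x,y))\Big)\le\frac{\tilde\gamma}{\delta}=\gamma,\]
where the middle equality uses $\tilde H=\beta\circ H$ and the last inequality the compatibility condition satisfied by $\tilde G$. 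This is \eqref{eq::17bis} for $G$, which completes the proof. I do not expect any genuine obstacle here: the whole argument rests on Lemmas~\ref{lem::1} and~\ref{lem:case-convex} plus the elementary convexity manipulation above, the only slightly delicate point being the transfer of \eqref{eq::17bis} just carried out.
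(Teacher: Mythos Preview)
Your proposal is correct and follows essentially the same route as the paper: reduce to the smooth convex case via the reparametrization $\tilde H_i=\beta\circ H_i$, $\tilde A=\beta(A)$ from Lemma~\ref{lem::1}, apply Lemma~\ref{lem:case-convex} to obtain the vertex test function, and then transfer the gradient compatibility back using the lower bound $\beta'\ge\delta$. The only cosmetic differences are that you verify $\tilde H(x,p)=\beta(H(x,p))$ at the junction explicitly and carry out the transfer of \eqref{eq::17bis} via the mean value theorem on $\beta$, whereas the paper phrases the same estimate through $(\beta^{-1})'$; you are also slightly more careful than the paper about the diagonal condition \eqref{eq::85}.
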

%------------------------------------------------------------------------
\begin{proof}
We assume that the Hamiltonians $H_i$ satisfy \eqref{eq::3}. Thanks to
Lemma~\ref{pi0zero}, we can further assume that they satisfy
\eqref{eq::24}. Let $\beta$ be the function given by Lemma
\ref{lem::1}. If $u$ solves \eqref{eq::1bis} on $(0,T)\times J$, then
$u$ is also a viscosity solution of
\begin{equation}\label{eq::1ter}
\left\{\begin{array}{lll}
\bar \beta (u_t) + \hat{H}_i(u_x)= 0  &\text{for}\quad t\in (0,T) &\quad \text{and}\quad x\in J_i^*,\\
\bar \beta (u_t)  + \hat{F}_{\hat{A}}(u_x)=0   &\text{for}\quad  t\in (0,T) &\quad \text{and}\quad x=0
\end{array}\right.
\end{equation}
with $\hat{F}_{\hat{A}}$ constructed as $F_A$ where $H_i$ and $A$ are replaced with
$\hat{H}_i$ and $\hat{A}$ defined as follows
\[\hat{H}_i = \beta\circ H_i,\quad \hat{A} = \beta(A)\]
and $\bar \beta(\lambda)=-\beta(-\lambda)$.  We can then apply
Theorem~\ref{th::G} in the case of smooth convex Hamiltonians (namely Lemma \ref{lem:case-convex}) to
construct a vertex test function $\hat{G}$ associated to problem
\eqref{eq::1ter} for every $\hat{\gamma}>0$.  This means that we have
with $\hat{H}(x,p) = \beta(H(x,p))$,
\[\hat{H}(y, -G_y) \le \hat{H}(x,G_x) +\hat{\gamma}.\]
This implies
\[{H}(y, -G_y) \le \beta^{-1}(\beta({H}(x,G_x)) + \hat{\gamma}) \le {H}(x,G_x) + \hat{\gamma} |(\beta^{-1})'|_{L^\infty(\R)}.\]
Because of the lower bound on $\beta'$ given by Lemma~\ref{lem::1}, 
we get $|(\beta^{-1})'|_{L^\infty(\R)}\le 1/\delta$ which yields 
the compatibility condition~\eqref{eq::17bis}
with $\gamma= \hat{\gamma}/\delta$ arbitrarily small.
\end{proof}
We are now in position to prove Theorem~\ref{th::G} in the general
case. 
\begin{proof}[Proof of Theorem~\ref{th::G}]
Let us now assume that the Hamiltonians only satisfy
\eqref{assum:H}. In this case, we simply approximate the Hamiltonians
$H_i$ by other Hamiltonians $\tilde{H}_i$ satisfying \eqref{eq::3}
such that
\[|H_i-\tilde{H}_i|\le \gamma.\]
We then apply Theorem~\ref{th::G} to the Hamiltonians $\tilde{H}_i$
and construct an associated vertex test function $\tilde{G}$ also for
the parameter $\gamma$.  We deduce that
\[H(y, -\tilde{G}_y) \le H(x,\tilde{G}_x) + 3\gamma\]
with $\gamma>0$ arbitrarily small, which shows again the compatibility
condition on the Hamiltonians \eqref{eq::17bis} for the Hamiltonians
$H_i$'s. The proof is now complete in the general case.
\end{proof}

\begin{rem}[A variant in the proof of construction of $G^0$]
When the Hamiltonians are not convex, it is also possible to use the
function $\beta$ from Lemma~\ref{lem::1} in a different way by
defining directly the function $G^0$ as follows
\[\tilde{G}^0(x,y)=\sup_{(p,\lambda)\in \mathcal{G}_A}
\left(p_i x -p_j y - \beta(\lambda)\right).\]
\end{rem}

%%%%%%%%%%%%%%%%%%%%%%%%%%%%%%%%%%%%%%%%%%%%%%%%%%%%%%%%%%%%%%%%%%%%%%%%%%
\subsection{A special function}

In order to prove Proposition~\ref{pro:vtf}, we first need to study a
special function $\oG$. Precisely, we define the following convex
function for  $z=(z_1,...,z_N)\in\R^N$,
\[\oG(z)=\sup_{(p,\lambda)\in \mathcal{G}_A} (p\cdot z - \lambda).\]

We remark that if $\pm z_i \ge 0$ then the supremum will select
$\pm p_i \ge 0$ if the two vectors $(p_1,\dots,\pm p_i, \dots p_N)$
belong to the germ $\mathcal{G}_A$.  Moreover, in view of the
definition of the germ, see \eqref{eq::50}, we know that
$(p,\lambda) \in \mathcal{G}_A$ if and only if
$p_i = \pi_i^{\sigma_i} (\lambda)$ for some $\sigma_i \in \{-,+\}$,
$\lambda \ge A$ and $(\sigma_1,\dots,\sigma_N) \neq (+,\dots,+)$ for
$\lambda >A$. These facts explain why we will assume that
$\sigma \neq (+,\dots,+)$ in the two next lemmas.  \medskip

For $\sigma = (\sigma_1,\dots,\sigma_N) \in \{\pm\}^N$, we consider
the following subsets of $\R^N$,
\begin{align*}
 Q_\sigma &= \{z =(z_1,\dots,z_N) \in \R^N: \quad \sigma_i z_i \ge 0,
\quad i=1,\dots,N\} \\
 \Delta_\sigma & = \{ z =(z_1,\dots,z_N) \in Q_\sigma : 
\sum_{i=1}^N \frac{\sigma_i z_i}{\bar z^\sigma_i(A)} \le 1 \} 
\end{align*}
where $\bar z^\sigma_i(A) = \sigma_i H_i'(\pi_i^{\sigma_i}(A))\ge 0$
and the functions $\pi^\pm_i$ are defined in \eqref{eq::21}.
We also make precise that we use the following convenient convention, 
\begin{equation}\label{eq::l7}
\frac{\bar z_i}{\bar z^\sigma_i(A)} =\left\{\begin{array}{ll} 0 &\quad
\text{if}\quad \bar z_i=0,\\ +\infty &\quad \text{if}\quad \bar z_i>0
\quad \text{and}\quad \bar z^\sigma_i(A)=0.
\end{array}\right.
\end{equation}
%-----------------------------------------------------------------------
\begin{lem}[The function $\oG$ in $Q_\sigma$]\label{lem:oGloc}
Under the assumptions of Proposition~\ref{pro:vtf}, we have, for 
 any $\sigma \in \{\pm\}^N$ with $\sigma \neq (+,\dots,+)$
if $N \ge 2$:
\begin{enumerate}[\upshape i)]
\item $\oG$ is $C^1$ on $Q_\sigma$ (up to the boundary).
\item For all $z \in Q_\sigma$, there exists a unique $\lambda = \oL
  (z) \ge A$ such that
\begin{align*}
\oG (z)&= p \cdot z - \lambda \\ 
\nabla \oG (z) &= p = (p_1,\dots,p_N)\\
p_i & = \pi_i^{\sigma_i} (\lambda) 
\end{align*}
 with $(p,\lambda) \in  \mathcal{G}_A$. In particular, $p_i$ is unique. 
\item \label{iii} For all $z \in Q_\sigma$, $\oL(z) = A$ if and only
  if $z \in \Delta_\sigma$. In particular, $\oG$ is linear in
  $\Delta_\sigma$: for $z \in \Delta_\sigma$, $\oG(z) = \sum_i \pi_i^{\sigma_i}(A) z - A$. 
\end{enumerate}
\end{lem}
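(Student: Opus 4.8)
The plan is to collapse the $N$-dimensional supremum defining $\oG$ into a one-dimensional concave maximization over the level $\lambda\ge A$, and then to read off all three assertions from the elementary analysis of that scalar problem. Fix $z\in Q_\sigma$, so $\sigma_i z_i\ge0$ for every $i$. For any $(p,\lambda)\in\mathcal{G}_A$ (see \eqref{eq::50}) one has $H_i(p_i)=\lambda$ for all $i$; since $\lambda\ge A\ge A_0\ge\min H_i$ (recall \eqref{eq::A_0}), this forces $p_i\in\{\pi_i^-(\lambda),\pi_i^+(\lambda)\}$ with $\pi_i^-(\lambda)\le0\le\pi_i^+(\lambda)$ as in \eqref{eq::21}, and because $\sigma_i z_i\ge0$ the product $p_i z_i$ is largest for the choice $p_i=\pi_i^{\sigma_i}(\lambda)$. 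Hence $p\cdot z-\lambda\le\phi(\lambda;z):=\sum_{i=1}^N\pi_i^{\sigma_i}(\lambda)z_i-\lambda$. Conversely I would verify that $p^\lambda:=(\pi_1^{\sigma_1}(\lambda),\dots,\pi_N^{\sigma_N}(\lambda))$ lies in $\mathcal{G}_A$ for every $\lambda\ge A$: indeed $H_i(p^\lambda_i)=\lambda$, and for $N\ge2$, since $\sigma\neq(+,\dots,+)$, there is an index $i_0$ with $\sigma_{i_0}=-$, so $H_{i_0}^-(p^\lambda_{i_0})=H_{i_0}(\pi_{i_0}^-(\lambda))=\lambda$, while on the branches with $\sigma_i=+$ one has $H_i^-(\pi_i^+(\lambda))=\min H_i\le A_0\le\lambda$; therefore $F_A(p^\lambda)=\max(A,\lambda)=\lambda$ (for $N=1$ there is no flux constraint and this is automatic). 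We conclude that $\oG(z)=\sup_{\lambda\ge A}\phi(\lambda;z)$, and any maximizing $\lambda$ carries the point $p^\lambda\in\mathcal{G}_A$ that realizes the supremum.

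Next I would analyze the scalar function $\lambda\mapsto\phi(\lambda;z)$. By \eqref{eq::16} and the inverse function theorem it is continuous on $[A,+\infty)$ and $C^2$ on $(A,+\infty)$, with $\partial_\lambda\phi(\lambda;z)=\sum_i z_i/H_i'(\pi_i^{\sigma_i}(\lambda))-1=\sum_i|z_i|/|H_i'(\pi_i^{\sigma_i}(\lambda))|-1$, using that $H_i'(\pi_i^{\sigma_i}(\lambda))$ has the sign $\sigma_i$. Since $H_i''>0$, the quantity $|H_i'(\pi_i^{\sigma_i}(\lambda))|$ is nondecreasing in $\lambda$, and superlinearity of $H_i$ forces it to tend to $+\infty$; hence $\partial_\lambda\phi(\cdot;z)$ is nonincreasing (strictly so when $z\neq0$) and tends to $-1$, so $\phi(\cdot;z)$ is concave and coercive on $[A,+\infty)$ and attains its maximum at a unique point $\oL(z)\ge A$. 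This proves ii) with $p=(\pi_i^{\sigma_i}(\oL(z)))_i\in\mathcal{G}_A$. The maximizer equals the left endpoint, $\oL(z)=A$, exactly when $\partial_\lambda\phi(A^+;z)\le0$; since $(\pi_i^{\sigma_i})'(A^+)=1/H_i'(\pi_i^{\sigma_i}(A))=\sigma_i/\bar z_i^\sigma(A)$ — with the convention \eqref{eq::l7} whenever $\bar z_i^\sigma(A)=0$ — this inequality reads $\sum_i\sigma_i z_i/\bar z_i^\sigma(A)\le1$, i.e. $z\in\Delta_\sigma$; moreover on $\Delta_\sigma$ we get $\oG(z)=\phi(A;z)=\sum_i\pi_i^{\sigma_i}(A)z_i-A$, an affine function of $z$. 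This is iii).

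For the regularity assertion i), I would note that $\oG$ is by construction a supremum of affine functions of $z$, hence convex and continuous, and that the affine map $z'\mapsto p^{\oL(z)}\cdot z'-\oL(z)$ lies below $\oG$ with equality at $z$, so $p(z):=(\pi_i^{\sigma_i}(\oL(z)))_i$ is a subgradient of $\oG$ at $z$. It then suffices to show that $z\mapsto\oL(z)$ is continuous on $Q_\sigma$: if $z_n\to z$, then since $\pi_i^{\sigma_i}(\lambda)=o(\lambda)$ as $\lambda\to+\infty$ (again by superlinearity) the functions $\phi(\cdot;z_n)$ are coercive uniformly in $n$, so $(\oL(z_n))_n$ stays bounded and, by joint continuity of $\phi$, every subsequential limit is a maximizer of $\phi(\cdot;z)$, hence equals $\oL(z)$ by uniqueness. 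Consequently $z\mapsto p(z)$ is continuous on $Q_\sigma$; a convex function with a continuous subgradient selection is differentiable with that selection as gradient, and this gradient extends continuously to the boundary of $Q_\sigma$ by continuity of $p$, so $\oG\in C^1(Q_\sigma)$ up to the boundary with $\nabla\oG=p$.

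The only genuinely delicate point is the bookkeeping in the first step: one must use both $\sigma\neq(+,\dots,+)$ and $A\ge A_0$ precisely to ensure that the natural candidate $p^\lambda$ still satisfies the flux-limiter constraint and therefore belongs to $\mathcal{G}_A$; without these the reduction to $\phi$ would break down (and the formula for $\oG$ would genuinely differ, which is what the exclusion of $\sigma=(+,\dots,+)$ for $N\ge2$ signals). A secondary subtlety is the degenerate case $\bar z_i^\sigma(A)=0$, which occurs only when $\min H_i=A_0=A$, where $\pi_i^{\sigma_i}(A)=0$ and the one-sided derivative $(\pi_i^{\sigma_i})'(A^+)$ is infinite; this is exactly what the convention \eqref{eq::l7} absorbs. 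Once these are in place, the concavity–coercivity analysis and the convex-analysis argument for $C^1$ regularity are routine.
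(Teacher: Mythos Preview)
Your proof is correct and follows essentially the same route as the paper: reduce the supremum over $\mathcal{G}_A$ to the one-parameter concave maximization $\sup_{\lambda\ge A}\big(\pi^\sigma(\lambda)\cdot z-\lambda\big)$, locate the unique maximizer $\oL(z)$ by analyzing the derivative, and deduce $C^1$ regularity from convexity together with continuity of the subgradient selection $z\mapsto\pi^\sigma(\oL(z))$. The only cosmetic difference is that the paper obtains the continuity of $\oL$ via a foliation of $Q_\sigma$ by the hyperplanes $P^\sigma(\lambda)$, while you argue directly by compactness and uniqueness of the maximizer; both arguments are equivalent.
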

%-----------------------------------------------------------------------
Before giving global properties of $\oG$, we introduce the set 
\begin{equation}\label{eq:defiomega}
 \bar \Omega = \begin{cases} \R & \text{ if } N=1,\\ 
\R^N \setminus (0,+\infty)^N & \text{ if } N \ge 2.
\end{cases}
\end{equation}
%-----------------------------------------------------------------------
\begin{lem}[Global properties of $\oG$ and $\oL$]\label{lem:oGglob}
Under the assumptions of Proposition~\ref{pro:vtf}, the function $\oG$
is convex and finite in $\R^N$, reaches its minimum $-A$ at $0$ and
the function $\oL$ is continuous in $\bar\Omega$.
\end{lem}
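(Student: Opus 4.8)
The plan is to prove the four assertions in order, relying on Lemma~\ref{lem:oGloc} for the fine structure of $\oG$ on the orthants $Q_\sigma$. \emph{Convexity, finiteness, continuity of $\oG$.} Since $\oG(z)=\sup_{(p,\lambda)\in\mathcal G_A}(p\cdot z-\lambda)$ is a supremum of affine functions of $z$, it is convex and lower semicontinuous. To see it is finite, I would fix $z\in\R^N$ and note that every $(p,\lambda)\in\mathcal G_A$ satisfies $\lambda\ge A$ and $H_i(p_i)=\lambda$ with $p_i=\pi_i^{\pm}(\lambda)$; the superlinearity of the $H_i$ in \eqref{eq::16} then forces $|p_i|/\lambda\to0$ as $\lambda\to+\infty$, so that $p\cdot z-\lambda\to-\infty$. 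Hence the supremum is effectively over $\{(p,\lambda)\in\mathcal G_A:\ \lambda\le R\}$ for some $R=R(z)$, a set on which $p\cdot z-\lambda$ is bounded (the branches $\lambda\mapsto\pi_i^{\pm}(\lambda)$ being continuous), so $\oG(z)<+\infty$. A convex function finite on all of $\R^N$ is automatically continuous.

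\emph{Minimum value $-A$ at the origin.} Evaluating at $z=0$ gives $\oG(0)=-\inf\{\lambda:(p,\lambda)\in\mathcal G_A\}=-A$, the lower bound $\lambda\ge A$ coming from $\lambda=F_A(p)\ge A$ and equality being realized at $\lambda=A$. For a general $z$, I would pick $\sigma$ with $z\in Q_\sigma$ and take $p_i^\sigma=\pi_i^{\sigma_i}(A)$: then $(p^\sigma,A)\in\mathcal G_A$, since $H_i(p_i^\sigma)=A$ and $H_i^-(p_i^\sigma)\le A$ for each $i$ (if $\sigma_i=+$ then $p_i^\sigma\ge0=p_i^0$ and $H_i^-(p_i^\sigma)=\min H_i\le A_0\le A$; if $\sigma_i=-$ then $H_i^-(p_i^\sigma)=H_i(p_i^\sigma)=A$), whence $F_A(p^\sigma)=A$. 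Therefore $\oG(z)\ge p^\sigma\cdot z-A=\sum_i\bigl(\sigma_i\pi_i^{\sigma_i}(A)\bigr)\bigl(\sigma_i z_i\bigr)-A\ge-A$, using $\sigma_i\pi_i^{\sigma_i}(A)\ge0$ and $\sigma_i z_i\ge0$. This gives $\oG\ge-A=\oG(0)$, so $0$ is a global minimum.

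\emph{Continuity of $\oL$ on $\bar\Omega$.} I would first observe that $\bar\Omega=\bigcup_\sigma Q_\sigma$, the union being over all $\sigma$ when $N=1$ and over all $\sigma\neq(+,\dots,+)$ when $N\ge2$, i.e. exactly the orthants to which Lemma~\ref{lem:oGloc} applies. On each such $Q_\sigma$, Lemma~\ref{lem:oGloc} gives $\nabla\oG(z)=\bigl(\pi_i^{\sigma_i}(\oL(z))\bigr)_i$ with $\oG$ of class $C^1$ up to the boundary, so that $\oL(z)=H_i\bigl(\partial_i\oG(z)\bigr)$ for any index $i$ (since $H_i\circ\pi_i^{\sigma_i}=\mathrm{id}$ on $[\min H_i,+\infty)\ni\oL(z)$), which is continuous in $z$ on $Q_\sigma$. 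To see that these local definitions patch into a single function, note that on an overlap $z\in Q_\sigma\cap Q_{\sigma'}$ one has $z_i=0$ whenever $\sigma_i\neq\sigma'_i$, so that $\lambda\mapsto\sum_i\pi_i^{\sigma_i}(\lambda)z_i-\lambda$ and its $\sigma'$-analogue coincide, hence so do their unique maximizers by Lemma~\ref{lem:oGloc}(ii); thus $\oL$ is well defined and agrees there with both continuous restrictions. A function continuous on each of finitely many closed sets covering $\bar\Omega$ and agreeing on overlaps is continuous, which concludes.

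\emph{Expected main obstacle.} The computations are routine once the inverse branches $\pi_i^{\pm}$ and the definition of $F_A$ are unwound; the points requiring care are the reduction to a bounded range of $\lambda$ in the finiteness step (which is exactly where superlinearity is used) and, for the continuity of $\oL$, the verification that the orthants $Q_\sigma$ appearing in Lemma~\ref{lem:oGloc} genuinely cover $\bar\Omega$ and that the local maximizers $\oL$ match along the lower-dimensional intersections $Q_\sigma\cap Q_{\sigma'}$.
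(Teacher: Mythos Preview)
Your proposal is correct and follows essentially the same approach as the paper: convexity from the sup-of-affines structure, finiteness from superlinearity of the $H_i$, the lower bound $\oG\ge-A$ by plugging in $(\pi^\sigma(A),A)\in\mathcal G_A$, and continuity of $\oL$ by covering $\bar\Omega$ with the orthants $Q_\sigma$ and checking that the local pieces agree on the overlaps where some $z_i$ vanish. The only minor variation is that the paper (which proves Lemmas~\ref{lem:oGloc} and~\ref{lem:oGglob} together) constructs $\oL$ first via an explicit foliation $P^\sigma(\lambda)$ of $[0,+\infty)^N$ and then uses its continuity to deduce the $C^1$-regularity of $\oG$, whereas you invoke Lemma~\ref{lem:oGloc} as given and read off $\oL(z)=H_i(\partial_i\oG(z))$; with that lemma granted, your order of dependence is perfectly legitimate and arguably cleaner.
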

%-----------------------------------------------------------------------
\begin{proof}[Proof of Lemmas~\ref{lem:oGloc} and \ref{lem:oGglob}]
Let $\sigma\in \left\{\pm\right\}^N$ and $z\in Q_\sigma$. We set
\[\pi^\sigma(\lambda)=(\pi^{\sigma_1}_1(\lambda),...,\pi^{\sigma_N}_N(\lambda)).\]
Using the fact that $(\pi^\sigma(A),A) \in \mathcal{G}_A$, we get
$\oG(z) \ge \oG(0)=-A$.

\paragraph{Step 1: Explicit expression of $\oG$.}
For $\sigma \neq (+,\dots,+)$ if $N \ge 2$, we have
\begin{equation}\label{eq::l1}
(p,\lambda)\in \mathcal{G}_A \cap \left(Q_\sigma \times \R\right) \quad 
\Longleftrightarrow \quad 
\lambda\ge A  \quad \text{and}\quad p=\pi^{\sigma}(\lambda).
\end{equation}
This implies in particular that
\begin{equation}\label{eq::l2}
\oG(z)=\sup_{\lambda\ge A}\ (z\cdot \pi^\sigma(\lambda) - \lambda).
\end{equation}

\paragraph{Step 2: Optimization.}
Because of the superlinearity of the Hamiltonians $H_i$ (see
\eqref{eq::16}), we have for $z\not= 0$,
\[\lim_{\lambda\to +\infty} f^\sigma(\lambda) = -\infty \quad
\text{for}\quad f^\sigma(\lambda):=z\cdot \pi^\sigma(\lambda) -
\lambda.\] 
Therefore the supremum in \eqref{eq::l2} is reached for some
$\lambda\in [A,+\infty)$, \textit{i.e.}
\[\oG(z)= z\cdot \pi^\sigma(\lambda) - \lambda.\]
Then we have $\lambda=A$ or 
\(\lambda>A \quad \text{and}\quad (f^\sigma)'(\lambda)=0.\)
Note that for $\lambda> A_0$, we can rewrite $(f^\sigma)'(\lambda)=0$ as
\[\sum_{i=1,...,N}\frac{\bar z_i}{\bar z^\sigma_i} = 1 \quad \text{with}\quad \left\{\begin{array}{l}
\bar z_i = \sigma_i z_i \ge 0,\\ \\ \bar z^\sigma_i=\bar
z^\sigma_i(\lambda) := \sigma_i H_i'(\pi_i^{\sigma_i}(\lambda))> 0.
\end{array}\right.\]
Moreover, we have
\[(\bar z^\sigma_i)'(\lambda)=\frac{H_i''(\pi_i^{\sigma_i}(\lambda))}{\sigma_i H_i'(\pi_i^{\sigma_i}(\lambda))}>0\]
where the strict inequality follows from the strict convexity of
Hamiltonians, see \eqref{eq::16}. Moreover, by
definition of $\bar z^\sigma_i$, we have
\[\lim_{\lambda\to +\infty} \bar z^\sigma_i(\lambda) = +\infty\]
because $H_i$ is convex and superlinear.

\paragraph{Step 3: Foliation and definition  of $\oL$.}
Let us consider the sets
\begin{equation}\label{eq::l5}
  P^\sigma(\lambda)= \left\{\begin{array}{ll }
\left\{\bar z \in [0,+\infty)^N,\quad \displaystyle \sum_{i=1,...,N}\frac{\bar z_i}{\bar z^\sigma_i(\lambda)} = 1\right\} &\quad \text{if}\quad \lambda >A,\\
\\
\left\{\bar z\in [0,+\infty)^N, \displaystyle \sum_{i=1,...,N}\frac{\bar z_i}{\bar z^\sigma_i(A)} \le  1\right\} &\quad \text{if}\quad \lambda =A
\end{array}\right.
\end{equation}
(keeping in mind  convention \eqref{eq::l7}).
\begin{figure}
\begin{center}
\includegraphics[height=7cm]{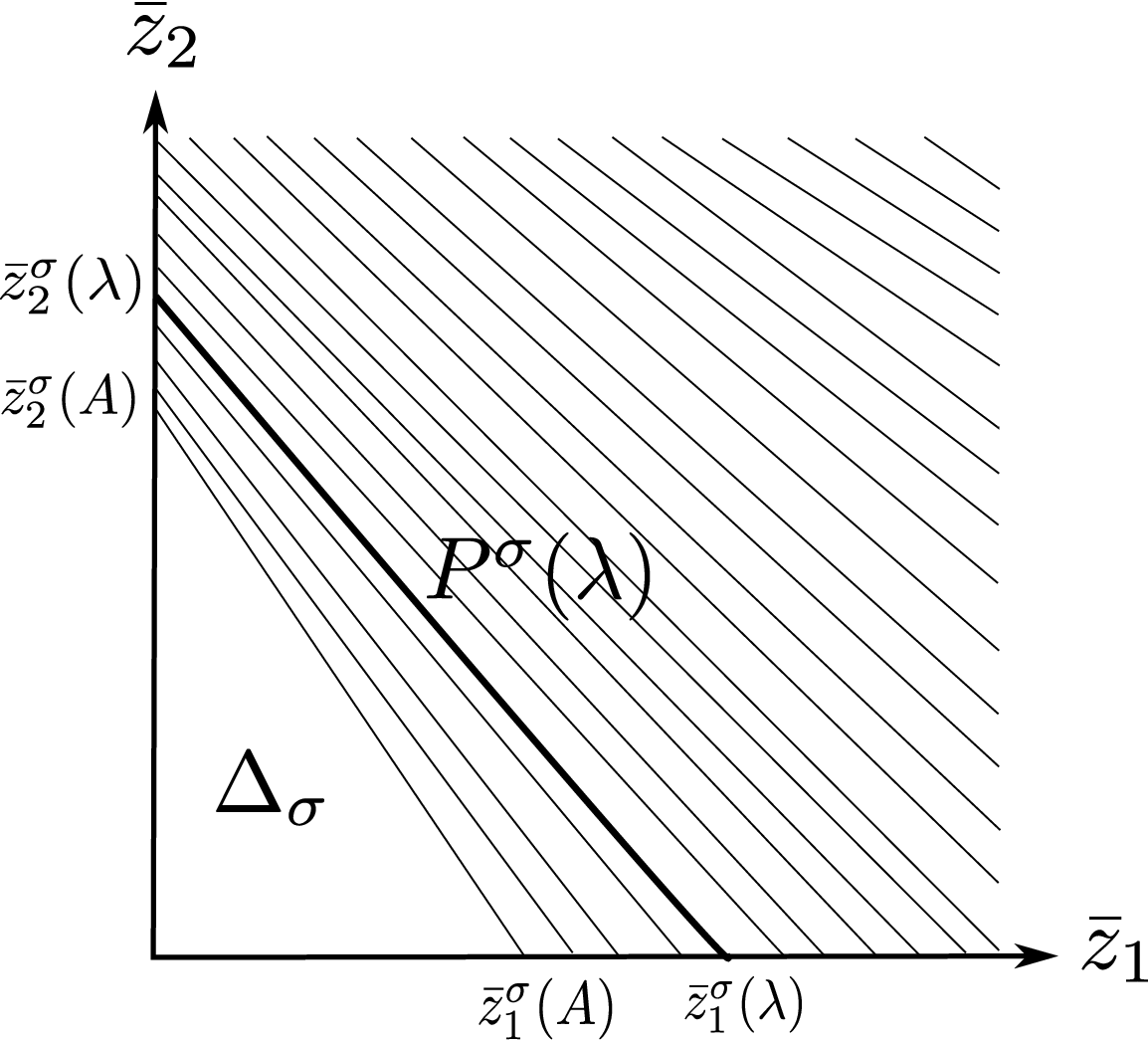}
\end{center}
\caption{The foliation of $[0,+\infty)^2$ ($N=2$) with sets $P^\sigma(\lambda)$ for $\lambda \ge A$.}
\label{figure}
\end{figure}
Because for $\lambda>A$, the intersection points of the piece of hyperplane $P^\sigma(\lambda)$ with each axis $\R e_i$ are $\bar z^\sigma_i(\lambda) e_i$, 
we deduce that we can write the partition (see Figure~\ref{figure})
\[[0,+\infty)^N = \bigcup_{\lambda\ge A} P^\sigma(\lambda)\]
where $P^\sigma(\lambda)$ gives a foliation by hyperplanes for $\lambda>A$.
Then we can define for $z\in Q_\sigma$,
\[\oL^\sigma(z)=\left\{\lambda \quad \text{such that}\quad \bar z \in P^\sigma(\lambda) \quad \text{for}\quad \bar z_i=\sigma_i z_i \quad \text{for}\quad i=1,...,N\right\}.\]
From our definition, we get that the function $\oL^\sigma$ is
continuous on $Q_\sigma$ and satisfies $\oL^\sigma(0)=A$.  For $z\in
Q_\sigma$ such that $z_{i_0} =0$, we see from the definition of
$P^\sigma$ given in \eqref{eq::l5} that the value of $\oL^\sigma(z)$
does not depend on the value of $\sigma_{i_0}$.  Therefore we can glue
up all the $\oL^\sigma$ in a single continuous function $\oL$ defined
for $z\in \bar \Omega$ by
\[\oL(z)= \oL^\sigma(z) \quad \text{if}\quad z\in Q_\sigma.\]
which satisfies $\oL(0)=A$.

\paragraph{Step 4: Regularity of $\oG$ and computation of the gradients.}
For $z\in Q_\sigma \subset \bar \Omega$, we have
\[\oG(z)  = \displaystyle \sup_{\lambda\ge A} \ 
(z\cdot \pi^\sigma(\lambda) -\lambda)\] 
where the supremum is reached only for $\lambda = \oL(z)$.  Moreover
$\oG$ is convex in $\R^N$.  We just showed that the subdifferential of
$\oG$ on the interior of $Q_\sigma$ is the singleton
$\left\{\pi^\sigma(\lambda)\right\}$ with $\lambda = \oL(z)$. This
implies that $\oG$ is differentiable in the interior of $Q_\sigma$ and
\[\nabla \oG(z)=  \pi^\sigma(\lambda) \quad \text{with}\quad \lambda= \oL(z).\]
The fact that the maps $\pi^\sigma$ and $\oL$ are continuous implies
that $\oG_{|Q_\sigma}$ is $C^1$.
\end{proof}

\subsection{Proof of Proposition~\ref{pro:vtf}}
\label{subsec:pro80}

We now turn to the proof of Proposition~\ref{pro:vtf}.
\begin{proof}[Proof of Proposition~\ref{pro:vtf}]
By definition of $G^0$, we have
\[G^0(x,y)=\oG(Z(x,y)) \quad \text{with}\quad 
Z(x,y):=xe_i - y e_j\in \bar\Omega \quad \text{if}\quad (x,y)\in
J_i\times J_j
\]
where $(e_1,...,e_N)$ is the canonical basis of $\R^N$ and $\bar
\Omega$ is defined in \eqref{eq:defiomega}.

\paragraph{Step 1: Regularity.}
Then Lemmas~\ref{lem:oGloc} and \ref{lem:oGglob} imply immediately that
$G^0\in C(J^2)$ and $G^0 \in C^1(R)$
for each region $R$ given by
\begin{equation}\label{eq::71}
R=\begin{cases}
J_i\times J_j &\quad \text{if}\quad i\not=j,\\
T_i^\pm =\left\{(x,y)\in J_i\times J_i,\quad \pm (x-y)\ge 0 \right\} 
 &\quad \text{if}\quad i=j.
\end{cases}
\end{equation}
This regularity of $\oG$ implies in particular the regularity of $G^0$
given in i).

\paragraph{Step 2: Computation of the gradients.}
We also deduce from  Lemma~\ref{lem:oGglob} that 
\[\Lambda(x,y):=\oL(Z(x,y))\]
defines a continuous map $\Lambda:J^2\to [A,+\infty)$
which satisfies
\begin{equation}\label{eq::93}
\Lambda(x,x)=A
\end{equation}
because of Lemma~\ref{lem:oGloc}-\ref{iii}) and $Z(x,x)=0$.  Moreover,
for each $R$ given by \eqref{eq::71} and for all $(x,y)\in R \subset
J_i\times J_j$ we have
\[G^0(x,y)=p_i x -p_j y -\lambda\]
and
\[(G^0_{|R})_x(x,y)=p_i\quad \text{and}\quad (G^0_{|R})_y(x,y)=-p_j\]
with $\lambda=\Lambda(x,y)$ and $(p,\lambda)\in \mathcal{G}_A$ and 
\begin{equation}\label{eq:pipj}
(p_i,p_j)=\left\{\begin{array}{lll}
(\pi^+_i(\lambda),\pi^-_j(\lambda)) &\quad \text{if}\quad R=J_i\times J_j &\quad \text{with}\quad i\not=j,\\
(\pi^\pm_i(\lambda),\pi^\pm_i(\lambda)) &\quad \text{if}\quad R=T^\pm_i &\quad \text{with}\quad i=j. 
\end{array}\right.
\end{equation}

\paragraph{Step 3: Checking the compatibility condition on the gradients.}
Let us consider $(x,y)\in J^2$ with $x=y=0$ or $x\not= y$.
We have
\[(\partial_i G^0(\cdot, y))(x)\in \left\{\pi^\pm_i(\lambda)\right\}
\quad \text{and}\quad -(\partial_j G^0(x,\cdot))(y) \in
\left\{\pi^\pm_j(\lambda)\right\} \quad \text{with}\quad
\lambda=\Lambda(x,y)\ge A.\]

We claim that
\begin{equation}\label{eq::95}
H(x,G^0_x(x,y))=\lambda.
\end{equation}
If $x \neq 0$, then $H(x,G^0_x(x,y)) = H_i (\pi_i^\pm (\lambda)) = \lambda$. 
If $x =0$ and there exists $i$ such that $\sigma_i = -$, then 
$H_i^-(\partial_i^x G^0(0,y)) = H_i^- (\pi_i^- (\lambda)) = \lambda$
and $H_j^- (\partial_j^x G^0(0,y)) = H_j^- (\pi_j^{\sigma_j} (\lambda)) \le \lambda$. 
Hence we also have in this case that \eqref{eq::95} holds true.  
We are left with treating the case where
\begin{equation}\label{eq::94}
x=0\quad \text{and}\quad (\partial_i G^0(\cdot, y))(0) =
\pi^+_i(\lambda) \quad \text{for all}\quad i=1,...,N
\end{equation}
If $0\not= y \in J_j$, then $(x,y)=(0,y)\in T_j^-$ and $(\partial_j
G^0(\cdot, y))(0) = \pi^-_j(\lambda)$. Therefore \eqref{eq::94} only
happens if $y=0$ and then
\[H(0, G^0_x(0,0))=A\]
which still implies \eqref{eq::95}, because $\lambda= \Lambda(0,0)=A$.

In view of \eqref{eq::95}, \eqref{eq::17bis} with equality and
$\gamma=0$ is equivalent to
\begin{equation}\label{eq::96}
H(y,-G^0_y(x,y))=\lambda.
\end{equation}
Arguing like we did to get \eqref{eq::95}, we can treat all cases except the following one
\begin{equation}\label{eq::97}
y=0\quad \text{and}\quad -(\partial_j G^0(x,\cdot))(0) =
\pi^+_j(\lambda) \quad \text{for all}\quad j=1,...,N.
\end{equation}
If $x\in J_i$ and $N\ge 2$, then we can find $j\not=i$ such that
$-(\partial_j G^0(x,\cdot))(0) =\pi^-_j(\lambda)$.  Therefore
\eqref{eq::97} only happens if $N=1$ and then
\[H(0, -G^0_y(x,0))=A\le \lambda.\]

\paragraph{Step 4: Superlinearity.}
In view of the definition of $G^0$, we deduce from \eqref{eq:pipj} that for all $\lambda \ge A$, 
\[G^0(x,y)\ge \left\{\begin{array}{ll}
x \pi^+_i(\lambda)-y \pi^-_j(\lambda) - \lambda & \quad \text{if}\quad i\not=j,\\
(x-y) \pi^\pm_i(\lambda) -\lambda & \quad \text{if}\quad i=j \quad \text{and}\quad \pm (x-y)\ge 0
\end{array}\right.\]
Setting
\[ \pi^0(\lambda):=\min_{\pm, \ i=1,...,N} \pm \pi^\pm_i(\lambda) \ge
0, \]
we get
\[G^0(x,y)\ge d(x,y) \pi^0(\lambda) -\lambda.\]
From the definition \eqref{eq::21}  of $\pi^\pm_i$ and the
assumption~\eqref{eq::16} on the Hamiltonians, we deduce that
\[\pi^0(\lambda)\to +\infty \quad \text{as}\quad \lambda\to +\infty\]
which implies that for any $K\ge 0$, there exists a constant $C_K\ge
0$ such that
\[G^0(x,y)\ge Kd(x,y) - C_K.\]
Therefore we get \eqref{eq::20} with
\[g^0(a)=\sup_{K\ge 0} (Ka - C_K).\]

\paragraph{Step 5: Gradient bounds.}
Note that 
\[\sum_{i=1,...,N} |Z_i(x,y)|= d(x,y).\]
Because each component of the gradients of $G^0$ are equal to one of
the $\left\{\pi^\pm_k(\lambda)\right\}_{\pm,k=1,...,N}$ with $\lambda=
\oL(Z(x,y))$, we deduce \eqref{eq::19} from the continuity of $\oL$
and of the maps $\pi^\pm_k$.

\paragraph{Step 6: Saturation close to the diagonal.}
Point \ref{saturation}) in Proposition~\ref{pro:vtf} follows from
Lemma~\ref{lem:oGloc}-\ref{iii}), from the definition of $\oG$ and
from the regularity of $G^0$. In particular, for $(x,y) \in T_i^\pm$,
$Z=(x-y)e_i$ belongs to $P^\sigma (\lambda) \cup \Delta_\sigma$ with
$\sigma_i = \pm$. Hence, Lemma~\ref{lem:oGloc}-\ref{iii}) implies that
$G^0(x,y) = \pi_i^\pm (A) (x-y) -A$ for
$\pm (x-y) \in [0,\pm z_i^\pm]$ with $z_i^\pm = H_i' (\pi_i^\pm
(A))$.
We recall that $\bar z_i^\sigma = \pm z_i^\pm \ge 0$ appears in the
definition of $P^\sigma (\lambda)$ and $\Delta_\sigma$.
\end{proof}

%%%%%%%%%%%%%%%%%%%%%%%%%%%%%%%%%%%%%%%%%%%%%%%%%%%%%%%%%%%%%%%%%%%%%%%%%%
\subsection{A second vertex test function}

In this subsection, we propose a construction of a second vertex test
function $G^\sharp$ (see Theorem \ref{th::p1} below), that can be seen
as a kind of approximation of the original vertex test function $G$.
This test function is somehow less natural than our previous test
function, but it has the advantage that it is easier to check its
properties. Moreover, it can be useful in applications. 

We introduce the following
%---------------------------------------------------------------
\begin{defi}[Piecewise $C^1$ Regularity]\label{defi::p1}
  We say that a function $u$ belongs to $C^{1,\sharp}(J)$, if $u\in
  C(J)$, and if for any branch $J_i$ for $i=1,\dots,N$, there exists a
  sequence of points $(a^i_k)_{k\in \N}$ on the branch $J_i$
  satisfying
$$0=a^i_0< a^i_1< \dots < a^i_k<a^i_{k+1} \to +\infty \quad 
\mbox{as}\quad k \to +\infty$$ 
such that
$$u_{|[a^i_k,a^i_{k+1}]}\in C^1\left([a^i_k,a^i_{k+1}]\right) \quad 
\mbox{for all}\quad k\in \N,\quad i=1,\dots,N.$$
\end{defi}
%----------------------------------------------------------------

\subsubsection*{The smooth convex case}

Following what we did in order to construct the first vertex test
function, we first construct $G^\sharp$ in the smooth convex case and
we then derive the general case by approximation. In the smooth convex
case, we first consider
\begin{equation}\label{eq::p5}
G^{0,\sharp}(x,y)=\sup_{k\in \N} \left(\sup_{(p,\lambda_k)\in {\mathcal G}_A}  
(p_ix-p_j y -\lambda_k)\right)\quad \mbox{if}\quad (x,y)\in J_i\times J_j
\end{equation}
for an increasing sequence $(\lambda_k)_{k\in \N}$ satisfying for some constant $\gamma_0>0$
\begin{equation}\label{eq::p8}
\left\{\begin{array}{l}
\lambda_0=A \quad \mbox{and}\quad \lambda_k\to +\infty \quad \mbox{as}\quad k\to +\infty\\
\lambda_{k+1}-\lambda_k\le \gamma_0 \quad \mbox{for all}\quad k\ge 0.
\end{array}\right.
\end{equation}
%------------------------------------------------------------------------
\begin{lem}[Piecewise linearity] 
The function $G^{0,\sharp}$ is piecewise linear. More precisely, 
\begin{itemize}
\item
For $(x,y)\in J_i\times J_i$,
\[ G^{0,\sharp}(x,y)=\ell_i(x-y)\]
  with $\ell_i \in C(\R)$ and
\[\ell_i(a)=\left\{\begin{array}{lll}
a\pi^+_i(\lambda_k) -\lambda_k & \quad \text{if}\quad & z_i^{k,+}\le a\le z_i^{k+1,+}\\
a \pi^-_i(\lambda_k) -\lambda_k & \quad \text{if}\quad &  z_i^{k+1,-}\le a \le  z_i^{k,-}\\
\end{array}\right| \quad \mbox{for all}\quad k\ge 0\]
and
\begin{equation}\label{eq::p10}
  z^{0,\pm}_i=0 \quad \mbox{and}\quad z^{k+1,\pm}_i=
\frac{\lambda_{k+1}-\lambda_k}{\pi^\pm_i(\lambda_{k+1})-\pi^\pm_i(\lambda_k)}
\quad \mbox{for all}\quad k\ge 0
\end{equation}
(recall that  $\pi^\pm_i$ is defined in \eqref{eq::21}).
We have in particular for all $k\ge 1$
\begin{equation}\label{eq::p11}
z_i^{k+1,-}<z_i^{k,-} < z_i^{0,-} =0=z_i^{0,+}< z_i^{k,+} < z_i^{k+1,+}.
\end{equation}
\item 
For $(x,y) \in J_i \times J_j$ with $i\not=j$, 
$$G^{0,\sharp}(x,y)=x\pi_i^{+}(\lambda_k)-y\pi^-_i(\lambda_k)-\lambda_k $$
for $(x,y)\in \Delta^k_{ij}$ with
\begin{equation}\label{eq::p7}
\Delta^k_{ij}=\left\{(x,y)\in J_i\times J_j,\quad \frac{x}{z^{k,+}_i}-\frac{y}{z^{k,-}_j}\ge 1,
\quad \frac{x}{z^{k+1,+}_i}-\frac{y}{z^{k+1,-}_j}\le 1\right\}.
\end{equation}
\end{itemize}
\end{lem}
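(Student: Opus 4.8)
The plan is to collapse the double supremum in \eqref{eq::p5} into a single supremum of affine functions and then to identify that upper envelope explicitly on each pair of branches. First I would observe that for fixed $k$ and $(x,y)\in J_i\times J_j$ one has $x,y\ge 0$, so the linear form $p\mapsto p_ix-p_jy$ is maximized over $\{p:(p,\lambda_k)\in\mathcal{G}_A\}$ by pushing $p_i$ to the largest and $p_j$ to the smallest value compatible with $H_m(p_m)=\lambda_k$, i.e.\ $p_i=\pi_i^+(\lambda_k)$ and $p_j=\pi_j^-(\lambda_k)$ when $i\neq j$, and $p_i=\pi_i^{\pm}(\lambda_k)$ according to the sign of $x-y$ when $i=j$. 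One then checks that the remaining components $p_m$ can always be chosen so that $F_A(p)=\lambda_k$: for $k\ge 1$ this is automatic once one component equals $\pi_m^-(\lambda_k)$ (namely $p_j$ if $i\ne j$, or — using $N\ge 2$ — any other branch if $i=j$, or nothing at all if $N=1$), while for $k=0$, where $\lambda_0=A$, taking the free components equal to $\pi_m^+(A)$ works because $H_m^-(\pi_m^+(A))=\min H_m\le A_0\le A$. This is exactly the description \eqref{eq::l1} of $\mathcal{G}_A$ in each quadrant. Hence, on $J_i\times J_j$ with $i\ne j$,
\[\sup_{(p,\lambda_k)\in\mathcal{G}_A}\big(p_ix-p_jy-\lambda_k\big)=x\pi_i^+(\lambda_k)-y\pi_j^-(\lambda_k)-\lambda_k,\]
and on $J_i\times J_i$ the inner supremum equals $(x-y)\pi_i^{\pm}(\lambda_k)-\lambda_k$ according to $\pm(x-y)\ge 0$. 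In particular $G^{0,\sharp}$ is, branch by branch, a countable supremum of affine functions, hence convex.

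For the diagonal blocks $J_i\times J_i$ this gives $G^{0,\sharp}(x,y)=\ell_i(x-y)$ with $\ell_i(a)=\sup_k\big(a\pi_i^+(\lambda_k)-\lambda_k\big)$ for $a\ge 0$, the symmetric formula with $\pi_i^-$ for $a\le 0$, the two agreeing at $a=0$ with value $-\lambda_0=-A$ (continuity of $\ell_i$ then follows once local finiteness is known). For $a\ge 0$ it is the upper envelope of the lines of slope $\pi_i^+(\lambda_k)$ — strictly increasing in $k$ — and intercept $-\lambda_k$ — strictly decreasing in $k$. I would note that the abscissa where the $k$-th and $(k+1)$-th lines cross is precisely $z_i^{k+1,+}$ of \eqref{eq::p10}, namely the reciprocal of the slope of the chord of $\pi_i^+$ over $[\lambda_k,\lambda_{k+1}]$. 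Since $\pi_i^+$ is strictly concave — being the inverse of the strictly convex increasing function $H_i|_{[0,+\infty)}$, cf.\ \eqref{eq::21} — these chord slopes decrease strictly, so $z_i^{k+1,+}$ increases strictly, and superlinearity of $H_i$ forces $z_i^{k,+}\to+\infty$. Thus $0=z_i^{0,+}<z_i^{1,+}<\dots$ exhausts $[0,+\infty)$ and on $[z_i^{k,+},z_i^{k+1,+}]$ the $k$-th line is the maximal one, which is the asserted formula for $\ell_i$ there. The case $a\le 0$ is symmetric, using this time that $\pi_i^-$ is strictly convex (inverse of the strictly convex \emph{decreasing} $H_i|_{(-\infty,0]}$), which makes $z_i^{k+1,-}$ decrease strictly; together with $z_i^{0,\pm}=0$ this yields \eqref{eq::p11}.

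For the off-diagonal blocks $J_i\times J_j$ with $i\ne j$, we have $G^{0,\sharp}(x,y)=\sup_k\big(x\pi_i^+(\lambda_k)-y\pi_j^-(\lambda_k)-\lambda_k\big)$. I would subtract the $(k-1)$-th affine function from the $k$-th one and divide by $\lambda_k-\lambda_{k-1}>0$, using \eqref{eq::p10} to recognize $(\pi_i^+(\lambda_k)-\pi_i^+(\lambda_{k-1}))/(\lambda_k-\lambda_{k-1})=1/z_i^{k,+}$ and likewise $(\pi_j^-(\lambda_k)-\pi_j^-(\lambda_{k-1}))/(\lambda_k-\lambda_{k-1})=1/z_j^{k,-}$. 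This shows the $k$-th function dominates the $(k-1)$-th exactly on $\{x/z_i^{k,+}-y/z_j^{k,-}\ge 1\}$ and dominates the $(k+1)$-th exactly on $\{x/z_i^{k+1,+}-y/z_j^{k+1,-}\le 1\}$, so the set where it beats both neighbours is precisely $\Delta_{ij}^k$ of \eqref{eq::p7}. It then remains to see that on $\Delta_{ij}^k$ the $k$-th affine function beats \emph{every} index; this follows from the monotonicity of the previous paragraph: since $1/z_i^{\ell,+}$ decreases and $1/z_j^{\ell,-}$ increases in $\ell$, the functionals $(x,y)\mapsto x/z_i^{\ell,+}-y/z_j^{\ell,-}$ are totally ordered, so dominating the two immediate neighbours propagates to all indices. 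Hence $\bigcup_k\Delta_{ij}^k=J_i\times J_j$ and $G^{0,\sharp}$ coincides with the $k$-th affine expression on $\Delta_{ij}^k$, as claimed.

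The hard part will be the strict monotonicity of the break abscissae $z_i^{k,\pm}$ in $k$: it is exactly what guarantees that each upper envelope is assembled from the affine pieces in the natural order of $k$, with no piece omitted, and it is where the strict convexity of the Hamiltonians is genuinely used (through the strict concavity of $\pi_i^+$ and the strict convexity of $\pi_i^-$). Once this monotonicity is established, together with $z_i^{k,\pm}\to\pm\infty$ from superlinearity, the identification of $\ell_i$ and of the polygonal regions $\Delta_{ij}^k$ is purely algebraic bookkeeping.
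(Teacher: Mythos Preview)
Your proof is correct and follows essentially the same route as the paper's, only far more explicitly: the paper's proof is three lines, noting that $\lambda_k=H_i(\pi_i^\pm(\lambda_k))$ together with the convexity of $H_i$ gives the monotonicity \eqref{eq::p11}, and that the explicit expressions then follow by inspection. Your argument via the strict concavity of $\pi_i^+$ (resp.\ strict convexity of $\pi_i^-$) is exactly the dual formulation of the same fact --- the chord slopes of $H_i$ over $[\pi_i^+(\lambda_{k-1}),\pi_i^+(\lambda_k)]$ are precisely the $z_i^{k,+}$, so strict convexity of $H_i$ and strict concavity of its inverse say the same thing --- and your careful identification of the regions $\Delta_{ij}^k$ via the pointwise monotonicity of $g_\ell(x,y)=x/z_i^{\ell,+}-y/z_j^{\ell,-}$ on $J_i\times J_j$ makes rigorous what the paper leaves as ``easy to check.''
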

%--------------------------------------------------------------------------
\begin{proof}
  Remark that $\lambda_k=H_i(\pi^\pm_i(\lambda_k))$. Therefore the
  definition of $z_i^{k,\pm}$ and the convexity of $H_i$ imply
  inequalities (\ref{eq::p11}). It is then easy to check the explicit
  expressions of $G^{0,\sharp}$.
\end{proof}
We recall that if $u\in C^{1,\sharp}(J)$ and $u$ is not $C^1$
at a point $x\in J_i^*$, then Proposition~\ref{pro::gr42} can be used
in order to understand $H$ as follows
\begin{equation}\label{eq::p9}
H(x,u_x)=\max\left(H_i^+(\partial_iu(x^-)),H_i^-(\partial_iu(x^+))\right).
\end{equation}
This interpretation will be used to check inequality (\ref{eq::17bis})
at points where $G^{0,\sharp}(x,y)$ is not $C^1$ with $(x,y)\in
J_i\times J_j$ with $i\not= j$.

%----------------------------------------------------------------------
\begin{pro}[The second vertex test function -- the smooth convex case]\label{pro::p3}
  Let $A\ge A_0$ with $A_0$ given by \eqref{eq::A_0} and assume that
  the Hamiltonians satisfy \eqref{eq::16}.  Let $(\lambda_k)_{k\in
    \N}$ be any increasing sequence satisfying (\ref{eq::p8}) for some
  given $\gamma_0>0$.  Then the function $G^{0,\sharp}:J^2\to \R$
  defined in (\ref{eq::p5}) satisfies properties ii) and iv) listed in
  Proposition \ref{pro:vtf}, together with the following properties
\begin{enumerate}
\item[] \emph{i') (Regularity)}
\[G^{0,\sharp}\in C(J^2)\quad \text{and}\quad \left\{\begin{array}{l}
G^{0,\sharp}(x,\cdot)\in C^{1,\sharp}(J) \quad \text{for all}\quad x\in J,\\
G^{0,\sharp}(\cdot,y)\in C^{1,\sharp}(J) \quad \text{for all}\quad y\in J.
\end{array}\right.\]
\item[] \emph{iii') (Compatibility conditions)} On the one hand,
  \eqref{eq::85} holds with $\gamma=0$ for all $x\in J$.  On the other
  hand, \eqref{eq::17bis} holds with $\gamma=\gamma_0$, for all
  $(x,y)\in J^2$, except possibly for all points on the diagonals
  $x=y\in J_i^*$ for $i\in \left\{1,\dots,N\right\}$.

  Moreover, at points $(x,y)\in J_i\times J_j$ with $i\not= j$, where
  the functions $G^{0,\sharp}(x,\cdot)$ or $G^{0,\sharp}(\cdot,y)$ are
  not $C^1$, inequality \eqref{eq::17bis} has to be understood using
  convention (\ref{eq::p9});
\item[] \emph{v') (Gradient bounds)} Estimate~\eqref{eq::19} holds for
  all $(x,y) \in J^2$ if we understand it as a bound for both left and
  right derivatives, at points where the functions
  $G^{0,\sharp}(x,\cdot)$ and $G^{0,\sharp}(\cdot,y)$ are not $C^1$.
\end{enumerate}
\end{pro}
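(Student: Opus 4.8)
The plan is to follow the proof of Proposition~\ref{pro::80} step by step, replacing everywhere the continuous germ $\mathcal{G}_A$ by its discretisation $\{(p,\lambda_k):k\in\N\}$ and using the explicit piecewise-linear description of $G^{0,\sharp}$ supplied by the Piecewise linearity Lemma. As in Subsection~\ref{subsec:pro80} one writes $G^{0,\sharp}(x,y)=\oG^\sharp(Z(x,y))$ with $Z(x,y)=xe_i-ye_j$ on $J_i\times J_j$ and $\oG^\sharp(z)=\sup_{k}(z\cdot\pi^{\sigma}(\lambda_k)-\lambda_k)$ on each $Q_\sigma$. Property~ii) is immediate: the term $k=0$ (that is, $\lambda_0=A$) gives $\oG^\sharp(z)\ge z\cdot\pi^\sigma(A)-A\ge -A$ by the elementary estimate used in Lemma~\ref{lem:oGglob}, with equality at $z=0$, and the Piecewise linearity Lemma gives $G^{0,\sharp}(0,0)=-A$. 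For property~iv), the same Lemma yields $G^{0,\sharp}(x,y)\ge d(x,y)\,\pi^0(\lambda_k)-\lambda_k$ for every $k$ (with $\pi^0$ as in Step~4 of the proof of Proposition~\ref{pro::80}); since $\pi^0(\lambda_k)\to+\infty$, taking the supremum over $k$ gives \eqref{eq::20} for some superlinear nondecreasing $g$. For the gradient bounds~v'), every one-sided derivative of $G^{0,\sharp}$ is one of the $\pm\pi^\pm_m(\lambda_k)$, and on the set $\{d(x,y)\le K\}$ only finitely many indices $k$ occur — the cells $\Delta^k_{ij}$ move off to infinity as $k\to\infty$ since their inner boundaries involve $z^{k,\pm}_i$ (see below) — so $\lambda_k\le\lambda_{k_0(K)}$ there and one concludes exactly as in Step~5 of the proof of Proposition~\ref{pro::80}.

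For the regularity~i'), the only point beyond the Piecewise linearity Lemma is that the corner points accumulate only at infinity on each branch. Applying the mean value theorem to $\lambda\mapsto H_i(\pi^+_i(\lambda))$ in \eqref{eq::p10} one gets $z^{k+1,+}_i=H_i'(\xi_k)$ for some $\xi_k\in(\pi^+_i(\lambda_k),\pi^+_i(\lambda_{k+1}))$; since $\lambda_k\to+\infty$ forces $\pi^+_i(\lambda_k)\to+\infty$, hence $\xi_k\to+\infty$, and $H_i$ is convex and superlinear, we obtain $z^{k,+}_i\to+\infty$, and symmetrically $z^{k,-}_i\to-\infty$. Combined with \eqref{eq::p11} and the description \eqref{eq::p7} of $\Delta^k_{ij}$, this shows that for fixed $x$ the corner points of $y\mapsto G^{0,\sharp}(x,y)$ form, on each branch, a discrete set tending to $+\infty$; hence $G^{0,\sharp}(x,\cdot)\in C^{1,\sharp}(J)$, and symmetrically in the other variable. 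Continuity across the junction point is part of the Lemma.

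The heart of the proof is the compatibility condition~iii'). On the diagonal, $G^{0,\sharp}(x,x)=-A=G^{0,\sharp}(0,0)$, so \eqref{eq::85} holds with $\gamma=0$. For \eqref{eq::17bis} there are three kinds of points. (a) In the interior of a region where $G^{0,\sharp}$ is $C^1$ (the interior of a cell $\Delta^k_{ij}$ with $i\ne j$, or of a piece of a diagonal $T^\pm_i$), both $G^{0,\sharp}_x$ and $-G^{0,\sharp}_y$ equal the relevant value among $\pi^+_i(\lambda_k)$, $\pi^-_j(\lambda_k)$, so $H(x,G^{0,\sharp}_x)=H(y,-G^{0,\sharp}_y)=\lambda_k$ and the left-hand side of \eqref{eq::17bis} vanishes. (b) At a point with $x\ne 0$, $y\ne 0$, not on a diagonal $x=y$, where $G^{0,\sharp}$ fails to be $C^1$ in $x$ or in $y$: the two regions meeting there carry consecutive parameters $\lambda_k<\lambda_{k+1}$, and by the orientation of $\Delta^k_{ij}$ (larger $x$, resp. larger $y$, corresponds to larger index) the one-sided $x$-derivatives are $\pi^+_i(\lambda_k)$ (left) and $\pi^+_i(\lambda_{k+1})$ (right), while the one-sided values of $-G^{0,\sharp}_y$ are $\pi^-_j(\lambda_k)$ (left) and $\pi^-_j(\lambda_{k+1})$ (right). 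Using convention~\eqref{eq::p9}, the identities $H^+_i(\pi^+_i(\lambda))=\lambda=H^-_j(\pi^-_j(\lambda))$, and $\max(H^-_i(q),H^+_j(q'))=\max(H_i(0),H_j(0))\le A_0\le A\le\lambda_k$ for $q\ge 0$ and $q'\le 0$ (after the normalisation \eqref{eq::24}), one finds $H(x,G^{0,\sharp}_x)=\lambda_k$ and $H(y,-G^{0,\sharp}_y)=\lambda_{k+1}$, so the left-hand side of \eqref{eq::17bis} equals $\lambda_{k+1}-\lambda_k\le\gamma_0$ by \eqref{eq::p8}; at corners along a branch, $x-y=z^{k,\pm}_i$ with $k\ge 1$, the same computation gives $\lambda_k-\lambda_{k-1}\le\gamma_0$. (c) Points with $x=0$ or $y=0$ are treated exactly as in Step~3 of the proof of Proposition~\ref{pro::80}: there $H(0,\cdot)=F_A$ and, choosing the sign of the missing component, one checks that the junction Hamiltonian reads the selected parameter $\lambda_k\ge A$ on one side and the branch Hamiltonian reads the (smaller) matching parameter on the other, so the contribution to \eqref{eq::17bis} is again $\le\gamma_0$ (and $0$ at $(0,0)$). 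That \eqref{eq::17bis} genuinely fails on the diagonals $x=y\in J_i^*$ when $A>A_0$, where it reduces to $A-A_0>0$, explains the exception in the statement.

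The main obstacle is purely combinatorial: at each corner of $G^{0,\sharp}$ one must identify correctly which one-sided derivative carries the smaller parameter $\lambda_k$ and which the larger one $\lambda_{k+1}$, and verify that convention~\eqref{eq::p9} selects the \emph{smaller} parameter for $H(x,G^{0,\sharp}_x)$ and the \emph{larger} one for $H(y,-G^{0,\sharp}_y)$; only this ordering yields the bound $\lambda_{k+1}-\lambda_k\le\gamma_0$ and not the uninformative bound $0$. This is exactly where the monotonicity assumptions on the $H_i$ (nonincreasing on $(-\infty,0]$, nondecreasing on $[0,+\infty)$, after the normalisation \eqref{eq::24}) and the orientation of the cells $\Delta^k_{ij}$ in \eqref{eq::p7} are used; everything else is a transcription of the arguments already carried out for $G^0$.
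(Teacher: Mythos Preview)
Your proof is correct and follows essentially the same approach as the paper: a direct case analysis of the piecewise-linear function $G^{0,\sharp}$, using convention~\eqref{eq::p9} at corners to show that the two Hamiltonians differ by at most $\lambda_{k+1}-\lambda_k\le\gamma_0$. The paper organises the cases by region (first $J_i^*\times J_i^*$, then $\Delta^k_{ij}$ with six subcases), while you organise by type of point (interior, corner, junction), but the computations coincide. Your extra argument that $z^{k,\pm}_i\to\pm\infty$, ensuring the corner points do not accumulate, is a welcome clarification the paper leaves implicit; conversely your treatment of the boundary cases $x=0$ or $y=0$ in~(c) is more compressed than the paper's explicit Cases~1--6 (in particular Cases~5 and~6, where a corner sits on a coordinate axis and both the branch and the junction Hamiltonian must be evaluated via~\eqref{eq::p9}), but the missing details are routine and identical in spirit to your case~(b).
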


\begin{proof}
  The regularity i') follows immediately for the previous
  lemma. Moreover points ii) and iv) listed in Proposition
  \ref{pro:vtf} follow easily, and similarly for the gradient bounds
  v'). Also \eqref{eq::85} holds clearly for $\gamma=0$.

The only important point is to check inequality \eqref{eq::17bis} in
iii') with $\gamma=\gamma_0$.

%%%%%%%%%%%%%%%%%%%%%%%%%%%%%%%%%%%%%%%%%%
\paragraph{Step 1: checking on $J_i^*\times J_i^*$}
Inequality \eqref{eq::17bis} is clearly true for $(x,y)\in J_i^*\times
J_i^*$, if $x-y\not= z^{k,\pm}_i$.  Let us check it if $x-y=
z^{k+1,\pm}_i\not= 0$. We distinguish two cases. \medskip

%%%%%%%%%%%%%%%%%%%%%%%%%%%%%%%%%%%%%%%%%%
\noindent \textsc{Case 1: $(x,y)\in J_i^*\times J_i^*$ with $x-y=
  z^{k+1,+}_i>0$.}  The only novelty here is that the function
$G^{0,\sharp}$ is not $C^1$ at those points, and we have to use
interpretation (\ref{eq::p9}) to compute it. We get
\begin{equation}\label{eq::p20}
\begin{array}{ll}
H(x,G^{0,\sharp}_x(x,y))&=\max(H_i^+(G^{0,\sharp}_x(x^-,y)),H_i^-(G^{0,\sharp}_x(x^+,y)))\\
& = \max(H_i^+(\pi^+_i(\lambda_k)),H_i^-(\pi^+_i(\lambda_{k+1})))\\
& = \lambda_k
\end{array}
\end{equation}
and
\begin{equation}\label{eq::p21}
\begin{array}{ll}
H(y,-G^{0,\sharp}_y(x,y)) & = \max(H_i^+(-G^{0,\sharp}_y(x,y^-)),H_i^-(-G^{0,\sharp}_y(x,y^+)))\\
& = \max(H_i^+(\pi^+_i(\lambda_{k+1})),H_i^-(\pi^+_i(\lambda_{k})))\\
& = \lambda_{k+1}.
\end{array}
\end{equation}
This implies inequality \eqref{eq::17bis} for $\gamma=\gamma_0\ge
\lambda_{k+1}-\lambda_k$.\medskip

%%%%%%%%%%%%%%%%%%%%%%%%%%%%%%%%%%%%%%%%%%
\noindent \textsc{Case 2: $(x,y)\in J_i^*\times J_i^*$ with $x-y= z^{k+1,-}_i<0$.}
We compute
\begin{equation}\label{eq::p22}
\begin{array}{ll}
H(x,G^{0,\sharp}_x(x,y))&=\max(H_i^+(G^{0,\sharp}_x(x^-,y)),H_i^-(G^{0,\sharp}_x(x^+,y)))\\
& = \max(H_i^+(\pi^-_i(\lambda_{k+1})),H_i^-(\pi^-_i(\lambda_{k})))\\
& = \lambda_k
\end{array}
\end{equation}
and
\begin{equation}\label{eq::p23}
\begin{array}{ll}
H(y,-G^{0,\sharp}_y(x,y)) & = \max(H_i^+(-G^{0,\sharp}_y(x,y^-)),H_i^-(-G^{0,\sharp}_y(x,y^+)))\\
& = \max(H_i^+(\pi^-_i(\lambda_{k})),H_i^-(\pi^-_i(\lambda_{k+1})))\\
& = \lambda_{k+1}
\end{array}
\end{equation}
which gives the result.

%%%%%%%%%%%%%%%%%%%%%%%%%%%%%%%%%%%%%%%%%%
\paragraph{Step 2: checking on $\Delta^k_{ij}$ for $i\not=j$.}
This inequality is also obviously true if $(x,y)\in \mbox{Int}\
\Delta^k_{ij}$ for $i\not=j$. We then distinguish six cases.
\medskip

%%%%%%%%%%%%%%%%%%%%%%%%%%%%%%%%%%%%%%%%%%
\noindent {\sc Case 1: $x=y=0$.}
This case is similar to the study of $G^0$ and we get immediately 
$$H(0,-G^{0,\sharp}_y(0,0))=-A=H(0,G^{0,\sharp}_x(0,0)).$$

%%%%%%%%%%%%%%%%%%%%%%%%%%%%%%%%%%%%%%%%%%
\noindent {\sc Case 2: $(x,y)\in \Delta^k_{ij}$ with $y=0$ and $z^{k,+}_i<x<z^{k+1,+}_i$.}
$$H(0,-G^{0,\sharp}_y(x,0))=\lambda_k=H(x,G^{0,\sharp}_x(x,0)).$$

%%%%%%%%%%%%%%%%%%%%%%%%%%%%%%%%%%%%%%%%%%
\noindent {\sc Case 3: $(x,y)\in \Delta^k_{ij}$ with $x=0$ and $-z^{k,-}_j<y<-z^{k+1,-}_j$.}
$$H(y,-G^{0,\sharp}_y(0,y))=\lambda_k=H(0,G^{0,\sharp}_x(0,y)).$$
%%%%%%%%%%%%%%%%%%%%%%%%%%%%%%%%%%%%%%%%%%

\noindent {\sc Case 4: $(x,y)\in (\partial \Delta^k_{ij})\backslash
  \left((J_i\times \left\{0\right\})\cup (\left\{0\right\}\times
    J_j)\right)$.}
Let us consider the subcase where $\displaystyle \frac{x}{z^{k+1,+}_i}-\frac{y}{z^{k+1,-}_j}= 1$ 
(the other case with $k+1$ replaced by $k$ being of course similar).
We compute again:
$$\begin{array}{ll}
H(x,G^{0,\sharp}_x(x,y))&=\max(H_i^+(G^{0,\sharp}_x(x^-,y)),H_i^-(G^{0,\sharp}_x(x^+,y)))\\
& = \max(H_i^+(\pi^+_i(\lambda_k)),H_i^-(\pi^+_i(\lambda_{k+1})))\\
& = \lambda_k
\end{array}$$
and
$$\begin{array}{ll}
H(y,-G^{0,\sharp}_y(x,y)) & = \max(H_j^+(-G^{0,\sharp}_y(x,y^-)),H_j^-(-G^{0,\sharp}_y(x,y^+)))\\
& = \max(H_j^+(\pi^-_j(\lambda_k)),H_j^-(\pi^-_j(\lambda_{k+1})))\\
& = \lambda_{k+1}.
\end{array}$$
This implies again inequality \eqref{eq::17bis} for
$\gamma=\gamma_0\ge \lambda_{k+1}-\lambda_k$. \medskip

%%%%%%%%%%%%%%%%%%%%%%%%%%%%%%%%%%%%%%%%%%
\noindent {\sc Case 5: $(x,y)\in \Delta^k_{ij}$ with $y=0$ and $x=z^{k+1,+}_i$.}
Again, we check easily that $H(0,-G^{0,\sharp}_y(x,0))=\lambda_{k+1}$,
and $H(x,G^{0,\sharp}_x(x,0))=\lambda_k$, as in Case~4.
\medskip

%%%%%%%%%%%%%%%%%%%%%%%%%%%%%%%%%%%%%%%%%%
\noindent {\sc Case 6: $(x,y)\in \Delta^k_{ij}$ with $x=0$ and $y=-z^{k+1,-}_j$.}
We have $H(y,-G^{0,\sharp}_y(0,y))=\lambda_{k+1}$ as in Case~4, and
$H(0,G^{0,\sharp}_x(0,y))=\lambda_k$.
\end{proof}

\subsubsection*{The general case}

Then we have the following
%----------------------------------------------
\begin{theo}[The second vertex test function]\label{th::p1}
  Let $A\in \R\cup \left\{-\infty\right\}$ and $\gamma>0$. Assume that
  the Hamiltonians satisfy \eqref{assum:H} and \eqref{eq::24}.  Then
  there exists a function $G^\sharp:J^2\to \R$ enjoying properties ii)
  to vi) listed in Theorem \ref{th::G},
  and property i') given in Proposition \ref{pro::p3}.\\
  In particular, at points (different from the origin) where functions
  $G^\sharp(x,\cdot)$ and $G^\sharp(\cdot,y)$ are not $C^1$, we get
  bounds (\ref{eq::19}) on both left and right derivatives. Moreover,
  at those points, inequality (\ref{eq::17bis}) has to be interpreted
  in the sense of Proposition \ref{pro::gr42}.  Moreover, there exists
  some $\varepsilon>0$ such that we have
\begin{equation}\label{eq::p13}
  G^\sharp = G^{0,\sharp} \quad \mbox{on}\quad J^2 \backslash
  \delta_\varepsilon 
\quad \mbox{with}\quad \delta_\varepsilon=
  \left\{(x,y)\in \bigcup_{i=1,\dots,N}J_i^*\times J_i^*,\quad  |x-y|\le \varepsilon\right\}
\end{equation}
where $G^{0,\sharp}$ is given in Proposition \ref{pro::p3}, with $\gamma=\gamma_0$.
\end{theo}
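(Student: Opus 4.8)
The plan is to reproduce, step by step, the three-layer scheme already used to prove Theorem~\ref{th::G}: first the smooth convex case, then smooth (possibly non-convex) Hamiltonians via the convexifying function $\beta$ of Lemma~\ref{lem::1}, and finally the general case by approximating the $H_i$. The only genuinely new ingredient compared with Theorem~\ref{th::G} is the treatment of the diagonals $\{x=y\in J_i^*\}$, on which Proposition~\ref{pro::p3} does \emph{not} provide the compatibility condition \eqref{eq::17bis}; there I would use a local regularization of $G^{0,\sharp}$ in the spirit of Step~1 of the proof of Lemma~\ref{lem:case-convex}.

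Assuming first \eqref{eq::16}, I would start from the piecewise linear function $G^{0,\sharp}$ of \eqref{eq::p5}, built from a sequence $(\lambda_k)$ satisfying \eqref{eq::p8} with a small parameter $\gamma_0$. By Proposition~\ref{pro::p3} it already satisfies all the required properties except \eqref{eq::17bis} on the diagonals $\{x=y\in J_i^*\}$ for the indices $i$ in the set $I=\{i:\pi_i^+(A)>\pi_i^-(A)\}$. For such an $i$, recalling that $G^{0,\sharp}(x,y)=\ell_i(x-y)$ on $J_i\times J_i$, I would replace $\ell_i$ near $a=0$ by a convex $C^1$ function lying within $O(\eps)$ of $\ell_i$, glued in through a cut-off $\zeta(x+y)$ exactly as in the construction of $\tilde G^{0,i}$ (Steps~1--3 of the proof of Lemma~\ref{lem:case-convex}). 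Since this modification is confined to an $\eps$-tube around the diagonal that shrinks near the origin (as in \eqref{eq::l8}--\eqref{eq::l10}), it smooths out only the single kink on the diagonal, leaves all the remaining break points $z_i^{k,\pm}$ and the whole off-diagonal piecewise linear structure intact, and vanishes on the axes; this gives regularity i') and relation \eqref{eq::p13} with $\delta_\eps$. Computing the gradients of the regularized branch as in Step~3 of the proof of Lemma~\ref{lem:case-convex}, the Hamiltonian identities \eqref{eq::p20}--\eqref{eq::p23} are perturbed by a term controlled by $\eps|\zeta'|$ through the local modulus of continuity of $H_i$, so \eqref{eq::17bis} holds with $\gamma=\gamma_0+o_\eps(1)$; choosing $\gamma_0$ and then $\eps$ small enough yields any prescribed $\gamma$. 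Properties ii), iv)--vi) and the diagonal bound \eqref{eq::85} are inherited from $G^{0,\sharp}$ (up to the extra $O(\eps)$ of the regularization, as in \eqref{eq::l12}).

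For smooth Hamiltonians satisfying \eqref{eq::3} and \eqref{eq::24}, I would then apply the previous step to $\hat H_i=\beta\circ H_i$ (with $\beta$ from Lemma~\ref{lem::1}) and a parameter $\hat\gamma$, obtaining $G^\sharp$ with $\hat H(y,-G^\sharp_y)\le \hat H(x,G^\sharp_x)+\hat\gamma$ (in the sense of Proposition~\ref{pro::gr42} at the remaining non-$C^1$ points); since $\beta'\ge\delta>0$, the argument of the proof of Lemma~\ref{lem:case-smooth} gives \eqref{eq::17bis} for the $H_i$ with $\gamma=\hat\gamma/\delta$. Finally, for general $H_i$ satisfying only \eqref{eq::3bis}, I would approximate them by $\tilde H_i$ satisfying \eqref{eq::3} with $|H_i-\tilde H_i|\le\gamma$, apply the previous step, and absorb the error to reach \eqref{eq::17bis} with $3\gamma$; the case $A=-\infty$ reduces to $A=\max(A,A_0)$ as usual.

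The main obstacle is precisely the local regularization in the first step: one must verify that gluing a smoothed convex replacement of $\ell_i$ through $\zeta(x+y)$ both preserves the $C^{1,\sharp}$ structure — which works because the modification lives in an $\eps$-tube around the diagonal that never reaches the origin — and costs only $o_\eps(1)$ in \eqref{eq::17bis} on top of the intrinsic discretization loss $\gamma_0$. Everything else is bookkeeping identical to the proofs of Lemmas~\ref{lem:case-convex} and \ref{lem:case-smooth} and of Theorem~\ref{th::G}.
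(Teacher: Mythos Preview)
Your proposal is correct and follows essentially the same approach as the paper: regularize $G^{0,\sharp}$ along the diagonals $\{x=y\in J_i^*\}$ via the cut-off construction of Lemma~\ref{lem:case-convex}, then pass to general Hamiltonians by approximation. The paper's own proof is much terser (one sentence for the general case), while you have spelled out the three-layer reduction (smooth convex $\to$ smooth via $\beta$ $\to$ general via approximation) explicitly, exactly mirroring the proof of Theorem~\ref{th::G}.
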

%----------------------------------------------
\begin{proof}[Proof of Theorem \ref{th::p1}]
  In the smooth convex case, we define $G^\sharp$ as in
  (\ref{eq::p13}). On $J_i^*\times J_i^*$, we simply define $G^\sharp$
  as a regularization of $G^{0,\sharp}$ along each line
  $x=y\in J_i^*$, following the
  procedure described in the proof of Lemma \ref{lem:case-convex} for
  $\varepsilon\le \gamma=\gamma_0$.  The general case follows by
  approximation.  
\end{proof}

\begin{rem}\label{rem::p14}
  With the help of Proposition \ref{pro::gr42}, it is straighforward
  to check that the proof of the comparison principle works as well
  with this second vertex test function $G^\sharp$ given by Theorem
  \ref{th::p1}.
\end{rem}

%%%%%%%%%%%%%%%%%%%%%%%%%%%%%%%%%%%%%%%%%%%%%%%%%%%%%%%%%%%%%%%%%%%%%%%%%%
%%%%%%%%%%%%%%%%%%%%%%%%%%%%%%%%%%%%%%%%%%%%%%%%%%%%%%%%%%%%%%%%%%%%%%%%%%
\section{Extension to networks}
\label{s.n}

%%%%%%%%%%%%%%%%%%%%%%%%%%%%%%%%%%%%%%%%%%%%%%%%%%%%%%%%%%%%%%%%%%%%%%%%%%
\subsection{Definition of a network}

A general abstract network ${\mathcal N}$ is characterized by the set
${\mathcal E}$ of its \emph{edges} and the set ${\mathcal V}$ of
its vertices (or nodes). It is endowed with a distance.  

\paragraph{Edges.}
${\mathcal E}$ is a finite or countable set of edges.  Each edge $e\in
{\mathcal E}$ is assumed to be either isometric to the half line
$[0,+\infty)$ with $\partial e = \left\{e^0\right\}$ (where the
endpoint $e^0$ can be identified to $\left\{0\right\}$), or to a
compact interval $[0,l_e]$ with 
\begin{equation}
  \label{eq:le-inf} 
  \inf_{e \in \mathcal{E}} l_e >0 
\end{equation}
and $\partial e= \left\{e^0,e^1\right\}$. Condition~\eqref{eq:le-inf}
implies in particular that the network is complete. The endpoints $\{e^0\},
\{e^1\}$ can respectively be identified to $\left\{0\right\}$ and
$\left\{l_e \right\}$.  The \emph{interior} $e^*$ of an edge $e$
refers to $e \setminus (\partial e)$.

\paragraph{Vertices.}
It is convenient to see vertices of the network as a partition of the
sets of all edge endpoints, 
\[\bigcup_{e\in {\mathcal E}} \partial e  = \bigcup_{n\in {\mathcal V}} n;\]
we assume that each set $n$ only contains a finite number of
endpoints.

Here each $n\in {\mathcal V}$ can be identified as a vertex (or node)
of the network as follows.  For every $x,y\in \bigcup_{e\in {\mathcal
    E}} e$, we define the equivalence relation:
\[x\sim y \quad \Longleftrightarrow \left(x=y\quad \text{or}\quad 
x,y\in n \in {\mathcal V}\right)\]
and we define the network as the quotient 
\begin{equation}\label{eq::N}
{\mathcal N} = \left(\bigcup_{e\in {\mathcal E}} e\right)/ \sim 
\quad = \quad \left(\bigcup_{e\in {\mathcal E}} e^*\right) \cup {\mathcal V}.
\end{equation}

We also define for $n\in {\mathcal V}$
\[{\mathcal E}_n =\left\{e\in {\mathcal E},\quad n\in \partial e \right\}\]
and its partition ${\mathcal E}_n = {\mathcal E}_n^-\cup {\mathcal E}_n^+$ with
\[{\mathcal E}_n^-=\left\{e\in {\mathcal E}_n, n=e^0\right\},\quad 
{\mathcal E}_n^+=\left\{e\in {\mathcal E}_n, n=e^1\right\}.\]

\paragraph{Distance.}
We also define the distance function $d(x,y)=d(y,x)$ as the minimal
length of a continuous path connecting $x$ and $y$ on the network,
using the metric of each edge (either isometric to $[0,+\infty)$ of to
a compact interval). Note that, because of our assumptions, if
  $d(x,y)<+\infty$, then there is only a finite number of minimal
  paths.

\begin{rem}\label{rem:bd-no-path}
For any $\eps>0$, there is a bound (depending on
$\eps$) on the number of minimal paths connecting $x$ to $y$
for all $y\in B(\bar y, \eps)=\left\{y\in {\mathcal N},\quad
d(\bar y, y) < \eps\right\}$.
\end{rem}

%%%%%%%%%%%%%%%%%%%%%%%%%%%%%%%%%%%%%%%%%%%%%%%%%%%%%%%%%%%%%%%%%%%%%%%%%%
\subsection{Hamilton-Jacobi equations on a network}

Given a Hamiltonian $H_e$ on each edge $e \in \mathcal{E}$, we
consider the following HJ equation on the network $\mathcal{N}$,
\begin{equation}\label{eq::1terbis}
\left\{\begin{array}{lll}
u_t + H_e(t,x,u_x)= 0  &\text{for}\quad t\in (0,+\infty) 
&\quad \text{and}\quad x\in e^*,\\
u_t + F_A(t,x,u_x)=0   &\text{for}\quad  t\in (0,+\infty) 
&\quad \text{and}\quad x=n\in {\mathcal V}
\end{array}\right.
\end{equation}
supplemented with  an initial condition
\begin{equation}\label{eq::l15}
u(0,x)=u_0(x) \quad \text{for}\quad x\in {\mathcal N}. 
\end{equation}
The limited flux functions $F_A$ associated with the Hamiltonians
$H_e$ are defined below. We first make precise the meaning of
$u_x$ in \eqref{eq::1terbis}.

\paragraph{Gradients of real functions.} 
For a real function $u$ defined on the network ${\mathcal N}$, we
denote by $\partial_e u(x)$ the (spatial) derivative of $u$ at $x\in
e$ and define the ``gradient'' of $u$ by
\[u_x(x):=\begin{cases}
\partial_e u(x) & \quad \text{if} \quad x\in e^*= e \setminus (\partial e),\\
((\partial_e u(x))_{e\in {\mathcal E}_n^-},(\partial_e u(x))_{e\in
  {\mathcal E}_n^+}) & \quad \text{if} \quad x=n\in {\mathcal V}
\end{cases}.\]
The norm $|u_x|$ simply denotes $|\partial_e u|$ for $x\in e^*$ or
$\max \{ |\partial_e u|: e\in {\mathcal E}_n\}$ at the vertex $x=n$.

\paragraph{Limited flux functions.} 
We also define for $(t,x)\in \R\times \partial e$,
\[H^-_e(t,x,q)=\left\{\begin{array}{ll}
H_e(t,x,q) & \quad \text{if}\quad q\le p^0_e(t,x),\\
H_e(t,x,p^0_e(t,x)) & \quad \text{if}\quad q > p^0_e(t,x)
\end{array}\right.\]
and
\[H^+_e(t,x,q)=\left\{\begin{array}{ll}
H_e(t,x,p^0_e(t,x)) & \quad \text{if}\quad q\le p^0_e(t,x),\\
H_e(t,x,q) & \quad \text{if}\quad q > p^0_e(t,x).
\end{array}\right.\]
Given limiting functions $(A_n)_{n\in {\mathcal V}}$, we define
 for $p=(p_e)_{e\in {\mathcal E}_n}$, 
\[F_{A}(t,n,p)=\max\left(A_n(t),\quad\max_{e\in {\mathcal E}_n^-}
H^-_e(t,n,p_e), \quad \max_{e\in {\mathcal E}_n^+} H^+_e(t,n,p_e)\right).\]
In particular, for each $n\in {\mathcal V}$, the functions 
$F_A(t,n,\cdot)$ are the same for all $A_n(t)\in [-\infty, A_n^0(t)]$ with
\begin{equation}\label{eq::l21}
A_n^0(t):= \max\left(\max_{e\in {\mathcal E}_n^-} H^-_e(t,n,p^0_e(t,n)),
\quad \max_{e\in {\mathcal E}_n^+} H^+_e(t,n,p^0_e(t,n))\right).
\end{equation}

\paragraph{A shorthand notation.}
As in the junction case, we introduce
\begin{equation}\label{eq::l24bis}
H_{\mathcal N}(t,x,p)=\left\{\begin{array}{llll}
H_e(t,x,p) &\quad \text{for}\quad p\in\R, 
&\quad t\in\R, &\quad \text{if}\quad x\in e^*,\\
F_A(t,x,p) &\quad \text{for}\quad 
p=(p_e)_{e\in {\mathcal E}_n}\in \R^{\text{\small Card}\ {\mathcal E}_n}, 
&\quad t\in\R,  &\quad \text{if}\quad x=n\in {\mathcal V}
\end{array}\right.
\end{equation}
in order to rewrite \eqref{eq::1terbis}  as
\begin{equation}\label{eq::l16}
u_t + H_{\mathcal N}(t,x, u_x) =0 \quad \text{for all}\quad (t,x)\in 
(0,+\infty)\times {\mathcal N}.
\end{equation}

\subsection{Assumptions on the Hamiltonians}

For each $e\in {\mathcal E}$, we consider a Hamiltonian $H_e:
[0,+\infty) \times e\times \R \to \R$ satisfying
\begin{itemize}
\item \textbf{(H0)} (Continuity) $H_e \in C([0,+\infty) \times e\times
  \R)$.
\item \textbf{(H1)} (Uniform coercivity) For all $T>0$,  
\[\lim_{|q|\to +\infty} H_e(t,x,q)=+\infty\]
uniformly with respect to $t \in [0,T]$ and $x \in e \in
\mathcal{E}$.
\item \textbf{(H2)} (Uniform bound on the Hamiltonians for bounded gradients) 
For all $T,L>0$, there exists $C_{T,L}>0$ such that
\[ \sup_{t\in [0,T],\ p\in [-L,L], x \in \mathcal{N} \setminus
  \mathcal{V}}  |H_{\mathcal{N}}(t,x,p)| \le C_{T,L}.\]
\item \textbf{(H3)} (Uniform modulus of continuity for bounded gradients)
For all $T,L>0$, there exists a modulus of continuity $\omega_{T,L}$
such that for all $|p|,|q|\le L$, $t\in [0,T]$ and $x\in e\in
{\mathcal E}$, 
\[|H_e(t,x,p)-H_e(t,x,q)|\le \omega_{T,L}(|p-q|).\]
\item \textbf{(H4)} (Quasi-convexity) For all $n \in \mathcal{V}$,
  there exists a (possibly discontinuous) function $t
  \mapsto p^0_e (t,n)$ such that
\[\begin{cases}
H_e(t,n,\cdot) \quad \text{is nonincreasing on}\quad (-\infty,p^0_e(t,n)],\\
H_e(t,n,\cdot) \quad \text{is nondecreasing on}\quad [p^0_e(t,n),+\infty).
\end{cases}\]
\item \textbf{(H5)} (Uniform modulus of continuity in time)
For all $T>0$, there exists a modulus of continuity $\bar\omega_{T}$
such that for all $t,s\in [0,T]$, $p\in \R$, $x\in e\in {\mathcal E}$,
\[H_e(t,x,p)-H_e(s,x,p) \le  \bar\omega_{T}
\left( |t-s|(1+\max(H_e(s,x,p),0)) \right). \]
\item \textbf{(H6)} (Uniform continuity of $A^0$) 
For all $T>0$, there exists a modulus of continuity $\bar\omega_{T}$
such that for all $t,s\in [0,T]$ and $n\in {\mathcal V}$,
\[|A^0_n(t)-A^0_n(s)|\le \bar\omega_{T}(|t-s|).\]
\end{itemize}
As far as flux limiters are concerned, the following assumptions will
be used.
\begin{itemize}
\item \textbf{(A0)} (Continuity of $A$) For all $T>0$ and $n \in \mathcal{V}$,
  $A_n \in C([0,T])$.
\item \textbf{(A1)} (Uniform bound on $A$)
For all $T>0$, there exists a constant $C_T>0$ such that for all $t\in
[0,T]$ and $n\in {\mathcal V}$
\[|A_n(t)|\le C_T .\]
\item \textbf{(A2)} (Uniform continuity of $A$) For all $T>0$, there
  exists a modulus of continuity $\bar\omega_{T}$ such that for all
$t,s\in [0,T]$ and $n\in {\mathcal V}$,
\[|A_n(t)-A_n(s)|\le \bar\omega_{T}(|t-s|).\]
\end{itemize}
The proof of the following technical lemma is postponed until
appendix. 
%--------------------------------------------------------------------------
\begin{lem}[Estimate on the difference of Hamiltonians]\label{lem::l146}
 Assume that the Hamiltonians satisfy \emph{(H0)-(H4)} and
 \emph{(A0)-(A1)}. Then for all $T>0$, there exists a constant $C_T>0$
 such that
\begin{eqnarray}
\label{eq::l136bis}
|p^0_e(t,x)| & \le C_T &  \quad \text{for all}
\quad t\in [0,T],\quad x\in \partial e, \quad e\in {\mathcal E}, \\
\label{eq::l156}
|A^0_n(t)| & \le C_T & \quad \text{for all}
\quad t\in [0,T],\quad n\in {\mathcal V}.
\end{eqnarray}
If we assume moreover \emph{(H5)-(H6)} and \emph{(A2)}, then there
exists a modulus of continuity $\tilde{\omega}_T$ such that for all
$t,s \in [0,T]$, and $x,p$
\begin{equation}\label{eq::l139}
 H_{\mathcal N}(t,x,p)-H_{\mathcal N}(s,x,p)\le 
\tilde{\omega}_T(|t-s|(1+\max(0,H_{\mathcal N}(s,x,p))).
\end{equation}
\end{lem}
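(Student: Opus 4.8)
Here is a plan for proving Lemma~\ref{lem::l146}.

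\medskip

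\textbf{Overview and the a priori bounds \eqref{eq::l136bis}--\eqref{eq::l156}.} The plan is to treat the three assertions in turn; the two bounds follow from coercivity, and \eqref{eq::l139} is the substantive point. First I would fix $T>0$ and use \emph{(H2)} together with the continuity \emph{(H0)} up to the endpoints of each edge to obtain a constant $C_{T,1}$ with $|H_e(t,n,0)|\le C_{T,1}$ for all $t\in[0,T]$, $n\in\partial e$, $e\in\mathcal E$ (one evaluates $H_{\mathcal N}(t,\cdot,0)=H_e(t,\cdot,0)$ on $e^*$ and passes to the limit towards the endpoint). Since $H_e(t,n,\cdot)$ is minimal at $p^0_e(t,n)$ by \emph{(H4)}, we get $\min_q H_e(t,n,q)=H_e(t,n,p^0_e(t,n))\le H_e(t,n,0)\le C_{T,1}$. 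By the uniform coercivity \emph{(H1)} there is $L_0=L_0(T)$ with $H_e(t,x,q)>C_{T,1}+1$ whenever $|q|\ge L_0$, uniformly in $t\in[0,T]$, $x\in e$, $e\in\mathcal E$. If $p^0_e(t,n)>L_0\,(\ge 0)$, then $H_e(t,n,\cdot)$ is nonincreasing on $(-\infty,p^0_e(t,n)]\supset[0,L_0]$, whence the absurd chain $C_{T,1}\ge H_e(t,n,0)\ge H_e(t,n,L_0)>C_{T,1}+1$; the case $p^0_e(t,n)<-L_0$ is excluded symmetrically using the nondecreasing branch. This gives \eqref{eq::l136bis} with $C_T=L_0$. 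For \eqref{eq::l156}, note that $H_e^\pm(t,n,p^0_e(t,n))=H_e(t,n,p^0_e(t,n))=\min_q H_e(t,n,q)$, so $A^0_n(t)=\max_{e\in\mathcal E_n}\min_q H_e(t,n,q)$; the upper bound is again $C_{T,1}$, and the lower bound follows from \emph{(H2)}, \emph{(H0)} applied at the argument $p^0_e(t,n)$, now controlled by $L_0$ thanks to \eqref{eq::l136bis}.

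\medskip

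\textbf{The time-modulus estimate \eqref{eq::l139}.} If $x\in e^*$ lies in the interior of an edge, then $H_{\mathcal N}(\cdot,x,p)=H_e(\cdot,x,p)$ and \eqref{eq::l139} is exactly \emph{(H5)}. The work is at a vertex $x=n$, where $\mathcal E_n$ is finite and $H_{\mathcal N}(t,n,p)=F_A(t,n,p)$ is the maximum of $A_n(t)$ and finitely many quantities $H_e^-(t,n,p_e)$, $e\in\mathcal E_n^-$, and $H_e^+(t,n,-p_e)$, $e\in\mathcal E_n^+$. Since $\max_k a_k(t)-\max_k a_k(s)\le\max_k\big(a_k(t)-a_k(s)\big)$ and each of these quantities at time $s$ is $\le F_A(s,n,p)$, it suffices (taking $\tilde\omega_T$ nondecreasing, which we may always do) to bound $A_n(t)-A_n(s)$ — which is $\le\bar\omega_T(|t-s|)\le\bar\omega_T(|t-s|(1+\max(0,F_A(s,n,p))))$ by \emph{(A2)} — together with each $H_e^\pm(t,n,\cdot)-H_e^\pm(s,n,\cdot)$. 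For the latter (the $H_e^+$ case being symmetric) I would write $m_\tau=p^0_e(\tau,n)$, use the identity $H_e^-(\tau,n,q)=H_e(\tau,n,q\wedge m_\tau)$, and observe the elementary inequality
\[
H_e(t,n,q\wedge m_t)\ \le\ H_e(t,n,q\wedge m_s),
\]
which follows by distinguishing $q\le m_t$ from $q>m_t$ and using that $H_e(t,n,\cdot)$ is nonincreasing on $(-\infty,m_t]$ with minimum at $m_t$. Consequently, applying \emph{(H5)} at the \emph{fixed} gradient $q\wedge m_s$ and using $H_e(s,n,q\wedge m_s)=H_e^-(s,n,q)$,
\[
H_e^-(t,n,q)-H_e^-(s,n,q)\ \le\ H_e(t,n,q\wedge m_s)-H_e(s,n,q\wedge m_s)\ \le\ \bar\omega_T\big(|t-s|(1+\max(0,H_e^-(s,n,q)))\big).
\]
Collecting these bounds and letting $\tilde\omega_T$ be a nondecreasing majorant of the moduli furnished by \emph{(H5)} and \emph{(A2)} (and \emph{(H6)}, which provides the uniform continuity of $t\mapsto A^0_n(t)$) yields \eqref{eq::l139} at vertices as well, which completes the proof.

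\medskip

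\textbf{Main obstacle.} The delicate step is this last vertex estimate: naively comparing $H_e^-(t,n,q)$ with $H_e^-(s,n,q)$ is problematic because the kink location $p^0_e(\tau,n)$ moves with $\tau$ and is only assumed to be a (possibly discontinuous) function of time by \emph{(H4)}, so \emph{(H5)} cannot be invoked directly at the gradient $q$. The device above circumvents this by \emph{overestimating} $H_e^-(t,n,q)$ by the value of $H_e(t,n,\cdot)$ at the $s$-dependent clipped point $q\wedge p^0_e(s,n)$, reducing everything to \emph{(H5)} at one and the same gradient; it is precisely the quasi-convexity/monotonicity \emph{(H4)} that makes this overestimate legitimate, while \eqref{eq::l136bis}--\eqref{eq::l156}, established along the way, serve as the uniform a priori estimates used throughout the comparison argument on networks.
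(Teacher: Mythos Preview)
Your proof is correct and, for the vertex case of \eqref{eq::l139}, genuinely different from the paper's argument. The paper fixes an index $e_t$ realizing the maximum at time $t$ and then distinguishes three cases according to whether $H_{e_t}^-$ coincides with $H_{e_t}$ at the times $s$ and $t$; in the first case it introduces an auxiliary time $\bar\tau$ at which $H_{e_t}(\cdot,n,p_{e_t})$ crosses $A^0_n(\cdot)$ and invokes \emph{(H6)} to control $A^0_n(\bar\tau)-A^0_n(s)$. Your clipping inequality $H_e(t,n,q\wedge m_t)\le H_e(t,n,q\wedge m_s)$, valid for any $s$ by quasi-convexity, lets you apply \emph{(H5)} directly at the fixed gradient $q\wedge m_s$ and bypass the case analysis entirely. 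A pleasant by-product is that your argument does not actually use \emph{(H6)}; the parenthetical mention at the end can be dropped (this is consistent with the paper's own remark that \emph{(H5)-(H6)} and \emph{(A2)} may be replaced by \eqref{eq::l139}). The paper's route, by contrast, makes explicit how the minimum value $A^0_n(\cdot)$ interacts with the moving kink, which is conceptually informative but longer.
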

%--------------------------------------------------------------------------
\begin{rem}
From the proof, the reader can check that Assumptions (H5)-(H6) and (A2) in the
statement of Theorem~\ref{th::l2} can in fact be replaced with
\eqref{eq::l139}.
\end{rem}
\begin{rem}[Example of Hamiltonians with uniform modulus of time continuity]
Condition on the uniform modulus of continuity in time in (H5)
is for instance satisfied by Hamiltonians of the type for $q>0$ and $\delta>0$
such that for all $x\in e \in {\mathcal E}$ we have
\[
H_e(t,x,p)=c_e(t,x)|p|^q \quad \text{with}\quad 0<\delta \le
c_e(t,x)\le 1/\delta\]
with $c_e$ Lipschitz continuous in time and continuous in space.
\end{rem}

%%%%%%%%%%%%%%%%%%%%%%%%%%%%%%%%%%%%%%%%%%%%%%%%%%%%%%%%%%%%%%%%%%%%%%%%%%
\subsection{Viscosity solutions on a network}

\paragraph{Class of test functions.}
For $T>0$, set ${\mathcal N}_T= (0,T)\times {\mathcal N}$. We define
the class of test functions on $(0,T)\times {\mathcal N}$ by
\[C^1({\mathcal N}_T)=\left\{\varphi\in C({\mathcal N}_T),\; 
\text{the restriction of $\varphi$ to $(0,T)\times e$ is $C^1$, 
for all $e\in {\mathcal E}$}\right\}.\]
%---------------------------------------------------------------------------
\begin{defi}[Viscosity solutions]\label{defi::l1}
Assume the Hamiltonians satisfy \emph{(H0)-(H4)} and \emph{(A0)-(A1)}
and let $u:[0,T)\times {\mathcal N}\to \R$.
\begin{enumerate}[i)]
\item We say that $u$ is a \emph{sub-solution}
  (resp. \emph{super-solution}) of \eqref{eq::1bis} in $(0,T)\times
  {\mathcal N}$ if for all test function $\varphi\in
  C^1({\mathcal N}_T)$ such that
\[u^*\le \varphi \quad (\text{resp.}\quad u_*\ge \varphi) 
\quad \text{in a neighborhood of $(t_0,x_0)\in {\mathcal N}_T$}\]
with equality at $(t_0,x_0)$, we have
\[\varphi_t + H_{\mathcal N}(t,x,\varphi_x) \le 0 
\quad (\text{resp.}\quad \ge 0) \quad \text{at $(t_0,x_0)$}.\]
\item We say that $u$ is a \emph{sub-solution}
  (resp. \emph{super-solution}) of \eqref{eq::1bis}, \eqref{eq::2} in 
$[0,T)\times {\mathcal N}$ if additionally
\[u^*(0,x) \le u_0(x) \quad 
(\text{resp.}\quad u_*(0,x) \ge u_0(x))\quad \text{for all}
\quad x\in {\mathcal N}.\]
\item 
We say that $u$ is a \emph{(viscosity) solution} if $u$ 
is both a sub-solution and a super-solution.
\end{enumerate}
\end{defi}
%---------------------------------------------------------------------------
\begin{rem}[Touching sub-solutions with semi-concave functions] 
\label{rem:sc-net}
When proving the comparison principle in the network setting,
sub-solutions (resp. super-solutions) will be touched from above
(resp. from below) by functions that will not be $C^1$, but only
semi-concave (resp. semi-convex).  We recall that a function is
semi-concave if it is the sum of a concave function and a smooth
($C^2$ say) function.  But it is a classical observation that, at a
point where a semi-concave function is not $C^1$, we can replace the
semi-concave function by a $C^1$ test function touching it from
above.
\end{rem}
%-------------------------------------
As in the case of a junction (see Proposition~\ref{pro::2}), viscosity
solutions are stable through supremum/infimum.  We also have the
following existence result.
%----------------------------------------------------------------------------
\begin{theo}[Existence on a network]\label{th::l3}
  Assume \emph{(H0)-(H4)} and \emph{(A0)-(A1)} on the Hamiltonians and
  assume that the initial data $u_0$ is uniformly continuous on
  ${\mathcal N}$.  Let $T>0$.  Then there exists a viscosity solution
  $u$ of \eqref{eq::l16},\eqref{eq::l15} on $[0,T)\times {\mathcal N}$
    and a constant $C_T>0$ such that
\[|u(t,x)-u_0(x)|\le C_T \quad \text{for all}\quad 
(t,x)\in [0,T)\times {\mathcal N}.\]
\end{theo}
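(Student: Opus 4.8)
The plan is to follow the classical Perron method, exactly as in the proof of Theorem~\ref{th::3} for a single junction, adapting the argument to the network setting. The main additional ingredient is that the comparison principle on a network (which is proved in Section~\ref{s.n} via a localization procedure that reduces the vertex test function construction of Theorem~\ref{th::G} to a single vertex at a time) will be used to get the ordering of barriers, but strictly speaking for \emph{existence} we only need the stability of sub/super-solutions under sup/inf and under taking the maximal sub-solution below a fixed super-solution; these stability statements are the network analogues of Proposition~\ref{pro::2} and are invoked in Remark following Definition~\ref{defi::l1}.

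First I would construct barriers. Using uniform continuity of $u_0$ on $\mathcal{N}$, regularize $u_0$ by a sup/inf-convolution in the geodesic distance $d$ on $\mathcal{N}$ (equivalently, set $u_0^{\eps,\pm}(x) = \pm\sup_{y}(\mp u_0(y) - \eps^{-1} d(x,y))$ truncated appropriately), to obtain for each $\eps\in(0,1]$ a function $u_0^\eps$ with $|u_0^\eps - u_0|\le\eps$ and $|(u_0^\eps)_x|\le L_\eps$, where $L_\eps$ dominates $\sup_{t\in[0,T],\,n\in\mathcal{V},\,e\in\mathcal{E}_n}|p_e^0(t,n)|$; the latter quantity is finite and bounded by Lemma~\ref{lem::l146}, estimate~\eqref{eq::l136bis}. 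Using the uniform bounds (H1), (H2), (A1) and Lemma~\ref{lem::l146}~\eqref{eq::l156}, one sets
\[
C_\eps = C_{T,L_\eps} + \sup_{t\in[0,T],\,n\in\mathcal{V}}|A_n(t)|,
\]
which is finite by (H2) and (A1). Then $u_\eps^\pm(t,x) = u_0^\eps(x) \pm C_\eps t \pm \eps$ are, respectively, a global super- and sub-solution of \eqref{eq::l16} in the sense of Definition~\ref{defi::l1} that dominate (resp.\ are dominated by) the initial datum $u_0$; checking this uses that for such affine-in-$x$ functions the vertex inequality reduces to a comparison of $\pm C_\eps$ with $F_A$ evaluated at a bounded gradient, which is controlled by $C_\eps$. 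Setting $u^+ = \inf_\eps u_\eps^+$ and $u^- = \sup_\eps u_\eps^-$, the network stability of sub/super-solutions yields that $u^+$ is a super-solution and $u^-$ a sub-solution, with $u^-\le u^+$ and $u^\pm(0,\cdot)=u_0$, and $|u^\pm(t,x)-u_0(x)|\le C_1 + 1 =: C_T$ for $t\in[0,T)$ (taking $\eps=1$ in the infimum/supremum gives the bound).

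Next I would run Perron's method. Define $S$ to be the set of sub-solutions $w$ of \eqref{eq::l16} on $(0,T)\times\mathcal{N}$ with $u^-\le w\le u^+$, and set $u = \sup_{w\in S} w$. By stability, $u\in S$, so $u$ is a sub-solution satisfying the initial condition and the bound $|u-u_0|\le C_T$. It remains to show $u$ is a super-solution. Suppose not, at some point $P_0=(t_0,x_0)$ with $t_0\in(0,T)$: outside vertices this is the classical one-dimensional argument on an interval, so assume $x_0=n\in\mathcal{V}$. Then there is a test function $\varphi\in C^1(\mathcal{N}_T)$ with $u_*\ge\varphi$ near $P_0$, equality at $P_0$, and $\varphi_t + H_{\mathcal N}(t,x,\varphi_x) < 0$ at $P_0$ — which at a vertex, unwinding the definition~\eqref{eq::l24bis}, \eqref{eq::l21} of $F_A$, forces both $\varphi_t + F_A(\varphi_x) < 0$ and $\varphi_t + H_e(\partial_e\varphi) < 0$ for every $e\in\mathcal{E}_n$ at $P_0$. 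As usual one shows $\varphi < (u^+)_*$ on a small closed ball $\overline{B_r(P_0)}$ (using that $u^+$ is a super-solution and the strict inequalities), subtracts $|(t,x)-P_0|^2$ to get a strict contact, shrinks $r$ so the strict inequalities persist, checks that $\varphi$ is itself a sub-solution in $B_r(P_0)$ (using monotonicity of $F_A$ in all its gradient arguments and that perturbations raise all the $\partial_e$), and forms the bump $u_\delta = \max(\delta+\varphi, u)$ in $B_r(P_0)$ and $u_\delta = u$ outside. For $\delta$ small, $u_\delta\in S$ but $u_\delta > u$ somewhere, contradicting maximality of $u$.

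The main obstacle, and the only place the network geometry enters in an essential way, is the validity of the two stability statements (stability under sup/inf, and that the matching of $u_\delta$ across $\partial B_r(P_0)$ produces a genuine sub-solution) at a vertex where the edge set $\mathcal{E}_n$ may have many elements and the Hamiltonians $H_e$ depend on $(t,x)$. These are the network analogues of Proposition~\ref{pro::2}; they follow from the same relaxed-versus-classical argument (Proposition~\ref{pro::1}) applied vertex-by-vertex, together with the monotonicity assumptions encoded in (A0)--(A1) and the structure of $F_A$, so no genuinely new difficulty arises beyond bookkeeping. Everything else (the interval case away from vertices, the bump construction, the initial-condition check) is verbatim the proof of Theorem~\ref{th::3}; hence it suffices to indicate the modifications above and refer to \cite{I,CGG} for the classical details.
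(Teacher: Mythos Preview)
Your proposal is correct and follows essentially the same route as the paper. The paper's own proof is deliberately terse: it simply says the argument ``follows along the lines'' of the junction existence proof (Theorem~\ref{th::3}), with the only real modification being the choice of $C_\eps$ in the barriers, for which it invokes \eqref{eq::l156} of Lemma~\ref{lem::l146} exactly as you do. Your write-up spells out the Perron step at a vertex in more detail than the paper does; just be careful with the sentence ``unwinding the definition \ldots\ forces both $\varphi_t + F_A(\varphi_x) < 0$ and $\varphi_t + H_e(\partial_e\varphi) < 0$ for every $e$'': this is not literally immediate from Definition~\ref{defi::l1} (which only gives the $F_A$ inequality at a vertex), but it is obtained after the Proposition~\ref{pro::1}-style modification of the test function that you correctly invoke a few lines later.
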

%----------------------------------------------------------------------------
\begin{proof}
The proof follows along the lines of the ones of Theorem~\ref{th::2}.
The main difference lies in the construction of barriers.  We proceed
similarly and get a regularized initial data $u_0^\eps$
satisfying
\[|u_0^\eps - u_0|\le \eps \quad \text{and}\quad
|(u_0^\eps)_x|\le L_\eps.\]
Then the functions 
\begin{equation}\label{eq::35bis}
u_\eps^\pm(t,x)=  u_0^\eps(x) \pm C_\eps t \pm \eps 
\end{equation}
are global super and sub-solutions with respect to the initial data
$u_0$ if $C_\eps$ is chosen as follows, 
\begin{equation}\label{eq::l114}
C_\eps = \max \left(\sup_{t\in [0,T]}\sup_{n\in {\mathcal V}}|
\max(A_n(t),A_n^0(t))|, \sup_{t\in [0,T]}\sup_{e\in {\mathcal E}} 
\sup_{x\in e,\ |p_e|\le L_\eps} |H_e(t,x,p_e)|\right);
\end{equation}
indeed, we use \eqref{eq::l156} in Lemma~\ref{lem::l146} to bound the first
terms in \eqref{eq::l114}. 
\end{proof}

%%%%%%%%%%%%%%%%%%%%%%%%%%%%%%%%%%%%%%%%%%%%%%%%%%%%%%%%%%%%%%%%%%%%%%%%%%
\subsection{Comparison principle on a network}

%----------------------------------------------------------------------
\begin{theo}[Comparison principle on a network]\label{th::l2}
Assume the Hamiltonians satisfy \emph{(H0)-(H6)} and \emph{(A0)-(A2)}
and assume that the initial data $u_0$ is uniformly continuous on
${\mathcal N}$.  Let $T>0$.  Then for all sub-solution $u$ and
super-solution $w$ of \eqref{eq::l16}, \eqref{eq::l15} in $[0,T)\times
  {\mathcal N}$, satisfying for some $C_T>0$ and some $x_0\in
  {\mathcal N}$
\begin{equation}\label{eq::l27}
u(t,x)\le C_T (1+ d(x_0,x)),\quad w(t,x)\ge -C_T(1+d(x_0,x)),
\quad \text{for all}\quad (t,x) \in [0,T)\times {\mathcal N},
\end{equation}
we have
\[u\le w \quad \text{on}\quad [0,T)\times {\mathcal N}.\]
\end{theo}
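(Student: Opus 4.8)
The plan is to reduce the network comparison principle to the junction case already handled in Theorem~\ref{th::2}, using a localization argument around each vertex. First I would set up the standard doubling of variables, replacing the usual penalization $\frac{d(x,y)^2}{\varepsilon}$ with a sum of localized vertex test functions. For a point $\bar y$ near a vertex $n$, Remark~\ref{rem:bd-no-path} guarantees only finitely many geodesics issue from $\bar y$, so locally the network looks like a junction; I would use the vertex test function $G_n$ produced by Theorem~\ref{th::G} applied to the Hamiltonians $H_e(t,n,\cdot)$ for $e\in\mathcal E_n$ and the flux limiter $A_n(t)$. The key preliminary is Lemma~\ref{lem::l146}, which collects exactly the structural estimates — uniform bounds \eqref{eq::l136bis}, \eqref{eq::l156} on $p_e^0$ and $A_n^0$, and the one-sided time-regularity estimate \eqref{eq::l139} — needed to run the argument with the same constants uniformly over all vertices (since the network may have infinitely many).

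The main steps, in order, would be as follows. (1) An a priori control analogous to Lemma~\ref{lem::3}: using the sublinear bounds \eqref{eq::l27} and the superlinearity \eqref{eq::20} of $G$, one shows $u(t,x)\le w(s,y)+C(1+d(x,y))$ so that the penalized supremum is attained and the doubling points $x_\nu,y_\nu$ stay within $\omega(\varepsilon)$ of each other. (2) Argue by contradiction that $M=\sup(u-w)>0$, form the penalized functional with terms $\varepsilon G_n(\varepsilon^{-1}x,\varepsilon^{-1}y)$ localized near the finitely many relevant vertices (plus the usual $\frac{(t-s)^2}{2\nu}$, $\frac{\eta}{T-t}$, and an $\alpha$-term controlling behavior at infinity), and locate the maximizer. (3) The initial-time case is disposed of via the uniform continuity of $u_0$ exactly as in Step~2 of the proof of Theorem~\ref{th::2}. (4) In the interior-time case, write the sub- and super-solution inequalities; away from vertices this is the classical one-dimensional argument, while at a vertex the compatibility condition \eqref{eq::17bis} on the gradients of $G_n$ cancels the discontinuity, and the time-dependence is absorbed using \eqref{eq::l139} from Lemma~\ref{lem::l146} together with (H5)–(H6) and (A2). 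Subtracting the inequalities and sending $\alpha\to0$, then $\varepsilon\to0$, yields the contradiction.

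The hard part will be the bookkeeping of the localization: one must choose the cutoff so that near $\bar y$ only the geodesics to a single vertex matter, ensure the patched penalization function remains admissible as a test function (in particular semi-concave in the right variables, cf.\ Remark~\ref{rem:sc-net}, so it can touch $u$ from above even though $G_n$ is only piecewise smooth on the diagonal), and verify that all constants produced — the modulus $\omega$, the gradient bound $C_K$ from \eqref{eq::19}, the constants in Lemma~\ref{lem::l146} — are genuinely uniform over the (possibly infinite) vertex set. Once the reduction to a neighborhood of one vertex is in place, the core contradiction is line-for-line the proof of Theorem~\ref{th::2}, with $H(x,p)$ replaced by $H_{\mathcal N}(t,x,p)$ and the extra time-regularity handled by \eqref{eq::l139}. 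I would therefore present the proof as: recall Lemmas~\ref{lem::3}-type control and \ref{lem::l146}, then say the argument of Theorem~\ref{th::2} applies verbatim after localization near a vertex using the vertex test function of Theorem~\ref{th::G} for $(H_e(t,n,\cdot))_{e\in\mathcal E_n}$ and $A_n(t)$, pointing out only the two new ingredients (finiteness of local geodesics, uniform time estimate).
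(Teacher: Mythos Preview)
Your overall strategy---localize near a single vertex and invoke the junction vertex test function---is the right idea, but the execution you sketch departs from the paper's proof in ways that would cause real difficulty.

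First, the architecture of the localization is different. The paper does \emph{not} patch together ``a sum of localized vertex test functions''; instead it proceeds in two stages. In Step~1 (Lemma~\ref{lem:localization}) one maximizes $u(t,x)-w(s,x)-\alpha\psi(x)-\frac{\eta}{T-t}-\frac{(t-s)^2}{2\nu}$ \emph{without} any vertex test function, obtaining a single point $(t_\alpha,s_\alpha,x_\alpha)$. Only then (Step~3) does one introduce the vertex test function $G^{\alpha,\gamma}$, built for the frozen Hamiltonian $H^{t_\alpha,x_\alpha}_{\mathcal V}$ of \eqref{eq::l127} at that specific vertex and time, together with an extra penalty $\varphi^\alpha(t,s,x)=\tfrac12(|t-t_\alpha|^2+|s-s_\alpha|^2+d^2(x,x_\alpha))$ forcing the doubled maximizers back to $(t_\alpha,s_\alpha,x_\alpha)$. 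Your proposal skips the first stage and never freezes the time in $G$, which leaves unclear which vertex test function to use and how the compatibility inequality \eqref{eq::17bis} (which holds only for the frozen Hamiltonian) connects to the actual Hamiltonians $H_e(t,x,\cdot)$ and $H_e(s,y,\cdot)$.

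Second, and relatedly, the order of limits you give is reversed. You write ``sending $\alpha\to0$, then $\varepsilon\to0$'', as in the junction proof. But here the vertex test function $G^{\alpha,\gamma}$ depends on $x_\alpha$, which depends on $\alpha$; the paper explicitly flags that one must fix $\nu$, then send $(\varepsilon,\gamma)\to0$ \emph{first} (so maximizers converge to $x_\alpha$), and only afterwards send $\alpha\to0$ and finally $\nu\to0$ using \eqref{eq::l155}. Your order would leave you with no control on which vertex the test function is attached to.

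Third, you omit Lemma~\ref{lem::l110} (uniform control by the initial data), which is what replaces the simple initial-time argument of the junction proof and is needed to bound $t_\alpha,s_\alpha$ away from $0$ uniformly in $\alpha$. This is a new ingredient, not present in Theorem~\ref{th::2}.
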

%----------------------------------------------------------------------
As a straighforward corollary of Theorems~\ref{th::l2} and \ref{th::l3}, we get
%----------------------------------------------------------------------
\begin{cor}[Existence and uniqueness] \label{cor::l4}
Under the assumptions of Theorem~\ref{th::l2}, there exits a unique
viscosity solution $u$ of \eqref{eq::l16}, \eqref{eq::l15} in
$[0,T)\times {\mathcal N}$ such that there exists a constant $C>0$
  with
\[|u(t,x)-u_0(x)|\le C \quad \text{for all}\quad 
(t,x)\in [0,T)\times {\mathcal N}.\]
\end{cor}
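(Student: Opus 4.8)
The plan is to deduce the corollary directly from the two main results of this section: existence comes from Theorem~\ref{th::l3}, and uniqueness from the comparison principle Theorem~\ref{th::l2}, once we check that any solution obeying the stated a priori bound automatically satisfies the linear growth condition \eqref{eq::l27} under which Theorem~\ref{th::l2} applies.

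First I would invoke Theorem~\ref{th::l3}. Since (H0)-(H6) and (A0)-(A2) imply in particular (H0)-(H4) and (A0)-(A1), and since $u_0$ is assumed uniformly continuous on $\mathcal N$, Theorem~\ref{th::l3} provides, for the fixed $T>0$, a viscosity solution $u$ of \eqref{eq::l16}, \eqref{eq::l15} on $[0,T)\times\mathcal N$ together with a constant $C_T>0$ such that $|u(t,x)-u_0(x)|\le C_T$ for all $(t,x)\in[0,T)\times\mathcal N$. This is the existence part of the statement, with $C=C_T$.

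For uniqueness, suppose $u_1$ and $u_2$ are two viscosity solutions of \eqref{eq::l16}, \eqref{eq::l15} on $[0,T)\times\mathcal N$, each satisfying $|u_k(t,x)-u_0(x)|\le C$ for some constant $C$ and $k=1,2$. Fix a base point $x_0\in\mathcal N$. Because $u_0$ is uniformly continuous and the distance on $\mathcal N$ is the length metric, $u_0$ grows at most linearly: splitting a minimal path from $x_0$ to $x$ into at most $d(x_0,x)+1$ segments of length $\le 1$ and applying the modulus of continuity $\omega_0$ of $u_0$ on each segment, one gets $|u_0(x)-u_0(x_0)|\le (d(x_0,x)+1)\,\omega_0(1)$ for all $x\in\mathcal N$. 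Hence $|u_k(t,x)|\le |u_0(x)|+C\le C'(1+d(x_0,x))$ for a suitable $C'>0$; in particular $u_k(t,x)\le C'(1+d(x_0,x))$ and $u_k(t,x)\ge -C'(1+d(x_0,x))$ on $[0,T)\times\mathcal N$, so \eqref{eq::l27} holds for each $u_k$ both as a sub-solution and as a super-solution. Viewing $u_1$ as a sub-solution and $u_2$ as a super-solution (a viscosity solution being both, by Definition~\ref{defi::l1}), and noting both satisfy the initial condition \eqref{eq::l15} in the appropriate one-sided sense, Theorem~\ref{th::l2} gives $u_1\le u_2$ on $[0,T)\times\mathcal N$. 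Exchanging the roles of $u_1$ and $u_2$ yields $u_2\le u_1$, hence $u_1=u_2$. Together with the existence statement above, this proves the corollary.

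No step here is a genuine obstacle, the result being a formal consequence of Theorems~\ref{th::l3} and \ref{th::l2}; the only point requiring a (routine) argument is the passage from the uniform continuity of $u_0$ and the bound $|u-u_0|\le C$ to the at-most-linear growth bound \eqref{eq::l27}, which is precisely what makes the comparison principle applicable.
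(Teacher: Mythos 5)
Your proof is correct and follows exactly the route the paper intends: the paper simply declares the corollary "a straightforward corollary of Theorems~\ref{th::l2} and \ref{th::l3}", i.e.\ existence from Theorem~\ref{th::l3} and uniqueness from the comparison principle. Your only added detail --- deducing the linear growth bound \eqref{eq::l27} from the uniform continuity of $u_0$ and the bound $|u-u_0|\le C$ by splitting a minimal path into unit-length segments --- is a correct, routine verification that the paper leaves implicit.
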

%----------------------------------------------------------------------
In order to prove Theorem~\ref{th::l2}, we first need two technical
lemmas that are proved in appendix.
%----------------------------------------------------------------------
\begin{lem}[A priori control -- the network case]\label{lem::l3}
Let $T>0$ and let $u$ be a sub-solution and $w$ be a super-solution as
in Theorem~\ref{th::l2}. Then there exists a constant $C=C(T)>0$ such
that for all $(t,x),(s,y)\in [0,T)\times \mathcal{N}$, we have
\begin{equation}\label{eq::l29}
u(t,x)\le w(s,y) + C(1 + d(x,y)).
\end{equation}
\end{lem}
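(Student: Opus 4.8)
The statement to prove is Lemma~\ref{lem::l3} (the a priori control in the network case), which is the exact analogue of Lemma~\ref{lem::3} from the junction case. The idea is to run a standard Gronwall-type argument: compare $u(t,x)$ with $w(s,y) + C(1+d(x,y))$ by testing the sub- and super-solution inequalities against a carefully built penalization that includes the distance $d(x,y)$. First I would fix $T>0$ and, using the linear growth bounds \eqref{eq::l27} together with the uniform coercivity (H1) and the bound (H2), observe that $u$ and $w$ are globally Lipschitz in a suitable sense away from the initial time, or at least satisfy a finite-speed-of-propagation estimate. The natural approach is to show that the function
\[
\Phi(t,x,s,y) = u(t,x) - w(s,y) - C_1(1+d(x,y)) - \frac{(t-s)^2}{2\nu} - \eta t
\]
stays nonpositive for $C_1$ large enough, by arguing that if its supremum were positive it would be attained at an interior point where the viscosity inequalities yield a contradiction through (H2)--(H3).

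The key steps, in order: (1) Reduce to the case where the growth constant in \eqref{eq::l27} controls everything, and use the uniform bound (H2) on $H_{\mathcal N}$ for bounded gradients, which will force any maximizing test-function gradient to be bounded by a constant depending only on $T$; (2) set up the doubling of variables with the penalization above, and also add a localizing term $\epsilon(d(x_0,x)^2 + d(x_0,y)^2)$ if needed to guarantee the supremum is attained — here one uses that $d$ is proper on the network because of \eqref{eq:le-inf}; (3) at an interior maximum point, write the sub-solution inequality for $u$ and the super-solution inequality for $w$; the spatial penalization $C_1 d(x,y)$ contributes a gradient of modulus exactly $C_1$ along the minimizing path (using Remark~\ref{rem:bd-no-path}, which guarantees only finitely many minimizing paths and hence a well-defined one-sided derivative structure, and the fact that at a vertex the flux condition is monotone so the penalization is admissible in the viscosity sense); (4) subtract the two inequalities and use (H3) plus (H5) (the uniform modulus of continuity in time, via \eqref{eq::l139} in Lemma~\ref{lem::l146}) to absorb the difference of Hamiltonians into $\eta$ for $C_1$ large; (5) send the penalization parameters to their limits and conclude, finally using that at $t=0$ or $s=0$ the estimate follows from uniform continuity of $u_0$.

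The main obstacle I expect is step (3): handling the distance function $d(x,y)$ as a penalization term when $x$ or $y$ sits at a vertex, or when the pair $(x,y)$ has several competing minimizing paths. On a network $d(\cdot,\cdot)$ is only piecewise smooth and its gradient can jump across vertices and across the "cut locus" where two minimizing paths have equal length; one must check that $x \mapsto C_1 d(x,y)$ is still a legitimate test function in the viscosity sense for the equation \eqref{eq::l16}, which works precisely because $H_{\mathcal N}$ at a vertex is a \emph{max} of non-increasing (in the appropriate sense) functions of the branch-gradients and the flux limiter — so a concave-type kink pointing "into" the vertex is harmless for the sub-solution side, and one orients the doubling so the kink always falls on the favorable side. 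This is exactly the kind of monotonicity already exploited in Proposition~\ref{pro::1}, and Remark~\ref{rem:bd-no-path} is what keeps the combinatorics finite. The time-penalization and the Hamiltonian estimates are then routine given Lemma~\ref{lem::l146}.
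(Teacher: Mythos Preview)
Your approach differs from the paper's in two structural ways. First, you double variables in time with a penalty $(t-s)^2/(2\nu)$ and invoke (H5) via Lemma~\ref{lem::l146} to handle the resulting time-shift in the Hamiltonians, whereas the paper first proves the bound at \emph{equal} times $t=s$ (adapting the proof of Lemma~\ref{lem::4}) and then bridges to $t\neq s$ using the explicit barriers $u^\pm_\eps$ from the proof of Theorem~\ref{th::l3}, exactly as in Lemma~\ref{lem::3}. Second, you penalize with $C_1(1+d(x,y))$ directly, while the paper uses $C_1\varphi(x,y)$ with $\varphi(x,y)=\sqrt{1+d^2(x,y)}$; by Lemma~\ref{lem::l20} this function is semi-concave on each $e_a\times e_b$, so Remark~\ref{rem:sc-net} makes it an admissible test function on both the sub- and super-solution sides without any ad hoc argument at vertices or cut loci. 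Your discussion of the kinks of $d$ via monotonicity is plausible for the sub-solution side, but the paper's choice simply sidesteps the issue.

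There is a genuine gap in your step~(5). When the maximizer of $\Phi$ lands at $s=0$ with $t>0$, you would need
\[
u(t,x)\le u_0(y)+C_1(1+d(x,y))+\frac{t^2}{2\nu}+\eta t,
\]
and uniform continuity of $u_0$ alone does \emph{not} give this: it controls $u_0(x)-u_0(y)$, but says nothing about $u(t,x)-u_0(x)$. The symmetric difficulty arises at $t=0$, $s>0$. In the paper's decomposition this problem disappears: at equal times the only boundary case is $t=s=0$, where one genuinely compares $u_0(x)$ with $u_0(y)$ and uniform continuity suffices (this is the choice $C_1>C_0$ in the proof of Lemma~\ref{lem::4}); the passage to $t\neq s$ then uses the barriers, for which the bound $u^+_\eps(t,x)-u^-_\eps(s,y)\le 2C_\eps T+2\eps+L_\eps d(x,y)$ holds for \emph{all} times by construction, with no viscosity inequality needed. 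Note also that (H5) plays no role here: the same-time argument only requires (H2) to bound the Hamiltonians at gradients of size $\le C_1+C$ and then choose $C_2$ larger, and the barrier step handles the time dependence explicitly.
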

%----------------------------------------------------------------------
\begin{lem}[Uniform control by the initial data] \label{lem::l110}
Under the assumptions of Theorem~\ref{th::l2}, for any $T>0$ and
$C_T>0$, there exists a modulus of continuity $f:[0,T)\to [0,+\infty]$
  satisfying $f(0^+)=0$ such that for all sub-solution $u$
  (resp. super-solution $w$) of \eqref{eq::l16}, \eqref{eq::l15} on
  $[0,T)\times {\mathcal N}$, satisfying \eqref{eq::l27} for some
    $x_0\in {\mathcal N}$, we have for all $(t,x)\in [0,T)\times
      {\mathcal N}$,
\begin{equation}\label{eq::l111}
u(t,x)\le u_0(x) + f(t) \quad \left(\text{resp.}\quad w(t,x)\ge
u_0(x)-f(t)\right).
\end{equation}
\end{lem}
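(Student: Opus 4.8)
\emph{Plan.} The idea is a standard barrier (local comparison) argument: for each point $\bar x\in\mathcal N$ and each small $\varepsilon>0$, construct an explicit global super-solution $\phi=\phi_{\bar x,\varepsilon}$ lying above $u$ at $t=0$, and deduce $u(\cdot,\bar x)\le \phi(\cdot,\bar x)$; the super-solution estimate for $w$ follows symmetrically.

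First I would record the needed bounds. Denote by $\omega_0$ a modulus of continuity of $u_0$; since a modulus is subadditive, for each $\varepsilon>0$ there is $K_\varepsilon\ge C_T+1$ with $\omega_0(r)\le \varepsilon/2+K_\varepsilon r$ for all $r\ge0$. By (H1) and (H0)--(H2), together with the uniform estimates \eqref{eq::l136bis}--\eqref{eq::l156} of Lemma~\ref{lem::l146} and (A0)--(A1), there is, for every $L>0$, a constant $C_{T,L}$ with $|H_{\mathcal N}(t,x,p)|\le C_{T,L}$ whenever $t\in[0,T]$ and $|p|\le L$, and a lower bound $H_{\mathcal N}\ge -m_0$ valid for all gradients. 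Fix a convex $C^1$ function $h_\delta:[0,+\infty)\to[0,+\infty)$ with $h_\delta(r)=r$ for $r\ge\delta$, $h_\delta'(0)=0$ and $h_\delta(0)=\delta/2$ (so that $h_\delta(r)\ge r-\delta/2$), and pick $\delta\le\varepsilon/K_\varepsilon$. Set
$$\phi(t,x)=u_0(\bar x)+\varepsilon+K_\varepsilon\,h_\delta(d(x,\bar x))+C_\varepsilon t,\qquad C_\varepsilon:=1+\max\big(m_0,\ C_{T,K_\varepsilon},\ C_T\big).$$
Then $\phi\in C^1(\mathcal N_T)$ on each edge, is genuinely $C^1$ near $\bar x$ (because $h_\delta'(0)=0$), and is semi-concave in $x$ away from $\bar x$ (as $h_\delta$ is convex, $d(\cdot,\bar x)$ is affine on each edge and has only concave corners at vertices and cut-locus points other than $\bar x$). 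One checks directly that $\phi$ is a super-solution of \eqref{eq::l16} on $(0,T)\times\mathcal N$: any admissible test function touching $\phi$ from below has spatial gradient of modulus $\le K_\varepsilon$, hence the required inequality follows from $H_e\ge-\max(m_0,C_{T,K_\varepsilon})\ge-C_\varepsilon$ at edge points and from $F_A\ge A_n\ge -C_T\ge-C_\varepsilon$ at vertices. Finally $\phi(0,x)\ge u_0(\bar x)+\varepsilon/2+K_\varepsilon d(x,\bar x)\ge u_0(\bar x)+\omega_0(d(x,\bar x))\ge u_0(x)\ge u^*(0,x)$.

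Next comes the comparison step. Fix $t_0\in(0,T)$ and $\eta>0$, and consider $M=\sup_{[0,T)\times\mathcal N}\big(u(t,x)-\phi(t,x)-\tfrac{\eta}{T-t}\big)$. Since $K_\varepsilon>C_T$ and $u$ satisfies \eqref{eq::l27}, we have $u(t,x)-\phi(t,x)\to-\infty$ as $d(\bar x,x)\to+\infty$ uniformly in $t$; hence, if $M>0$, the supremum is attained over a large closed ball $\bar B(\bar x,R)$, at an interior point $(t_1,x_1)$ with $t_1<T$ (penalization) and $t_1>0$ (at $t=0$ the quantity is $\le -\eta/T<0<M$). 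Then $\psi:=\phi+\tfrac{\eta}{T-t}$ touches $u$ from above at $(t_1,x_1)$; $\psi$ is $C^1$ in $t$, and in $x$ it is $C^1$ near $x_1$ when $x_1$ is a $C^1$-point of $\phi$ and only semi-concave otherwise. In the latter case I invoke Remark~\ref{rem:sc-net} to replace $\psi$ by a genuine test function $\tilde\psi\in C^1(\mathcal N_T)$ touching $u$ from above at $(t_1,x_1)$, with $\tilde\psi_t=\psi_t$ and spatial gradient still controlled by $K_\varepsilon$. The sub-solution inequality for $u$ then yields $0\ge\tilde\psi_t+H_{\mathcal N}(t_1,x_1,\tilde\psi_x)=C_\varepsilon+\tfrac{\eta}{(T-t_1)^2}+H_{\mathcal N}(t_1,x_1,\tilde\psi_x)\ge\tfrac{\eta}{(T-t_1)^2}>0$, a contradiction. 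Hence $M\le0$, so $u(t_0,\bar x)\le\phi(t_0,\bar x)+\tfrac{\eta}{T-t_0}$; letting $\eta\to0$ and using $K_\varepsilon h_\delta(0)=K_\varepsilon\delta/2\le\varepsilon/2$ gives $u(t_0,\bar x)\le u_0(\bar x)+\tfrac32\varepsilon+C_\varepsilon t_0$. Since $\bar x$ and $t_0$ are arbitrary, the function $f(t):=\inf_{\varepsilon>0}\big(\tfrac32\varepsilon+C_\varepsilon t\big)$ (allowed to take the value $+\infty$) is nondecreasing, satisfies $f(0^+)=0$, depends only on $T$, $C_T$ and $\omega_0$ (not on $u$), and gives \eqref{eq::l111}; the case of $w$ is handled with the lower barrier $u_0(\bar x)-\varepsilon-K_\varepsilon h_\delta(d(x,\bar x))-C_\varepsilon t$.

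\textbf{Main obstacle.} The construction is elementary; the only delicate point is that the barrier $\phi$ is not globally $C^1$ — it has concave corners at vertices and cut-locus points and, a priori, a convex corner at $\bar x$. The convex corner is eliminated by the smoothing $h_\delta$; the concave corners are harmless both when verifying that $\phi$ is a super-solution (one only needs $H_{\mathcal N}$ bounded below on bounded-gradient sets) and in the contradiction step (they are absorbed by the semi-concave touching principle of Remark~\ref{rem:sc-net}). The remaining care is bookkeeping: ensuring $C_\varepsilon$ depends only on $\varepsilon$ and $T$, which is exactly what the uniform-in-vertex bounds of Lemma~\ref{lem::l146} provide.
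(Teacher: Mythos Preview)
Your proof is correct and follows the same barrier-and-contradiction scheme as the paper, but the barrier you build is different. The paper takes a global regularization $u_0^\eps$ of the initial datum (with $|u_0^\eps-u_0|\le\eps$ and $|(u_0^\eps)_x|\le L_\eps$) and uses the single barrier $u_\eps^+(t,x)=u_0^\eps(x)+K_\eps t+\eps$; because this barrier is only sublinear, an extra spatial penalization $\alpha\,d^2(x_0,x)/2$ is added to force the supremum to be attained, and one must bound $\alpha\,d(x_0,x)$ before writing the viscosity inequality. Your barrier $\phi=u_0(\bar x)+\varepsilon+K_\varepsilon h_\delta(d(x,\bar x))+C_\varepsilon t$ is instead centred at the evaluation point $\bar x$; since $K_\varepsilon>C_T$ it is self-localizing, so you dispense with both the regularization of $u_0$ on the network and the $\alpha$-penalization. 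The price is that your barrier is not $C^1$ at cut-locus points of $d(\cdot,\bar x)$, but you correctly absorb this through Remark~\ref{rem:sc-net}, exactly as the paper does for its own $d^2(x_0,\cdot)/2$ term. Both routes yield $f(t)=\inf_\varepsilon(c_\varepsilon+C_\varepsilon t)$ with the same dependence on the data.
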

We can now turn to the proof of Theorem~\ref{th::l2}.  The proof is
similar the comparison principle on a junction
(Theorem~\ref{th::2}). Still, a space localization procedure has to be
performed in order to ``reduce'' to the junction case.  From a
technical point of view, a noticeable difference is that we will fix
the time penalization (for some parameter $\nu$ small enough), and
then will first take the limit $\eps\to 0$ ($\eps$ being the parameter
for the space penalization), and then take the limit $\alpha\to 0$
($\alpha$ being the penalizaton parameter to keep the optimization
points at a finite distance).
%-------------------------------------------------------------------------
\begin{proof}[Proof of Theorem~\ref{th::l2}]
Let $\eta >0$ and $\theta>0$ and consider
\[M(\theta)=\sup\left\{u(t,x)-w(s,x)-\frac{\eta}{T-t},
\quad x\in{\mathcal N}, \quad t,s\in [0,T),\quad |t-s|\le \theta\right\}.\]
We want to prove that
\[M=\lim_{\theta \to 0} M(\theta) \le 0.\]
Assume by contradiction that $M>0$. 
From Lemma~\ref{lem::l3} we  know that $M$ is finite.

\paragraph{Step 1: The localization procedure.}
Let $\psi$ denote $\frac{d^2(x_0,\cdot)}{2}$.
%--------------------------------------------------------------
\begin{lem}[Localization] \label{lem:localization}
The supremum  
\[M_{\alpha}=\sup_{\stackrel{t,s\in [0,T], t < T}{x\in {\mathcal N}}}\left\{u(t,x)-w(s,x)
-\alpha\psi(x)-\frac{\eta}{T-t}-\frac{(t-s)^2}{2\nu}\right\}\] is
reached for some point $(t_\alpha,s_\alpha,x_\alpha)$. Moreover, for
$\alpha$ and $\nu$ small enough, we have the following localization
estimates
\begin{align}
\label{eq::bfb}
M_\alpha \ge 3M/4>0\\
\label{eq::l117}
d(x_0,x_\alpha)\le \frac{C}{\sqrt{\alpha}} \\
\label{eq::l120} 
0 < \tau_\nu \le t_\alpha, s_\alpha  \le T - \frac{\eta}{2C} \\
\label{eq::l155}
\lim_{\nu\to 0} \left(\limsup_{\alpha\to 0}
\frac{(t_\alpha-s_\alpha)^2}{2\nu}\right)  =0
\end{align}
where $C$ is a constant which does not depend on $\alpha$, $\eps$,
 $\nu$ and $\eta$. 
\end{lem}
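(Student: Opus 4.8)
The plan is to run the classical penalization/localization scheme, feeding in two facts established earlier: the a priori control of Lemma~\ref{lem::l3} (which, taking $y=x$, reads $u(t,x)-w(s,x)\le C$ for all $t,s\in[0,T)$ and $x\in\mathcal N$) and the control by the initial data of Lemma~\ref{lem::l110}. As usual I would first replace $u$ by $u^*$ and $w$ by $w_*$, so that
\[
\Phi_\alpha(t,s,x):=u(t,x)-w(s,x)-\alpha\psi(x)-\frac{\eta}{T-t}-\frac{(t-s)^2}{2\nu}
\]
is upper semi-continuous on $[0,T)\times[0,T)\times\mathcal N$ and $M_\alpha=\sup\Phi_\alpha$; recall also that $\theta\mapsto M(\theta)$ is nondecreasing, hence $M(\theta)\ge M>0$ for all $\theta>0$ and $M(\theta)\to M$ as $\theta\to 0^+$.

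First I would prove the lower bound \eqref{eq::bfb}. Given $\nu>0$ small, fix a near-maximizer $(\bar t,\bar s,\bar x)$ of $M(\nu)$ with $|\bar t-\bar s|\le\nu$ and $u(\bar t,\bar x)-w(\bar s,\bar x)-\eta/(T-\bar t)\ge M(\nu)-\nu$; testing $\Phi_\alpha$ at $(\bar t,\bar s,\bar x)$ gives
\[
M_\alpha\ \ge\ M(\nu)-\nu-\frac{\nu^2}{2\nu}-\alpha\psi(\bar x)\ =\ M(\nu)-\frac{3\nu}{2}-\alpha\psi(\bar x).
\]
Hence $\liminf_{\alpha\to0}M_\alpha\ge M(\nu)-\frac{3\nu}{2}$, and choosing $\nu$ small and then $\alpha$ small (the threshold depending on $\bar x$, i.e. on $\nu$) yields $M_\alpha\ge 3M/4$.

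Next I would obtain the existence of a maximizer together with \eqref{eq::l117} and the upper parts of \eqref{eq::l120}. Along a maximizing sequence $\Phi_\alpha$ eventually exceeds $M/2$; combined with $u(t,x)-w(s,x)\le C$ this forces, at such points, $\alpha\psi(x)\le C$, $\eta/(T-t)\le C$ and $(t-s)^2/(2\nu)\le C$, i.e. $d(x_0,x)\le C/\sqrt\alpha$, $t\le T-\eta/C$ and $|t-s|\le\sqrt{2C\nu}$ (so also $s\le T-\eta/C+\sqrt{2C\nu}$). Since closed bounded subsets of $\mathcal N$ are compact, such a maximizing sequence lies in a compact set, and by upper semi-continuity of $\Phi_\alpha$ a subsequential limit $(t_\alpha,s_\alpha,x_\alpha)$ realizes $M_\alpha$ and inherits the above bounds; for $\nu$ small enough that $\sqrt{2C\nu}\le\eta/(2C)$ this gives \eqref{eq::l117} and the upper parts of \eqref{eq::l120}. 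For the lower bounds on $t_\alpha,s_\alpha$ I would invoke Lemma~\ref{lem::l110}: from $u(t_\alpha,x_\alpha)\le u_0(x_\alpha)+f(t_\alpha)$ and $w(s_\alpha,x_\alpha)\ge u_0(x_\alpha)-f(s_\alpha)$ one gets $3M/4\le M_\alpha\le f(t_\alpha)+f(s_\alpha)$; since $f(0^+)=0$ there is $\tau_0>0$ with $f<3M/8$ on $[0,\tau_0)$, so $\max(t_\alpha,s_\alpha)\ge\tau_0$, and with $|t_\alpha-s_\alpha|\le\sqrt{2C\nu}$ this yields $t_\alpha,s_\alpha\ge\tau_0-\sqrt{2C\nu}=:\tau_\nu>0$ for $\nu$ small enough.

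The hard part will be \eqref{eq::l155}, since $u$ and $w$ are not assumed continuous in time, so the time-penalization term cannot be estimated by a modulus of continuity of $u$ or $w$. The device is to squeeze $M_\alpha$ between two values of $M(\cdot)$. On the one hand, $(t_\alpha,s_\alpha,x_\alpha)$ is admissible in the definition of $M(\theta_\nu)$ with $\theta_\nu:=\sqrt{2C\nu}$, so, dropping $-\alpha\psi(x_\alpha)\le0$,
\[
M_\alpha\ \le\ M(\theta_\nu)-\frac{(t_\alpha-s_\alpha)^2}{2\nu}.
\]
On the other hand, the first step already gave $\liminf_{\alpha\to0}M_\alpha\ge M(\nu)-\frac{3\nu}{2}$. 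Subtracting,
\[
\limsup_{\alpha\to0}\frac{(t_\alpha-s_\alpha)^2}{2\nu}\ \le\ M(\theta_\nu)-M(\nu)+\frac{3\nu}{2},
\]
and since $\theta_\nu\to0$ as $\nu\to0$, both $M(\theta_\nu)$ and $M(\nu)$ tend to $M$, so the right-hand side tends to $0$; as the left-hand side is nonnegative, \eqref{eq::l155} follows. Everything beyond this (writing the viscosity inequalities at $(t_\alpha,s_\alpha,x_\alpha)$ and reaching the contradiction) belongs to the steps following this lemma, not to the lemma itself.
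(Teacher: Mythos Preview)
The proposal is correct and follows essentially the same approach as the paper: the a priori control from Lemma~\ref{lem::l3} and the initial-data control from Lemma~\ref{lem::l110} feed into the standard penalization/localization scheme in the same way, and the sandwich $M_\alpha\le M(\theta_\nu)-\frac{(t_\alpha-s_\alpha)^2}{2\nu}$ combined with a lower bound on $M_\alpha$ is exactly the paper's device for \eqref{eq::l155}. Your version of this last step is slightly more streamlined---you reuse the lower bound $\liminf_{\alpha\to 0}M_\alpha\ge M(\nu)-\tfrac{3\nu}{2}$ already obtained in the first step, whereas the paper introduces an auxiliary parameter $\delta$ (with its own near-maximizer) and sends $\delta\to 0$ before $\nu\to 0$---but the underlying idea is identical.
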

%--------------------------------------------------------------
\begin{proof}[Proof of Lemma~\ref{lem:localization}]
Choosing $\alpha$ small enough, we have \eqref{eq::bfb} for all $\nu
>0$. Because the network is complete for its metric, the supremum in
the definition of $M_\alpha$ is reached at some point
$(t_\alpha,s_\alpha,x_\alpha)$. From Lemma~\ref{lem::l3}, we deduce
that 
\[0<\frac{3M}{4} \le M_\alpha 
\le C
-\alpha\psi(x_\alpha)-\frac{\eta}{T-t_\alpha}-\frac{(t_\alpha-s_\alpha)^2}{2\nu}\]
and then
\begin{equation}\label{eq::l130}
\alpha\psi(x_\alpha)+\frac{\eta}{T-t_\alpha}+\frac{(t_\alpha-s_\alpha)^2}{2\nu}\le C. 
\end{equation}
This implies \eqref{eq::l117} changing $C$ if necessary.

On the one hand, we get from \eqref{eq::l130} the second inequality in
\eqref{eq::l120} by choosing $\nu$ such that $\sqrt{2 \nu C} \le \eta
/2C$. On the other hand, we get from Lemma~\ref{lem::l110}  
\[0< M_\alpha  \le  f(t_\alpha)+f(s_\alpha) -\frac{\eta}{T}.\]
In particular,
\[\frac{\eta}{T}\le 2 f(\tau+ \sqrt{2 \nu C})\]
where $\tau = \min(t_\alpha,s_\alpha)$. 
If both $\tau$ and $\nu$ are too small, we get a contradiction. Hence
the first inequality in \eqref{eq::l120} holds for some constant
$\tau_\nu$ depending on $\nu$ but not on $\alpha$, $\eps$ and $\eta$.

We now turn to the proof of \eqref{eq::l155}. We know that for any
$\delta>0$, there exists $\theta(\delta)>0$ (with $\theta(\delta)\to
0$ as $\delta\to 0$) and $(t^\delta,s^\delta,x^\delta)\in [0,T)\times
  [0,T)\times {\mathcal N}$ such that
\[u(t^\delta,x^\delta)-w(s^\delta,x^\delta)
-\frac{\eta}{T-t^\delta}\ge M-\delta \quad \text{and}\quad 
|t^\delta-s^\delta|\le \theta(\delta).\]
Then from \eqref{eq::l130} we deduce that
\[M(\sqrt{2\nu C})-\frac{(t_\alpha-s_\alpha)^2}{2\nu}\ge M_\alpha\ge 
M-\delta
-\alpha\psi(x^\delta)-\frac{|\theta(\delta)|^2}{2\nu}\]
and then
\[\limsup_{\alpha\to 0} \frac{(t_\alpha-s_\alpha)^2}{2\nu} \le
M(\sqrt{2\nu C}) - M + \delta  + \frac{|\theta(\delta)|^2}{2\nu}.\]
Taking the limit $\delta\to 0$, we get
\[\limsup_{\alpha\to 0} \frac{(t_\alpha-s_\alpha)^2}{2\nu} \le M(\sqrt{2\nu C}) - M\]
which yields the desired result. 
\end{proof}

\paragraph{Step 2: Reduction when $x_\alpha$ is a vertex.} We adapt here
Lemma~\ref{pi0zero}.
\begin{lem}[Reduction]\label{lem:pi0zero-net}
Assume that $x_\alpha = n \in \mathcal{V}$. Without loss of
generality, we can assume that ${\mathcal E}_n^+=\emptyset$ and
$p^0_e(t_\alpha,x_\alpha)=0$ for each $e\in {\mathcal E}_n$ with
$n=x_\alpha$.
\end{lem}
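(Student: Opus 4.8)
To prove Lemma~\ref{lem:pi0zero-net} the plan is to imitate the junction reduction of Lemma~\ref{pi0zero}, but carried out \emph{locally} around the single vertex $n=x_\alpha$ and in two stages: first I would make $\mathcal{E}_n^+$ empty by relabelling the edges that meet $n$ at their far endpoint, and then I would kill the critical slopes $p^0_e(t_\alpha,n)$ by subtracting a suitable, spatially localized perturbation. The bookkeeping observation that makes this harmless is that both operations are applied simultaneously to the sub-solution $u$ and the super-solution $w$, and the second one will depend on the space variable only; hence after it we still have $\tilde u(t,x)-\tilde w(s,x)=u(t,x)-w(s,x)$, so the penalized supremum $M_\alpha$ of Lemma~\ref{lem:localization} and its maximizer $(t_\alpha,s_\alpha,x_\alpha)$ are unchanged, and the reduction may indeed be performed after Step~1.

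\emph{First stage ($\mathcal{E}_n^+=\emptyset$).} Each $e\in\mathcal{E}_n^+$ is a compact edge (length $l_e\ge\inf_{e'}l_{e'}>0$ by \eqref{eq:le-inf}); I would replace its parametrization by the isometry $x\mapsto l_e-x$, which interchanges $e^0$ and $e^1$, so that $n$ becomes the $0$-endpoint of every incident edge and $\mathcal{E}_n^+=\emptyset$. On the interior this replaces $H_e(t,x,\cdot)$ by $\hat H_e(t,x,\cdot):=H_e(t,l_e-x,-\cdot)$, which still satisfies \textbf{(H0)}--\textbf{(H6)} with critical slope $-p^0_e$ in place of $p^0_e$ and with the \emph{same} moduli of continuity, coercivity and boundedness constants, since precomposition with an isometry and with $p\mapsto -p$ affects none of them. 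At the vertex, the very definition of $F_A$ is arranged so that the term $H^+_e(t,n,-p_e)$ attached to an edge $e\in\mathcal{E}_n^+$ is exactly the term $\hat H^-_e(t,n,\cdot)$ that the relabelled edge contributes as an element of $\mathcal{E}_n^-$; checking this is an elementary identification of the non-increasing and non-decreasing parts of a Hamiltonian under $p\mapsto -p$, which I would carry out but which is routine. Consequently $u$ and $w$ remain a sub- and a super-solution of the relabelled problem, and the initial datum and the growth bounds \eqref{eq::l27} are untouched since the metric of $\mathcal{N}$ does not change.

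\emph{Second stage ($p^0_e(t_\alpha,n)=0$).} Having arranged $\mathcal{E}_n=\mathcal{E}_n^-$, set $c_e:=p^0_e(t_\alpha,n)$ for $e\in\mathcal{E}_n$; these are finitely many reals (finite valence of $n$), bounded by \eqref{eq::l136bis}. I would pick $\delta\in(0,\tfrac12\inf_{e'}l_{e'})$ and a cut-off $\zeta\in C^\infty_c([0,\delta))$ with $\zeta\equiv 1$ near $0$, and define $\phi:\mathcal{N}\to\R$ by $\phi\equiv 0$ outside $B(n,\delta)$ and $\phi(x)=c_e\,d(n,x)\,\zeta(d(n,x))$ for $x$ on an edge $e\in\mathcal{E}_n$ with $d(n,x)<\delta$. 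Then $\phi\in C(\mathcal{N})$, is $C^\infty$ on each edge, vanishes near every vertex (so $\phi(n)=0$), is supported on finitely many edges (hence $\|\phi\|_\infty+\sup|\phi_x|<\infty$), and satisfies $\partial_e\phi(n)=c_e$ for all $e\in\mathcal{E}_n$. Replace $u,w$ by $\tilde u:=u-\phi$, $\tilde w:=w-\phi$, each $H_e$ by $\tilde H_e(t,x,p):=H_e(t,x,p+\partial_e\phi(x))$, and keep each $A_n$. As usual, touching $\tilde u$ (resp.\ $\tilde w$) with a test function $\varphi$ amounts to touching $u$ (resp.\ $w$) with $\varphi+\phi$, so $\tilde u,\tilde w$ are a sub- and a super-solution of the HJ equation with data $\tilde H_e,A_n$; moreover $\tilde H_e$ again satisfies \textbf{(H0)}--\textbf{(H6)} because adding a bounded continuous function of $x$ to the momentum preserves coercivity, the uniform bounds, and the moduli, and the quasi-convexity holds with critical slope $\tilde p^0_e(t,x)=p^0_e(t,x)-\partial_e\phi(x)$, so $\tilde p^0_e(t_\alpha,n)=c_e-c_e=0$ and $\tilde A^0_n=A^0_n$ (the ranges of $\tilde H_e(t,n,\cdot)$ and $H_e(t,n,\cdot)$ coincide). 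One computes $\tilde H^-_e(t,n,q)=H^-_e(t,n,q+c_e)$, whence the transformed flux limiter is $\tilde F_A(t,n,p)=F_A(t,n,p+(c_e)_{e\in\mathcal{E}_n})$ and the vertex equation is preserved; the bounds \eqref{eq::l27} survive with $C_T$ enlarged by $\|\phi\|_\infty$; and $\tilde u(t,x)-\tilde w(s,x)=u(t,x)-w(s,x)$, so $M_\alpha$ and its maximizer are unchanged. This completes the reduction.

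The main obstacle is precisely in the second stage: unlike in the junction case of Lemma~\ref{pi0zero}, the affine function $p^0_e\,x$ cannot be subtracted over the whole network without breaking continuity at the far endpoints of the edges incident to $n$. This forces the cut-off localization above (using \eqref{eq:le-inf} to leave room near the other vertices), and therefore also forces the transformed Hamiltonians to depend on $x$ through $\partial_e\phi(x)$ — so the only real work is to verify that assumptions \textbf{(H0)}--\textbf{(H6)} (and \textbf{(A0)}--\textbf{(A2)}, which are untouched) are stable under such a perturbation, which they are since $\phi$ is $C^\infty$ on edges with uniformly bounded first derivative.
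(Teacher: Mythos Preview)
Your proposal is correct and follows essentially the same two-stage strategy as the paper: reorient the incident edges so that $\mathcal{E}_n^+=\emptyset$, then shift the gradient variable on each incident edge by $p_e^0(t_\alpha,n)$ as in Lemma~\ref{pi0zero}. The paper's own proof is terse---it simply says ``argue as in Lemma~\ref{pi0zero}\ldots\ redefine the Hamiltonians only locally for $e\in\mathcal{E}_n$'' and invokes \eqref{eq::l136bis} to control the new constants---whereas you make the word ``locally'' precise via the cut-off $\zeta$, which is a legitimate and more explicit way to carry out the same reduction (the paper can afford to be brief because Steps~3--6 of the comparison proof take place entirely in $B(x_\alpha,r)$, which is already a junction). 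One small wording slip: $\phi$ does not vanish \emph{near} $n$, only \emph{at} $n$ and near every \emph{other} vertex; but your subsequent computations ($\phi(n)=0$, $\partial_e\phi(n)=c_e$) are correct.
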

\begin{proof}[Proof of Lemma~\ref{lem:pi0zero-net}] 
The orientation of the edges $e\in {\mathcal E}_n$ can be changed in
order to reduce to the case ${\mathcal E}_n^+=\emptyset$. In
particular, for $p=(p_e)_{e\in {\mathcal E}_n}$,
\[F_A(t,n,p)=\max\left(A_n(t),\quad\max_{e\in {\mathcal E}_n^-} H^-_e(t,n,p_e)\right).\]
We can then argue as in Lemma~\ref{pi0zero}.  This means that we
redefine the Hamiltonians (and the flux limiter $A_n$) only locally
for $e\in {\mathcal E}_{n}$.  Using
\eqref{eq::l136bis}, we can check that the new Hamiltonians (locally
for $e\in {\mathcal E}_{n}$) and $A_n$ still satisfy \emph{(H0)-(H6)}
and \emph{(A0)-(A2)} (with the same modulus of continuity, and with
some different controlled constants $C_{T,L}$).  We also have
\eqref{eq::l27} with some controlled different constants.
\end{proof}

\paragraph{Step 3: The penalization procedure.}
We now consider for $\eps>0$ and $\gamma\in (0,1)$
\begin{multline*}
{M}_{\alpha,\eps}=\sup_{\stackrel{(t,x),(s,y)\in [0,T]\times 
 \overline{B(x_\alpha,r)}}{t < T}} \left\{u(t,x)-w(s,y)
-\alpha\psi(x)-\frac{\eta}{T-t} \right.\\
 \left. -\frac{(t-s)^2}{2\nu}-G^{\alpha,\gamma}_\eps(x,y)
-\varphi^{\alpha}(t,s,x)\right\}
\end{multline*}
where the function $\varphi^\alpha$ 
\[\varphi^{\alpha}(t,s,x)= \frac{1}{2}\left(|t-t_\alpha|^2+ |s-s_\alpha|^2+
d^2(x,x_\alpha)\right)\] will help us to localize the problem around
$(t_\alpha,s_\alpha,x_\alpha)$, and $B(x_\alpha,r)$ is the open ball
of radius $r=r(\alpha)>0$ centered at $x_\alpha$; besides, we choose
$r\in(0,1)$ small enough such that $B(x_\alpha,r)\subset e$ if
$x_\alpha \in e \setminus {\mathcal V}$.  Lemma~\ref{lem::l20} ensures
that $\psi$ and $\varphi^\alpha$  are semi-concave and therefore can
be used as test functions, see Remark~\ref{rem:sc-net}. 

We choose
\[G_\eps^{\alpha,\gamma}(x,y)=\eps 
G^{\alpha,\gamma}(\eps^{-1}x,\eps^{-1}y)\]
with
\[G^{\alpha,\gamma}(x,y)=\begin{cases}
\displaystyle \frac{(x-y)^2}{2} &\quad \text{if}\quad 
x_\alpha\in {\mathcal N}\setminus {\mathcal V},\\
G^{x_\alpha,\gamma}(x,y) &\quad \text{if}\quad x_\alpha\in {\mathcal V},
\end{cases}\]
where $G^{x_\alpha,\gamma}\ge 0$ is the vertex test function of
parameter $\gamma>0$ given by Theorem~\ref{th::G}, built on the
junction problem associated to the vertex $x_\alpha$ at time
$t_\alpha$, \textit{i.e.} associated to junction problem for the
Hamiltonian $H^{t_\alpha,x_\alpha}_{\mathcal V}$ given by
\begin{equation}\label{eq::l127}
H^{t_\alpha,n}_{\mathcal V}(x,p):=\left\{\begin{array}{ll}
H_e(t_\alpha,n,p) & \quad \text{if}\quad x\in e\setminus \left\{n\right\} \quad \text{with}\quad e\in {\mathcal E}_n,\\
F_{A}(t_\alpha,n,p) & \quad \text{if}\quad x=n.
\end{array}\right.
\end{equation}
The supremum in the definition of $M_{\alpha,\eps}$ is reached at some
point $(t,x),(s,y)\in [0,T]\times \overline{B(x_\alpha,r)}$ with
$t<T$. These maximizers satisfy the following penalization estimates.
%-------------------------------------------------------------
\begin{lem}[Penalization]\label{lem:penal}
For $\eps \in (0,1)$ and $\gamma \in (0,M/4)$, we have
\begin{align}
\label{eq::l116}
{M}_{\alpha,\eps}\ge M_\alpha - \eps \gamma\ge M/2>0\\
\label{eq::l42}
d(x,y) \le \omega(\eps) \\
\nonumber
0 < \tau_\nu \le s,t \le T -\sigma_\eta
\end{align}
for some modulus of continuity $\omega$ (depending on $\alpha$ and
$\gamma$) and $\tau_\nu$ and $\sigma_\eta$ not depending on
$(\eps,\gamma)$. Moreover,
\[(t,s,x,y) \to (t_\alpha,s_\alpha,x_\alpha,x_\alpha) \quad \text{ as }
(\eps,\gamma)\to (0,0).\]
In particular, we have $x,y\in B(x_\alpha,r)$ for
$\eps,\gamma>0$ small enough.
\end{lem}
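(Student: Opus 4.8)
The plan is to reproduce, inside the small ball $\overline{B(x_\alpha,r)}$, the penalization estimates of Steps~1--2 in the proof of Theorem~\ref{th::2}, the only genuinely new ingredient being the diagonal and superlinearity properties of the vertex test function. Let $(t,x),(s,y)$ denote a maximizer of $M_{\alpha,\eps}$ (it exists by the discussion preceding the lemma). To get \eqref{eq::l116}, I would evaluate the functional at the admissible competitor $(t_\alpha,x_\alpha,s_\alpha,x_\alpha)$: there $\varphi^\alpha=0$, and $G^{\alpha,\gamma}_\eps(x_\alpha,x_\alpha)\le\eps\gamma$. Indeed, if $x_\alpha\in\mathcal N\setminus\mathcal V$ then $G^{\alpha,\gamma}(z,z)=0$; if $x_\alpha\in\mathcal V$ then $G^{x_\alpha,\gamma}$ is the function from Theorem~\ref{th::G} built on the frozen junction Hamiltonian $H^{t_\alpha,x_\alpha}_{\mathcal V}$ of \eqref{eq::l127} — which, after the normalization of Lemma~\ref{lem:pi0zero-net}, satisfies \eqref{eq::3bis}--\eqref{eq::24} thanks to \textbf{(H0)}--\textbf{(H4)} and the bounds of Lemma~\ref{lem::l146} — so the diagonal compatibility \eqref{eq::85} gives $0\le G^{x_\alpha,\gamma}(w,w)\le\gamma$, whence $G^{\alpha,\gamma}_\eps(x_\alpha,x_\alpha)=\eps\,G^{x_\alpha,\gamma}(\eps^{-1}x_\alpha,\eps^{-1}x_\alpha)\le\eps\gamma$. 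This gives $M_{\alpha,\eps}\ge M_\alpha-\eps\gamma$, and since $M_\alpha\ge 3M/4$ by \eqref{eq::bfb} and $\eps\gamma<M/4$, we obtain \eqref{eq::l116}.

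Next, from \eqref{eq::l116} and the a priori control of Lemma~\ref{lem::l3}, using $d(x,y)\le 2r<2$ on the ball, all the nonnegative penalization terms are bounded by a constant $C'$ not depending on $(\eps,\gamma)$; in particular $\eta/(T-t)\le C'$, $(t-s)^2/(2\nu)\le C'$, $\varphi^\alpha\le C'$ and $G^{\alpha,\gamma}_\eps(x,y)\le C'$. The bound on $\eta/(T-t)$ gives $t\le T-\sigma_\eta$ with $\sigma_\eta:=\eta/C'$, and then $s\le t+\sqrt{2\nu C'}\le T-\sigma_\eta/2$ for $\nu$ small, so after relabelling $\sigma_\eta$ the upper time bound holds. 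For $d(x,y)\le\omega(\eps)$: when $x_\alpha\notin\mathcal V$ one has $G^{\alpha,\gamma}_\eps(x,y)=d(x,y)^2/(2\eps)$, so $d(x,y)\le\sqrt{2\eps C'}$; when $x_\alpha\in\mathcal V$ the superlinearity \eqref{eq::20} of $G$ gives $\eps\,g(d(x,y)/\eps)\le G^{\alpha,\gamma}_\eps(x,y)\le C'$, and then, arguing exactly as in \eqref{eq::41}--\eqref{eq::42} from the superlinear lower bound $Ka-C_K\le g(a)$, one gets $d(x,y)\le\omega(\eps)$ for a concave nondecreasing $\omega$ with $\omega(0^+)=0$ (depending on $\alpha,\gamma$ through $g$).

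For the convergence and the lower time bound, let $(\bar t,\bar s,\bar x,\bar y)$ be any limit point of the maximizers as $(\eps,\gamma)\to(0,0)$; since $d(x,y)\le\omega(\eps)\to0$, $\bar x=\bar y$. Passing to the limsup in $M_{\alpha,\eps}\ge M_\alpha-\eps\gamma$, using upper semi-continuity of $u$, lower semi-continuity of $w$, continuity of $\psi$, $\varphi^\alpha$ and $t\mapsto\eta/(T-t)$, and $G^{\alpha,\gamma}_\eps\ge0$, we get
\[
u(\bar t,\bar x)-w(\bar s,\bar x)-\alpha\psi(\bar x)-\frac{\eta}{T-\bar t}-\frac{(\bar t-\bar s)^2}{2\nu}-\varphi^\alpha(\bar t,\bar s,\bar x)\ \ge\ M_\alpha .
\]
Since the definition of $M_\alpha$ forces the left-hand side without the $\varphi^\alpha$ term to be $\le M_\alpha$, we conclude $\varphi^\alpha(\bar t,\bar s,\bar x)=0$, i.e. $(\bar t,\bar s,\bar x)=(t_\alpha,s_\alpha,x_\alpha)$. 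As every limit point coincides, $(t,s,x,y)\to(t_\alpha,s_\alpha,x_\alpha,x_\alpha)$; hence for $\eps,\gamma$ small $x,y$ lie in the open ball $B(x_\alpha,r)$, and since $t_\alpha,s_\alpha\ge\tau_\nu$ by \eqref{eq::l120} we also get $t,s\ge\tau_\nu/2$ for such $\eps,\gamma$, while for the remaining values the lower time bound follows from Lemma~\ref{lem::l110} (using $|t-s|\le\sqrt{2\nu C'}$, so $t,s$ are small together, and $|u_0(x)-u_0(y)|\le\omega_0(d(x,y))$) exactly as in Step~2 of the proof of Theorem~\ref{th::2}.

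The argument is essentially bookkeeping; the one point that requires real care is ensuring the vertex test function $G^{x_\alpha,\gamma}$ is legitimately available — that is, that the frozen Hamiltonian $H^{t_\alpha,x_\alpha}_{\mathcal V}$ genuinely fits the hypotheses of Theorem~\ref{th::G} after the reduction of Lemma~\ref{lem:pi0zero-net}, which rests on the uniform estimates of Lemma~\ref{lem::l146} — and then combining its diagonal bound \eqref{eq::85} and its superlinearity \eqref{eq::20} correctly with the rescaling $G^{\alpha,\gamma}_\eps(x,y)=\eps\,G^{\alpha,\gamma}(\eps^{-1}x,\eps^{-1}y)$ so as to obtain both \eqref{eq::l116} and the decay $d(x,y)\le\omega(\eps)$.
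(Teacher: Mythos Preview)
Your proof is correct and follows essentially the same approach as the paper: diagonal compatibility \eqref{eq::85} for \eqref{eq::l116}, superlinearity \eqref{eq::20} of $G$ together with the a priori control of Lemma~\ref{lem::l3} for \eqref{eq::l42}, and Lemma~\ref{lem::l110} for the lower time bound. The only cosmetic differences are that you exploit $d(x,y)\le 2r<2$ upfront to get a uniform bound $C'$ on all penalization terms (the paper instead keeps $C(1+d(x,y))$ and bounds $d(x,y)$ via superlinearity first), and that you reorder the lower-time-bound argument by deducing it partly from the convergence $(t,s)\to(t_\alpha,s_\alpha)$ for small $(\eps,\gamma)$ rather than invoking Lemma~\ref{lem::l110} directly for all parameters; both routes are fine.
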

%-----------------------------------------------------------------
\begin{proof}[Proof of Lemma~\ref{lem:penal}]
For all $\eps,\nu >0$, the compatibility on the diagonal
\eqref{eq::85} of the vertex test function $G^{x_\alpha,\gamma}$
yields the first inequality in \eqref{eq::l116}. Then for $\eps
\in (0,1]$, with a choice of $\gamma$ such that $0<\gamma <M/4$, we
have the second one.

\paragraph{Bound on $d(x,y)$.}
Remark that 
\[\eps g\left(\frac{d(x,y)}{\eps}\right)\le 
G_\eps^{x_\alpha,\gamma}(x,y)\]
where
\[g(a)=\begin{cases}
\displaystyle \frac{a^2}{2} &\quad \text{if}\quad x_\alpha\in 
{\mathcal N}\setminus {\mathcal V},\\
g^{x_\alpha,\gamma}(a) &\quad \text{if}\quad x_\alpha\in {\mathcal V},
\end{cases}\]
and where $g^{x_\alpha,\gamma}$ is the superlinear function associated to
$G^{x_\alpha,\gamma}$ and given by Theorem~\ref{th::G}.  
Thanks to Lemma~\ref{lem::l3}, we deduce that the maximiser $(t,x),(s,y)$ satisfies
\begin{equation}\label{eq::l40}
\begin{array}{ll}
0<M/2 &\displaystyle \le  C(1+d(x,y))-G^{\alpha,\gamma}_\eps(x,y) 
-\frac{(t-s)^2}{2\nu}-\frac{\eta}{T-t} -\alpha\psi(x)\\
& \displaystyle \le  C(1+d(x,y)) 
-\eps g\left(\frac{d(x,y)}{\eps}\right) 
-\frac{(t-s)^2}{2\nu}-\frac{\eta}{T-t} -\alpha\psi(x)
\end{array}
\end{equation}
which implies in particular that
\[\eps g\left(\frac{d(x,y)}{\eps}\right)\le C(1+d(x,y)).\]
This gives \eqref{eq::l42} as in Step 1 of the proof of Theorem~\ref{th::2}.

\paragraph{First time estimate.}
From \eqref{eq::l40} with $G^{\alpha,\gamma}_\eps\ge 0$ and
\eqref{eq::l42}, we deduce in particular that for $\eps\in
(0,1]$
\[0<M/2\le C' -\frac{(t-s)^2}{2\nu}-\frac{\eta}{T-t}.\]
This implies in particular  that  
\begin{equation}\label{eq::l120bis}
T-t\ge \frac{\eta}{C'}, \quad T-s \ge \frac{\eta}{C'} 
- \sqrt{2\nu C'} \ge \frac{\eta}{2C'} =: \sigma_\eta >0
\end{equation}
for $\nu>0$ small enough, and up to redefine $\sigma_\eta$ for the new 
constant $C'\ge C$.

\paragraph{Second time estimate.}
From Lemma~\ref{lem::l110}, we  have with 
\[\begin{array}{ll}
0< \displaystyle M/2 &  
\le  f(t)+f(s) + u_0(x)-u_0(y) -\frac{\eta}{T} -\frac{(t-s)^2}{2\nu}\\
& \le  \displaystyle f(t)+f(s) + \omega_0\circ \omega(\eps) 
-\frac{\eta}{T} -\frac{(t-s)^2}{2\nu}
\end{array}\]
where $\omega_0$ is the modulus of continuity of $u_0$. Let us choose
$\eps>0$ small enough such that
\begin{equation}\label{eq::l115}
\omega_0 \circ \omega(\eps)\le \frac{M}2.
\end{equation}
As in the proof of Lemma~\ref{lem:localization}, for $\tau = \min (t,s)$, we get
\[\frac{\eta}{T}\le 2 f(\tau + \sqrt{2 \nu C'}).\]
For $\nu$ small enough (with $\eta$ fixed), we then get a
contradiction if $\tau$ converges to $0$ as $\nu$ does.

\paragraph{Convergence of maximizers.}
Because of \eqref{eq::l116} and using the fact that 
$G^{\alpha,\gamma}_\eps\ge 0$, we get for $\eps\in (0,1]$
\[M_\alpha - \gamma \le M_{\alpha,\eps} \le u(t,x)-w(s,y)
-\alpha\psi(x)-\frac{\eta}{T-t}
-\frac{(t-s)^2}{2\nu}-\varphi^{\alpha}(t,s,x).\] 
Extracting a subsequence if needed, we can assume
\[(t,x,s,y)\to (\bar t,\bar x,\bar s,\bar x) \quad \text{as}\quad
(\eps,\gamma)\to (0,0)\]
for some $\bar t,\bar s\in [\tau_\nu,T-\sigma_\eta]$, $\bar
x \in \overline{B(x_\alpha,r)}$.
We get
\[M_\alpha \le u(\bar t,\bar x)-w(\bar s,\bar x)
-\alpha\psi(\bar x)-\frac{\eta}{T-\bar t} 
-\frac{(\bar t-\bar s)^2}{2\nu} -\varphi^{\alpha}(\bar t,\bar s,\bar x)
\le M_\alpha -\varphi^{\alpha}(\bar t,\bar s,\bar x)\]
which implies that $(\bar t,\bar s,\bar x) = (t_\alpha,s_\alpha, x_\alpha)$. 
\end{proof}

\paragraph{Step 4: Viscosity inequalities.}
Then we can write the viscosity inequalities at $(t,x)$ and $(s,y)$
using the shorthand notation \eqref{eq::l24bis},
\begin{align}
\label{eq::l100}
\frac{\eta}{(T-t)^2} + \frac{t-s}{\nu}+ (t-t_\alpha) 
+ H_{\mathcal  N}(t,x,p_x^{\alpha,\gamma,\eps}
+ \alpha \psi_x(x)+ \varphi^\alpha_x(t,s,x))\le 0 \\
\nonumber \frac{t-s}{\nu} - (s-s_\alpha) 
+ H_{\mathcal N}(s,y,p_y^{\alpha,\gamma,\eps})\ge 0
\end{align}
where
\[ \begin{cases}
p_x^{\alpha,\gamma,\eps} =G^{\alpha,\gamma}_x(\eps^{-1}x,\eps^{-1}y),\\
p_y^{\alpha,\gamma,\eps} = -G^{\alpha,\gamma}_y(\eps^{-1}x,\eps^{-1}y).
\end{cases}
\]
We choose $\eps,\gamma$ small enough such that
(Lemma~\ref{lem:penal}) we have
\[|t-t_\alpha|, \quad |s-s_\alpha|\quad \le \frac{\eta}{4T^2}.\]
Substracting the two viscosity inequalities, we get
\begin{equation}\label{eq::l125}
 \frac{\eta}{2T^2}\le H_{\mathcal  N}(s,y,p_y^{\alpha,\gamma,\eps}) 
- H_{\mathcal N}(t,x,p_x^{\alpha,\gamma,\eps}+\alpha
\psi_x(x)+\varphi^\alpha_x(t,s,x)).
 \end{equation}

\paragraph{Step 5: Gradient estimates.}
We deduce from \eqref{eq::l100} that 
\[\tilde{p}^{\alpha,\gamma,\eps}_x=p^{\alpha,\gamma,\eps}_x+  \alpha \psi_x(x)+\varphi^\alpha_x(t,s,x)\]
satisfies
\begin{equation}\label{eq::l135}
H_{\mathcal N}(t,x,\tilde{p}^{\alpha,\gamma,\eps}_x)
\le \frac{s-t}{\nu} + t_\alpha -t\le \frac{T}{\nu} + T.
\end{equation}
Hence (H1) implies that there exists a
constant $C'_\nu$ (independent of $\alpha$, $\eps$, $\gamma$,
but depending on $\eta, \nu$) such that
\[\begin{cases}
|\tilde{p}^{\alpha,\gamma,\eps}_x| \le C'_\nu & 
\quad \text{if}\quad x\not= x_\alpha 
\quad \text{or} \quad x_\alpha \notin \mathcal{V},\\
\tilde{p}^{\alpha,\gamma,\eps}_x\ge - C'_\nu & 
\quad \text{if}\quad x= x_\alpha \quad \text{and}\quad x_\alpha\in {\mathcal V}.
\end{cases}\]
From \eqref{eq::l117}, we deduce that
\begin{equation}\label{eq::l147}
|\alpha \psi_x(x)+\varphi^\alpha_x(t,s,x)|\le C\sqrt{\alpha} + d(x,x_\alpha) \le C
\end{equation}
for $\alpha\le 1$ (using \eqref{eq::l117}).  Therefore, we have for
some constant $C_\nu$ (independent of $\alpha$, $\eps$,
$\gamma$):
\[\begin{cases}
|p^{\alpha,\gamma,\eps}_x| \le C_\nu & \quad \text{if}\quad
x\not= x_\alpha \quad \text{or} \quad x_\alpha \notin \mathcal{V},\\
p^{\alpha,\gamma,\eps}_x\ge - C_\nu & \quad \text{if}\quad x= x_\alpha \quad \text{and}\quad x_\alpha\in {\mathcal V}.
\end{cases}\]
From the compatibility condition of the Hamiltonians satisfied by
$G^{\alpha,\gamma}$ if $x_\alpha\in {\mathcal V}$, or the definition
of $G^{\alpha,\gamma}$ if $x_\alpha\notin {\mathcal V}$, we have in
both cases, 
\begin{equation}\label{eq::l128}
H^{t_\alpha,x_\alpha}(y,p_y^{\alpha,\gamma,\eps}) \le
H^{t_\alpha,x_\alpha}(x,p_x^{\alpha,\gamma,\eps}) + \gamma
\end{equation}
where
\[H^{t_\alpha,x_\alpha}(x,p)= \begin{cases}
H^{t_\alpha,n}_{\mathcal V}(x,p) & \text{if}\quad x_\alpha=n \in
{\mathcal V} ,\\
H_e(t_\alpha,x_\alpha,p) &   \text{if}\quad x_\alpha \notin
{\mathcal V}, x_\alpha \in  e^*.
\end{cases}\]
We deduce that
$p^{\alpha,\gamma,\eps}_y$
satisfies (modifying $C_\nu$ if necessary)
\[ \begin{cases}
|p^{\alpha,\gamma,\eps}_y| \le C_\nu & \quad \text{if}\quad
y\not= x_\alpha \quad \text{ or } x_\alpha \notin \mathcal{V},\\
p^{\alpha,\gamma,\eps}_y\ge - C_\nu & \quad \text{if}\quad 
y= x_\alpha \quad \text{and}\quad x_\alpha\in {\mathcal V}.
\end{cases}\]
For $z=x,y \in \mathcal{V}$, $p_z^{\alpha,\gamma,\eps}$ is a vector
and its components are only bounded from below, see above. But when
writing viscosity inequalities, they appear as variables of the
non-increasing part of Hamiltonians. Hence, if they are too large,
they can be replaced with the point minimizing the Hamiltonian,
without changing the viscosity inequalities.  This is the reason why
we truncate each component of this vector by a well chosen constant
$K$. Precisely, we define for $z=x,y$,
\[\bar p_z^{\alpha,\gamma,\eps} = \left\{\begin{array}{ll}
\left(
\min\left(K, (p^{\alpha,\gamma,\eps}_z)_{\tilde{z}}\right)
\right)_{\tilde{z}\in x_\alpha} &\quad \text{if}\quad z= x_\alpha \quad 
\text{and}\quad x_\alpha\in {\mathcal V} \\
p^{\alpha,\gamma,\eps}_z  &\quad \text{if not.}
\end{array}\right.\]
with, in the case where $x_\alpha \in \mathcal{V}$, the constant $K$
given by
\[ K = \max_{e \in \mathcal{E}_{x_\alpha}} (p_e^0(s,x_\alpha),
p_e^0(t_\alpha,x_\alpha),p_e^0 (t,x_\alpha)+C)) \le C_T+C\]
 ($C$ comes from  \eqref{eq::l147} and $C_T$ from \eqref{eq::l136bis}). 
We then have 
\[|\bar p_z^{\alpha,\gamma,\eps}|\le C_\nu+C_T+C=:C_{\nu,T}\]
and
\begin{align}
\label{eq::l125bis}
\frac{\eta}{2T^2}\le H_{\mathcal  N}(s,y,\bar p_y^{\alpha,\gamma,\eps})- 
H_{\mathcal N}(t,x,\bar p_x^{\alpha,\gamma,\eps}+\alpha\psi_x(x)
+\varphi^\alpha_x(t,s,x)),\\
\label{eq::l135bis}
H_{\mathcal N}(t,x,\bar p^{\alpha,\gamma,\eps}_x 
+ \alpha \psi_x(x)+\varphi^\alpha_x(t,s,x))
\le \frac{s-t}{\nu} + t_\alpha -t\le \frac{T}{\nu} + T,
\\
\label{eq::l128bis}
H^{t_\alpha,x_\alpha}(y,\bar p_y^{\alpha,\gamma,\eps}) \le
H^{t_\alpha,x_\alpha}(x,\bar p_x^{\alpha,\gamma,\eps}) + \gamma.
\end{align}

\paragraph{Step 6: The limit $(\eps,\gamma)\to (0,0)$ 
and conclusion as $\alpha\to 0$.}
Up to a subsequence, we get in the limit $(\eps,\gamma)\to
(0,0)$ for $z=x,y$:
\[\bar p_z^{\alpha,\gamma,\eps} \to \bar p_z^{\alpha} \quad \text{with}\quad |\bar p_z^{\alpha}|\le C_{\nu,T}.\]
Moreover, passing to the limit in \eqref{eq::l125bis} and
\eqref{eq::l135bis}, we get respectively
\[\displaystyle \frac{\eta}{2T^2}\le H_{\mathcal N}(s_\alpha,x_\alpha,\bar p_y^{\alpha})
- H_{\mathcal N}(t_\alpha,x_\alpha,\bar p_x^{\alpha}+ \alpha \psi_x(x_\alpha))\]
and
\[H_{\mathcal N}(t_\alpha,x_\alpha,\bar {p}^{\alpha}_x+ \alpha \psi_x(x_\alpha))\le \frac{s_\alpha-t_\alpha}{\nu} \le \frac{T}{\nu}.\]
On the other hand, passing to the limit in \eqref{eq::l128bis} gives
\[H^{t_\alpha,x_\alpha}(x_\alpha,\bar p_y^{\alpha}) \le H^{t_\alpha,x_\alpha}(x_\alpha, \bar p_x^{\alpha}).\]
Because
\[H_{\mathcal N}(t_\alpha,x_\alpha,p)=H^{t_\alpha,x_\alpha}(x_\alpha,p)\]
we get for any $p$,
\[\frac{\eta}{2T^2} \le I_1 + I_2\]
with
\begin{align*}
I_1 &=H_{\mathcal N}(s_\alpha,x_\alpha,\bar p_x^{\alpha})-H_{\mathcal N}(s_\alpha,x_\alpha,\bar p_x^{\alpha}+ \alpha \psi_x(x_\alpha)),\\
I_2 & =H_{\mathcal N}(s_\alpha,x_\alpha,\bar p_x^{\alpha}+ \alpha \psi_x(x_\alpha))
- H_{\mathcal N}(t_\alpha,x_\alpha,\bar p_x^{\alpha}+ \alpha
\psi_x(x_\alpha)).
\end{align*}
Thanks to (H3) and \eqref{eq::l117}, we have $|\alpha
\psi_x(x_\alpha)|\le C_{\nu,T}$ and we thus get
\begin{equation}\label{eq::l150}
I_1 \le \omega_{T,2C_{\nu,T}}(\alpha \psi_x(x_\alpha)) \le
\omega_{T,2C_\nu} (C\sqrt{\alpha}).
\end{equation}
Now thanks to Lemma~\ref{lem::l146}, we also have
\begin{align*}
I_2 &
\le \tilde{\omega}_T(|t_\alpha-s_\alpha|(1+\max(H_{\mathcal N}(t_\alpha,x_\alpha,\bar p_x^{\alpha}+ \alpha \psi_x(x_\alpha)),0)))\\
 &\le \tilde{\omega}_T(|t_\alpha-s_\alpha|(1+\max(\frac{s_\alpha-t_\alpha}{\nu},0))).
\end{align*}
Then taking first the limit $\alpha\to 0$ and then taking the limit
$\nu\to 0$, we use \eqref{eq::l155} to get the desired contradiction.
This achieves the proof of Theorem~\ref{th::l2}. 
\end{proof}

%%%%%%%%%%%%%%%%%%%%%%%%%%%%%%%%%%%%%%%%%%%%%%%%%%%%%%%%%%%%%%%%%%%%%%%%%%
%%%%%%%%%%%%%%%%%%%%%%%%%%%%%%%%%%%%%%%%%%%%%%%%%%%%%%%%%%%%%%%%%%%%%%%%%%

%%%%%%%%%%%%%%%%%%%%%%%%%%%%%%%%%%%%%%%%%%%%%%%%%%%%%%%%%%%%%%%%%%%%%%%%%%
%%%%%%%%%%%%%%%%%%%%%%%%%%%%%%%%%%%%%%%%%%%%%%%%%%%%%%%%%%%%%%%%%%%%%%%%%%
\section{First application: link with optimal control theory}
\label{s:oct}

This section is devoted to the study of the value function of an
optimal control problem associated with trajectories running over the
junction. 

\subsection{Assumptions on dynamics and running costs}

As before, we consider a junction $J=\bigcup_{i=1,\dots,N} J_i$.  We
consider compact metric spaces ${\mathbb A}_i$  for $i=0,\dots,N$ and
functions $b_i, \ell_i: [0,T] \times J_i \times {\mathbb A}_i\to \R$
for $i=1,\dots,N$ and $b_0, \ell_0:[0,T] \times \mathbb{A}_0 \to \R$. 
The sets $\mathbb{A}_i$ are the sets of controls on each branch
$J_i^*$ for $i=1,\dots,N$, while the set ${\mathbb A}_0$ is the set of
controls at the junction point $x=0$.  The functions $b_i$ represent
the dynamics and the $\ell_i$'s are the running cost functions.

For $i=1,\dots,N$, we follow \cite{bbc2} by assuming the following
\begin{equation}\label{eq::pr19}
\left\{\begin{array}{ll}
b_i \text{ and } \ell_i \text{ are continuous and bounded } \medskip \\
b_i \text{ is uniformly continuous w.r.t. } (t,x) \text{ uniformly
  w.r.t. } \alpha_i \medskip \\
\ell_i \text{ is uniformly continuous w.r.t. } (t,x) \text{ uniformly
  w.r.t. } \alpha_i \medskip \\
{\mathcal B}_i(t,x) := \{ (b_i (t,x,\alpha_i),\ell_i (t,x,\alpha_i)): 
\alpha_i \in {\mathbb A}_i \} \text{ is closed and convex} \medskip \\
 B_i(t,x) = \{ b_i (t,x,\alpha_i): \alpha_i \in
     {\mathbb A}_i \} \text{ contains } [-\delta,\delta]
\end{array}\right.
\end{equation}
for some $\delta$ independent of $(t,x)$. 

It is easy to check the following lemmas.
\begin{lem}[Hamiltonians]\label{lem::pp45}
Assume (\ref{eq::pr19}). 
Then given $i \in \{1,\dots,N\}$, the Hamiltonian $H_i$ defined by
\[H_i(t,x,p_i)=\sup_{\alpha_i\in {\mathbb A}_i}\left(
  b_i(t,x,\alpha_i) p_i - \ell_i(t,x,\alpha_i)\right)\] satisfies
Assumption~\eqref{assum:H}.
\end{lem}
\begin{lem}[Non-increasing Hamiltonians]\label{lem:hj-dec}
  Assume (\ref{eq::pr19}). Given $i \in \{1,\dots,N\}$, then the
  non-increasing part of $H_i(t,0,p_i)$ with respect to $p_i$, is
  given by
\begin{align*}
 H_i^- (t,p_i) & = \sup_{\alpha_i \in \mathbb{A}_i^-} \left(
b_i(t,0,\alpha_i) p_i - \ell_i(t,0,\alpha_i)\right)\\
& = \sup_{\alpha_i \in \mathbb{A}_i^<} \left(
b_i(t,0,\alpha_i) p_i - \ell_i(t,0,\alpha_i)\right) 
\end{align*}
where 
$\mathbb{A}_i^-  = \{ \alpha_i \in \mathbb{A}_i :
b_i(t,0,\alpha_i) \le 0\}$ and 
$\mathbb{A}_i^<  = \{ \alpha_i \in \mathbb{A}_i :
b_i(t,0,\alpha_i) < 0\}$. 
\end{lem}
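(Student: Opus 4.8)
The plan is to fix the time $t$ once and for all, work at the junction point $x=0$, and read $H_i(t,0,\cdot)$ as a support function. Because $b_i,\ell_i$ are bounded, the set $\mathcal B:=\mathcal B_i(t,0)\subset\R^2$ is bounded; it is closed and convex by \eqref{eq::pr19}, hence compact and convex. Thus $H_i(t,0,p)=\sup_{(b,\ell)\in\mathcal B}(bp-\ell)$ is a support function of $\mathcal B$: the supremum is attained for every $p$, and $H_i(t,0,\cdot)$ is convex; by Lemma~\ref{lem::pp45} it also satisfies \eqref{eq::3bis}, so it is non-increasing on $(-\infty,p_i^0]$ and non-decreasing on $[p_i^0,+\infty)$. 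Since $\alpha_i\mapsto(b_i(t,0,\alpha_i),\ell_i(t,0,\alpha_i))$ has image exactly $\mathcal B$, the two candidate expressions are the restricted support functions $\Phi^{\le}(p):=\sup_{(b,\ell)\in\mathcal B,\ b\le 0}(bp-\ell)$ and $\Phi^{<}(p):=\sup_{(b,\ell)\in\mathcal B,\ b<0}(bp-\ell)$ (both suprema taken over nonempty sets, since $[-\delta,\delta]\subset B_i(t,0)$ by \eqref{eq::pr19}). Both are convex, and both are non-increasing since every affine function $p\mapsto bp-\ell$ with $b\le 0$ is; and $\Phi^{<}\le\Phi^{\le}\le H_i(t,0,\cdot)$. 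Observe also that $H_i^-(t,\cdot)$ is precisely the largest non-increasing minorant of $H_i(t,0,\cdot)$, so $\Phi^{\le}\le H_i^-(t,\cdot)$ comes for free.

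For the reverse inequality $\Phi^{\le}\ge H_i^-(t,\cdot)$, the one non-routine ingredient is that $H_i(t,0,p_i^0)$ admits a maximizer $(0,\ell^\circ)\in\mathcal B$ with vanishing $b$-component — equivalently, $-\ell^\circ=H_i(t,0,p_i^0)$ with $b_i(t,0,\alpha^\circ)=0$ for some $\alpha^\circ\in\mathbb{A}_i^-$. This follows from the fact that $0$ is a subgradient of the convex function $H_i(t,0,\cdot)$ at its minimizer $p_i^0$: the maximizers of $H_i(t,0,p_i^0)$ form a compact convex face of $\mathcal B$, and its projection on the $b$-axis equals $\partial_p H_i(t,0,p_i^0)\ni 0$, so the face contains a point with $b=0$. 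Granting this, I would argue: for $p<p_i^0$, pick any maximizer $(b^\ast,\ell^\ast)$ of $H_i(t,0,p)$; testing the affine minorant $p'\mapsto b^\ast p'-\ell^\ast$ against $H_i(t,0,\cdot)$ at $p'\in(p,p_i^0]$ and using that $H_i(t,0,\cdot)$ is non-increasing there forces $b^\ast(p'-p)\le 0$, hence $b^\ast\le 0$; so $(b^\ast,\ell^\ast)$ lies in the slice $\{b\le 0\}$ and $\Phi^{\le}(p)\ge H_i(t,0,p)=H_i^-(t,p)$. For $p\ge p_i^0$, using $(0,\ell^\circ)$ directly gives $\Phi^{\le}(p)\ge 0\cdot p-\ell^\circ=H_i(t,0,p_i^0)=H_i^-(t,p)$. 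Combining with $\Phi^{\le}\le H_i^-(t,\cdot)$ gives $\Phi^{\le}=H_i^-(t,\cdot)$, which is the first equality in the statement.

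Finally I would prove $\Phi^{<}=\Phi^{\le}$, i.e. that deleting the controls with $b_i(t,0,\alpha_i)=0$ does not decrease the supremum. Here the assumption $[-\delta,\delta]\subset B_i(t,0)$ enters: fix $(b_-,\ell_-)\in\mathcal B$ with $b_-=-\delta<0$. For an arbitrary $(0,\ell_0)\in\mathcal B$, convexity of $\mathcal B$ puts the segment from $(b_-,\ell_-)$ to $(0,\ell_0)$, minus its endpoint $(0,\ell_0)$, inside $\mathcal B\cap\{b<0\}$; letting the parameter tend to the endpoint shows $-\ell_0\le\Phi^{<}(p)$ for every $p$. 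Taking the supremum over $(0,\ell_0)\in\mathcal B\cap\{b=0\}$ and using the obvious $\Phi^{\le}(p)=\max\big(\Phi^{<}(p),\ \sup_{(b,\ell)\in\mathcal B,\ b=0}(-\ell)\big)$ yields $\Phi^{\le}=\Phi^{<}$; translating back through the control-to-$(b,\ell)$ map gives the second displayed equality.

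The main obstacle — really the only point that is not bookkeeping about support functions of compact convex sets — is producing the zero-slope maximizer of $H_i(t,0,\cdot)$ at its minimum $p_i^0$. If one prefers to avoid the subgradient/active-face description, an equivalent route is a Sion-type minimax identity, $\min_p\sup_{(b,\ell)\in\mathcal B}(bp-\ell)=\sup_{(b,\ell)\in\mathcal B}\min_p(bp-\ell)=\sup_{(b,\ell)\in\mathcal B,\ b=0}(-\ell)$, the inner minimum being finite exactly on the slice $\{b=0\}$, and the outer supremum attained by compactness; this directly exhibits $(0,\ell^\circ)$ with $-\ell^\circ=H_i(t,0,p_i^0)$.
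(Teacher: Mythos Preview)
Your argument is correct. The paper does not give a proof of this lemma at all; it simply declares it ``easy to check'' alongside Lemma~\ref{lem::pp45}. Your write-up supplies a careful justification via the support-function structure of $H_i(t,0,\cdot)$ over the compact convex set $\mathcal{B}_i(t,0)$: the identification of $H_i^-$ as the largest non-increasing minorant, the Danskin/subdifferential argument producing a maximizer with vanishing $b$-component at the minimum $p_i^0$, and the convex-combination trick showing that excising the slice $\{b=0\}$ does not lower the supremum. All three steps are sound, and the alternative Sion route you sketch at the end is also valid (Sion applies since $\mathcal{B}$ is compact convex and the payoff is bilinear). If anything, your treatment is more thorough than the paper evidently deemed necessary; the only remark is that the existence of a control with $b_i(t,0,\alpha)=0$ (needed so that $\mathbb{A}_i^-\setminus\mathbb{A}_i^<$ and the slice $\{b=0\}$ are nonempty) is guaranteed by $[-\delta,\delta]\subset B_i(t,0)$, which you do invoke.
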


As far as the dynamics and running costs at the junction point are
concerned, we also assume that 
\begin{equation}\label{assum:0cont}
 b_0 \text{ and } l_0 \text{ are continuous  bounded, \; }  \mathbb A_0 \subset \R^{d_0}
\end{equation}
for some $d_0\ge 1$, and define
\[ B_0(t) = \{ b_0 (t,\alpha_0) : \alpha_0 \in \mathbb{A}_0\}.\] 

We also define 
\begin{equation}\label{eq::pp84}
A_0(t) = \max_{i=1,\dots,N} \min_{p \in \R} H_i (t,0,p).
\end{equation}
We set
\begin{equation}\label{eq::pp72}
H_0(t)=\begin{cases}
\displaystyle  \sup_{\alpha_0\in {\mathbb A}_{0}(t)}\left(- \ell_0(t,\alpha_0)\right) &
\quad \mbox{if}\quad {\mathbb A}_{0} (t)\not=\emptyset,\\
-\infty &\quad \mbox{if}\quad {\mathbb A}_{0}(t)=\emptyset
\end{cases}
\end{equation}
with 
\begin{equation}\label{eq::pp62}
{\mathbb A}_{0}(t)=\left\{\alpha_0\in {\mathbb A}_0,\quad b_0(t,\alpha_0)=0\right\},
\end{equation}
and we assume that
\begin{equation}\label{assum:hot}
\bar H_0 : t \mapsto \max (H_0(t),A_0(t)) \text{ is continuous in } [0,T].
\end{equation}

\subsection{The value function}

We then define the general set of controls,
\[{\mathbb A} = {\mathbb A}_0\times \dots\times {\mathbb A}_N\]
and define for $\alpha=(\alpha_0,\dots,\alpha_N)\in {\mathbb A}$ and $(t,x)\in [0,T]\times J$,
\[b(t,x,\alpha)=\left\{\begin{array}{ll}
b_i(t,x,\alpha_i) & \quad \mbox{if}\quad x\in J_i^*,\\
b_0(t,\alpha_0) & \quad \mbox{if}\quad x=0.
\end{array}\right.\]
Similarly, we define
\[\ell(t,x,\alpha)=\left\{\begin{array}{ll}
\ell_i(t,x,\alpha_i) & \quad \mbox{if}\quad x\in J_i^*,\\
\ell_0(t,\alpha_0) & \quad \mbox{if}\quad x=0.
\end{array}\right.\]

For $0\le s<t\le T$ and $y,x\in J$, we define the set of admissible dynamics
\begin{equation}\label{eq::pr60}
{\mathcal T}^{t,x}_{s,y}=\left\{\begin{array}{l}
(X(\cdot),\alpha(\cdot))\in \Lip ([s,t];J)\times
L^\infty([s,t];{\mathbb A}),\medskip \\
\left\{\begin{array}{l}
X(s)=y,\quad X(t)=x,\\
\dot{X}(\tau)=b(\tau,X(\tau),\alpha(\tau)) \quad \mbox{for a.e.}\quad \tau\in (s,t)
\end{array}\right.
\end{array}\right\}.
\end{equation}

Then we consider the value function of the optimal control problem,
\begin{equation}\label{eq::pr20}
u(t,x)=\inf_{z\in J}\quad \inf_{(X(\cdot),\alpha(\cdot))\in {\mathcal T}^{t,x}_{0,z}}E_0^t(X,\alpha) 
\end{equation}
with
\[E_0^t(X,\alpha)=u_0(X(0))+\int_0^t \ell(\tau,X(\tau),\alpha(\tau))\ d\tau\]
where the initial datum $u_0$ is assumed to be globally Lipschitz
continuous. 

Note that if ${\mathcal T}^{t,x}_{0,z}=\emptyset$, then we have
$\displaystyle \inf_{{\mathcal T}^{t,x}_{0,z}} (\dots)=+\infty$.  More
generally and for later use, we set
\begin{equation}\label{eq::pp40}
E_s^t(X,\alpha)=u(s,X(s))+\int_s^t \ell(\tau,X(\tau),\alpha(\tau))\
d\tau .
\end{equation}

%%%%%%%%%%%%%%%%%%%%%%%%%%%%
\subsection{Dynamic programming principle}

The following result is expected and quite standard.
%---------------------------------------------------------
\begin{pro}[Dynamic programming principle]\label{pro:dpp}
  For all $x \in J$, $t \in (0,T]$ and $s \in [0,t)$, the value
  function $u$ defined in (\ref{eq::pr20}) satisfies
  \[ u (t,x) = \inf_{y \in J} \inf_{(X(\cdot),\alpha(\cdot)) \in
    \mathcal{T}^{t,x}_{s,y}} E_s^t(X,\alpha)\] where $E_s^t$ and
  $\mathcal{T}^{t,x}_{s,y}$ are defined respectively in
  (\ref{eq::pp40}) ansd (\ref{eq::pr60}).
\end{pro}
%----------------------------------------------------------
\begin{proof}
  Let $V(t,x)$ denote the right hand side of the desired equality.
  Consider $(X(\cdot),\alpha(\cdot)) \in \mathcal{T}^{s,y}_{0,z}$ and
  $(\tilde{X}(\cdot),\tilde{\alpha}(\cdot)) \in
  \mathcal{T}_{s,y}^{t,x}$. Then
\[ (\bar X (\tau),\bar \alpha(\tau)) = 
\begin{cases} (X (\tau),\alpha(\tau)) & \text{ if } \tau \in  [0,s] \\ 
(\tilde{X}(\tau),\tilde{\alpha}(\tau)) & \text{ if } \tau \in (s,t] \end{cases} \]
lies in $\mathcal{T}^{t,x}_{0,z}$. In particular, 
\begin{align*} u(t,x) &\le u_0(z) + \int_0^t  \ell(\tau,\bar X(\tau),\bar \alpha(\tau))
 \, d\tau \\
& \le u_0(z) + \int_0^s  \ell(\tau,X(\tau),\alpha(\tau))\, d\tau +
  \int_s^t  \ell(\tau,\tilde{X}(\tau),\tilde{\alpha}(\tau)) \,  d\tau. 
\end{align*}
Taking the infimum, first with respect to $(X(\cdot),\alpha(\cdot))$ and $z$, and then with respect to $(\tilde{X} (\cdot),\tilde{\alpha}(\cdot))$
yields $u(t,x) \le V(t,x)$.  

To get the reversed inequality, consider, for all $\eps >0$,
an admissible dynamics $(X^\eps(\cdot),\alpha^\varepsilon(\cdot)) \in \mathcal{T}^{t,x}_{0,z}$ such that 
\begin{align*}
u(t,x) & \ge u_0 (X^\eps (0)) + \int_0^t \ell(\tau,{X}^\varepsilon(\tau),{\alpha}^\varepsilon(\tau)) \, d\tau - \eps \\
& \ge u_0 (X^\eps (0)) + \int_0^s  \ell(\tau,{X}^\varepsilon(\tau),{\alpha}^\varepsilon(\tau)) \, d\tau
 + \int_s^t  \ell(\tau,{X}^\varepsilon(\tau),{\alpha}^\varepsilon(\tau))  \, d\tau -\eps \\
& \ge u(s,X^\eps (s)) + \int_s^t  \ell(\tau,{X}^\varepsilon(\tau),{\alpha}^\varepsilon(\tau)) 
 \, d\tau -\eps \\
& \ge V(t,x) - \eps. 
\end{align*}
Since $\eps$ is arbitrary, we conclude. 
\end{proof}

%%%%%%%%%%%%%%%%%%%%%%%%%%%%%%%%%%%
\subsection{Derivation of the Hamilton-Jacobi-Bellman equation}

We will show that the value function $u$ solves the following problem
\begin{equation}\label{eq::pp42}
\left\{\begin{array}{ll}
u_t + H_i(t,x,u_x) = 0 & \quad \mbox{for all}\quad (t,x)\in (0,T)\times J_i^*,\\
u_t +F_{\bar H_0(t)}(t,u_x)=0 & \quad \mbox{for all}\quad (t,x)\in (0,T)\times \left\{0\right\}
\end{array}\right.
\end{equation}
with
$$F_{\bar H_0(t)}(t,u_x(t,0^+)) :=  \max\left({\bar H_0(t)},\ 
\displaystyle \max_{i=1,\dots,N} H_i^-(t,\partial_i u(t,0^+))\right)$$
and with initial condition
\begin{equation}\label{eq::pp43}
u(0,x)=u_0(x) \quad \mbox{for all}\quad x\in  J.
\end{equation}
We also consider the following condition for $i=1,\dots,N$
\begin{equation}\label{eq::toto}
b_i \text{ is Lipschitz continuous w.r.t. } t \text{ uniformly
  w.r.t. } (x,\alpha_i).
\end{equation}
%---------------------------------------------------------------------------
\begin{theo}[The value function is a flux-limited solution]\label{th::21}
  Assume (\ref{eq::pr19}), (\ref{assum:0cont}) and (\ref{assum:hot}).
  Let us also consider $H_i$, $H_i^-$ and $\bar H_0$ respectively
  defined in Lemmas~\ref{lem::pp45} and \ref{lem:hj-dec} and in
  (\ref{assum:hot}). Assume also that the initial datum $u_0$ is
  globally Lipschitz on $J$.
\begin{enumerate}[\upshape i)]
\item \emph{(Existence)}.  The value function $u$ defined by
  \eqref{eq::pr20} is a solution of \eqref{eq::pp42},
  \eqref{eq::pp43}.
\item \emph{(Uniqueness)}.  If we assume moreover (\ref{eq::toto}), then $u$
  is the unique solution of \eqref{eq::pp42}, \eqref{eq::pp43}.
\end{enumerate}
\end{theo}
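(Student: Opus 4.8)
The plan is to prove the existence statement i) by checking, with the dynamic programming principle of Proposition~\ref{pro:dpp} as the main tool, that the value function $u$ of \eqref{eq::pr20} is a (flux-limited) viscosity solution of \eqref{eq::pp42}, \eqref{eq::pp43}, and then to obtain the uniqueness statement ii) by combining i) with the $(t,x)$-dependent comparison principle established in Section~\ref{s.n}. I would begin with the preliminary properties of $u$: from the boundedness of $b$ and $\ell$ assumed in \eqref{eq::pr19} and \eqref{assum:0cont}, the global Lipschitz continuity of $u_0$, and the controllability $[-\delta,\delta]\subset B_i(t,x)$ of \eqref{eq::pr19} (which in particular allows admissible trajectories to reach, sit at and cross the junction point), one checks in a standard way that $u$ is locally Lipschitz on $[0,T]\times J$, has at most linear growth in $x$, and satisfies $|u(t,x)-u_0(x)|\le Ct$; this gives continuity up to $t=0$ and the initial condition \eqref{eq::pp43}.

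Next I would treat the equation itself. Away from the junction, that is on $(0,T)\times J_i^*$, the equation $u_t+H_i(t,x,u_x)=0$ in the viscosity sense is the classical Hamilton--Jacobi--Bellman equation associated with \eqref{eq::pr20}: the sub-solution inequality comes from plugging arbitrary admissible controls into the dynamic programming principle, and the super-solution inequality from $\eps$-optimal trajectories, both limits being justified by the uniform continuity of $b_i,\ell_i$ and the closedness and convexity of $\mathcal B_i(t,x)$. At the junction, take a test function $\varphi\in C^1$ touching $u$ from above at $(t_0,0)$, $t_0\in(0,T)$, and write $\lambda=-\varphi_t(t_0,0)$, $p_i=\partial_i\varphi(t_0,0)$. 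The bounds $\lambda\ge H_i^-(t_0,p_i)$ for every $i$ (which, since $H_i^-\ge\min_p H_i(t_0,0,p)$, also force $\lambda\ge A_0(t_0)$) are obtained exactly as in the proof of Theorem~\ref{th::gr1}, by applying the critical-slope Lemma~\ref{lem:ip0-2-gr} --- whose proof is local and adapts to the $(t,x)$-dependent Hamiltonians --- to the interior sub-solution on each branch and using that $H_i^-$ is non-increasing and $\le H_i$. The remaining bound $\lambda\ge H_0(t_0)$ is obtained by testing the dynamic programming principle with trajectories that stay at the junction point, driven by measurable controls $\alpha_0(\tau)\in\mathbb A_0(\tau)$ (available because $\mathbb A_0$ is compact and $b_0,\ell_0$ are continuous) with $-\ell_0(\tau,\alpha_0(\tau))$ close to $H_0(\tau)$, then dividing by $h$ and letting $h\to0$ with the help of \eqref{assum:hot}. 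Altogether $\lambda\ge\max\big(H_0(t_0),\max_{i}H_i^-(t_0,p_i)\big)=F_{\bar H_0(t_0)}(t_0,\varphi_x)$, which is the sub-solution inequality at the junction.

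For the super-solution property at the junction, let $\varphi$ touch $u$ from below at $(t_0,0)$; I would argue that $F_{\bar H_0(t_0)}(t_0,\varphi_x)\ge\lambda:=-\varphi_t(t_0,0)$. Taking an $\eps$-optimal trajectory $(X_\eps,\alpha_\eps)$ ending at $(t_0,0)$ on a short interval $[t_0-h,t_0]$ and using $\varphi\le u$ together with the dynamic programming principle yields
\[\int_{t_0-h}^{t_0}\Big(\varphi_t+\dot X_\eps\cdot\varphi_x-\ell\Big)\big(\tau,X_\eps(\tau),\alpha_\eps(\tau)\big)\,d\tau\ \ge\ -\eps h.\]
On the part of $[t_0-h,t_0]$ where $X_\eps(\tau)=0$ one has $\dot X_\eps\cdot\varphi_x-\ell=-\ell_0(\tau,\alpha_\eps(\tau))\le H_0(\tau)$ (there $\alpha_\eps(\tau)\in\mathbb A_0(\tau)$), and on the part where $X_\eps(\tau)\in J_i^*$ moves towards the junction one has $\dot X_\eps\cdot\varphi_x-\ell\le H_i^-(\tau,\partial_i\varphi)$ by Lemma~\ref{lem:hj-dec} and the continuity of $b_i,\ell_i$ near $x=0$. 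The main obstacle is the remaining part, where the trajectory moves \emph{outward} on some branch: one must control this regime so that, as $h,\eps\to0$, the integral above is dominated by $h\big(\max(H_0(t_0),\max_i H_i^-(t_0,\partial_i\varphi))-\lambda\big)+o(h)$. This requires a measure-theoretic analysis of the near-optimal trajectory close to $t_0$ --- distinguishing according to the fraction of time it spends at the junction point and, in the complementary regime, exploiting a localization/blow-up near the junction together with the structure of the $H_i^-$ from Lemma~\ref{lem:hj-dec}, in the spirit of \cite{bbc2,imz} --- and it is precisely here that the continuity of $\bar H_0$ assumed in \eqref{assum:hot} enters. One then concludes $F_{\bar H_0(t_0)}(t_0,\varphi_x)\ge\lambda$, completing i).

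Finally, for ii): under the extra assumption \eqref{eq::toto} the Hamiltonians $H_i(t,x,p)$ of Lemma~\ref{lem::pp45} satisfy \eqref{eq::3bis} for each fixed $(t,x)$ and are, in addition, Lipschitz continuous in $t$ uniformly in $(x,p)$ on bounded sets, while $\bar H_0$ is continuous by \eqref{assum:hot}; these are exactly the hypotheses under which the $(t,x)$-dependent comparison principle of Section~\ref{s.n} applies to \eqref{eq::pp42}, \eqref{eq::pp43}. Since by i) the function $u$ is a solution with at most linear growth, it is admissible for that comparison principle, so any solution with at most linear growth coincides with $u$ (comparing it with $u$ from above and from below). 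This proves ii).
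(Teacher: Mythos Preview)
Your overall plan matches the paper's: derive the equation on the branches classically, handle the junction via the dynamic programming principle (Proposition~\ref{pro:dpp}), check the initial condition and linear growth via barriers, and then invoke the comparison principle of Section~\ref{s.n}. For ii) the paper packages the verification of (H0)--(H6) and (A0)--(A2) as Proposition~\ref{pro::pp95}, which is exactly the check you describe.

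For the \emph{sub-solution} inequality at $x=0$ you take a route that is genuinely different from (and somewhat cleaner than) the paper's. The paper, in its Step~2.3, constructs for each branch $i_0$ explicit backward trajectories driven by a control $\bar\alpha_{i_0}$ with $b_{i_0}(\bar t,0,\bar\alpha_{i_0})<0$, plugs them into the DPP, and optimizes over such controls to obtain $\varphi_t+H_{i_0}^-\le 0$. You instead deduce the same bounds directly from the interior sub-solution property via the critical-slope Lemma~\ref{lem:ip0-2-gr} (as in the proof of Theorem~\ref{th::gr1}~\ref{thgr1-i})), and reserve the DPP only for the bound $\lambda\ge H_0(t_0)$ with stay-at-the-junction controls. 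Both routes are valid; yours exploits the PDE framework already built, while the paper's stays entirely inside the control formulation. Note that the stay-at-the-junction argument still needs a measurable selection $\tau\mapsto\bar\alpha_0(\tau)\in\mathbb A_0(\tau)$ with $-\ell_0(\tau,\bar\alpha_0(\tau))=H_0(\tau)$; the paper isolates this as Lemma~\ref{lem::pp85}.

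For the \emph{super-solution} inequality, your decomposition of the near-optimal trajectory into ``inward'' versus ``outward'' motion is not the mechanism that closes the argument, and you correctly flag this as the main obstacle without resolving it. The paper (its Step~1, borrowing an argument from \cite{bbc2}) does \emph{not} separate inward from outward motion. It decomposes $[s,t_n]$ only into $\mathbb T_0^n=\{X^n=0\}$ and $\mathbb T_i^n=\{X^n\in J_i^*\}$, and on each $\mathbb T_i^n$ uses the convexity of $\mathcal B_i(\bar t,0)$ from \eqref{eq::pr19} to replace the averages $\frac{1}{|\mathbb T_i^n|}\int_{\mathbb T_i^n}(b_i,\ell_i)\,d\tau$ by a single control $\bar\alpha_i^n\in\mathbb A_i$. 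The key point is then that $\int_{\mathbb T_i^n}b_i\,d\tau=\int_{\mathbb T_i^n}\dot X^n\,d\tau$ equals the net displacement on branch $i$, which, since $X^n$ starts and ends arbitrarily close to the junction, forces $|\mathbb T_i|\,b_i(\bar t,0,\bar\alpha_i)\le 0$ in the limit. Hence the averaged control lands in $\mathbb A_i^-$ and contributes at most $H_i^-$, with no need to control the outward excursions separately. This averaging is the missing ingredient in your sketch; it is also precisely where the closedness and convexity of $\mathcal B_i$ is used. The continuity of $\bar H_0$ from \eqref{assum:hot} enters only in the estimate of the $\mathbb T_0^n$ contribution (the paper's Step~1.3.2), not in the branch analysis.
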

%----------------
In order to prove this theorem, two technical results are
needed. Their proofs is postponed until the end of the proof of Theorem~\ref{th::21}.
%-----------------------------------------------------------
\begin{lem}[A measurable selection result]\label{lem::pp85}
Assume that $b_0$ and $\ell_0$ satisfy (\ref{assum:0cont}).
For some $[a,b]\subset (0,T)$, let us also assume that 
$$\emptyset \not=\mathbb A_0(\tau):=\left\{\alpha_0\in \mathbb A_0,\quad b_0(\tau,\alpha_0)=0\right\}  \quad 
\mbox{for all}\quad \tau\in [a,b]$$ 
and that
$$\tau\mapsto H_0(\tau):= \sup_{\alpha_0\in \mathbb A_0(\tau)}(-\ell_0(\tau,\alpha_0)) \quad \mbox{is continuous onø}\quad [a,b].$$
Then there exists a mesurable selection $\bar \alpha_0\in L^\infty([a,b];\mathbb A_0)$ such that
$$\bar \alpha_0(\tau)\in \mathbb A_0(\tau) \quad \mbox{and}\quad H_0(\tau)=-\ell_0(\tau,\bar \alpha_0(\tau)) \quad \mbox{for a.e.}\quad \tau\in [a,b].$$
\end{lem}
%-------------------------------------------------------------
\begin{pro}[Checking assumptions for the comparison principle]\label{pro::pp95}
  Assume (\ref{eq::pr19}), (\ref{assum:0cont}), (\ref{assum:hot}) and
  (\ref{eq::toto}).  Let us also consider $H_i$, $H_i^-$ and $\bar
  H_0$ respectively defined in Lemmas~\ref{lem::pp45} and
  \ref{lem:hj-dec} and in (\ref{assum:hot}).  Using notation from
  Section~\ref{s.n} on networks, let us consider the network $\mathcal
  N = J$, with edges ${\mathcal
    E}=\left\{J_1,\dots,J_N\right\}={\mathcal E}^-_n$ where the unique
  vertex $n$ is identified to the junction point $0$.  We set
  $H_e(t,x,p):=H_i(t,x,p)$ and $H_e^-(t,p)=H_i^-(t,p)$ for $e=J_i$ for
  each $i=1,\quad, N$.  We also set $A_n(t):=\bar H_0(t)$. Then
  assumptions (H0)-(H6) and (A0)-(A2) are satisfied.
\end{pro}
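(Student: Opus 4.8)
The plan is to check the network assumptions (H0)--(H6) and (A0)--(A2) one at a time, translating the structural hypotheses \eqref{eq::pr19}, \eqref{assum:0cont}, \eqref{assum:hot} and \eqref{eq::toto} on the dynamics $b_i$ and running costs $\ell_i$ into the corresponding properties of $H_e=H_i$, $H_e^-=H_i^-$ and $A_n=\bar H_0$. The pointwise-in-$(t,x)$ part is essentially already done: Lemma~\ref{lem::pp45} gives that each $H_i(t,x,\cdot)$ is continuous, quasi-convex and coercive, i.e.\ satisfies the structural assumption \eqref{eq::3bis}, which covers the (H)-assumptions asserting continuity and quasi-convexity of the edge Hamiltonians (with some quasi-convexity point that inherits continuity from $b_i,\ell_i$).

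Next I would verify the \emph{uniform} versions. For uniform coercivity I use that $[-\delta,\delta]\subset B_i(t,x)$ in \eqref{eq::pr19}: for every $|v|\le\delta$ there is $\alpha_i$ with $b_i(t,x,\alpha_i)=v$, hence $H_i(t,x,p)\ge \delta|p|-\sup|\ell_i|$, and since $\delta$ and $\sup|\ell_i|$ are independent of $(t,x)$ and of the edge, the coercivity is uniform; boundedness of $b_i,\ell_i$ also gives the reverse bound $H_i(t,x,p)\le C(1+|p|)$. For the modulus of continuity in the state variable I write $H_i(t,x,p)-H_i(t,y,p)\le \sup_{\alpha_i}\big[(b_i(t,x,\alpha_i)-b_i(t,y,\alpha_i))p-(\ell_i(t,x,\alpha_i)-\ell_i(t,y,\alpha_i))\big]$, so uniform continuity of $b_i$ and $\ell_i$ in $x$ (uniformly in $\alpha_i$) produces a modulus $\omega$ with $|H_i(t,x,p)-H_i(t,y,p)|\le \omega(d(x,y))(1+|p|)$, the required spatial regularity. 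The same decomposition in $t$, combined now with \eqref{eq::toto}, gives $|H_i(t,x,p)-H_i(s,x,p)|\le \Lip_t(b_i)\,|t-s|\,(1+|p|)+\omega_{\ell_i}(|t-s|)$, the Lipschitz-type time regularity. The nonincreasing part $H_i^-$ is handled by the same estimates applied to the formula of Lemma~\ref{lem:hj-dec} (a supremum over the reduced control set $\mathbb{A}_i^-$), which is still continuous, nonincreasing in $p$ and of linear growth.

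Finally, for the flux-limiter assumptions, $A_n(t)=\bar H_0(t)=\max(H_0(t),A_0(t))$ is continuous on $[0,T]$ by \eqref{assum:hot}, which is the regularity assumption (A0); and the inequality $\bar H_0(t)\ge A_0(t)$, where $A_0(t)=\max_{i}\min_{p}H_i(t,0,p)$ is the critical value of \eqref{eq::pp84}, shows that the flux limiter dominates the critical value at the vertex, the admissibility assumption (A1)/(A2). The main obstacle is the bookkeeping for the time regularity: \eqref{eq::pr19} only provides a \emph{modulus} in $t$ for $\ell_i$ (not Lipschitz), so the best one gets is the hybrid estimate above --- a term linear in $|p|$ times $|t-s|$ \emph{plus} a modulus --- and one has to check (which is indeed the case) that this weaker form is what the comparison principle on networks actually uses, since the $\ell_i$-contribution does not multiply $|p|$. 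Everything else is routine verification of the individual items in the list.
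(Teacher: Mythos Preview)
Your overall strategy matches the paper's: check each of (H0)--(H6), (A0)--(A2) by translating the structural hypotheses on $b_i,\ell_i$ into properties of the sup-Hamiltonians. Most of your individual verifications (coercivity from $[-\delta,\delta]\subset B_i$, the linear upper bound, quasi-convexity via convexity, (A0) from \eqref{assum:hot}) are the same as the paper's. There are, however, three concrete mismatches.

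First, and most importantly, your treatment of (H5) stops short of what is required. The assumption (H5) reads
\[
H_e(t,x,p)-H_e(s,x,p)\le \bar\omega_T\bigl(|t-s|(1+\max(H_e(s,x,p),0))\bigr),
\]
with $H_e$ on the right, not $|p|$. You correctly derive the hybrid estimate $L|p|\,|t-s|+\bar\omega(|t-s|)$, but then try to close by appealing to a ``weaker form'' used in the comparison principle. That is not how the paper proceeds, and it is not clear your weaker form is enough. The missing step is a one-line conversion: your own coercivity bound $H_i(s,x,p)\ge \delta|p|-C$ gives $|p|\le \delta^{-1}(C+\max(0,H_i(s,x,p)))$, which turns your hybrid estimate into (H5) exactly. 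This is the substantive point you are missing.

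Second, (H3) is a uniform modulus of continuity \emph{in the gradient variable} $p$ (for bounded $|p|,|q|$), not in the space variable. Your paragraph on ``modulus of continuity in the state variable'' checks the wrong inequality. The paper obtains (H0) and (H3) together from the representation $H_i(P)=\sup_{\alpha_i}\Phi_i(\alpha_i,P)$ and uniform continuity of $b_i,\ell_i$ in $(t,x)$, which yields $|H_i(P)-H_i(P')|\le \omega_{T,L}(|P-P'|)$ for $|p|,|p'|\le L$.

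Third, (A1) and (A2) are not an ``admissibility'' condition $\bar H_0\ge A_0$; they are a uniform bound on $A_n$ and a uniform modulus of continuity of $A_n$ over the set of vertices. Here there is a single vertex, so both are trivial once (A0) is known. You also do not address (H6) (continuity of $t\mapsto A^0_n(t)=\max_i\min_p H_i(t,0,p)$); the paper deduces it from (H0)--(H1), since uniform coercivity and continuity make $t\mapsto \min_p H_i(t,0,p)$ continuous.
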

%----------------------------------------------------------------------------------
\begin{proof}[Proof of Theorem~\ref{th::21}]
We will show that $u^*$ is a super-solution and $u_*$ is a sub-solution on $(0,T)\times J$.
Deriving the Hamilton-Jacobi-Bellman equation outside the junction
point is known and standard. This is the reason why we will focus on the
junction condition. As in the standard case, it relies on the dynamic
programming principle.

%%%%%%%%%%%%%%%%%%%%%%%%%%%%%%%%%%%%
\paragraph{Step 1: the super-solution property.}
Consider any test function $\varphi$ such that
\[\varphi\le u_* \text{ in } (0,+\infty)\times J\quad
\text{ and } \quad \varphi = u_* \text{ at } (\bar t,0)\quad
\mbox{with}\quad \bar t\in (0,T).\] Our goal is to show that
\begin{equation}\label{eq::pp50}
\varphi_t(\bar t,0)+F_{\bar H_0(\bar t)}(\bar t,\varphi_x(\bar t,0^+)) \ge 0
\end{equation}
The proof of this inequality proceeds in several substeps. \medskip

%%%%%%%%%%%%%%%%%%%%%%%%%%%%%%%%
\noindent {\sc Step 1.1: the basic optimal control inequality.}
Let $(t_n,x_n)\in (0,T)\times J$ be such that
$$(t_n,x_n)\to (\bar t,0) \quad \mbox{and}\quad u(t_n,x_n)\to u_*(\bar
t,0) 
\quad \mbox{as}\quad n\to +\infty.$$
Let $s\in (0,\bar t)$. Then the dynamic programming principle yields
$$u(t_n,x_n)=\inf_{y \in J} \inf_{(X(\cdot),\alpha(\cdot)) \in \mathcal{T}^{t_n,x_n}_{s,y}}
\left\{u(s,X(s))+\int_s^{t_n} \ell(\tau,X(\tau),\alpha(\tau))\ d\tau\right\}$$
This implies that
$$\varphi(t_n,x_n) + o_n(1) \ge \inf_{y \in J} \inf_{(X(\cdot),\alpha(\cdot)) \in \mathcal{T}^{t_n,x_n}_{s,y}}
\left\{\varphi(s,X(s))+\int_s^{t_n} \ell(\tau,X(\tau),\alpha(\tau))\ d\tau\right\}$$
where $o_n(1)\to 0$ as $n\to +\infty$. Therefore, we have
\begin{equation}\label{eq::pp65}
S_n:=\sup_{y \in J} \sup_{(X(\cdot),\alpha(\cdot)) \in \mathcal{T}^{t_n,x_n}_{s,y}} K_s^{t_n}(X,\alpha)
\ge -o_n(1)
\end{equation}
where
\begin{equation}\label{eq::pp77}
K_s^{t_n}(X,\alpha):=\varphi(t_n,X(t_n)) -\varphi(s,X(s))
-\int_s^{t_n}  \ell(\tau,X(\tau),\alpha(\tau))\ d\tau
\end{equation}
with
$$\varphi(t_n,X(t_n)) -\varphi(s,X(s))= \int_s^{t_n}d\tau\
\left\{\varphi_t(\tau,X(\tau)) 
+ \varphi_x(\tau,X(\tau))b(\tau,X(\tau),\alpha(\tau))\right\}.$$
Here, we take the convention that the product $\varphi_x b$ equals $0$ if
$X(\tau)=0$.  This makes sense for almost every $\tau$, because by
Stampacchia's truncation theorem, we have
\begin{equation}\label{eq::pp60}
  0=\dot{X}(\tau)=b(\tau,X(\tau),\alpha(\tau))
  =b_0(\tau,\alpha_0(\tau))
\quad \mbox{a.e. on}\quad  \left\{\tau\in(s,t_n),\ X(\tau)=0\right\}
\end{equation}
which implies in particular
\begin{equation}\label{eq::pp61}
\alpha_0(\tau)\in \mathbb A_0(\tau) \quad \mbox{a.e. on}\quad  \left\{\tau\in(s,t_n),\ X(\tau)=0\right\}
\end{equation}
where $\mathbb A_0$ is defined in (\ref{eq::pp62}).
This shows that we can write
$$K_s^{t_n}(X,\alpha)= \int_s^{t_n}d\tau\ \kappa(\tau,X(\tau),\alpha(\tau))$$
with for $(\tau,x)\in (0,T)\times J$ and $\beta=(\beta_0,\dots,
\beta_N) \in \mathbb A$:
$$\kappa(\tau,x,\beta) 
= \varphi_t(\tau,x) + \varphi_x(\tau,x)b(\tau,x,\beta) -\ell(\tau,x,\beta)$$
with the convention that 
$$\left\{\begin{array}{ll}
\varphi_x(\tau,x)b(\tau,x,\beta) = 0\\
\beta_0\in \mathbb A_0(\tau)
\end{array}\right| \quad \mbox{if}\quad  x=0.$$

%%%%%%%%%%%%%%%%%%%%%%%%%%%%%%%%
\noindent {\sc Step 1.2: freezing the coefficients.}
We now freeze the coefficients at the point $(\bar t,0)\in (0,T)\times J$, defining for any  
$(\tau,x)\in (0,T)\times J$ and $\beta\in \mathbb A$:
\begin{equation}\label{eq::pp79}
\bar \kappa (\tau,x,\beta) :=\left\{\begin{array}{ll}
\varphi_t(\bar t,0) + \partial_i\varphi(\bar t,0)b_i(\bar t,0,\beta_i) -\ell_i(\bar t,0,\beta_i) & \quad \mbox{if}\quad x\in J_i^*,\\
\varphi_t(\bar t,0)  -\ell_0(\tau,\beta_0)  & \quad \mbox{if}\quad x=0,
\end{array}\right.
\end{equation}
with the convention that $\beta_0\in \mathbb A_0(\tau)$ if $x=0$.
From structural assumptions (\ref{eq::pr19}) and (\ref{assum:0cont}),
there exists a (monotone continuous) modulus of continuity $\omega$
(depending only on $\varphi$ and the quantities $b_i$, $\ell_i$ for
$i=0,\dots,N$) such that
$$|\bar \kappa (\tau,x,\beta)  -\kappa(\tau,x,\beta)|\le \omega(|\bar
t-\tau| + d(x,0)) \quad \mbox{for all}\quad (\tau,x,\beta)\in
(0,T)\times J\times \mathbb A.$$ Since trajectories are uniformly
Lipschitz, there exists a constant $C_0>0$ such that for all $\tau \in
(s,t_n)$, 
$$d(X(\tau),0)\le d(x_n,0) + C_0|t_n-\tau| = o_n(1) + C_0|\bar t -\tau|.$$
Defining
\begin{equation}\label{eq::pp66}
\bar K_s^{t_n}(X,\alpha)= \int_s^{t_n}d\tau\ \bar \kappa(\tau,X(\tau),\alpha(\tau))
\end{equation}
we get that
\begin{equation}\label{eq::pp78}
  |\bar K_s^{t_n}(X,\alpha) - K_s^{t_n}(X,\alpha)|\le
  |t_n-s|\omega(o_n(1)+C_1|\bar t -s|) 
\quad \mbox{with}\quad  C_1=1+C_0.
\end{equation}

%%%%%%%%%%%%%%%%%%%%%%%%%%%%%%%%
\noindent {\sc Step 1.3: application to a quasi-optimizer.} 
Let us consider a quasi-optimizer $(X^n,\alpha^n)\in
\mathcal{T}^{t_n,x_n}_{s,y_n}$ for some $y_n\in J$ such that
$$K_s^{t_n}(X^n,\alpha^n) \ge S_n -o_n(1).$$
By \eqref{eq::pp65} and estimate \eqref{eq::pp78}, this implies
\begin{equation}\label{eq::pp75}
\bar K_s^{t_n}(X^n,\alpha^n) \ge -o_n(1)- |t_n-s|\omega(o_n(1)+C_1|\bar t -s|).
\end{equation}
In order to evaluate $\bar K_s^{t_n}(X^n,\alpha^n)$, we naturally define the following sets. 
Let
$$\mathbb T_0^n = \left\{\tau\in (s,t_n),\quad X^n(\tau)=0\right\}$$
which is a (relative) closed set of $(s,t_n)$, and let us set   for $i=1,\dots,N$:
$$\mathbb T_i^n = \left\{\tau\in (s,t_n),\quad X^n(\tau) \in J_i^*\right\}$$
which are open sets.
We have
$$\bar K_s^{t_n}(X^n,\alpha^n) = \sum_{i=0,\dots,N} \bar K_i^n \quad \mbox{with}\quad 
\bar K_i^n:= \int_{\mathbb T_i^n}d\tau\  \bar
\kappa(\tau,X^n(\tau),\alpha^n(\tau)).$$
We next study each term $\bar K_i^n$ of the previous sum. \medskip

%%%%%%%%%%%%%%%%%%%%%%%%%%%%%%%%
\noindent {\sc Step 1.3.1: convergence for $i=1,\dots,N$.}
We now use an argument that we found in \cite{bbc2}.
For $i=1,\dots,N$, by convexity of the set ${\mathcal B}_i(\bar t,0)$ defined in (\ref{eq::pr19}), 
we deduce that there exists some $\bar \alpha_i^n\in \mathbb A_i$
such that
\begin{equation}\label{eq::pp67}
\frac{1}{|\mathbb T_i^n|}\int_{\mathbb T_i^n}d\tau\  (b_i(\bar t,0,\alpha^n(\tau)), \ell_i(\bar t,0,\alpha^n(\tau))) = 
(b_i(\bar t,0,\bar \alpha_i^n), \ell_i(\bar t,0,\bar \alpha_i^n))
\end{equation}
and then
$$ \bar K_i^n  = |\mathbb T_i^n| \left\{\varphi_t(\bar t,0) + \partial_i\varphi(\bar t,0)b_i(\bar t,0,\bar \alpha_i^n) -\ell_i(\bar t,0,\bar \alpha_i^n)\right\}.$$
Moreover, decomposing the set $\mathbb T_i^n$ in a (at most countable) union of intervals $(a_k,b_k)$ (with possibly $a_k=s$ or $b_k=t_n$ for some particular value of $k$), we see that we have with $x_n=X(t_n)$
\begin{multline}\label{eq::pp68}
\int_{\mathbb T_i^n}d\tau\  b_i(\bar t,0,\alpha^n(\tau))  =
\int_{\mathbb T_i^n}d\tau\ \dot{X}^n(\tau) \\
= \left\{\begin{array}{lll}
0-X^n(s) & \quad \mbox{if} \quad X^n(t_n)\not \in J_i^*, &\quad X^n(s)\in J_i^*,\\
X(t_n)-X^n(s) & \quad \mbox{if} \quad X^n(t_n)\in J_i^*, &\quad X^n(s)\in J_i^*,\\
X(t_n)-0 & \quad \mbox{if} \quad X^n(t_n)\in J_i^*, &\quad X^n(s)\not\in J_i^*.
\end{array}\right.
\end{multline}
Up to a subsequence, we have $\bar \alpha_i^n \to \bar \alpha_i$,
$|\mathbb T_i^n|\to T_i$ for some $T_i \ge 0$. It is
convenient to write $T_i$ as $|\mathbb T_i|$. Remark in particular
that we have
\[ \sum_{i=0}^N |\mathbb T_i | = \bar t -s. \]
Next, we get that the sequence of 
trajectories $X^n(\cdot)$ converges uniformly to some $X(\cdot)$  such
that
$$|\mathbb T_i| b_i(\bar t,0, \bar \alpha_i) = \left\{\begin{array}{ll}
0-X(s) & \quad \mbox{if} \quad X(s)\in J_i^*,\\
0  & \quad \mbox{if} \quad X(s)\not\in J_i^*
\end{array}\right.$$
and therefore
$$b_i(\bar t,0, \bar \alpha_i) \le 0 \quad \mbox{if}\quad |\mathbb T_i|\not=0.$$
This implies
$$\bar K_i^n \to \bar K_i$$
with
\begin{equation}\label{eq::pp70}
\begin{array}{lll}
\bar K_i  & := & |\mathbb T_i| \left\{\varphi_t(\bar t,0) 
+ \partial_i\varphi(\bar t,0)b_i(\bar t,0,\bar \alpha_i) -\ell_i(\bar t,0,\bar \alpha_i)\right\}\\
& \le & |\mathbb T_i|\left\{\varphi_t(\bar t,0)  + H_i^-(\bar t,\partial_i\varphi(\bar t,0)) \right\}\\
& \le &  |\mathbb T_i|\left\{\varphi_t(\bar t,0)  + F_{\bar H_0(t)}(t,\varphi_x(t,0^+))\right\}.
\end{array}
\end{equation}

%%%%%%%%%%%%%%%%%%%%%%%%%%%%%%%%
\noindent {\sc Step 1.3.2: convergence for $i=0$.}
We have
$$\bar K_0^n =  \int_{\mathbb T_0^n}d\tau\  \bar \kappa(\tau,X^n(\tau),\alpha^n(\tau)) 
= \int_{\mathbb T_0^n}d\tau\ \left\{\varphi_t(\bar t,0)
  -\ell_0(\tau,\alpha_0^n(\tau)) \right\}.$$ 
Because of (\ref{eq::pp61}), we know that $\alpha_0^n(\tau)\in \mathbb
A_0(\tau)$ for almost every $\tau\in \mathbb T_0^n$ which implies
$$\bar K_0^n \le   \int_{\mathbb T_0^n}d\tau\  \left\{\varphi_t(\bar t,0)  +H_0(\tau) \right\} 
\le   \int_{\mathbb T_0^n}d\tau\  
\left\{\varphi_t(\bar t,0)  +\bar H_0(\tau)) \right\}$$
where $H_0$ and $\bar H_0$ are defined in (\ref{eq::pp72}) and
\eqref{assum:hot} respectively.  Since the function $\bar H_0$ is
assumed to be continuous, see (\ref{assum:hot}), there exists some
(monotone continuous) modulus of continuity, that we still denote by
$\omega$, such that
$$\bar K_0^n \le |\mathbb T_0^n| \left\{\varphi_t(\bar t,0)  +\bar H_0(t_n) + \omega(|t_n-s|)\right\}$$
Up to a subsequence, we have $|\mathbb T_0^n| \to |\mathbb T_0|$ and then
\begin{equation}\label{eq::pp74}
\begin{array}{lll}
\limsup_{n\to +\infty} \bar K_0^n & \le & |\mathbb T_0| \left\{\varphi_t(\bar t,0)  +\bar H_0(\bar t) + \omega(|\bar t-s|)\right\}\\
& \le &   |\mathbb T_0| \left\{\varphi_t(\bar t,0)  + F_{\bar H_0(t)}(t,\varphi_x(t,0^+))+ \omega(|\bar t-s|)\right\}.
\end{array}
\end{equation}
%%%%%%%%%%%%%%%%%%%%%%%%%%%%%%%%
\noindent {\sc Step 1.4: conclusion.}
From (\ref{eq::pp75}) on the one hand, and from (\ref{eq::pp70}),
(\ref{eq::pp74}) on the other hand, we deduce that
\begin{multline*}
- |\bar t-s|\omega(C_1|\bar t -s|)\le \limsup_{n\to +\infty} \sum_{i=0,\dots,N} \bar K_i^n \\
\le \left(\sum_{i=0,\dots,N} |\mathbb T_i|\right) \left\{\varphi_t(\bar t,0) +
  F_{\bar H_0(t)}(t,\varphi_x(t,0^+)\right\} + |\mathbb T_0|
\omega(|\bar t-s|).
\end{multline*}
 Using the fact that $\sum_{i=0,\dots,N} |\mathbb
T_i| = |\bar t-s|$ and $C_1\ge 1$, and dividing by $|\bar t -s|$, we
deduce that
$$- 2 \omega(C_1|\bar t -s|)\le \varphi_t(\bar t,0)   + F_{\bar H_0(t)}(t,\varphi_x(t,0^+).$$
Passing to the limit $s\to \bar t$, we deduce (\ref{eq::pp50}).

%%%%%%%%%%%%%%%%%%%%%%%%%%%%%%%%%%%%%%%
\paragraph{Step 2: the sub-solution property.}
Consider any test function $\varphi$ such that
\[\varphi \ge u^* \text{ in } (0,+\infty)\times J
\quad \text{ and } \quad \varphi = u^* \text{ at } (\bar t,0)\in
(0,T)\times J, \quad \mbox{with}\quad \bar t\in (0,T).\] Our goal is
to show that
\begin{equation}\label{eq::ppq50}
\varphi_t(\bar t,0)+F_{\bar H_0(\bar t)}(\bar t,\varphi_x(\bar t,0^+)) \le 0.
\end{equation}

%%%%%%%%%%%%%%%%%%%%%%%%%%%%%%%%
\noindent {\sc Step 2.1: the basic optimal control inequality.}
Let $(t_n,x_n)\in (0,T)\times J$ such that
$$(t_n,x_n)\to (\bar t,0) \quad \mbox{and}\quad u(t_n,x_n)\to u^*(\bar
t,0) \quad \mbox{as}\quad n\to +\infty.$$ 
From the dynamic programming principle, we get that for all $(s,y) \in
(0,t_n)\times J$ and all $(X
(\cdot),\alpha(\cdot))\in \mathcal{T}^{t_n,x_n}_{s,y}$,
$$u(t_n,x_n)\le E^{t_n}_s(X,\alpha)=u(s,X(s))+\int_s^{t_n} \ell(\tau,X(\tau),\alpha(\tau))\ d\tau.$$
This implies
$$\varphi(t_n,x_n) - o_n(1)\le \varphi(s,X(s))+\int_s^{t_n} \ell(\tau,X(\tau),\alpha(\tau))\ d\tau$$
\textit{i.e.}
$$K_s^{t_n}(X,\alpha) \le o_n(1)$$
with $K_s^{t_n}(X,\alpha)$ defined in (\ref{eq::pp77}).
\medskip

%%%%%%%%%%%%%%%%%%%%%%%%%%%%%%%%
\noindent {\sc Step 2.2: freezing the coefficients.}
Using (\ref{eq::pp78}), this implies
\begin{equation}\label{eq::pp88}
\int_s^{t_n}d\tau \  \bar \kappa(\tau,X(\tau),\alpha(\tau)) 
= \bar K_s^{t_n}(X,\alpha) \le o_n(1)+  |t_n-s|\omega(o_n(1)+C_1|\bar t -s|)
\end{equation}
with $\bar \kappa$ defined in (\ref{eq::pp79}).

%%%%%%%%%%%%%%%%%%%%%%%%%%%%%%%%
\noindent {\sc Step 2.3: inequalities for $i_0=1,\dots,N$.}
For each $i=1,\dots,N$, let us choose some $\bar \alpha_i, \underline{\alpha}_i\in \mathbb A_i$ such that
\begin{equation}\label{eq::pp80}
b_i(\bar t,0,\bar \alpha_i)<0\quad \mbox{and} \quad b_i(\bar t,0,\underline{\alpha}_i)>0.
\end{equation}
We now fix some index $i_0\in \left\{1,\dots,N\right\}$.

Assume first that $x_n\in J_j^*$ with $j\not=i_0$. Then we look for a solution with
terminal condition $X^n(t_n)=x_n$, which solves backward the following
ODE
$$\dot{X}^n(\tau)=b_j(\tau,{X}^n(\tau),\underline{\alpha}_j) \quad \mbox{for}\quad \tau<t_n$$
up to the first time $\tau_n^j$ where $X^n$ reaches the junction
point, where $\tau_n^j$ is precisely defined by
\begin{equation}\label{eq::pp87}
  \tau_n^j\in (0,t_n)\quad \mbox{such that}\quad  X^n(\tau_n^j)=0 
\quad \mbox{and}\quad X^n(\tau)\in J_j^* \quad \mbox{for all}\quad \tau\in (\tau_n^j,t_n].
\end{equation}
Note that such a trajectory $X^n(\cdot)$ always exists, even if it 
may not be unique, because $b_j$ is not Lipschitz in the space variable $x$.
By assumption (\ref{eq::pp80}) and the continuity of $b_j$, we know
that we will have $\tau_n^j \to \bar t$ as $n\to +\infty$.  Then we
consider some $\alpha^n(\cdot)\in L^\infty([s,t_n];\mathbb A)$ such
that
$$
\left\{\begin{array}{ll}
\alpha^n_{i_0}(\tau)=\bar \alpha_{i_0} & \quad \mbox{if}\quad \tau\in
[s,\tau_n^j], \\
\alpha^n_j(\tau)=\underline{\alpha}_j & \quad \mbox{if}\quad \tau\in (\tau_n^j,t_n].
\end{array}\right.
$$

Assume now that $x_n\in J_{i_0}$. In  this case, we require
$$\alpha^n_{i_0}(\tau) = \bar \alpha_{i_0} \quad \mbox{for all}\quad
\tau\in [s,t_n].$$

In both cases, we call $X^n(\cdot)$ a trajectory such that
$(X^n,\alpha^n)\in {\mathcal T}^{t_n,x_n}_{s,X^n(s)}$.  

Up to a subsequence, we get that $X^n$ converges uniformly towards
some $X$, and $\alpha^n$ converges to $\alpha=\bar \alpha_{i_0}$, such
that (using (\ref{eq::pp88})),
$$
|\bar t -s| \left\{\varphi_t(\bar t,0) 
+ \partial_{i_0}\varphi(\bar t,0)b_{i_0}(\bar t,0,\bar \alpha_{i_0}) 
-\ell_{i_0}(\bar t,0,\bar \alpha_{i_0})\right\}  
= \bar K_s^{\bar t}(X,\alpha) \le |\bar t-s|\omega(C_1|\bar t -s|).
$$
Dividing by $|\bar t-s|$ and passing to the limit $s\to \bar t$, and taking the supremum on $\bar \alpha_{i_0}\in \mathbb A_{i_0}$
such that $b_{i_0}(\bar t,0,\bar \alpha_{i_0})<0$, we get
\begin{equation}\label{eq::pp81}
\varphi_t(\bar t,0) + H_{i_0}^-(\bar t,\partial_{i_0}\varphi(\bar t,0))  \le 0.
\end{equation}

%%%%%%%%%%%%%%%%%%%%%%%%%%%%%%%%
\noindent {\sc Step 2.4: inequality for $i_0=0$.}
We now assume that (\ref{eq::ppq50}) does not hold true.
Then (\ref{eq::pp81}) implies that
\begin{equation}\label{eq::pp90}
\varphi_t(\bar t,0) + H_0(\bar t) >0
\end{equation}
and
$$
H_0(\bar t)=\bar H_0(\bar t) >  
\displaystyle \max_{i=1,\dots,N} H_i^-(\bar t,\partial_i \varphi(\bar
t,0^+)) \ge A_0(\bar t).
$$
By continuity of $\bar H_0=\max(H_0,A_0)$ with $A_0$ continuous
defined in (\ref{eq::pp84}), we deduce that there exists some
$s_0<\bar t$ such that $H_0$ is continuous on $[s_0,\bar t]$. In
particular, we have $\mathbb A_0(\tau)\not=\emptyset$ for all $\tau\in
[s_0,\bar t]$.  By Lemma \ref{lem::pp85}, there exists a measurable
selection $\bar \alpha_0 \in L^\infty([s_0,\bar t];\mathbb A_0)$ such
that
$$
\bar \alpha_0(\tau)\in \mathbb A_0(\tau) \quad \mbox{and}\quad
H_0(\tau)=-\ell_0(\tau,\bar \alpha_0(\tau)) \quad \mbox{for a.e.}\quad
\tau\in [s_0,\bar t].
$$
If $x_n\in J_j^*$, we now use the defintion of $\tau_n^j$ given in (\ref{eq::pp87}) and 
consider some $\alpha^n(\cdot)\in L^\infty([s_0,t_n];\mathbb A)$ such that
$$\left\{\begin{array}{ll}
\alpha^n_j(\tau)=\underline{\alpha}_j & \quad \mbox{if}\quad \tau\in (\tau_n^j,t_n],\\
\alpha^n_{0}(\tau)=\bar \alpha_{0}(\tau) & \quad \mbox{if}\quad \tau\in [s_0,\tau_n^j].
\end{array}\right.$$
If $x_n=0$, then we simply choose some $\alpha^n(\cdot)\in
L^\infty([s_0,t_n];\mathbb A)$ such that
$$\alpha^n_{0}(\tau)=\bar \alpha_{0}(\tau) \quad \mbox{if}\quad \tau\in [s_0,t_n].$$
Let $s\in [s_0,\bar t)$.  In any cases, we call again $X^n(\cdot)$ a
trajectory such that $(X^n,\alpha^n)\in {\mathcal
  T}^{t_n,x_n}_{s,X^n(s)}$.  Similarly to Step 2.3, up to a
subsequence, we get that $X^n$ converges uniformly towards $X=0$, and
$\alpha^n$ converges to $\alpha=\bar \alpha_{i_0}$, such that (using
(\ref{eq::pp88})):
$$\begin{array}{ll}
|\bar t-s|\omega(C_1|\bar t -s|) & \ge  \bar K_s^{\bar t}(X,\alpha)\\
&= \displaystyle \int_s^{\bar t}d\tau\  \left\{\varphi_t(\bar t,0)  -\ell_{0}(\tau,\bar \alpha_{0}(\tau))\right\}\\
& = \displaystyle \int_s^{\bar t}d\tau\  \left\{\varphi_t(\bar t,0)  +H_{0}(\tau))\right\}\\
& \ge  |\bar t-s| \left\{\varphi_t(\bar t,0)  +H_{0}(\bar t) -\omega(|\bar t -s|))\right\}\\
\end{array}$$
where $\omega$ still denotes some modulus of continuity of $H_0$ on $[s_0,\bar t]$.
Dividing by $|\bar t-s|$ and passing to the limit $s\to \bar t$, we get
$$\varphi_t(\bar t,0)  +H_{0}(\bar t) \le 0$$
which contradicts (\ref{eq::pp90}). This finally shows that
(\ref{eq::ppq50})  holds true.

%%%%%%%%%%%%%%%%%%%%%%%%%%%%%%%%
\paragraph{Step 3: checking the initial condition and a priori bounds.} 
From the fact that $u_0$ is  continuous and the fact that 
$b_i,\ell_i$ are bounded for $i=0,\dots,N$, we deduce easily from the
representation formula (\ref{eq::pr20}) that the value function $u$
satisfies
$$u^*(0,x)=u_0(x)=u_*(0,x) \quad \mbox{for all}\quad x\in J.$$
Again from the representation formula (\ref{eq::pr20}), the fact that 
$b_i,\ell_i$ are bounded for $i=0,\dots,N$, and the fact that $u_0$ is
globally Lipschitz continuous, we also easily see that there exists a constant
$C>0$ such that $|u(t,x)-u_0(x)|\le Ct$. In particular
\begin{equation}\label{eq::pp96}
|u(t,x)|\le C_T(1+d(x,0)) \quad \mbox{for all}\quad (t,x)\in [0,T]\times J.
\end{equation}

%%%%%%%%%%%%%%%%%%%%%%%%%%%%%%%%
\paragraph{Step 4: conclusion.}
The previous steps show that $u$ solves (\ref{eq::pp42}) with initial condition (\ref{eq::pp43}).
We also have the sublinear property (\ref{eq::pp96}).
Then, we apply Proposition \ref{pro::pp95} (which is postponed)  
which claims that our PDE satisfies the assumptions of Corollary \ref{cor::l4}.
This implies the indentification of the function $u$ to the unique solution of (\ref{eq::pp42}), (\ref{eq::pp43}).
This ends the proof of the theorem.
\end{proof}
We now turn to proofs of Lemma~\ref{lem::pp85} and Proposition~\ref{pro::pp95}.
%---------------------------------------------
\begin{proof}[Proof of Lemma~\ref{lem::pp85}]
We consider the map $f: [a,b]\times \mathbb A_0 \to \R^2$ defined by
$$f(\tau,\alpha_0)=(b_0(\tau,\alpha_0),H_0(\tau)+\ell_0(\tau,\alpha_0))$$
Recall that by (\ref{assum:0cont}), we have $\mathbb A_0\subset
\R^{d_0}$, with $\mathbb A_0$ compact.  Then we define the
multifunction $\Gamma : [a,b]\rightrightarrows \mathbb \R^{d_0}$
defined by
$$\Gamma(\tau)=\left\{\alpha_0\in \mathbb A_0,\quad f(\tau,\alpha_0)=(0,0)\right\}$$
Because $f$ is continuous, $\Gamma(\tau)$ is closed. Moreover our
assumptions guarantee that $\Gamma(\tau)$ is nonempty.  We recall (see
\cite{rockafellar}, page 314, beginning of section 2) that $\Gamma$ is
said to be $\mathcal L$-measurable (Lebesgue measurable) if and only
if its graph
$$G(\Gamma)=\left\{(\tau,\alpha_0)\in [a,b]\times \mathbb
  \R^{d_0},\quad 
\alpha_0\in \Gamma(\tau)\right\}$$
is ${\mathcal L} \otimes {\mathcal B}$-mesurable, \textit{i.e.}
belongs to the $\sigma$-algebra generated by the product of Lebesgue
sets in $[a,b]$ and Borel sets in $\R^{d_0}$.  Here $G(\Gamma)
=f^{-1}((0,0))$ is a closed set of $[a,b]\times \mathbb \R^{d_0}$, so
this set is obviously $\mathcal L \otimes {\mathcal B}$-measurable.
We now apply the mesurable selection result cited as the corollary on
page 315 in \cite{rockafellar}.  This result states that for any
$\mathcal L$-measurable multifunction $\Gamma: [a,b]\rightrightarrows
\mathbb \R^{d_0}$, which is closed-valued with $\Gamma(\tau)$ nonempty
for almost every $\tau\in [a,b]$, there exists a $\mathcal
L$-measurable function $\bar \alpha_0 : [a,b]\to \R^{d_0}$ such that
$$\bar \alpha_0(\tau)\in \Gamma(\tau) \quad \mbox{for almost every}\quad \tau\in [a,b]$$
This implies the result stated in the lemma and ends its proof.
\end{proof}
%-----------------------------------------------------
\begin{proof}[Proof of Proposition~\ref{pro::pp95}]
We check successively all assumptions.

%%%%%%%%%%%%%%%%%%%%%%%%%%%
\noindent \textsc{Step 1: Checking  (H0) and (H3).}
We set
$$P=(t,x,p) \quad \mbox{and}\quad \Phi_i(\alpha_i,P)=p b_i(t,x,\alpha_i) - \ell_i(t,x,\alpha_i).$$
We recall that
$$H_i(P)=\sup_{\alpha_i\in \mathbb A_i} \Phi_i(\alpha_i,P)= \Phi_i(\bar \alpha_i(P),P).$$
Let $P'=(t',x',p')$. 
We assume that 
$$|p|,|q|\le L.$$
Using the fact that $b_i$, $\ell_i$ are uniformly continuous with
respect to $(t,x)$, uniformly with respect to $\alpha_i\in \mathbb
A_i$, we deduce that there exists a modulus of continuity
$\omega_{T,L}$ such that
$$H_i(P') \ge \Phi_i(\bar \alpha_i(P),P') \ge  \Phi_i(\bar
\alpha_i(P),P) - \omega_{T,L}(|P-P'|) = H_i(P) - \omega_{T,L}(|P-P'|).$$
Exchanging $P$ and $P'$, we get the reverse inequality, which yields
\begin{equation}\label{eq::pp102}
|H_i(P')-H_i(P)|\le \omega_{T,L}(|P-P'|)
\end{equation}
In particular, this gives the continuity of $H_i$.

%%%%%%%%%%%%%%%%%%%%%%%%%%%
\noindent \textsc{Step 2: Checking  (H1).}
By assumption (\ref{eq::pr19}), there exists some $\delta>0$ and
controls $\alpha_i^\pm=\alpha_i^\pm(t,x)$ such that
$$\pm b_i(t,x,\alpha_i^\pm)\ge \delta>0.$$
Using the fact that $\ell_i$ is bounded, this implies that
\begin{equation}\label{eq::pp99}
H_i(t,x,p)\ge \delta |p| -C
\end{equation}
for some constant $C>0$.

%%%%%%%%%%%%%%%%%%%%%%%%%%%
\noindent \textsc{Step 3: Checking  (H2).}
Again, using the boundedness of $b_i$ and $\ell_i$, we get the uniform coercivity estimate
\begin{equation}\label{eq::pp100}
|H_i(t,x,p)|\le C(|p| +1).
\end{equation}

%%%%%%%%%%%%%%%%%%%%%%%%%%%
\noindent \textsc{Step 4: Checking  (H4).}
The quasi-convexity of $H_i(t,x,\cdot)$ follows from its convexity.

%%%%%%%%%%%%%%%%%%%%%%%%%%%
\noindent \textsc{Step 5: Checking  (H5).}
We write with $p'=p$, $x'=x$, $\bar \alpha_i:= \bar \alpha_i(P')$
$$\begin{array}{ll}
H_i(P')-H_i(P) & =\Phi(\bar \alpha_i(P'),P')-H_i(P)\\
& \le \Phi(\bar \alpha_i,P')-\Phi(\bar \alpha_i,P)\\
& = p(b_i(t',x,\bar \alpha_i)-b_i(t,x,\bar \alpha_i)) 
-(\ell_i(t',x,\bar \alpha_i)-\ell_i(t,x,\bar \alpha_i))\\
& \le L|p| |t'-t| +\bar \omega(|t'-t|)\\
& \le L\delta^{-1} (C+\max(0,H_i(t,x,p)))|t'-t| +\bar \omega(|t'-t|)\\

\end{array}$$
where in the fourth line, we have used the fact that $b_i$ is
$L$-Lipschitz continuous (by (\ref{eq::toto})) with respect to $t$, uniformly with respect
to $\alpha_i$. We have also used the fact that there exists a modulus
of continuity $\bar \omega$ for $\ell_i$ with respect to $(t,x)$,
uniformly in $\alpha_i$. In the fifth line, we have used the uniform
coercivity estimate (\ref{eq::pp99}).  The previous inequality implies easily
(H5).

%%%%%%%%%%%%%%%%%%%%%%%%%%%
\noindent \textsc{Step 6: Checking  (H6).}
Recall that $H_i$ is uniformly coercive by (H1), and continuous by (H0).
This implies that the map $t\mapsto \min H_i(t,0,\cdot)$ is also continuous. 
This implies the continuity of 
$$A^0_0(t) = \max_{i=1,\dots,N}  \min H_i(t,0,\cdot).$$

%%%%%%%%%%%%%%%%%%%%%%%%%%%
\noindent \textsc{Step 7: Checking  (A0).}
The continuity of $A_0(t)=\bar H_0(t)$ follows from (\ref{assum:hot}).

%%%%%%%%%%%%%%%%%%%%%%%%%%%
\noindent \textsc{Step 8: Checking  (A1) and (A2).}
The bound on $A_0(t)$ and the uniform continuity of $A_0(t)$ are
trivial since there is only one vertex.

This ends the proof of the proposition.
\end{proof}

%%%%%%%%%%%%%%%%%%%%%%%%%%%%%%%%%%%%%%%%%%%%%%%%%%%%%%%%%%%
%%%%%%%%%%%%%%%%%%%%%%%%%%%%%%%%%%%%%%%%%%%%%%%%%%%%%%%%%%%
\section{Second application: study of Ishii solutions}
\label{sec:bbc}

This section is strongly inspired by the work \cite{bbc2} where one of
the main contribution of the authors was to identify the maximal and
minimal Ishii solutions (in any dimensions), in the framework of
convex Hamiltonians, and using tools of optimal control theory.  With
our PDE theory in hands, we revisit this problem in dimension one, but
for quasi-convex Hamiltonians (in the sense of (\ref{assum:H})) that
can be non-convex.  As a by-product of our approach, we give a PDE
characterization of both the maximal and the minimal Ishii solutions.

\begin{rem}\label{rem::gr80}
Combining results from Subsection \ref{svs} with the ones from this
Section, we can easily see that for one-dimensional problems, the
solutions in \cite{bbc}, \cite{bbc2}, \cite{rz} and \cite{rsz} fall
naturally in our theoretical framework; they coincide with some 
$A$-flux-limited solutions for $A$ well chosen.
\end{rem}

%%%%%%%%%%%%%%%%%%%%%%%%%%%%%%%%%%%%%%%%%%%%%%%%%%%%%%%%%%%
\subsection{The framework}

Let us consider two Hamiltonians $H_i$ for $i=1,2$ which are level-set
convex in the sense of (\ref{assum:H}).  In particular $H_i$ is
assumed to be minimal at $p_i^0$.

\paragraph{Ishii solutions on the real line.}
In \cite{bbc2}, Ishii solutions are considered. A function $u$ is said
to be a Ishii sub-solution if its upper semi-continuous envelope $u^*$
solves
\[\left\{\begin{array}{ll}
u_t + H_1(u_x) \le 0 &\quad \mbox{for } x <0,\\
u_t + H_2(u_x) \le 0 &\quad \mbox{for } x >0,\\
u_t + \min(H_1(u_x),H_2(u_x))\le 0  &\quad \mbox{for } x=0
\end{array}\right.\]
A function $u$ is said to be a Ishii super-solution if its lower
semi-continuous envelope $u_*$  solves
\[\left\{\begin{array}{ll}
u_t + H_1(u_x) \ge 0 &\quad \mbox{for } x <0,\\
u_t + H_2(u_x) \ge 0 &\quad \mbox{for } x >0,\\
u_t + \max(H_1(u_x),H_2(u_x))\ge 0 &\quad \mbox{for } x=0.
\end{array}\right.\]
An Ishii solution is a function $u$ which is both an Ishii
sub-solution and an Ishii super-solution. 

\paragraph{Translation of flux-limited solutions in the real line setting.}
The notion of solutions $\tilde{u}(t,x)$ from Section~\ref{s.v} on two
branches $J_1\cup J_2$ with two Hamiltonians
\[\tilde H_1(q)=H_1(-q)\quad \mbox{and}\quad \tilde H_2(q)=H_2(q)\]
is translated in the framework of the real line into functions $u$ defined for $(t,x)\in
\left[0,+\infty\right)\times \R$ by
\[u(t,x)=\left\{\begin{array}{ll}
\tilde u(t,x) &\quad \mbox{for}\quad  0\le x \in J_2,\\
\tilde u(t,-x)  &\quad \mbox{for}\quad  0\le -x \in J_1.\\
\end{array}\right.\]
Then $\tilde u$ solves \eqref{eq::1bis} with Hamiltonians $\tilde H_i$
if and only if $u$ solves
\begin{equation}\label{eq::pr1}
\left\{\begin{array}{ll}
u_t + H_1(u_x)=0 &\quad \mbox{for}\quad (t,x)\in (0,+\infty)\times (-\infty,0),\\
u_t + H_2(u_x)=0 &\quad \mbox{for}\quad (t,x)\in (0,+\infty)\times (0,+\infty),\\
u_t  + \check F_A(u_x(t,0^-),u_x(t,0^+))=0  &\quad \mbox{for}\quad (t,x)\in (0,+\infty)\times \left\{0\right\}
\end{array}\right.
\end{equation}
with
\[\check F_A (q_1,q_2) = \max (A, H_1^+(q_1),H_2^-(q_2))\] 
where
\[H_i^-(q)=\left\{\begin{array}{ll}
H_i(q) &\quad \mbox{if}\quad q < p_i^0,\\
H_i(p_i^0) &\quad \mbox{if}\quad q\le p_i^0
\end{array}\right.
\quad \mbox{and}\quad
H_i^+(q)=\left\{\begin{array}{ll}
H_i(p_i^0) &\quad \mbox{if}\quad q\le p_i^0,\\
H_i(q) &\quad \mbox{if}\quad q > p_i^0.
\end{array}\right.\]
We have the following correspondance 
\[ \tilde{H}_1^\pm (p_1) = H_1^\mp (-p_1) \quad \text{ and } \quad \tilde{H}_2^\pm (p_2) = H_2^\pm (p_2). \]

Viscosity inequalities are now naturally written by touching 
$u$ with test functions $\phi: [0,+\infty) \times \R \to \R$ that are
  continuous, and $C^1$ in $[0,+\infty) \times (-\infty,0]$ and in
  $[0,+\infty) \times [0,+\infty)$.  
  
\paragraph{Ishii flux-limiters.}
We recall the quantity
$$\displaystyle A_0 = \max_{i=1,2} \left(\min_{q\in
  \R} H_i(q)\right)=\max_{i=1,2}  H_i(p^0_i).$$
and define
\[ A^* =  \max_{q \in \mbox{ch}\left[p_1^0,p_2^0\right]}
(\min(H_1 (q),H_2 (q))).\]
with the chord
\[\mbox{ch}\left[p_1^0,p_2^0\right]=[\min (p_1^0,p_2^0), \max (p_1^0,p_2^0)].\]
Then we set
\begin{equation}\label{eq::gr57}
A_I^+ = \max (A^*,A_0)
\end{equation}
and 
\begin{equation}\label{eq::gr58}
A_I^- = \left\{\begin{array}{ll}
A_I^+ & \quad \mbox{if}\quad p_2^0< p_1^0,\\
A_0 & \quad \mbox{if}\quad p_2^0\ge  p_1^0,\\
\end{array}\right.
\end{equation} 

\begin{rem}\label{rem::gr60}
Notice that even if the points of minimum $p_i^0$ of $H_i$ may be not unique,
it is easy to see that the quantities $A_I^\pm$ are uniquely defined.
\end{rem}
These two quantities $A_I^\pm$ will play a crucial role here; they
have been identified first in \cite{bbc2}, in a different way (see
below).  

%%%%%%%%%%%%%%%%%%%%%%%%%%%%%%%%%%%%%%%%%%%%%%%%%%%%%%%%%%%
\subsection{Identification of maximal and minimal Ishii solutions}

The main result of this section is the following.
%-------------------
\begin{theo}[Identification of maximal and minimal Ishii
    solutions] \label{th::gr59} We assume that the Hamiltonians $H_i$
  satisfy \eqref{assum:H} for $i=1,2$.  We have $A_I^- \le A_I^+$
  and the following holds.
\begin{enumerate}[\upshape i)]
\item \label{pro::i4} {\upshape(Ishii sub-solution)}
Every Ishii sub-solution is a $\check{F}_{A_I^-}$-sub-solution.
\item \label{pro::i5} {\upshape(Ishii super-solution)}
Every Ishii super-solution is a $\check{F}_{A_I^+}$-super-solution.
\item \label{pro::i6} {\upshape(Particular Ishii solutions)}
Every $\check{F}_{A}$-solution is a Ishii solution if $A\in \left[A_I^-, A_I^+\right]$.
\item \label{pro::i7} {\upshape(Maximal and minimal Ishii solutions)} For a given
   uniformly continuous initial data, the
  $\check{F}_{A_I^+}$-solution is the minimal Ishii solution, and the
  $\check{F}_{A_I^-}$-solution is the maximal Ishii solution.
  Moreover the Ishii solution is unique if and only if
  $A_I^+=A_I^-$.
\end{enumerate}
\end{theo}
%--------------
We prove successively i)-iv) from Theorem~\ref{th::gr59}.
%-----------------------------------------------------------------
\begin{proof}[Proof of Theorem~\ref{th::gr59}-\ref{pro::i4})] Let $u$
  be a Ishii sub-solution. We want to check that $u$ is a
  $\check F_{A_I^-}$-sub-solution. Lemma~\ref{lem:wc-c1} implies the
  ``weak continuity'' condition.  The only difficulty is on the
  junction point $x=0$.  If $A_I^-=A_0$, then the result follows from
  Theorem \ref{th::gr1} i).

Assume now that 
\[A_I^->A_0.\]
Then $A_I^-=A^*$, and $p_2^0<p_1^0$. In particular, we can choose
$p^*\in \left[p_2^0,p_1^0\right]$ such that
\begin{equation}\label{eq::gr61}
H_1(p^*)=H_1^+(p^*)=A^*=A_I^-=H_2(p^*)=H_2^-(p^*).
\end{equation}
Now from Theorem \ref{th::gr1} i), we see that, in order to show that
$u$ is a $\check F_{A_I^-}$-sub-solution, it is sufficient to consider
a test function $\varphi$ touching $u$ from above at $(t_0,0)$ for
$t_0>0$, with
\[\varphi(t,x)=\psi(t) + p^*x\]
with $\psi\in C^1$, and to show that 
\begin{equation}\label{eq::gr62}
\varphi_t + A_I^-\le 0\quad \mbox{at}\quad (t_0,0).
\end{equation}
Indeed, such $\varphi$ is now an admissible test function for Ishii
sub-solutions. So we deduce that
\[\varphi_t + \min(H_1^+(\varphi_x(t_0,0^-)),\  
H_2^-(\varphi_x(t_0,0^+)))\le 0 \quad \mbox{at}\quad (t_0,0)\] which
implies (\ref{eq::gr62}). We conclude that $u$ is a $\check
F_{A_I^-}$-sub-solution and this ends the proof.
\end{proof}
%----------------------------------------------------------------------
\begin{proof}[Proof of Theorem~\ref{th::gr59}-\ref{pro::i5})]
Let $u$ be a Ishii super-solution. We want to show that $u$ is a
$\check F_{A_I^+}$-super-solution.

\paragraph{Step 1: preliminaries.}
We distinguish two cases.

\noindent {\sc Case 1: $A^*\ge A_0$.}  Then we have $A_I^+=A^*$. In
particular, there exists $p^*\in \mbox{ch}\left[p_1^0,p_2^0\right]$
such that 
\begin{equation}\label{eq:a*}
A^* = H_1 (p^*)= H_2 (p^*).
\end{equation}
 We set
\begin{equation}\label{eq::gr70}
\varphi(t,x):=\psi(t)+ p^*x=:\tilde{\varphi}(t,x)
\end{equation}
with $\psi\in C^1$.

\noindent {\sc Case 2: $A^*<A_0$.}  This implies that there is a
unique $\alpha\in \left\{1,2\right\}$ such that
\[A_I^+=A_0=H_\alpha(p_\alpha^0)\]
and  for $\bar \alpha\in  \left\{1,2\right\}\backslash \left\{\alpha\right\}$ we have
\[
H_\alpha(p_\alpha^0)> H_{\bar \alpha}(p_\alpha^0).
\]
In particular,
\begin{equation}\label{eq::gr74}
\max (H_\alpha(p_\alpha^0), H_{\bar \alpha}(p_\alpha^0)) = A_I^+.
\end{equation}

If $\alpha=1$, then we set $(p_1,p_2) = (p_1^0,\pi_2^+(A_0))$; if $\alpha=2$, then
we set $(p_1,p_2) = (\pi_1^-(A_0),p_2^0)$. 
We remark that we have
\[H_2(p_2)=H_2^+(p_2)=A_0=A_I^+=H_1(p_1)=H_1^-(p_1)\]
and
\[p_2>p_1.\]
We set
\begin{equation}\label{eq::gr71}
\varphi(t,x):=\psi(t) + p_1 x 1_{\left\{x<0\right\}}+ p_2 x
1_{\left\{x>0\right\}} \ge \tilde{\varphi}(t,x) := \psi(t) + p_\alpha^0 x
\end{equation}
with $\psi\in C^1$.

\paragraph{Step 2: conclusion.}
Now from Theorem \ref{th::gr1} iii), we see that, in order to show that
$u$ is a $\check F_{A_I^+}$-super-solution, it is sufficient to consider
a test function $\varphi$ (given either in (\ref{eq::gr70}) in case 1
or (\ref{eq::gr71}) in case 2) touching $u$ from below at $(t_0,0)$
for $t_0>0$, and to show that
\begin{equation}\label{eq::gr73}
\varphi_t + A_I^+\ge 0\quad \mbox{at}\quad (t_0,0)
\end{equation}
Because we have $\varphi\ge \tilde{\varphi}$ with equality at
$(t_0,0)$, we deduce that $\tilde{\varphi}$ is an admissible test
function for the Ishii super-solution $u$.  Therefore, we have
$$\tilde{\varphi}_t +
\max(H_1(\tilde{\varphi}_x),H_2(\tilde{\varphi}_x))\ge 0 \quad
\mbox{at}\quad (t_0,0)$$ Using either \eqref{eq:a*} in case 1, or
(\ref{eq::gr74}) in case 2, we deduce that
$$\psi_t + A_I^+\ge 0 \quad \mbox{at}\quad (t_0,0)$$ which implies
(\ref{eq::gr73}). This implies that $u$ is a $\check
F_{A_I^+}$-super-solution and ends the proof.
\end{proof}
%---------------
We now state and prove a proposition which is more precise than
Theorem~\ref{th::gr59}-\ref{pro::i6}).
%-------------------------------------------------------------------------
\begin{pro}[Relation between $\check F_A$ and Ishii  sub/super-solutions]\label{pro::pr4}
Under the assumptions of Theorem \ref{th::gr59}, every $\check
F_A$-sub\-solution (resp. $\check F_A$-super-solution) is a Ishii
sub-solution (resp. Ishii super-solution) if $A\ge A_I^-$
(resp. $A\le A_I^+$).

  Moreover for every $A\in \left[A_0, A_I^-\right)$, there exists
    a $\check F_A$-sub-solution which is not a Ishii sub-solution.
    For every $A>A_I^+$, there exists a $\check F_A$-super-solution
    which is not a Ishii super-solution.
\end{pro}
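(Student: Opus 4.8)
The plan is to reduce the whole statement to two elementary pointwise inequalities between the crossed flux $\check F_A(p,p)=\max(A,H_1^+(p),H_2^-(p))$ and the quantities $\min(H_1(p),H_2(p))$ and $\max(H_1(p),H_2(p))$ appearing in the Ishii junction conditions: for every $p\in\R$,
\[
\min(H_1(p),H_2(p))\le\check F_A(p,p)\ \ (A\ge A_I^-),\qquad
\check F_A(p,p)\le\max(H_1(p),H_2(p))\ \ (A\le A_I^+).
\]
Once these are available, the first two assertions are one line each: a classical $C^1$ test function touching a $\check F_A$-sub-solution (resp.\ super-solution) from above (resp.\ below) at a junction point $(t_0,0)$ is in particular admissible for the flux-limited problem, so it satisfies $\varphi_t+\check F_A(\varphi_x,\varphi_x)\le0$ (resp.\ $\ge0$), and the pointwise inequality upgrades this to $\varphi_t+\min(H_1(\varphi_x),H_2(\varphi_x))\le0$ (resp.\ $\varphi_t+\max(H_1(\varphi_x),H_2(\varphi_x))\ge0$), i.e.\ the Ishii inequality at $x=0$; away from the junction the two notions coincide.

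To prove the two pointwise inequalities I would first establish the identity $\min_{p\in\R}\max(H_1(p),H_2(p))=A_I^+$. The bound ``$\ge$'' follows from $\max(H_1,H_2)\ge\max(\min H_1,\min H_2)=A_0$ together with $\max(H_1,H_2)\ge A^*$ everywhere, the latter obtained by using the monotonicity in \eqref{eq::3bis} on each side of a point $p^\sharp\in\mathrm{ch}[p_1^0,p_2^0]$ realizing $A^*=\min(H_1(p^\sharp),H_2(p^\sharp))$. The bound ``$\le$'' amounts to showing that the sublevel sets $\{H_1\le A_I^+\}$ and $\{H_2\le A_I^+\}$ --- nonempty compact intervals containing $p_1^0$ and $p_2^0$ respectively --- must intersect; otherwise a nondegenerate subinterval of $\mathrm{ch}[p_1^0,p_2^0]$ would carry $\min(H_1,H_2)>A_I^+\ge A^*$, contradicting the definition \eqref{eq::gr57} of $A^*$. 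Granting this identity, the super-solution inequality is immediate from $H_1^+\le H_1$, $H_2^-\le H_2$ and $A\le A_I^+\le\max(H_1(p),H_2(p))$; and the sub-solution inequality is trivial in every case (using $H_1^+(p)=H_1(p)$ for $p\ge p_1^0$ and $H_2^-(p)=H_2(p)$ for $p\le p_2^0$) except when $p_2^0<p_1^0$ and $p\in[p_2^0,p_1^0]$, where $\check F_A(p,p)=\max(A,A_0)$ while $\min(H_1(p),H_2(p))\le A^*\le A_I^-\le A$ by \eqref{eq::gr58}.

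For the two optimality claims I would exhibit explicit stationary flux-limited solutions coming from the germ $\mathcal G_A$. If $A>A_I^+=\min_p\max(H_1,H_2)$, pick $r_0$ with $\max(H_1(r_0),H_2(r_0))<A$, let $q_1$ be the leftmost root of $H_1=A$ and $q_2$ the rightmost root of $H_2=A$; then $q_1<r_0<q_2$, and since $q_1<p_1^0$ and $q_2>p_2^0$ one checks $\check F_A(q_1,q_2)=A$, so $(q_1,q_2,A)\in\mathcal G_A$. The associated solution $u(t,x)=q_1x-At$ for $x<0$, $u(t,x)=q_2x-At$ for $x>0$, is a $\check F_A$-solution, hence a $\check F_A$-super-solution, but the linear test function $\varphi(t,x)=r_0x-At$ touches $u$ from below at any $(t_0,0)$ and violates the Ishii super-solution inequality since $\varphi_t+\max(H_1(r_0),H_2(r_0))=-A+\max(H_1(r_0),H_2(r_0))<0$. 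Symmetrically, for $A\in[A_0,A_I^-)$ --- a range which is nonempty only if $p_2^0<p_1^0$, in which case it forces $A<A^*$ --- choose $p^\sharp\in(p_2^0,p_1^0)$ with $\min(H_1(p^\sharp),H_2(p^\sharp))=A^*$, let $q_2\le p_2^0$ be a root of $H_2=A$ and $q_1\ge p_1^0$ a root of $H_1=A$; then $\check F_A(q_1,q_2)=A$, $(q_1,q_2,A)\in\mathcal G_A$, $q_2\le p^\sharp\le q_1$, the corresponding linear function is a $\check F_A$-sub-solution, and the test function $\varphi(t,x)=p^\sharp x-At$ touches it from above at $(t_0,0)$ while $\varphi_t+\min(H_1(p^\sharp),H_2(p^\sharp))=-A+A^*>0$, so it is not an Ishii sub-solution. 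In both constructions one adds a harmless term $\pm(t-t_0)^2$ to the test function to make the contact point strict.

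The step I expect to be the main obstacle is the identity $\min_p\max(H_1(p),H_2(p))=A_I^+$ and, more broadly, the bookkeeping of the various configurations of $p_1^0$ versus $p_2^0$ (and of the possible flat minima of the merely quasi-convex $H_i$) --- in particular verifying in each configuration that the germ elements $(q_1,q_2,A)$ used above actually exist and sit on the correct sides of $r_0$, resp.\ $p^\sharp$. The rest is routine manipulation of $H_i^\pm\le H_i$ and the monotonicities built into \eqref{eq::3bis}.
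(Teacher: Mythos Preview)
Your proposal is correct and follows essentially the same route as the paper. Both reduce the first two assertions to the pointwise inequalities $\min(H_1(p),H_2(p))\le\check F_A(p,p)$ (for $A\ge A_I^-$) and $\check F_A(p,p)\le\max(H_1(p),H_2(p))$ (for $A\le A_I^+$), and both prove the optimality claims via explicit piecewise-linear germ solutions together with a linear test function that violates the relevant Ishii inequality.

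The only noteworthy difference is in packaging: the paper isolates the key bound $A_I^+\le\max(H_1(q),H_2(q))$ as a separate lemma (Lemma~\ref{A*}), whereas you prove the sharper identity $\min_p\max(H_1(p),H_2(p))=A_I^+$ directly, via the sublevel-set intersection argument. This identity is a clean way to get both the super-solution inequality and the existence of the point $r_0$ needed for the counterexample in one stroke; the paper instead constructs the analogous point $\bar p$ by a short case distinction on whether $A^*\ge A_0$. For the sub-solution counterexample, the paper picks slopes $p_1,p_2$ inside the interval $[p_2^0,p_1^0]$ while you pick roots $q_1\ge p_1^0$, $q_2\le p_2^0$ on the outer monotone branches; both choices yield a valid $\check F_A$-sub-solution touched from above by the same linear test function, so the difference is cosmetic.
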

%-------------------------------------------------------------------------
\begin{proof}
We treat successively sub-solutions and super-solutions. 

\textsc{Sub-Solutions.} Let $u$ be a $\check F_A$-sub-solution with $A
\ge A_I^-$.  Consider a $C^1$ function $\phi$ touching
$u$ from above at $(t,0)$ for some $t>0$. Then
\[ \lambda + \check F_A (q,q) \le 0 \]
where $\lambda = \partial_t \phi (t,0)$ and $q = \partial_x \phi
(t,0)$. In particular, $\lambda +A \le 0$.
We want to prove that 
\[ \lambda + \min (H_1(q),H_2 (q)) \le 0. \]
If $q \le p_2^0$, then 
\[ \min (H_1(q),H_2(q)) \le H_2^-(q) \le \check F_A (q,q) \le - \lambda.\]
Similarly, if  $q \ge p_1^0$, then 
\[ \min (H_1(q),H_2(q)) \le  H_1^+(q) \le  \check F_A (q,q) \le - \lambda.\]
If $p_2^0<p_1^0$, and $q\in \left[p_2^0,p_1^0\right]$, then by
definition of $A^*$, we have
\[ \min (H_1(q),H_2 (q)) \le A^* \le A_I^+=A_I^-\le A \le - \lambda.\]
This shows that $u$ is a Ishii sub-solution.

If $A^* \le A_0$ or $p_2^0\ge p_1^0$, there is nothing additional to
prove.  Assume now that $p_2^0< p_1^0$ with $A_I^-=A^* > A_0$, and
we claim that for any $A \in \left[A_0,A_I^-\right)=
  \left[A_0,A^*\right)$, there exists a $\check F_A$-sub-solution which
    is not an Ishii sub-solution. Indeed, let us consider $p^*\in
    \left[p_2^0,p_1^0\right]$ such that
$$A^*=H_1(p^*)=H_2(p^*).$$
Then there exists $p_2^0\le p_2<p^*<p_1\le p_1^0$ such that
\begin{equation}\label{eq::pr5}
A=H_1(p_1)=H_2(p_2)= \check F_A(p_1,p_2)
\end{equation}
Let us now consider 
\[ u (t,x) = - A t  + p_1 x 1_{\left\{x<0\right\}} + p_2 x1_{\left\{x\ge 0\right\}} \]
In particular $u$ is $\check F_A$-sub-solution because of (\ref{eq::pr5}).
Now the test function $\phi (t,x) = -A t + p^* x$ touches $u$ at
$(t,0)$ from above and  does not satisfy
the inequality
\[ \partial_t \phi (t,0) + \min (H_1(\partial_x \phi (t,0)), H_2
(\partial_x \phi (t,0))) \le 0.\]
This shows that $u$ is not a Ishii sub-solution. \smallskip

\textsc{Super-Solutions.} Let $u$ be a $\check F_A$-super-solution with
$A \le A_I^+$.  Consider a $C^1$ function $\phi: \R \to \R$ touching $u$
from below at $(t,0)$ for some $t>0$. Then
\[ \lambda + F_A (q,q) \ge 0 \]
where $\lambda = \partial_t \phi (t,0)$ and $q = \partial_x \phi
(t,0)$. Without loss of generality, we can assume that $A\ge A_0$. We want to prove that 
\[ \lambda + \max (H_1(q),H_2 (q)) \ge 0.\]
If $F_A (q,q) = A$, then we deduce from Lemma~\ref{A*} below that 
\[ 0 \le \lambda + A \le \lambda + A_I^+ \le \lambda + \max (H_1(q),H_2 (q)). \]
If now $F_A (q,q) = H_1^+(q)$, then 
\[ 0 \le \lambda + F_A (q,q) \le \lambda + H_1 (q) \le \lambda + \max
(H_1(q),H_2(q)).\] 
If finally $F_A (q,q) = H_2^-(q)$, then 
\[ 0 \le \lambda + F_A (q,q) \le \lambda + H_2 (q) \le \lambda + \max
(H_1(q),H_2(q)).\] 
This shows that $u$ is a Ishii super-solution.

Assume next that $A > A_I^+$.
If $A^*\ge A_0$, let $p^*\in \mbox{ch}\left[p_1^0,p_2^0\right]$ such that
\[A^*=H_1(p^*)=H_2(p^*).\]
Let us choose an index $\alpha\in \left\{1,2\right\}$ such that
\[\max_{i=1,2} H_i(p_i^0)= H_\alpha(p_\alpha^0).\]
Then we set
\[\bar p = \left\{\begin{array}{ll}
p^* & \quad \mbox{if}\quad A^*\ge A_0,\\
p_1^0 & \quad \mbox{if}\quad A^*< A_0 \quad \mbox{and}\quad \alpha=1,\\
p_2^0 & \quad \mbox{if}\quad A^*< A_0 \quad \mbox{and}\quad \alpha=2.
\end{array}\right.\]
In particular we have
\begin{equation}\label{eq::pr6}
\max(H_1(\bar p),H_2(\bar p))=A_I^+.
\end{equation}
Then for $A>A_I^+$, there exist $p_1$ and $p_2$ such that
\[ p_2\ge \max(p_1^0,p_2^0)\ge \bar p\ge \min(p_1^0,p_2^0)\ge p_1 \] 
and 
\[H_2(p_2)=A=H_1(p_1).\]
Let us now define
\[ u (t,x) = - A t + p_1 x1_{\left\{x<0\right\}} + p_2 x1_{\left\{x\ge 0\right\}} .\]
Then $u$ is a $\check F_A$-super-solution because $\check F_A(p_1,p_2)=A$.
Now the test function $\phi (t,x) = -A t + \bar p x$ touches $u$ at
$(t,0)$ from below and  does not satisfy
the inequality
\[ \partial_t \phi (t,0) + \max (H_1(\partial_x \phi (t,0)), H_2
(\partial_x \phi (t,0))) \ge 0\] because of (\ref{eq::pr6}).  This
shows that $u$ is not a Ishii super-solution. This achieves the
proof.\end{proof}

In the previous proof, we used the following elementary lemma. 
%------------------------
\begin{lem}[Bound from above for $A_I^+$]\label{A*}
For all $q \in \R$, $A_I^+ \le \max (H_1(q),H_2(q))$.
\end{lem}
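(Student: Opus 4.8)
**Proof plan for Lemma~\ref{A*}.**

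The plan is to prove the pointwise inequality $A_I^+ \le \max(H_1(q), H_2(q))$ for every $q \in \R$ by splitting according to the position of $q$ relative to the chord $\mathrm{ch}[p_1^0, p_2^0]$ and invoking the definition $A_I^+ = \max(A^*, A_0)$ from \eqref{eq::gr57}. Recall $A_0 = \max(H_1(p_1^0), H_2(p_2^0))$ and $A^* = \max_{q \in \mathrm{ch}[p_1^0,p_2^0]} \min(H_1(q), H_2(q))$. So it suffices to show that for every fixed $q$ we have both $A_0 \le \max(H_1(q), H_2(q))$ and $A^* \le \max(H_1(q), H_2(q))$.

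The bound $A_0 \le \max(H_1(q), H_2(q))$ is immediate from quasi-convexity: since $H_i$ attains its minimum at $p_i^0$, we have $H_i(p_i^0) = \min_\R H_i \le H_i(q)$, hence $A_0 = \max_i H_i(p_i^0) \le \max_i H_i(q)$. For the bound involving $A^*$, first note that if the chord $\mathrm{ch}[p_1^0,p_2^0]$ is a single point (i.e. $p_1^0 = p_2^0$), then $A^* = \min(H_1(p_1^0), H_2(p_1^0)) \le A_0$, so there is nothing new to prove. Otherwise, write $\mathrm{ch}[p_1^0,p_2^0] = [m, M]$ with $m = \min(p_1^0, p_2^0) < M = \max(p_1^0, p_2^0)$, and let $q^*$ be a maximizer realizing $A^* = \min(H_1(q^*), H_2(q^*))$ with $q^* \in [m, M]$. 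I distinguish three cases for the given point $q$. If $q \in [m,M]$ too, then by definition of $A^*$ as the maximum over the chord, $\min(H_1(q), H_2(q)) \le A^*$ is false in general — rather, we need the reverse: actually the key monotonicity observation is that on $[m, M]$ the function $\min(H_1, H_2)$ behaves in a controlled way. Let me instead argue: label indices so that $p_2^0 \le p_1^0$ (if $p_1^0 \le p_2^0$ the roles swap). Then on $[m, M] = [p_2^0, p_1^0]$, $H_1$ is nonincreasing (we are to the left of $p_1^0$) and $H_2$ is nondecreasing (we are to the right of $p_2^0$). Hence for $q \le m = p_2^0$: $H_2(q) \ge H_2$ of anything to the right up to... no — $H_2$ nonincreasing on $(-\infty, p_2^0]$, so $H_2(q) \ge H_2(p_2^0)$; also $H_1(q) \ge H_1(m) \ge H_1(q^*) \ge A^*$ since $H_1$ nonincreasing on $[m, M]$ and $q \le m$. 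So $\max(H_1(q), H_2(q)) \ge H_1(q) \ge A^* $. Symmetrically for $q \ge M = p_1^0$: $H_2(q) \ge H_2(M) \ge H_2(q^*) \ge A^*$. Finally for $q \in [m, M]$: since $H_1$ is nonincreasing and $H_2$ is nondecreasing there, the function $q \mapsto \max(H_1(q), H_2(q))$ is the max of a nonincreasing and a nondecreasing function, hence quasi-convex, but more usefully, at $q^*$ we have $\min(H_1(q^*), H_2(q^*)) = A^*$, so $\max(H_1(q^*), H_2(q^*)) \ge A^*$; and for general $q \in [m,M]$, either $H_1(q) \ge H_1(q^*)$ (if $q \le q^*$, by monotonicity of $H_1$) giving $\max(H_1(q),H_2(q)) \ge H_1(q) \ge H_1(q^*) \ge \min(H_1(q^*),H_2(q^*)) = A^*$, or $q \ge q^*$ giving $H_2(q) \ge H_2(q^*) \ge A^*$ similarly. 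In all cases $\max(H_1(q), H_2(q)) \ge A^*$, and combining with the $A_0$ bound yields $\max(H_1(q), H_2(q)) \ge \max(A^*, A_0) = A_I^+$.

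The main (and really only) subtlety is the case $q$ inside the chord: one must use that $q^*$ is a \emph{maximizer} of $\min(H_1, H_2)$ together with the one-sided monotonicities of $H_1$ and $H_2$ valid throughout $[m,M]$ to slide from $q$ to $q^*$ along whichever Hamiltonian is monotone in the favorable direction. There are no convexity or smoothness issues since only \eqref{eq::3bis} (quasi-convexity) is used, and the argument is purely one-dimensional bookkeeping; I expect no real obstacle beyond organizing the cases cleanly.
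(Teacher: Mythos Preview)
Your argument is correct and follows essentially the same monotonicity idea as the paper: use that one Hamiltonian is nonincreasing and the other nondecreasing on the chord $[\min(p_1^0,p_2^0),\max(p_1^0,p_2^0)]$, anchor at a special point where the value is $A^*$, and slide to any $q$.  The only organizational difference is that the paper splits according to whether $A_I^+=A_0$ or $A_I^+=A^*>A_0$ and, in the latter case, invokes the existence of a crossing point $p^*$ with $H_1(p^*)=H_2(p^*)=A^*$; you instead bound $A_0$ and $A^*$ separately for every $q$ and anchor at the maximizer $q^*$ of $\min(H_1,H_2)$, which avoids having to justify that a crossing point exists.  Both routes are short and rely only on \eqref{eq::3bis}.
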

%------------------------
\begin{proof}
We recall that $A_I^+=\max(A^*,A_0)$.  Assume first that
$\max(A^*,A_0)=A_0$, then $A_0 = \min H_\alpha$ for some $\alpha \in
\{1,2\}$. In particular, for all $q \in \R$, we have $A_I^+=A_0 \le H_\alpha
(q) \le \max (H_1(q),H_2(q))$.\\ If now $\max(A^*,A_0)=A^* > A_0$,
then there exists $p^* \in [p_i^0,p_j^0]$ for some $i,j \in \{1,2\}$
($i \neq j$), such that
\[ A^* = H_i(p^*) = H_j(p^*).\]
Moreover, $H_j$ is non-increasing in $(-\infty,p^*]$ hence 
\[ H_j (q) \ge A^* \text{ for } q \le p^*;\]
similarly, $H_i$ is non-decreasing in $[p^*,+\infty)$ hence 
\[ H_i (q) \ge A^* \text{ for } q \ge p^*.\] 
This implies the expected inequality. 
\end{proof}

%%%%%%%%%%%%%%%%
We finally state a proposition which implies Theorem~\ref{th::gr59}-\ref{pro::i7}).
%--------------------------------------------------------------------
\begin{cor}[Conditions for uniqueness of Ishii solution]\label{cor::pr16}
We work under the assumptions of Theorem \ref{th::gr59}. Recall that
$A_I^+\ge A_I^-$, and let $g$ be a uniformly continuous initial data.
\begin{itemize}
\item
If $A_I^+=A_I^-$, then there is uniqueness of the Ishii solution with
initial data $g$.  
\item
If $A_I^+> A_I^-$, then there exists a Lipschitz
continuous initial data $g$ such that there are two different Ishii
solutions with the same initial data $g$.
\end{itemize}
\end{cor}
%--------------------------------------------------------------------
\begin{proof}
If $A_I^+=A_I^-$, then Theorem \ref{th::gr59} \ref{pro::i4}) and \ref{pro::i5})
imply that every Ishii solution $u$ is a $\check{F}_A$-solution for
$A=A_I^+$.  Given some uniformly continuous initial data, such a
solution is then unique.

 On the contrary, if $A_I^+> A_I^-$, then
\[U^-(t,x)=-At + p_1 x 1_{\left\{x<0\right\}}+ p_2 x 1_{\left\{x\ge 0\right\}}\]
is a $\check F_A$-solution with $A=A_I^+$ with initial data
$g(x)=U^-(0,x)$ if
\[A_I^+=A=H_1(p_1)=H_2(p_2),\quad p_2\ge p_2^0,\quad p_1\le p_1^0.\]
On the other hand, $U^-$ is not a $\check F_{A_I^-}$-solution
because $\check F_{A_I^-}(p_1,p_2)=A_I^-<A_I^+$.
\end{proof}

%%%%%%%%%%%%%%%%%%%%%%%%%%%%%%%%%%%%%%%%%%%%%%%%%%%%%%%%%%%
\subsection{Link with regional control}

In this subsection, we shed light on the consequence of our results in
the interpretation of the results from \cite{bbc2} when both
frameworks coincide. Roughly speaking, the one-dimensional framework
from \cite{bbc2} reduces to our framework with two branches. In this
case, the value function ${U}^-$ defined in \cite[Eq.~(2.7)]{bbc2}
(see also (\ref{eq::pr17}) in the present paper) and characterized in
\cite[Theorem~4.4]{bbc2} corresponds to the unique solution of
\eqref{eq::1bis} for $A=A_I^+$.  Similarly, the function ${U}^+$
defined in \cite[Eq.~(2.8)]{bbc2} (see also (\ref{eq::pr18}) in the
present paper) corresponds to the unique solution of \eqref{eq::1bis}
for $A=A_I^-$. This is shown in this subsection. We also provide the
link between our definition of $A_I^+$ and $A_I^-$ and the tangential
Hamiltonians introduced in \cite{bbc2}, coming from optimal control
theory.

%%%%%%%%%%%%%%%%%%%%%%%%%%%%%%%%%%%%%%%%%%%%%%%%%%%%%%%%%%%
\subsubsection{The optimal control framework}
\label{subsub}

The one dimensional framework of \cite{bbc2} corresponds to 
\[\Omega_1
= (-\infty,0), \quad \mathcal{H}=\{0\}, \quad \Omega_2 =
(0,+\infty).\] 
In this case, $(\mathbf{H}_\Omega)$ in \cite{bbc2}  is satisfied. We refer to this
framework as \textit{the common framework}. 

\paragraph{Hamiltonians.} 
As far as the Hamiltonian is concerned, the $(t,x)$-dependence is not
relevant for what we discuss now; for this reason we consider the
simplified case of convex Hamiltonians given for $i=1,2$ by
\[ H_i (p) = \sup_{\alpha_i \in A_i} ( - b_i (\alpha_i) p - \ell_i
(\alpha_i)) \]
for some compact metric space $A_i$ and $b_i,\ell_i: A_i \to \R$. In this
simplified framework, $(\mathbf{H}_C)$ reduces to the following
assumptions for $i=1,2$: 
\begin{equation}
\left\{\begin{array}{l}
b_i \text { and } \ell_i \text{ are continuous and bounded }\medskip\\
\{ (b_i (\alpha_i),\ell_i (\alpha_i)): \alpha_i \in A_i \} \text{ is
  closed and convex} \medskip \\
B_i = \{ -b_i (\alpha_i): \alpha_i \in A_i \} \text{ contains } [-\delta,\delta].
\end{array}
\right.
\end{equation}
In particular, we see that $B_i$ is a compact interval.
Introducing the Legendre-Fenchel transform $L_i$ of $H_i$, it is possible to see that
this problem can be reformulated by assuming that for $i=1,2$
\[ H_i (p) = \sup_{q \in B_i} ( q p - L_i (q))\]
where $L_i: B_i \to \R$ is convex where we recall that $B_i$ is a
compact interval containing $[-\delta,\delta]$. Indeed the graph of
$L_i$ on $B_i$ is the lower boundary of the closed convex set $\{ (b_i
(\alpha_i),\ell_i (\alpha_i)): \alpha_i \in A_i \}$ in the plane
$\R^2$. In particular, we see that $H_i$ is convex, Lipschitz
continuous and $H_i (p)\to + \infty$ as $|p| \to +\infty$. This last
fact comes from the fact that $\pm \delta \in B_i$.  Moreover $H_i$
reaches its minimum at any convex subgradient $p_i^0$ of $L_i$ at $0$
and satisfies
\[\left\{\begin{array}{l}
H_i \quad \mbox{is non-increasing on}\quad (-\infty,p_i^0],\\
H_i \quad \mbox{is non-decreasing on}\quad [p_i^0, +\infty).
\end{array}\right.\]
Hence, $H_i$ satisfies \eqref{assum:H}.

\paragraph{Tangential Hamiltonians.}
Using notation similar to the one of \cite{bbc2}, we define
$$\hat A =  A_1 \times A_2 \times [0,1]$$
Now, for $a=(\alpha_1,\alpha_2,\mu)\in \hat A$, we define
\[\left\{\begin{array}{l}
b_{\mathcal H}(a)=\mu b_1(\alpha_1)+(1-\mu)b_2(\alpha_2),\\
\ell_{\mathcal H}(a)=\mu \ell_1(\alpha_1)+(1-\mu)\ell_2(\alpha_2)
\end{array}\right.\]
and set
\begin{align*}
 {\hat A}_0 &= \{ a=(\alpha_1,\alpha_2,\mu) \in \hat A:
0 = b_{\mathcal H}(a)\}, \\
{\hat A}_0^{\mathrm{reg}} &= \{ a=(\alpha_1,\alpha_2,\mu) \in \hat A:
b_1(\alpha_1) \le 0, b_2(\alpha_2) \ge 0 \text{ and } 0 = b_{\mathcal H}(a)\}. 
\end{align*}
In the common framework, the tangential Hamiltonians  
given in \cite{bbc2} reduce to constants, and we can see that we can write them as follows
\begin{equation}\label{eq::pp1}
\left\{\begin{array}{ll}
H_T & = \displaystyle \sup_{a=(\alpha_1,\alpha_2,\mu)\in {\hat A}_0} (- \ell_{\mathcal H}(a) ),\\
H_T^{\mathrm{reg}} & = \displaystyle \sup_{a=(\alpha_1,\alpha_2,\mu)\in {\hat  A}_0^{\mathrm{reg}}} (- \ell_{\mathcal H}(a) ).
\end{array}\right.
\end{equation}

\paragraph{The value functions $U^-$ and $U^+$.}
We consider the following initial condition
\[u(0,x)=g(x) \quad \mbox{for}\quad x\in \R\]
with $g$ globally Lipschitz continuous.

For $a=(\alpha_1,\alpha_2,\mu)\in \hat A$, and for $x\in \R$, we set
\[b(x,a)=\left\{\begin{array}{ll}
b_1(\alpha_1) & \quad \mbox{if}\quad x\in (-\infty,0)=\Omega_1,\\
b_2(\alpha_2) & \quad \mbox{if}\quad x\in (0,+\infty)=\Omega_2,\\
b_{\mathcal H}(a) & \quad \mbox{if}\quad x\in {\mathcal H}=\left\{0\right\}
\end{array}\right.\]
and
\[\ell(x,a)=\left\{\begin{array}{ll}
\ell_1(\alpha_1) & \quad \mbox{if}\quad x\in (-\infty,0)=\Omega_1,\\
\ell_2(\alpha_2) & \quad \mbox{if}\quad x\in (0,+\infty)=\Omega_2,\\
\ell_{\mathcal H}(a) & \quad \mbox{if}\quad x\in {\mathcal H}=\left\{0\right\}.
\end{array}\right.\]
We consider admissible controlled dynamics starting from the point $(0,x)$ and ending at time $t>0$ defined by
\[{\mathcal T}_{t,x}=\left\{\begin{array}{l}
(X(\cdot),a(\cdot))\in \mbox{Lip}(0,t;\R)\times L^\infty(0,t;\hat A) \quad \mbox{such that}\medskip \\
\left\{\begin{array}{l}
X(0)=x,\\
\dot{X}(s)=b(X(s),a(s)) \quad \mbox{for a.e.}\quad s\in (0,t)
\end{array}\right.
\end{array}\right\}\]
and define the set of regular controlled dynamics as
\[{\mathcal T}^{reg}_{t,x}=\left\{\begin{array}{l}
(X(\cdot),a(\cdot))\in  {\mathcal T}_{t,x}\quad \mbox{such
  that}\medskip \\
a(s)\in {\hat A}_0^{reg} \quad \mbox{for a.e.}\quad s\in (0,t) \quad \mbox{such that} \quad X(s)=0
\end{array}\right\}.\]
Notice that the definition of ${\mathcal T}_{t,x}$ differs from the
one given in (\ref{eq::pr60}), where now $X$ takes the value $x$ at
time $0$ instead of at time $t$.  Then we define
\begin{equation}\label{eq::pr17}
U^-(x,t)=\inf_{(X(\cdot),a(\cdot))\in {\mathcal T}_{t,x}}\left\{g(X(t))+ \int_0^t \ell(X(s),a(s))\ ds\right\}
\end{equation}
and
\begin{equation}\label{eq::pr18}
U^+(x,t)=\inf_{(X(\cdot),a(\cdot))\in {\mathcal T}^{reg}_{t,x}}\left\{g(X(t))+ \int_0^t \ell(X(s),a(s))\ ds\right\}.
\end{equation}
Then we have the following characterization of $U^-$ and $U^+$:
%--------------------------------------------------------------------
\begin{theo}[Characterization of $U^-$ and $U^+$]\label{th::pr8}
Under the previous assumptions, $U^-$ is the unique $\check
F_A$-solution with initial data $g$ for $A=H_T$.  Similarly, $U^+$ is
the unique $\check F_A$-solution with initial data $g$ for
$A=H_T^{reg}$.
\end{theo}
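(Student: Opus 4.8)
The plan is to realise $U^-$ and $U^+$ as value functions of optimal control problems on the two-branch junction $J=J_1\cup J_2$ of exactly the type analysed in Section~\ref{s:oct}, and then to read off the flux limiter from Theorem~\ref{th::21}. First I would set up the dictionary. By the correspondence recalled at the beginning of this section, an $A$-flux-limited solution on $J$ for the Hamiltonians $\tilde H_1(q)=H_1(-q)$ and $\tilde H_2(q)=H_2(q)$ is precisely a $\check F_A$-solution on $\R$. Next, a time reversal $\tau\mapsto t-\tau$ turns $U^-$ and $U^+$ into value functions of the form \eqref{eq::pr20}: the fixed initial point in \eqref{eq::pr17}--\eqref{eq::pr18} becomes a free initial point and the free endpoint becomes the prescribed point $x$, while the branch data $b_i,\ell_i$ become, through the relabelling $x\mapsto -x$ of $\Omega_1$, admissible branch dynamics and costs on $J$ (in particular $[-\delta,\delta]\subset B_i$ is preserved, so \eqref{eq::pr19} holds). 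The junction control set is $\hat A$, with junction dynamics $\pm b_{\mathcal H}$ and running cost $\ell_{\mathcal H}$; replacing each $A_i$ by the compact interval $B_i$ via the Legendre transform $L_i$ (as already done in the text) makes $\hat A$ a compact subset of $\R^3$, so \eqref{assum:0cont} holds as well.

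For $U^-$ all hypotheses of Theorem~\ref{th::21} are then satisfied: \eqref{eq::pr19}, \eqref{assum:0cont}, \eqref{assum:hot} and \eqref{eq::toto} hold trivially because the dynamics and the running costs do not depend on $(t,x)$. By \eqref{eq::pp72}--\eqref{eq::pp62} the junction Hamiltonian is $H_0=\sup_{a\in\hat A_0}(-\ell_{\mathcal H}(a))=H_T$, and $H_T\ge A_0$: taking $\mu\in\{0,1\}$ together with a zero-velocity branch control (possible since $[-\delta,\delta]\subset B_i$) and using the elementary identity $\min H_i=-L_i(0)$ (a consequence of the convexity of the running-cost sets), one gets $H_T\ge\min H_i$ for $i=1,2$. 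Hence $\bar H_0\equiv\max(H_0,A_0)=H_T$, so Theorem~\ref{th::21} identifies $U^-$ as the unique solution of \eqref{eq::pp42}--\eqref{eq::pp43} with constant flux limiter $H_T$; through the dictionary above this is exactly the unique $\check F_{H_T}$-solution with initial datum $g$ (uniqueness can also be deduced from the comparison principle, Theorem~\ref{th::2}, together with the a priori Lipschitz-in-time bound \eqref{eq::pp96}).

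For $U^+$ the only difference is that a trajectory sojourning at the junction on a set of positive measure must use \emph{regular} controls, i.e.\ controls in $\hat A_0^{\mathrm{reg}}$. This is not literally the framework of Theorem~\ref{th::21}, but its proof adapts with a single modification: the junction running-cost term becomes $H_0=\sup_{a\in\hat A_0^{\mathrm{reg}}}(-\ell_{\mathcal H}(a))=H_T^{\mathrm{reg}}$. Concretely, in the super-solution step one runs the convexity/averaging argument of Step~1.3.1 with the still convex running-cost set $\{(b_{\mathcal H}(a),\ell_{\mathcal H}(a)):a\in\hat A_0^{\mathrm{reg}}\}$ and applies the measurable selection Lemma~\ref{lem::pp85} with $\mathbb A_0(\tau):=\hat A_0^{\mathrm{reg}}$, and in the sub-solution step one tests with the stationary trajectory $X\equiv 0$ carrying a near-optimal control of $\hat A_0^{\mathrm{reg}}$, which by Theorem~\ref{th::gr6} gives $\psi_t+H_T^{\mathrm{reg}}\le 0$. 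Since $H_T^{\mathrm{reg}}\ge A_0$ as well (same argument, now also imposing $b_1(\alpha_1)\le 0\le b_2(\alpha_2)$), we obtain $\bar H_0=H_T^{\mathrm{reg}}$, hence $U^+$ is the unique $\check F_{H_T^{\mathrm{reg}}}$-solution with datum $g$. As a by-product, combining this with Theorem~\ref{th::gr59}-\ref{pro::i7}) and with the fact (from \cite{bbc2}) that $U^-$ and $U^+$ are respectively the minimal and the maximal Ishii solution, one recovers the identities $A_I^+=H_T$ and $A_I^-=H_T^{\mathrm{reg}}$; alternatively these two identities can be established directly by convex analysis and then combined with Theorem~\ref{th::gr59} to conclude.

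The hard part will be the super-solution property at the junction point for $U^+$: proving that a near-optimal \emph{regular} trajectory cannot exploit the time it spends at the junction (or its transitions through it) to do better than the flux limiter $H_T^{\mathrm{reg}}$ allows. This is precisely where the regularity constraint $b_1(\alpha_1)\le 0\le b_2(\alpha_2)$ and the convexity of the running-cost set attached to $\hat A_0^{\mathrm{reg}}$ come into play, i.e.\ it rests on the \cite{bbc2}-type convexity argument already used in the proof of Theorem~\ref{th::21}. A secondary, more bookkeeping, difficulty is to make the passage between the real-line trajectories of \cite{bbc2} --- where ``regular'' constrains the control only on the possibly Lebesgue-negligible contact set $\{X=0\}$ --- and the junction optimal control problem of Section~\ref{s:oct} fully precise, in particular to verify that restricting the \emph{stationary} junction controls to $\hat A_0^{\mathrm{reg}}$ does not change which branches a trajectory is allowed to enter.
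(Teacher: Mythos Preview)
Your approach is exactly the paper's: both statements are read off from Theorem~\ref{th::21} once the real-line problem is recast on the two-branch junction (the paper's proof is literally one sentence to this effect). Your verification that $H_0=H_T$ and $H_T\ge A_0$ (hence $\bar H_0\equiv H_T$) for $U^-$ is correct and fills in the routine details the paper omits.

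For $U^+$ you slightly overcomplicate matters. It \emph{is} literally the framework of Theorem~\ref{th::21}: take $\mathbb A_0=\hat A_0^{\mathrm{reg}}$ with $b_0=b_{\mathcal H}|_{\hat A_0^{\mathrm{reg}}}\equiv 0$ and $\ell_0=\ell_{\mathcal H}|_{\hat A_0^{\mathrm{reg}}}$. Then $\mathbb A_0(t)=\hat A_0^{\mathrm{reg}}$ and $H_0=H_T^{\mathrm{reg}}$ directly, with no need to reopen the proof. The constraint $\alpha_0(\cdot)\in\hat A_0^{\mathrm{reg}}$ for a.e.\ time (rather than only on $\{X=0\}$) changes nothing since $\alpha_0$ affects neither dynamics nor cost off $\{X=0\}$, and trajectories can still enter or leave the junction through the branch dynamics (the set $\{X=0\}$ being hit at isolated instants is a measure-zero event). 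Also, a small slip: the convexity/averaging you invoke is Step~1.3.1 for the \emph{branch} contributions $i=1,\dots,N$; the junction term is Step~1.3.2 and only uses $H_0$ together with the measurable-selection lemma, which applies verbatim with the regular control set. Your check that $H_T^{\mathrm{reg}}\ge A_0$ (via $\mu\in\{0,1\}$ and a zero-velocity branch control satisfying the sign constraints) is fine, so $\bar H_0\equiv H_T^{\mathrm{reg}}$ and Theorem~\ref{th::21} applies without modification.
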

%--------------------------------------------------------------------
\begin{proof}
Theorem \ref{th::pr8} is a straightforward application of Theorem
\ref{th::21}. 
\end{proof}

%%%%%%%%%%%%%%%%%%%%%%%%%%%%%%%%%%%%%%%%%%%%%%%%%%%%%%%
\subsubsection{Tangential Hamiltonians and Ishii flux-limiters}

In this paragraph, we show that the tangential Hamiltonians from
\cite{bbc2} coincide with the Ishii flux-limiters. 

%%%%
We start with  defining
\begin{align*}
 {\mathcal A} & =  B_1 \times B_2 \times [0,1], \\
 {\mathcal A}_0 &= \{ (v_1,v_2,\mu) \in {\mathcal A}:
v_1 v_2 \le 0 \text{ and } 0 = \mu v_1 + (1-\mu)
v_2 \}, \\
{\mathcal A}_0^{\mathrm{reg}} &= \{ (v_1,v_2,\mu) \in {\mathcal A}:
v_1 \le 0, v_2 \ge 0 \text{ and } 0 = \mu v_1 + (1-\mu)
v_2 \}. 
\end{align*}
Then we can see (with $v_i=b_i(\alpha_i)$) that the tangential
Hamiltonians given in (\ref{eq::pp1}) can be written as follows
\begin{align*}
H_T & = \sup_{(v_1,v_2,\mu)\in {\mathcal A}_0} (- \mu L_1 (v_1) - (1-\mu) L_2 (v_2) ),\\
H_T^{\mathrm{reg}} & = \sup_{(v_1,v_2,\mu)\in {\mathcal A}_0^{\mathrm{reg}}} (- \mu L_1 (v_1) - (1-\mu) L_2 (v_2) ).
\end{align*}
%%%%
Indeed, we use here the construction of $L_1$ and $L_2$ explained in
the previous Paragraph~\ref{subsub}. In particular, for $-b_i \in B_i$,
there exists $\alpha_i \in A_i$ such that $v_i = -b (\alpha_i)$. There
are several possible $\alpha_i$ and hence several possible
$\ell_i (\alpha_i)$. The construction $L_i (v_i)=\ell_i (\alpha^*_i)$ 
which is smaller than all the possible $\ell_i (\alpha_i)$.

%-------------------------
\begin{pro}[Characterization of $H_T$]\label{pro:ht}
\[ H_T = A_I^+ .\]
\end{pro}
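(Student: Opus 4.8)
The plan is to prove Proposition~\ref{pro:ht} by first rewriting $H_T$ as a min--max of the Hamiltonians $H_1,H_2$ through convex duality, and then computing that quantity by an elementary analysis of (convex) quasi-convex functions.

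\emph{Step 1 (Fenchel duality).} I would first observe that in the set $\mathcal{A}_0$ the constraint $v_1v_2\le 0$ is automatic once $\mu v_1+(1-\mu)v_2=0$, so $\mathcal{A}_0=\{(v_1,v_2,\mu)\in B_1\times B_2\times[0,1]:\ \mu v_1+(1-\mu)v_2=0\}$. Since each $H_i$ is the convex conjugate of $L_i$ (extended by $+\infty$ off the compact interval $B_i$), Fenchel's inequality $-L_i(v_i)\le H_i(p)-pv_i$ yields, for every $(v_1,v_2,\mu)\in\mathcal{A}_0$ and every $p\in\R$,
\[-\mu L_1(v_1)-(1-\mu)L_2(v_2)\le \mu H_1(p)+(1-\mu)H_2(p)-p\bigl(\mu v_1+(1-\mu)v_2\bigr)=\mu H_1(p)+(1-\mu)H_2(p).\]
Writing $\Phi(\mu)=\inf_{p\in\R}\bigl(\mu H_1(p)+(1-\mu)H_2(p)\bigr)$, this gives $H_T\le\sup_{\mu\in[0,1]}\Phi(\mu)$. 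For the converse, for fixed $\mu$ the convex coercive map $p\mapsto\mu H_1(p)+(1-\mu)H_2(p)$ attains its infimum at some $\bar p$; the subdifferential sum rule provides $v_i\in\partial H_i(\bar p)\subset B_i$ with $\mu v_1+(1-\mu)v_2=0$, so $(v_1,v_2,\mu)\in\mathcal{A}_0$, and the Fenchel equality $-L_i(v_i)=H_i(\bar p)-\bar p\,v_i$ turns the displayed inequality into an equality. Hence $H_T=\sup_{\mu\in[0,1]}\Phi(\mu)$.

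\emph{Step 2 (computation).} Relabelling so that $p_1^0\le p_2^0$ (legitimate since both $H_T$ and $A_I^+$ are symmetric under $1\leftrightarrow 2$), one checks that $\mu H_1+(1-\mu)H_2$ is non-increasing on $(-\infty,p_1^0]$ and non-decreasing on $[p_2^0,+\infty)$, so $\Phi(\mu)=\min_{p\in[p_1^0,p_2^0]}\bigl(\mu H_1(p)+(1-\mu)H_2(p)\bigr)$. For the bound $\Phi(\mu)\le A_I^+$ I would distinguish cases according to the sign of $H_1-H_2$ on the chord $[p_1^0,p_2^0]$: if it vanishes at some $q^*$, evaluating there gives $\Phi(\mu)\le H_1(q^*)=\min(H_1,H_2)(q^*)\le A^*\le A_I^+$; if $H_1<H_2$ on the chord, then $A^*=H_1(p_2^0)$ and evaluating at $p_2^0$ gives a convex combination of $H_1(p_2^0)=A^*$ and $H_2(p_2^0)=\min H_2\le A_0$, both $\le A_I^+$; the case $H_1>H_2$ is symmetric, using $p_1^0$. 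For the reverse inequality, if $A_I^+=A_0$ one takes $\mu\in\{0,1\}$, so $\Phi(\mu)=\min H_i=A_0$. If $A_I^+=A^*>A_0$, then since $\min(H_1,H_2)\le \min H_i<A^*$ at $p_i^0$, the maximum of $\min(H_1,H_2)$ over the chord is attained at an interior point $q^*$ with $H_1(q^*)=H_2(q^*)=A^*$; convexity of $H_i$ together with $H_i(q^*)=A^*>\min H_i$ and $q^*\in(p_1^0,p_2^0)$ forces $\partial H_1(q^*)\subset(0,+\infty)$ and $\partial H_2(q^*)\subset(-\infty,0)$, so choosing $v_i\in\partial H_i(q^*)$ and $\mu^*=-v_2/(v_1-v_2)\in(0,1)$ makes $q^*$ a minimizer of $\mu^*H_1+(1-\mu^*)H_2$, whence $\Phi(\mu^*)=A^*$. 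Combining the two bounds yields $\sup_{\mu}\Phi(\mu)=A_I^+$, and Step~1 concludes.

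\emph{Main difficulty.} The delicate point is the lower bound in Step~2: showing that the maximizer $q^*$ of $\min(H_1,H_2)$ on the chord is interior and satisfies $H_1(q^*)=H_2(q^*)=A^*$ with strictly signed one-sided derivatives. This is precisely where the convexity of the $H_i$ in this section (rather than mere quasi-convexity) is used, and where the possible non-uniqueness of the minimizers $p_i^0$ must be dealt with; everything else reduces to routine Fenchel-duality bookkeeping.
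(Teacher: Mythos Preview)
Your proof is correct and takes a genuinely different route from the paper's.

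The paper proceeds by a normalization: it shifts the Hamiltonians so that $A_I^+=0$ and the critical point $p_c=0$, then shows directly that $L_1,L_2\ge 0$ (giving $H_T\le 0$) and exhibits a specific element of $\mathcal A_0$ built from subgradients at the origin to get $H_T\ge 0$. This is split into two cases according to whether $A_I^+=A^*$ or $A_I^+=A_0$, mirroring your lower-bound analysis but carried out after the reduction rather than on the original Hamiltonians.

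Your approach instead passes through the intermediate identity
\[
H_T=\sup_{\mu\in[0,1]}\inf_{p\in\R}\bigl(\mu H_1(p)+(1-\mu)H_2(p)\bigr),
\]
obtained by Fenchel duality, and then computes that min--max directly on the chord $[p_1^0,p_2^0]$. This formula is a pleasant by-product and makes the structure of $H_T$ transparent; it also avoids the change of variables entirely. The price is that your case analysis in Step~2 (especially the lower bound when $A_I^+=A^*>A_0$) has to be done on the unshifted Hamiltonians, which is where you correctly identify the delicate point: showing the interior maximizer $q^*$ satisfies $H_1(q^*)=H_2(q^*)$ with strictly signed subgradients. Your argument there is fine (convexity forces a local interior maximum of $H_1$ to sit at the global minimum, contradicting $A^*>A_0$), though it could be stated a bit more explicitly. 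The non-uniqueness of the $p_i^0$ is handled implicitly by the value comparison $H_i(q^*)=A^*>\min H_i$, which forces $q^*$ to lie strictly outside the full minimizing set of each $H_i$.

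Both proofs ultimately hinge on the same convex-analytic ingredients (Fenchel equality and subgradients at the optimum), but yours organizes them through a min--max reformulation while the paper's relies on a well-chosen affine normalization.
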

%-------------------------
\begin{proof}
\textsc{Reduction.}
Remark that there exists $p_c \in
\R$ such that $A_I^+ = H_{i_c} (p_c)$ for some $i_c \in \{1,2\}$. We then
consider 
\[ \tilde{H}_i (v_i) = H_i (p_c + v_i) - A_I^+. \]
In this case, using obvious notation, $\tilde{A}_I^+ = 0$ and
$\tilde{p}_c = 0$. 
Remark that 
\begin{align*}
\tilde{L}_i (v_i)&= \sup_q ( v_i q - \tilde{H}_i (q) )  \\
& = \sup_q ( v_iq - H_i (p_c + q)) + A_I^+ \\
& = \sup_q (v_i q - H_i (q))  - p_c v_i + A_I^+\\
& = L_i (v_i) -p_c v_i + A_I^+. 
\end{align*}
Then
\begin{align*}
 \tilde{H}_T & = \sup_{(v_1,v_2,\mu)\in A_0} (- \mu
\tilde{L}_1 (v_1) - (1-\mu) \tilde{L}_2 (v_2) )  \\
& = \sup_{(v_1,v_2,\mu)\in A_0} (- \mu
L_1 (v_1) - (1-\mu) L_2 (v_2) ) -A_I^+ \\
& = H_T - A_I^+. 
\end{align*}
Hence, it is enough to prove 
\[\tilde{H}_T = 0.\] 

From now on, we assume that $A_I^+ = 0$ and $p_c =0$. We distinguish two
cases. \medskip

\textsc{First case.}
Assume first that $0= A_I^+ = A^* \ge A_0$. Then $0=A^*=H_1(p^*)=H_2(p^*)=H_{i_c}(p_c)$
with $p^*\in \mbox{ch}\left[p_1^0,p_2^0\right]$. Choosing initially
$p_c=p^*$, we can assume that
$A^* =H_1 (0) = H_2(0)=0$. In particular, $L_1 \ge 0$ and $L_2 \ge 0$. Hence $H_T \le
0$. To get the reverse inequality, we observe that there exists 
$v_i^* \in \partial H_i (0)$, $i=1,2$, with 
\[ v_1^* v_2^* \le 0. \] 
Indeed, if this is not true,  this
implies that for all $v_i \in \partial H_i (0)$, 
\[ v_1 v_2 > 0 \]
which is impossible because the graphs of $H_1$ and $H_2$ cross at $p^*$ and 
$p^*$ lies between $p_1^0$ and $p_2^0$ where $H_1$ and $H_2$ reach their minimum.

Pick now $\mu \in [0,1]$ such that $\mu v_1^* +
(1-\mu)v_2^*=0$. Then $(v_1^*,v_2^*,\mu) \in {\mathcal A}_0$ and
consequently, 
\[ H_T \ge - \mu L_1 (v_1^*) - (1-\mu) L_2 (v_2^*) =  \mu
H_1 (0) + (1-\mu) H_2 (0) = 0. \]
Hence $H_T=0$ in the first case, as desired.

\textsc{Second case.}  We now assume that $0 = A_I^+=A_0 > A^*$. In this
case, there exists $a \in \{1,2\}$ such that
\[  \min H_{a}  = H_{a}(0) = 0,\]
with the initial choice $p_c=p_a^0$.
This implies in particular 
\[ L_{a} \ge L_{a}(0)=0.\]
Moreover, for $b \neq a$, 
\[ \min L_b = - H_b (0) \ge 0,\] 
where we have used the fact that $A^*< A_0$.
Hence, $L_{a} \ge 0$ and $L_b \ge 0$ and consequently, $H_T \le 0$. 
Moreover with $v_i^*\in \partial H_i(0)$, we have,  $(0,v_2^*,1) \in {\mathcal A}_0$ when $a = 1$ 
and  $(v_1^*,0,0) \in {\mathcal A}_0$ when $a =2$. Hence, in both cases,
\[ H_T \ge -L_a (0) = 0 .\]
Hence $H_T=0$ in the second case too. The proof is now complete.
\end{proof}

%-------------------------
\begin{pro}[Characterization of $H_T^{reg}$]\label{pro:htreg}
\[H_T^{reg} = A_I^-.\]
\end{pro}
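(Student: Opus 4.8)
The approach is to prove $H_T^{reg}=A_I^-$ by the same direct, Fenchel-duality computation used for $H_T=A_I^+$ in Proposition~\ref{pro:ht}, the only new feature being the sign constraints $v_1\le 0$, $v_2\ge 0$ built into $\mathcal{A}_0^{\mathrm{reg}}$. The first step is a \emph{reduction}: one checks that $A_I^-$ is always of the form $H_{i_c}(p_c)$. Precisely, if $p_2^0<p_1^0$ and $A^*\ge A_0$, then $A_I^-=A^*$ and there is a maximizer $p^*\in\mathrm{ch}[p_1^0,p_2^0]=[p_2^0,p_1^0]$ with $H_1(p^*)=H_2(p^*)=A^*$ (since $H_1-H_2$ is non-increasing on $[p_2^0,p_1^0]$, a short case check on the signs of $(H_1-H_2)(p_2^0)$ and $(H_1-H_2)(p_1^0)$, using $A^*\ge A_0$, produces such a $p^*$); in every other case $A_I^-=A_0=\min H_a=H_a(p_a^0)$ for the index $a$ realizing $A_0=\max(\min H_1,\min H_2)$. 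Replacing each $H_i$ by $\tilde H_i(p)=H_i(p_c+p)-A_I^-$ turns $L_i$ into $\tilde L_i(v)=L_i(v)-p_c v+A_I^-$, hence $-\mu\tilde L_1(v_1)-(1-\mu)\tilde L_2(v_2)=-\mu L_1(v_1)-(1-\mu)L_2(v_2)+p_c\big(\mu v_1+(1-\mu)v_2\big)-A_I^-$, and on $\mathcal{A}_0^{\mathrm{reg}}$ the middle term vanishes, so $\tilde H_T^{reg}=H_T^{reg}-A_I^-$. Thus it suffices to prove $H_T^{reg}=0$ under the normalization $A_I^-=0$, $p_c=0$.

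Two cases remain. In \emph{Case 1} ($p_2^0<p_1^0$, $A^*\ge A_0$, normalized so $p^*=0$ and $H_1(0)=H_2(0)=0$, whence $p_2^0\le 0\le p_1^0$) one has $L_i(v)\ge v\cdot 0-H_i(0)=0$ for all $v$ and $i=1,2$; since every element of $\mathcal{A}_0^{\mathrm{reg}}$ satisfies $v_1\le 0\le v_2$, this gives $-\mu L_1(v_1)-(1-\mu)L_2(v_2)\le 0$, i.e.\ $H_T^{reg}\le 0$. For the reverse inequality pick $v_i^*\in\partial H_i(0)$: since $0\le p_1^0$ and $p_2^0\le 0$ we get $v_1^*\le 0\le v_2^*$, while the Fenchel equality gives $L_i(v_i^*)=v_i^*\cdot 0-H_i(0)=0$; choosing $\mu\in[0,1]$ with $\mu v_1^*+(1-\mu)v_2^*=0$ yields a triple in $\mathcal{A}_0^{\mathrm{reg}}$ of value $0$, so $H_T^{reg}\ge 0$. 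In \emph{Case 2} ($A_I^-=A_0$, normalized at $p_a^0$ so $H_a(0)=\min H_a=0$ and $\min H_b\le 0$ for the other index $b$) one has $L_a(v)\ge -H_a(0)=0$ for all $v$; and on the relevant half-line ($v\le 0$ if $b=1$, $v\ge 0$ if $b=2$) one also has $L_b(v)\ge 0$: either by $L_b(v)\ge v\,p_b^0-\min H_b\ge 0$ when $p_b^0$ has the favorable sign, or, when it does not — which forces $p_2^0<p_1^0$, hence (being in Case 2) $A^*<A_0=0$, and since $0=p_a^0\in\mathrm{ch}[p_1^0,p_2^0]$ this forces $H_b(0)<0$ — by $L_b(v)\ge -H_b(0)>0$. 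Again $H_T^{reg}\le 0$, and the matching lower bound is immediate: $(v_1,v_2,\mu)=(0,0,1)\in\mathcal{A}_0^{\mathrm{reg}}$ has value $-L_1(0)=\min H_1=0$ when $a=1$, and $(0,0,0)\in\mathcal{A}_0^{\mathrm{reg}}$ has value $-L_2(0)=\min H_2=0$ when $a=2$. Hence $H_T^{reg}=0$ in both cases, and undoing the normalization gives $H_T^{reg}=A_I^-$.

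There is no real analytic obstacle: every inequality is an elementary instance of convex duality, exactly as in Proposition~\ref{pro:ht}. The only thing needing care is the bookkeeping of the case split — choosing the correct normalization point $p_c$, and in Case 2 selecting the right test point ($p_b^0$ versus $0$) for the one-sided positivity of $L_b$ according to the sign of $p_b^0$ and the position of $0$ in $\mathrm{ch}[p_1^0,p_2^0]$. One should also record the harmless facts that $\partial H_i(0)\subseteq B_i$ (as $L_i=+\infty$ off $B_i$) and $0\in[-\delta,\delta]\subseteq B_i$, so all the triples exhibited above are admissible. Combined with Theorem~\ref{th::pr8}, this identity also yields the announced statement that the Ishii value function $U^+$ of \cite{bbc2} is the $\check F_{A_I^-}$-solution.
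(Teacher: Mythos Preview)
Your proof is correct and follows essentially the same approach as the paper's: the same reduction by translation, the same two-case split driven by whether $A_I^-=A^*$ or $A_I^-=A_0$, and the same Fenchel-duality inequalities for the upper and lower bounds. The only differences are organizational---you place the boundary case $p_2^0\ge p_1^0$, $A^*=A_0$ entirely in Case~2 (the paper puts it in its first case), and your favorable/unfavorable-sign dichotomy in Case~2 repackages the paper's three subcases---and you are arguably a bit more careful about admissibility of the test triples ($\partial H_i(0)\subset B_i$, $0\in B_i$) and about the existence of $p^*$ with $H_1(p^*)=H_2(p^*)$.
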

%-------------------------
\begin{proof}
The proof is similar to the proof of Proposition \ref{pro:ht}.  We make
precise how to adapt it.

\textsc{Reduction.} The reduction to the case $A_I^- = 0$ and $p_c =0$
is completely analogous.  We now have to prove that
$H_T^{reg}=0$.\smallskip

\textsc{First case.}  Assume first that $0= A_I^- = A^* \ge A_0$.  Note
that this case only makes sense either when $p_2^0<p_1^0$ or when
$p_2^0 \ge p_1^0$ and $0= A_I^- = A^* = A_0$. Similarly, we get
$H_T^{reg}\le 0$.  To get the reverse inequality, we observe that
there exists $v_i^* \in \partial H_i (0)$, $i=1,2$, with
\[ v_1^* v_2^* \le 0. \]  
We deduce that we can choose $v_2^*\ge 0$ and $v_1^*\le 0$,
both in the case $p_2^0<p_1^0$ and the case $p_2^0 \ge p_1^0$ and $0=
A_I^- = A^* = A_0$.  This implies that we can find
$(v_1^*,v_2^*,\mu)\in {\mathcal A}_0^{reg}$ and similarly,
we conclude that $H_T^{reg}\ge 0$.  Hence $H_T=0$ in the first case,
as desired.

\textsc{Second case.}  We now assume that $0 = A_I^-=A_0$.
We set again for some $a\in \left\{1,2\right\}$:
\[\min H_{a}  = H_{a}(0) = 0.\]
From our definition of $a$, we have again
\[ L_{a} \ge L_{a}(0)=0 \quad \mbox{and}\quad p_a^0=0.\]
We first prove that $H_T^{reg} \le 0$. In order to do so, we now
distinguish three subcases.

Assume first $p_2^0<p_1^0$.  Then we can assume that $A_0>A^*$
(otherwise we have $A_0=A^*$ and we fall into the first case).  Then
we deduce, as in the proof of Proposition \ref{pro:ht}, that
$H_T^{reg}\le 0$.

Assume now that $p_2^0\ge p_1^0$ and $a=1$.  We deduce that
$0=p_1^0\le p_2^0$. But because $H_2$ is minimal at $p_2^0$, we have
$0\in \partial H_2(p_2^0)$, and we deduce that $0\le p_2^0\in \partial
L_2(0)$.  This implies that $L_2\ge L_2(0) = -H_2(p_2^0)\ge 0$ on
$\R^+$. By definition of $H_T^{reg}$, this implies that $H_T^{reg} \le
0$.

Assume finally that $p_2^0\ge p_1^0$ and $a=2$.  This subcase is
symmetric with respect to the previous one.  We deduce that
$0=p_2^0\ge p_1^0$. But because $H_1$ is minimal at $p_1^0$, we deduce
that $0\ge p_1^0\in \partial L_1(0)$.  This implies that $L_1\ge
L_1(0) = -H_1(p_1^0)\ge 0$ on $\R^-$. Again, by definition of
$A_I^-$, this implies that $A_I^- \le 0$.

We now prove that $H_T^{reg} \ge 0$.  To do so pick some $(0,v_2,1)
\in {\mathcal A}_0^{reg}$ when $a = 1$ and some $(v_1,0,0) \in
    {\mathcal A}_0^{reg}$ when $a =2$. Hence, in both cases, we get
\[ H_T^{reg} \ge -L_a (0) = 0 .\]
Hence $H_T=0$ in the second case too.  The proof is now complete.
\end{proof}

%%%%%%%%%%%%%%%%%%%%%%%%%%%%%%%%%%%%%%%%%%%%%%%%%%%%%%%%%%%%%%%%%%%%%%%%%%
%%%%%%%%%%%%%%%%%%%%%%%%%%%%%%%%%%%%%%%%%%%%%%%%%%%%%%%%%%%%%%%%%%%%%%%%%%
\section{Third application: a homogenization result for a network}
\label{s.h}

In this section, we present an application of the comparison principle
of viscosity sub- and super-solutions on networks. 

\subsection{A homogenization problem}

We consider the simplest periodic network generated by $\eps \Z^d$. It
is in fact a lattice.  Hence, the network (or lattice) is naturally embedded in
$\R^d$. Let us be more precise now. At scale $\eps =1$, the edges are
the following subsets of $\R^d$: for $k,l \in \Z^d$, $|k-l|=1$,
\[ e_{k,l} = \{ \theta k + (1-\theta) l : \theta \in [0,1]\}. \] 
If $(e_1,\dots,e_d)$ denotes the canonical basis of $\R^d$, then for
$l = k + e_i$, $e_{k,l}$ is oriented in the direction of $e_i$.
The network $\mathcal{N}_\eps$ at scale $\eps >0$ is the one
corresponding to 
\[ \begin{cases}
 \mathcal{E}_\eps = \{ \eps e_{k,l}, k,l \in \Z^d, |k-l|=1 \}\\
 \mathcal{V}_\eps = \eps \Z^d \\
 \end{cases} \]
endowed with the metric induced by the Euclidian norm.
We next consider the following ``oscillating'' Hamilton-Jacobi equation
on this network
\begin{equation}\label{eq:hj-eps}
\left\{\begin{array}{ll}
u^\eps_t + H_{\frac{e}\eps} (u^\eps_x) = 0, & t >0, 
\; x \in e^*, e \in \mathcal{E}_\eps, \\
u^\eps_t + F_A(\frac{x}\eps,u^\eps_x) =0, 
& t >0, \; x \in \mathcal{V}_\eps 
\end{array}\right.
\end{equation}
(for some $A \in \R$) subject to the initial condition
\begin{equation}\label{eq:ic-eps}
u^\eps(0,x) = u_0 (x), \qquad x \in \mathcal{N}_\eps.
\end{equation}
\begin{rem}
In this section, we choose the simplest periodic homogenization problem
but much more can be done. For instance, the cell can be larger or have
a different shape, Hamiltonians can depend on $x$ etc. 
\end{rem}

For $m \in  \Z^d$, it is convenient to define
\[ \eps e_{k,l} + \eps m = \eps e_{k+m,l+m}.\]

\paragraph{Assumptions on $H$ for the homogenization problem}

For  each $e \in \mathcal{N}_1$, we associate a Hamiltonian
$H_e$ and we assume
\begin{itemize}
\item \textbf{(H'0)} (Continuity) For all $e \in \mathcal{E}_1$, $H_e
  \in C(\R)$.
\item \textbf{(H'1)} (Coercivity) $e \in \mathcal{E}_1$,
\[\liminf_{|q|\to +\infty} H_e(q)=+\infty.\]
\item \textbf{(H'2)} (Quasi-convexity) For all $e \in \mathcal{E}_1$,
  there exists a $p^0_e \in \R$ such that
\[\begin{cases}
H_e \quad \text{is nonincreasing on}\quad (-\infty,p^0_e],\\
H_e \quad \text{is nondecreasing on}\quad [p^0_e,+\infty).
\end{cases}\]
\item \textbf{(H'3)} (Periodicity) For all $m \in  \Z^d$, $H_{e+m}
  (p) = H_e (p)$. 
\end{itemize}

\paragraph{A homogenization result}

The goal of this section is to prove the following convergence result
for the oscillating Hamilton-Jacobi equation.
%----------------------------------
\begin{theo}[Homogenization of a network]\label{thm:conv}
Assume \emph{(H'0)-(H'3)}. Let $u_0$ be Lipschitz continuous and
$u^\eps$ be the solution of \eqref{eq:hj-eps}-\eqref{eq:ic-eps}.
There exists a continuous function $\bar{H} : \R^d \to \R$ such that
$u^\eps$ converges locally uniformly towards the unique solution $u^0$
of
\begin{eqnarray}
 u^0_t + \bar H (\nabla_x u^0) = 0, & &  t>0, x \in \R^d \label{eq:hj-h}\\
 u^0(0,x) = u_0 (x), & & x \in \R^d. \label{eq:ic-h}
\end{eqnarray}
\end{theo}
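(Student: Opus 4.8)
The plan is to follow the classical perturbed test function method of Evans, adapted to the network setting via the comparison principle on networks (Theorem~\ref{th::l2}) and the reduction of general junction conditions to flux-limited ones. First I would establish uniform Lipschitz/equi-continuity bounds on the family $(u^\eps)$: because $u_0$ is Lipschitz and the Hamiltonians are coercive (H'1) with a uniform-in-$e$ control coming from periodicity (H'3) (there are only finitely many edges modulo $\Z^d$), the barrier construction of Theorem~\ref{th::l3} gives $|u^\eps(t,x)-u_0(x)|\le C_Tt$, and a standard argument (comparing $u^\eps(t,\cdot)$ with translates, using finite speed of propagation / coercivity) gives a uniform spatial modulus of continuity along the network. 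By Arzel\`a--Ascoli, extract a subsequence converging locally uniformly to some $u^0:[0,\infty)\times\R^d\to\R$ with $u^0(0,\cdot)=u_0$; the content is to identify $u^0$ as the unique viscosity solution of \eqref{eq:hj-h}-\eqref{eq:ic-h} for a suitable effective Hamiltonian $\bar H$, which by uniqueness (classical comparison in $\R^d$) forces the whole family to converge.

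The heart of the argument is the construction of $\bar H$ via a cell problem on the unit periodic network $\mathcal{N}_1$. For each $P\in\R^d$ one seeks $\bar H(P)\in\R$ and a $\Z^d$-periodic (bounded) corrector $v=v_P:\mathcal{N}_1\to\R$ solving, in the viscosity sense on the network,
\[
H_e\big(P\cdot \tau_e + \partial_e v\big) = \bar H(P) \quad\text{on } e^*,\qquad
F_A\big(x,(P\cdot\tau_e+\partial_e v)_e\big)=\bar H(P)\quad\text{at vertices},
\]
where $\tau_e$ is the unit direction of edge $e$; equivalently one solves the approximate cell problem $\delta v_\delta + H(\cdots,P+ (v_\delta)_x)=0$ (with the flux-limited junction condition) on $\mathcal{N}_1$, obtains $\|\delta v_\delta\|_\infty\le C$ and $\|v_\delta-v_\delta(0)\|_\infty\le C$ from the a~priori bounds in this paper, and sets $\bar H(P):=-\lim_{\delta\to0}\delta v_\delta(x_0)$. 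Continuity and coercivity of $\bar H$ follow from the stability of the construction and the uniform coercivity of the $H_e$. Here a genuine subtlety specific to networks, as opposed to the continuous periodic case, is that one must check the corrector interacts correctly with the flux-limited vertex condition $F_A$; this is exactly where Theorem~\ref{th::gr1} (testing with a single affine test function at the vertex) and Proposition~\ref{pro::pp95}-type verifications of assumptions (H0)-(H6), (A0)-(A2) are used to guarantee well-posedness of the $\delta$-cell problem on $\mathcal{N}_1$ via Corollary~\ref{cor::l4}.

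Finally I would run the perturbed test function argument. Let $\varphi$ be a smooth test function touching $u^0$ strictly from above at $(t_0,x_0)$ with $u^0_t(t_0,x_0)+\bar H(\nabla\varphi(t_0,x_0))=:\theta>0$; set $P=\nabla\varphi(t_0,x_0)$ and form $\varphi^\eps(t,x)=\varphi(t,x)+\eps\, v_P(x/\eps)$ on $\mathcal{N}_\eps$. Using the cell-problem (sub/super) inequalities for $v_P$ together with the uniform continuity of $H_e$ and $F_A$ in the gradient variable (which bounds the error between $H_{e}(\nabla\varphi + \partial_e v_P)$ and $H_e(P+\partial_e v_P)$ by a modulus of $\eps$), one checks that $\varphi^\eps$ is a strict supersolution of \eqref{eq:hj-eps} in a neighborhood of $(t_0,x_0)$ for $\eps$ small; since $u^\eps-\varphi^\eps$ attains an interior max near $(t_0,x_0)$, the viscosity inequality for $u^\eps$ contradicts this. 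Hence $u^0$ is a subsolution of \eqref{eq:hj-h}; the supersolution property is symmetric. The main obstacle I anticipate is precisely the solvability and regularity of the network cell problem in the presence of the flux limiter $A$ at vertices: one must verify that the corrector can be taken bounded and periodic and that the flux-limited junction condition passes to the limit, which requires invoking the single-test-function characterization of Section~\ref{svs} rather than naive stability at vertices; the rest is a routine transcription of Evans' method using the comparison principle of Theorem~\ref{th::l2}.
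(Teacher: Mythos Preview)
Your plan is essentially the paper's: solve a periodic cell problem on $\mathcal{N}_1$ via the approximate problem $\delta v_\delta + H_{\mathcal N_1}(\cdot, P+ (v_\delta)_x)=0$, define $\bar H(P)$ as the limit of $-\delta v_\delta$, and run Evans' perturbed test function argument with $\varphi^\eps(t,x)=\varphi(t,x)+\eps v(x/\eps)$, concluding by the (localized) network comparison principle. So the core is the same.

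The one methodological difference worth noting is your compactness step. You propose proving spatial equi-continuity of $(u^\eps)$ and applying Arzel\`a--Ascoli to extract a limit $u^0$ on $\R^d$. The paper instead uses the Barles--Perthame half-relaxed limits
\[
\overline u(t,x)=\limsup_{\substack{\eps\to0,\ (s,y)\to(t,x)\\ y\in\mathcal N_\eps}} u^\eps(s,y),\qquad
\underline u(t,x)=\liminf_{\substack{\eps\to0,\ (s,y)\to(t,x)\\ y\in\mathcal N_\eps}} u^\eps(s,y),
\]
shows $\overline u$ is a sub-solution and $\underline u$ a super-solution of the effective equation, and concludes $\overline u=\underline u=u^0$ by comparison in $\R^d$. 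This buys two things: it removes the need for any spatial regularity beyond the barrier $|u^\eps-u_0|\le Ct$, and it handles cleanly the fact that each $u^\eps$ lives on a \emph{different} domain $\mathcal N_\eps\subsetneq\R^d$, where a direct Arzel\`a--Ascoli argument is awkward to formulate. Your route can be made to work (e.g.\ by showing the $u^\eps$ are uniformly Lipschitz along $\mathcal N_\eps$ and using density of $\bigcup_\eps\mathcal N_\eps$), but the semi-limit approach is shorter and more robust.

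A minor point: you anticipate needing the single-test-function characterization of Section~\ref{svs} to pass the flux-limited junction condition through the cell-problem and perturbed-test-function limits. In the paper this is not invoked; well-posedness of the $\delta$-cell problem comes from the stationary existence/uniqueness result (Appendix~\ref{s.stat}), and in the perturbed test function lemma the junction inequality for $\varphi^\eps$ at a vertex follows directly from the viscosity super-solution property of $v$ at that vertex, since $\psi_\eps(s,y)=\eps^{-1}(\psi(s,\eps y)-\varphi(s,\eps y))$ is a legitimate test function for $v$. So the obstacle you flag is less serious than you expect.
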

%----------------------------------------
\begin{rem}
The meaning of the convergence $u^\eps$ towards $u^0$ is
\[ \lim_{\stackrel{(s,y) \to (t,x)}{y \in \mathcal{N}_\eps} } u^\eps (s,y) = u^0(t,x).\]
\end{rem}

\subsection{The cell problem}

Keeping in mind the definitions of networks and derivatives of
functions defined on networks, solving the cell problem consists in
finding specific global solutions of \eqref{eq:hj-eps} for $\eps=1$,
\textit{i.e.}
\begin{equation}\label{eq:hj-1}
\left\{\begin{array}{ll}
w_t + H_e (w_y) = 0, & t \in \R, 
\; y \in e^*, e \in \mathcal{E}_1, \\
w_t + F_A(y,w_y) =0, 
& t \in \R, \; y \in \mathcal{V}_1. 
\end{array}\right.
\end{equation}
Precisely, for some $P \in \R^d$, we look for solutions
$w(t,y)=\lambda t + P \cdot y + v(y)$ with a $\Z^d$-periodic function $v$; in
other words, we look for $(\lambda,v)$ such that
\begin{equation}\label{eq:cell}
\left\{\begin{array}{ll}
\lambda + H_e ((P\cdot y+v)_y) = 0, &  
\; y \in e^*, e \in \mathcal{E}_1, \\
\lambda + F_A(y,(P\cdot y+ v)_y) =0, 
&  y \in \mathcal{V}_1. 
\end{array}\right.
\end{equation}
%-----------------
\begin{theo}\label{thm:ergo}
  For all $P \in \R^d$ there exists a unique $\lambda \in \R$ for
  which there exists a $\Z^d$-periodic solution $v$ of
  \eqref{eq:cell}. Moreover, the function $\bar H$ which maps $P$ to
  $-\lambda$ is continuous.
\end{theo}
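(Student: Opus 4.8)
The plan is the classical ergodic-approximation (vanishing discount) argument, carried out on the compact quotient network. Since the $H_e$ and the flux limiter are $\Z^d$-periodic by \textbf{(H'3)}, a $\Z^d$-periodic function on $\mathcal{N}_1$ is the same as a function on the compact network $\mathcal{N}=\mathcal{N}_1/\Z^d$ (the finite graph with one vertex and $d$ loops), and \eqref{eq:cell} is naturally an equation on $\mathcal{N}$. First I would record that, for fixed $P\in\R^d$, the edge Hamiltonians $q\mapsto H_e(P\cdot e_i+q)$ together with the (time-independent) flux limiter $A$ satisfy the standing assumptions \textbf{(H0)}--\textbf{(H6)} and \textbf{(A0)}--\textbf{(A2)}: continuity, coercivity and quasi-convexity come from \textbf{(H'0)}--\textbf{(H'2)}, while the time-regularity conditions are trivial since the problem is autonomous. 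Then, for $\delta>0$, I would introduce the discounted cell problem
\begin{equation}\label{eq:disc}
\left\{\begin{array}{ll}
\delta v^\delta + H_e((P\cdot y + v^\delta)_y) = 0, &  y\in e^*,\ e\in\mathcal{E},\\
\delta v^\delta + F_A(y,(P\cdot y + v^\delta)_y) = 0, & y\in\mathcal{V},
\end{array}\right.
\end{equation}
understood in the viscosity sense on $\mathcal{N}$. Perron's method together with the comparison principle for stationary equations on networks (Appendix~\ref{s.stat}, obtained as in Theorem~\ref{th::l2}) provide a unique bounded solution $v^\delta$, which is $\Z^d$-periodic by uniqueness and periodicity of the data.

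Next I would derive the uniform estimates. Comparing with suitable constant barriers $\pm C\delta^{-1}$, with $C$ independent of $\delta$, gives $\|\delta v^\delta\|_\infty\le C$ (the constants blow up but $\delta$ times them does not). Then \eqref{eq:disc} forces the one-sided values $H_e((P\cdot y+v^\delta)_y)$ to stay bounded, so the coercivity \textbf{(H'1)} yields a uniform bound $|(P\cdot y+v^\delta)_y|\le C_1$, hence a uniform Lipschitz bound for $v^\delta$ on $\mathcal{N}$ (all constants depending only on $P$ through a compact set). Normalizing $\tilde v^\delta:=v^\delta-v^\delta(y_0)$ at a fixed vertex $y_0$ and setting $\lambda^\delta:=-\delta v^\delta(y_0)$, the family $(\tilde v^\delta)$ is uniformly bounded and equi-Lipschitz on the compact network $\mathcal{N}$, while $(\lambda^\delta)$ is bounded. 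By the Arzel\`a--Ascoli theorem, along some sequence $\delta_n\to 0$ one has $\tilde v^{\delta_n}\to v$ uniformly and $\lambda^{\delta_n}\to\lambda$; since $\delta_n\tilde v^{\delta_n}\to 0$ uniformly, $\delta_n v^{\delta_n}\to-\lambda$ uniformly, and passing to the limit in \eqref{eq:disc} by stability of viscosity solutions on networks (Proposition~\ref{pro::2} and its network analogue) shows that the periodic function $v$ solves \eqref{eq:cell} with this $\lambda$. This proves the existence part.

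For uniqueness of $\lambda$, hence that $\bar H(P):=-\lambda$ is well defined, suppose $(\lambda_1,v_1)$ and $(\lambda_2,v_2)$ both solve \eqref{eq:cell}. The functions $w_k(t,y)=\lambda_k t+P\cdot y+v_k(y)$ solve the evolution problem \eqref{eq:hj-1} and, $v_k$ being bounded, satisfy the linear growth bounds \eqref{eq::l27}. Since $\bar w_2:=w_2+\sup(v_1-v_2)$ is a super-solution of \eqref{eq:hj-1} lying above $w_1$ at $t=0$, the comparison principle on networks (Theorem~\ref{th::l2}) gives $w_1\le\bar w_2$ on $(0,+\infty)\times\mathcal{N}_1$, i.e. $(\lambda_1-\lambda_2)t\le\sup(v_2-v_1)+\sup(v_1-v_2)$ for all $t>0$; letting $t\to+\infty$ forces $\lambda_1\le\lambda_2$, and by symmetry $\lambda_1=\lambda_2$. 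Finally, continuity of $\bar H$ follows from the same compactness scheme: if $P_n\to P$, then $H_e(P_n\cdot e_i+\cdot)\to H_e(P\cdot e_i+\cdot)$ locally uniformly, the normalized correctors $v_{P_n}$ and the eigenvalues $\lambda(P_n)$ are bounded and equi-Lipschitz uniformly for $P_n$ in a bounded set, so any subsequential limit is a corrector for $P$ with eigenvalue $\lim_n\lambda(P_n)$; by uniqueness that limit is $\lambda(P)$, so $\lambda(P_n)\to\lambda(P)$ and $\bar H$ is continuous.

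The hard part will be the uniform-in-$\delta$ gradient estimate \emph{through the vertex} and the legitimacy of the stability passage to the limit in the network setting: away from the vertex these are the usual facts for coercive Hamilton--Jacobi equations, but at the vertex one must argue at the level of the flux-limited junction condition — using Theorem~\ref{th::gr1} to reduce the test to a single slope — and check that the chosen constant barriers are genuine sub- and super-solutions of that condition. All these ingredients are supplied by the theory of Sections~\ref{s.v} and \ref{s.n} and by the stationary results of Appendix~\ref{s.stat}.
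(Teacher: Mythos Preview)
Your proposal is correct and follows essentially the same vanishing-discount scheme as the paper: solve the stationary discounted problem on the periodic network via Perron and the comparison principle of Appendix~\ref{s.stat}, get $\|\delta v^\delta\|_\infty\le C$ from constant barriers, a uniform Lipschitz bound from coercivity, then normalize, apply Arzel\`a--Ascoli, and pass to the limit by stability. The one notable difference is that you supply an explicit proof of the uniqueness of $\lambda$ via the evolution comparison principle (Theorem~\ref{th::l2}), whereas the paper simply asserts ``the uniqueness of $\lambda$'' when deducing continuity of $\bar H$; your argument fills that small gap cleanly.
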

%----------------
\begin{proof}
We consider the following $\Z^d$-periodic stationary problem
\begin{equation} \label{eq:alpha}
\left\{\begin{array}{ll}
\alpha v^\alpha + H_e ((P\cdot y+v^\alpha)_y) = 0, &  
\; y \in e^*, e \in \mathcal{E}_1, \\
\alpha v^\alpha + F_A(y,(P\cdot y+ v^\alpha)_y) =0, 
&  y  \in \mathcal{V}_1.
\end{array}\right.
\end{equation}
We consider 
\[ C = \max_{e \in \mathcal{E}_1} |H_e ((P \cdot y)_y)|.\]
Then the existence result and the comparison principle for the
stationary equation (see Appendix~\ref{s.stat}) imply that there
exists a (unique) $\Z^d$-periodic solution $v^\alpha$ of
\eqref{eq:alpha} such that
\[ |\alpha v^\alpha | \le C. \]
Since $H_e$ is coercive, this implies that there exists a constant
$\tilde{C}$ such that for all $\alpha >0$, $v_\alpha$ is
Lipschitz-continuous and
\[ |v^\alpha_y | \le \tilde{C};\]
in other words, the family $(v^\alpha)_{\alpha >0}$ is equi-Lipschitz continuous.
We then consider 
\[ \tilde{v}_\alpha = v_\alpha - v_\alpha (0).  \]
By Arzel\`a-Ascoli theorem, there exists $\alpha_n \to 0$ such that
$\tilde{v}^n := \tilde{v}_{\alpha_n}$ converges uniformly towards
$v$. Moreover, we can also assume that
\[ \alpha_n v_{\alpha_n} (0) \to \lambda.\]
Passing to the limit into the equation yields that $(\lambda,v)$
solves the cell problem~\eqref{eq:cell}.

The continuity of $\lambda$ is completely classical too. Consider $P_n
\to P_\infty$ as $n \to \infty$ and consider $(\lambda_n,v_n)$ solving
\eqref{eq:cell}. We proved above that
\[ |\lambda_n| \le C.\]
Hence, arguing as above, we can extract a subsequence from
$(\lambda_n,v_n)$ converging towards $(\lambda_\infty,v_\infty)$.
Passing to the limit into the equation implies that
$(\lambda_\infty,v_\infty)$ solves the cell
problem~\eqref{eq:cell}. The uniqueness of $\lambda$ yields the
continuity of $\bar H$. The proof is now complete.
\end{proof}

\subsection{Proof of convergence}

Before proving the convergence, we state without proof the following
elementary lemma. 
%----------------------------------------------
\begin{lem}[Barriers]
There exists $C>0$ such that for all $\eps>0$, 
\[ |u^\eps (t,x) - u_0(x) | \le Ct .\]
\end{lem}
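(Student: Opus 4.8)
The plan is to trap $u^\eps$ between the two affine-in-time barriers $u^\pm(t,x)=u_0(x)\pm Ct$ and to conclude by the comparison principle on a network; the whole point is to pick the constant $C$ once and for all, \emph{independent of $\eps$}. So I would first fix $C$, then check that $u^+$ (resp.\ $u^-$) is a super-solution (resp.\ sub-solution) of \eqref{eq:hj-eps}--\eqref{eq:ic-eps}, and finally apply Theorem~\ref{th::l2} (or directly Corollary~\ref{cor::l4}).

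\emph{Choice of $C$.} Let $L=\Lip(u_0)$. By the periodicity assumption \textbf{(H'3)}, $\{H_e:e\in\mathcal{E}_1\}$ consists of only finitely many distinct functions (one per coordinate direction), each edge of $\mathcal{N}_\eps$ carries a translate of one of them, and the critical slopes $p^0_e$ range over a finite set. Put $L'=\max\bigl(L,\max_{e\in\mathcal{E}_1}|p^0_e|\bigr)$ and $\kappa=\sup_{e\in\mathcal{E}_1}\sup_{|q|\le L'}|H_e(q)|<+\infty$, and take $C=\max(|A|,\kappa)$. This bound is valid on $\mathcal{N}_\eps$ for \emph{every} $\eps>0$, and this $\eps$-uniformity, coming solely from \textbf{(H'3)}, is the only genuinely delicate point of the argument.

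\emph{The barriers.} This is the global-barrier construction already carried out in the proof of Theorem~\ref{th::l3}. On an edge: a test function $\varphi$ touching $u^+$ from below (resp.\ $u^-$ from above) at $(t_0,x_0)$ with $x_0\in e^*$, $t_0>0$, has $\varphi_t(t_0,x_0)=\pm C$ and $|\varphi_x(t_0,x_0)|\le L$ (since $u_0$ is $L$-Lipschitz), whence $\varphi_t+H_{e/\eps}(\varphi_x)\ge C-\kappa\ge 0$ (resp.\ $\le -C+\kappa\le 0$). At a vertex $n$: for $u^+$ one has $\varphi_t(t_0,n)=C$ and $F_A(n,\varphi_x)\ge A\ge -C$, so the super-solution inequality holds; for $u^-$ one has $\varphi_t(t_0,n)=-C$, and since the outgoing slopes of $\varphi$ at $n$ are bounded below by $-L$, the monotonicity of the functions $H_e^\pm$ together with the bound on the $p^0_e$ force every term in $F_A(n,\varphi_x)$ to be at most $\kappa$, so that $F_A(n,\varphi_x)\le C$ and the sub-solution inequality holds. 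Thus $u^+$ is a super-solution and $u^-$ a sub-solution of \eqref{eq:hj-eps}, both with initial datum $u_0$, both of (trivially) linear growth; by Theorem~\ref{th::l3} the solution $u^\eps$ has linear growth too.

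\emph{Conclusion and main obstacle.} The Hamiltonians $H_{e/\eps}$ are independent of time, so \textbf{(H5)}, \textbf{(H6)} hold trivially, and \textbf{(H'0)}--\textbf{(H'3)} together with the finiteness above give \textbf{(H0)}--\textbf{(H4)} uniformly; the constant flux limiter $A$ satisfies \textbf{(A0)}--\textbf{(A2)}. Hence Theorem~\ref{th::l2} applies on $[0,T)\times\mathcal{N}_\eps$ for every $T>0$, and comparing the pairs $(u^\eps,u^+)$ and $(u^-,u^\eps)$ yields $u^-\le u^\eps\le u^+$, i.e.\ $|u^\eps(t,x)-u_0(x)|\le Ct$ with $C$ independent of $\eps$. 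The main obstacle, as stressed, is securing that $\eps$-independence of $C$ (this is exactly what periodicity \textbf{(H'3)} buys); once that is in hand, the verification of the vertex inequalities for the barriers is a routine repetition of the barrier step in the proof of Theorem~\ref{th::l3}.
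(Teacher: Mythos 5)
The paper states this lemma without proof (``we state without proof the following elementary lemma''), and your argument is precisely the expected one: the barrier construction already used in the proofs of Theorems~\ref{th::3} and \ref{th::l3}, with the key extra observation that periodicity \textbf{(H'3)} (finitely many distinct Hamiltonians, constant flux limiter $A$) makes the constant $C=\max(|A|,\kappa)$ independent of $\eps$, followed by the comparison principle of Theorem~\ref{th::l2}. Your verification is correct, including the one genuinely delicate point — at a vertex the slopes of a test function touching $u_0\pm Ct$ are only one-sidedly bounded, but for $u^+$ the bound $F_A\ge A\ge -C$ suffices, while for $u^-$ the monotone parts entering $F_A$, evaluated at outgoing slopes $\ge -L$, are dominated by $\kappa$ — so there is nothing to add.
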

%----------------------------------------------
We can now turn to the proof of convergence. 
\begin{proof}[Proof of Theorem~\ref{thm:conv}]
We classically consider the relaxed semi-limits 
\[\begin{cases}
 \overline u (t,x) = \limsup_{\stackrel{\eps \to 0,  
 (s,y) \to (t,x)}{y \in \mathcal{N}_\eps}} u^\eps (s,y),  \\
 \underline u (t,x) = \liminf_{\stackrel{\eps \to 0,  
 (s,y) \to (t,x)}{y \in \mathcal{N}_\eps}} u^\eps (s,y).  
 \end{cases}
 \]
 In order to prove convergence of $u^\eps$ towards $u^0$, it is enough
 to prove that $\overline{u}$ is a sub-solution of \eqref{eq:hj-h} and
 $\underline{u}$ is a super-solution of \eqref{eq:hj-h}. We only prove
 that $\overline{u}$ is a sub-solution since the proof for
 $\underline{u}$ is very similar.

We consider a test function $\varphi$ touching (strictly)
$\overline{u}$ from above at $(t_0,x_0)$: there exists $r_0>0$ such
that for all $(t,x) \in B_{r_0} (t_0,x_0)$, $(t,x) \neq (t_0,x_0)$,
\[ \varphi (t,x) > \overline{u} (t,x) \]
and $\varphi(t_0,x_0) = \overline{u}(t_0,x_0)$.  We argue by
contradiction by assuming that there exists $\theta >0$ such that
\begin{equation}\label{eq:1}
  \partial_t \varphi(t_0,x_0) - \lambda = \partial_t \varphi(t_0,x_0) + \bar H (\nabla_x \varphi (t_0,x_0)) = \theta >0. 
 \end{equation}
 We then consider the following ``perturbed test'' function
 $\varphi^\eps \colon \R^+ \times \mathcal{N}_\eps \to \R$
 \cite{evans},
\[ \varphi^\eps (t,x) = \varphi (t,x)  + \eps v (\eps^{-1}x)\]
where $(\lambda,v)$ solves the cell problem~\eqref{eq:cell} for $P= \nabla_x \varphi (t_0,x_0)$. 
%-----------------------------
\begin{lem}
  For $r \le r_0$ small enough, the function $\varphi^\eps$ is a
  super-solution of \eqref{eq:hj-eps} in
  $B((t_0,x_0),r) \subset (0,T) \times \mathcal{N}_\eps$ and
  $\varphi^\eps \ge u^\eps + \eta_r$ in $\partial B( (t_0,x_0),r)$ for
  some $\eta_r >0$.
\end{lem}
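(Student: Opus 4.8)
The plan is to prove the standard perturbed-test-function lemma for homogenization on networks. First I would show that $\varphi^\eps$ is a super-solution of \eqref{eq:hj-eps} in a small ball $B((t_0,x_0),r)$. Fix $P = \nabla_x \varphi(t_0,x_0)$ and let $(\lambda,v)$ solve the cell problem \eqref{eq:cell} for this $P$. Let $\phi$ be a test function touching $\varphi^\eps$ from below at some point $(t_1,y_1)$ with $y_1 \in \mathcal{N}_\eps$. Then $(t,y) \mapsto \phi(t,y) - \varphi(t,y)$ touches $y \mapsto \eps v(\eps^{-1}y)$ from below at $(t_1,y_1)$; in the rescaled variable $z = \eps^{-1}y$, the function $z \mapsto \eps^{-1}(\phi(t_1,\eps z) - \varphi(t_1,\eps z))$ touches $v$ from below at $z_1 = \eps^{-1}y_1$. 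Using that $v$ is a super-solution of \eqref{eq:cell} with $P\cdot z + v$, and reading off the spatial gradient $P + v_z(z_1)$ versus $\eps^{-1}(\partial_y\phi(t_1,y_1) - \nabla_x\varphi(t_1,y_1))$ — noting that the two agree up to the rescaling — we get, writing $q$ for the (vector- or scalar-valued) spatial gradient of $\phi$ at $(t_1,y_1)$,
\[
\partial_t\varphi(t_1,y_1) + H_{e/\eps}\!\left(q - \nabla_x\varphi(t_1,y_1) + \nabla_x\varphi(t_1,y_1)\right) \ \text{(or the }F_A\text{ analogue at a vertex)}\ \ge \ \partial_t\varphi(t_1,y_1) - \lambda.
\]
Since $\nabla_x\varphi$ is continuous and $\partial_t\varphi$ is continuous, for $r$ small enough $\nabla_x\varphi(t_1,y_1)$ is close to $P$ and $\partial_t\varphi(t_1,y_1)$ is close to $\partial_t\varphi(t_0,x_0)$; by uniform continuity of the finitely many Hamiltonians $H_e$, $e \in \mathcal{E}_1$ (periodicity reduces everything to finitely many edge-types), and by \eqref{eq:1} which gives a strict gap $\theta>0$, the super-solution inequality $\partial_t\phi + H_{e/\eps}(\phi_y) \ge 0$ (resp. with $F_A$) at $(t_1,y_1)$ follows. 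The key point making this work on the network rather than in $\R^d$ is that at a vertex $y_1 \in \mathcal{V}_\eps = \eps\Z^d$, the vertex condition for the cell problem transfers directly because $F_A$ depends only on the slopes along edges, and the rescaling is isometric on each edge.

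Second, I would establish the boundary separation $\varphi^\eps \ge u^\eps + \eta_r$ on $\partial B((t_0,x_0),r)$. Since $\varphi$ touches $\overline u$ strictly from above at $(t_0,x_0)$ inside $B_{r_0}$, we have $\min_{\partial B((t_0,x_0),r)} (\varphi - \overline u) =: 2\eta_r > 0$ for each $r \le r_0$. By definition of the relaxed upper limit $\overline u$, for $\eps$ small enough $u^\eps(s,y) \le \overline u(s,y) + \eta_r$ for $(s,y)$ in a neighborhood of $\partial B((t_0,x_0),r)$ with $y \in \mathcal{N}_\eps$; combined with $\|\eps v(\eps^{-1}\cdot)\|_\infty \le \eps \|v\|_\infty \le \eta_r$ for $\eps$ small, we get $\varphi^\eps = \varphi + \eps v(\eps^{-1}\cdot) \ge \varphi - \eta_r \ge \overline u + \eta_r \ge u^\eps$ on $\partial B((t_0,x_0),r)$, with room $\eta_r$ to spare. (A minor care: $\overline u$ is upper semicontinuous and the $\partial B$-minimum of $\varphi - \overline u$ is attained; one also uses the barrier lemma to know all the functions are finite.)

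Once the lemma is in hand, the contradiction is the usual one: $\varphi^\eps$ is a super-solution in $B((t_0,x_0),r)$ lying strictly above $u^\eps$ on $\partial B$, so by the comparison principle on the network restricted to this ball (applied with the initial/boundary data given on the parabolic boundary of $B$, using Theorem~\ref{th::l2} in the localized form), $\varphi^\eps \ge u^\eps$ throughout $B((t_0,x_0),r)$, i.e. $\varphi + \eps v(\eps^{-1}\cdot) \ge u^\eps$. Passing to the limsup as $\eps \to 0$ and $(s,y) \to (t_0,x_0)$ gives $\varphi(t_0,x_0) \ge \overline u(t_0,x_0)$ with strict inequality somewhere nearby contradicting that $\varphi(t_0,x_0) = \overline u(t_0,x_0)$ is the (strict) contact value — more precisely, one evaluates at points realizing the limsup and contradicts $\overline u(t_0,x_0) = \varphi(t_0,x_0)$.

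The main obstacle is the first step: correctly matching the viscosity inequalities for $\varphi^\eps$ (in the slow variable $x$, at scale $\eps$) with those for the cell solution $v$ (in the fast variable $z = x/\eps$), including at the vertices, and carefully tracking that the error between $\nabla_x\varphi$ evaluated at the contact point and the frozen value $P = \nabla_x\varphi(t_0,x_0)$ is absorbed by the strict gap $\theta$ using uniform continuity of the Hamiltonians — which is legitimate precisely because periodicity (H'3) means there are only finitely many distinct $H_e$. The vertex case requires noting that $F_A(y/\eps, \cdot)$ is $\Z^d$-periodic in $y/\eps$ and that the slopes transform correctly under the isometric rescaling on each incident edge, so the vertex inequality for the cell problem is exactly what is needed.
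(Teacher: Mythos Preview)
Your proposal is correct and follows essentially the same approach as the paper: touch $\varphi^\eps$ from below by a test function, rescale to obtain a test function for the corrector $v$, invoke the cell-problem super-solution inequality with $P=\nabla_x\varphi(t_0,x_0)$, and absorb the error $\nabla_x\varphi(t_1,y_1)-P$ and $\partial_t\varphi(t_1,y_1)-\partial_t\varphi(t_0,x_0)$ into the strict gap $\theta$ using uniform continuity of the finitely many periodic Hamiltonians. Your boundary-separation argument is in fact more detailed than the paper's (which simply asserts it from the strict contact $\varphi>\overline u$ on $\partial B$); the only cosmetic issue is the displayed formula where $q-\nabla_x\varphi(t_1,y_1)+\nabla_x\varphi(t_1,y_1)$ collapses trivially---the intended expression is $H\bigl(\phi_y+P-\nabla_x\varphi(t_1,y_1)\bigr)\ge -\lambda$, which is exactly what the paper writes and what your surrounding prose describes correctly.
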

%----------------------
\begin{proof}
  Consider a test function $\psi$ touching $\varphi^\eps$ from below
  at $(t,x) \in ]0,+\infty[ \times \mathcal{N}_\eps$.  Then the
  function
\[ \psi_\eps (s,y) = \eps^{-1} (\psi (s,\eps y) - \varphi (s,\eps y))\]
touches $v$ from below at $y=\frac{x}\eps \in e$. In particular, 
\begin{align}
\label{eq:2}
 \partial_t \psi (t,x) = \partial_t \varphi (t,x), \\ \lambda +
 H_{\mathcal{N}_1} (y,\varphi_x(t_0,x_0) + \psi_x (t,x) -\varphi_x
 (t,x) ) \ge 0.
\label{eq:3}
\end{align}
Combine now \eqref{eq:1}, \eqref{eq:2} and \eqref{eq:3} and get
\[ \partial_t \psi (t,x) + H_{\mathcal{N}_1}(y,\psi_x (t,x)) \ge  \theta + E \]
where 
\[ E = 
 ( \varphi_t (t,x) -\varphi_t (t_0,x_0)) + (H_{\mathcal{N}_1}
(y,\psi_x (t,x)) - H_{\mathcal{N}_1}(y,\psi_x (t,x) +
\varphi_x(t_0,x_0)-\varphi_x (t,x))).\] The fact that $\varphi$ is
$C^1$ implies that we can choose $r >0$ small enough so that for all
$(t,x) \in B((t_0,x_0),r)$,
\[ E \ge -\theta.\]
Moreover, since $\varphi$ is strictly above $\overline{u}$, we
conclude that $\varphi^\eps \ge u^\eps +\eta_r$ on $\partial
B((t_0,x_0),r)$ for some $\eta_r >0$.  This achieves the proof of the
lemma.
\end{proof}
From the lemma, we deduce thanks to the (localized) comparison principle that 
\[ \varphi^\eps (t,x) \ge u^\eps (t,x)+\eta_r.\]
In particular, this implies 
\[ u(t_0,x_0)=\varphi (t_0,x_0) \ge u(t_0,x_0) + \eta_r>u(t_0,x_0)\]
which is the desired contradiction.
\end{proof}

\subsection{Characterization of the effective Hamiltonian}

We remark that, in view of {\bf (H'3)}, there are exactly $d$
  different Hamiltonians $H_1, \dots, H_d$ corresponding to
  $e_{0,b_i}$ where $(b_i)_i$ denotes the canonical basis of
  $\R^d$. With such a remark in hand, we can know give the explicit
  form of the effective Hamiltonian $\bar H$.

\begin{pro}[Characterization of the effective Hamiltonian]\label{pro::p1}
  Under assumptions of Theorem \ref{thm:conv}, for all $P=(p_1,\dots,p_d)\in \R^d$,
\[\bar H(P)=\max(A,\ \max_{i=1,\dots,d} H_i(p_i)).\]
\end{pro}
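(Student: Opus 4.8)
The plan is to identify the effective Hamiltonian $\bar H(P)$ by exhibiting, for each fixed $P=(p_1,\dots,p_d)\in\R^d$, an explicit corrector $v$ solving the cell problem \eqref{eq:cell} with $\lambda=-\big(\max(A,\max_{i} H_i(p_i))\big)$, and then invoking the uniqueness of $\lambda$ from Theorem~\ref{thm:ergo}. The key observation is the one already recorded just before the statement: by periodicity {\bf (H'3)}, every edge of $\mathcal N_1$ oriented in direction $b_i$ carries the \emph{same} Hamiltonian $H_i$, so that the oscillating structure is in fact the flux-limited junction structure repeated periodically. This suggests looking for a corrector that is \emph{affine on each edge} — indeed, for a constant $P$ one expects that the natural candidate corrector makes $P\cdot y+v$ have a constant slope equal to some $q_i$ on each edge of direction $b_i$, and the freedom in choosing $v$ at the vertices is exactly what is used to match the flux-limited condition.

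First I would set $\lambda_* := -\max\!\big(A,\ \max_{i=1,\dots,d}H_i(p_i)\big)$ and show $\lambda_*$ is admissible, i.e. produces a periodic solution. On each edge of direction $b_i$ the equation $\lambda_*+H_i(q)=0$ must hold for the slope $q$ of $y\mapsto (P\cdot y+v)(y)$. Since $-\lambda_*\ge H_i(p_i)$ and $H_i$ is quasi-convex and coercive (by {\bf (H'1)}--{\bf (H'2)}), the level set $\{H_i=-\lambda_*\}$ is a nonempty compact interval $[\underline q_i,\overline q_i]$ containing $p_i$ (when $-\lambda_*>\min H_i$) or a single point/the minimizer (when $-\lambda_*=\min H_i$). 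I would then build $v$ edge by edge: on each edge in direction $b_i$ from a vertex $n$, choose the slope of $P\cdot y+v$ to be $\underline q_i$ or $\overline q_i$ so that $v$ returns to its prescribed vertex values and so that $P\cdot y+v$ is globally continuous and $\Z^d$-periodic. Concretely, since on an edge $e_{k,k+b_i}$ we have $(P\cdot y)_y=p_i$, writing the slope of $P\cdot y+v$ on that edge as $q_i^{\pm}\in\{\underline q_i,\overline q_i\}$ forces $v(k+b_i)-v(k)=q_i^{\pm}-p_i$; periodicity and continuity are then arranged by an explicit choice (e.g. a ``sawtooth'' using $\underline q_i$ on half of each unit edge and $\overline q_i$ on the other half, which keeps $v$ periodic with zero net increment over a period). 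This produces a Lipschitz, $\Z^d$-periodic $v$ solving the edge equations, and at each vertex one must verify the junction inequality $\lambda_*+F_A(y,(P\cdot y+v)_y)=0$: with the slopes chosen among $\{\underline q_i,\overline q_i\}$ one has $H_i^\pm$ of the appropriate incoming/outgoing slopes equal to $-\lambda_*$ for the ``active'' directions and $\le-\lambda_*$ otherwise, while $A\le -\lambda_*$ by definition, so $F_A=\max(A,\dots)=-\lambda_*$ exactly. By Definition~\ref{defi::l1}, the viscosity (super- and sub-) inequalities at the vertex reduce to this pointwise identity since $v$ is piecewise $C^1$ and one may test with one-sided slopes (using Proposition~\ref{pro::gr42} and Theorem~\ref{th::gr1} in the one-vertex/junction translation of the problem). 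Finally, uniqueness of $\lambda$ in Theorem~\ref{thm:ergo} forces $\bar H(P)=-\lambda_*=\max(A,\max_i H_i(p_i))$.

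The main obstacle I anticipate is the careful construction of the corrector near the vertices together with the verification that it is a viscosity solution there — one has to make sure that, regardless of which slopes are chosen among $\{\underline q_i,\overline q_i\}$, the flux-limited junction condition holds both as a sub- and a super-solution inequality, which requires checking that the chosen slopes realize $\max_{e\in\mathcal E_n^-}H_e^-(\cdot)$ and $\max_{e\in\mathcal E_n^+}H_e^+(\cdot)$ correctly and that $A$ does not exceed $-\lambda_*$. A clean way to bypass a hands-on sawtooth computation is to instead argue by the two inequalities: $\bar H(P)\le\max(A,\max_iH_i(p_i))$ by constructing a periodic \emph{sub-corrector} (it suffices to have a periodic viscosity sub-solution of \eqref{eq:cell} with that $\lambda_*$, for which the affine pieces with slopes $p_i$ themselves nearly work up to the vertex condition, or one uses the germ $\mathcal G_A$), and $\bar H(P)\ge\max(A,\max_iH_i(p_i))$ by evaluating any periodic solution along a closed loop in a fixed coordinate direction $b_i$ (integrating the edge equation around the loop forces the slope to take values on both sides of $p_i^0$, hence $-\lambda\ge H_i(p_i)$) and separately testing at a vertex to get $-\lambda\ge A$. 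This loop argument for the lower bound and a germ-based explicit sub-solution for the upper bound together pin down $\bar H(P)$ without delicate corrector bookkeeping, and that is the route I would ultimately write up.
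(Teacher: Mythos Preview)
Your final two-inequality route is essentially the paper's proof. For the upper bound, the paper uses exactly your affine candidate: $\bar w(t,y)=-\bar\mu t+P\cdot y$ (i.e.\ $v\equiv 0$) is a sub-solution of the evolution problem~\eqref{eq:hj-1}, and comparison with the exact solution $-\mu t+P\cdot y+v(y)$ as $t\to+\infty$ yields $\mu\le\bar\mu$. For the lower bound, the paper likewise integrates the edge equation over one period and then tests at a vertex via Theorem~\ref{th::gr6} to obtain $\mu\ge A$.

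The one substantive difference is how the integration step delivers $\mu\ge H_i(p_i)$. The paper first reduces to the convex case via Lemma~\ref{lem::1} (the $\beta$-convexification) and then applies Jensen's inequality to $\int_0^1 H_i(p_i+v_y)\,dy$. Your phrasing (``the slope takes values on both sides of $p_i^0$'') is pointing at a more direct argument that bypasses convexification entirely: since $H_i(p_i+v_y)=\mu$ a.e.\ and the sublevel set $\{H_i\le\mu\}$ is an interval by quasi-convexity, the average $p_i=\int_0^1(p_i+v_y)\,dy$ lies in that interval, whence $H_i(p_i)\le\mu$. This is cleaner than the paper's route; if you write it up, state it in terms of convexity of the sublevel set rather than ``both sides of $p_i^0$'', which is not quite the right invariant (the slope need not straddle $p_i^0$ when $H_i$ has a flat minimum). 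Your explicit sawtooth corrector would also work and pins down $\lambda$ in one stroke, but---as you anticipated---matching the slopes $\underline q_i,\overline q_i$ to the orientation conventions of $\mathcal E_n^\pm$ in the definition of $F_A$ is heavier than the two-inequality argument and buys nothing extra here.
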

\begin{proof}
Let $\bar \mu$ denote $\max(A,\ \max_{i=1,\dots,d} H_i(p_i))$ and
$\mu$ denote $\bar H(P)$. We prove successively that $\mu \le \bar
\mu$ and $\bar \mu \le \mu$. 

\paragraph{Step 1: bound from above.}
Consider the following  sub-solution of (\ref{eq:hj-1})
\[\bar w(t,y)=-\bar \mu t + P\cdot y.\]
By comparison with 
\[w(t,y)=-\mu t +  P\cdot y + v(y)\]
where the bounded corrector $v$ is a solution of (\ref{eq:cell}) with
$\lambda=-\mu$, we deduce  that
\[\bar H(P)=\mu \le \bar \mu\]
by letting $t \to + \infty$.

\paragraph{Step 2: bound from below.}
To deduce the reverse inequality, we first notice that the periodic
corrector $v$ is Lipschitz continuous (by coercivity of the
Hamiltonians), which implies
$$-\mu + H_e(p_e +v_y) =0 \quad \mbox{for a.e.}\quad y\in e\in {\mathcal E}_1.$$
If $H_e$ is convex, we deduce that
$$\int_0^1 \mu \ dy \ge H_e(\int_0^1 (p_e+v_y(y))\ dy)$$
which implies
\begin{equation}\label{eq::p2}
\mu \ge  H_e(p_e).
\end{equation}
When $H_e$ is only quasi-convex, we still get the same inequality, because for any
$\varepsilon>0$, we can find a Hamiltonian $\tilde{H}_e^\varepsilon$
such that $|\tilde{H}_e^\varepsilon -H_e|\le \varepsilon$ with
$\tilde{H}_e$ satisfying (\ref{eq::3}).  By Lemma~\ref{lem::1}, we
know that there exists a convex increasing function
$\beta_\varepsilon$ such that $\beta_\varepsilon\circ
\tilde{H}_e^\varepsilon$ is convex for all $e\in {\mathcal E}_1$,
which implies again
$$\beta_\varepsilon(\mu+\varepsilon)\ge \beta_\varepsilon\circ \tilde{H}_e^\varepsilon(p_e).$$
Composing by $\beta_\varepsilon^{-1}$ and letting $\varepsilon$ go to
zero, we recover (\ref{eq::p2}).

Let us now consider what happens at the junction point
$y=0$. Since $w(t,0)=v(t,0)-\mu t$, Theorem~\ref{th::gr6}
  implies
$$-\mu + A \le 0.$$
Together with (\ref{eq::p2}), this implies
$$\bar H(P)=\mu\ge \bar \mu.\qedhere$$
\end{proof}

%%%%%%%%%%%%%%%%%%%%%%%%%%%%%%%%%%%%%%%%%%%%%%%%%%%%%%%%%%%%%%%%%%%%%%%%%%
%%%%%%%%%%%%%%%%%%%%%%%%%%%%%%%%%%%%%%%%%%%%%%%%%%%%%%%%%%%%%%%%%%%%%%%%%%
%%%%%%%%%%%%%%%%%%%%%%%%%%%%%%%%%%%%%%%%%%%%%%%%%%%%%%%%%%%%%%%%%%%%%%%%%%
%%%%%%%%%%%%%%%%%%%%%%%%%%%%%%%%%%%%%%%%%%%%%%%%%%%%%%%%%%%%%%%%%%%%%%%%%%
%%%%%%%%%%%%%%%%%%%%%%%%%%%%%%%%%%%%%%%%%%%%%%%%%%%%%%%%%%%%%%%%%%%%%%%%%%
%%%%%%%%%%%%%%%%%%%%%%%%%%%%%%%%%%%%%%%%%%%%%%%%%%%%%%%%%%%%%%%%%%%%%%%%%%

\appendix

%%%%%%%%%%%%%%%%%%%%%%%%%%%%%%%%%%%%%%%%%%%%%%%%%%%%%%%%%%%%%%%%%%%%%%%%%%
%%%%%%%%%%%%%%%%%%%%%%%%%%%%%%%%%%%%%%%%%%%%%%%%%%%%%%%%%%%%%%%%%%%%%%%%%%
\section{Appendix: proofs of some technical results}\label{s.a}

%%%%%%%%%%%%%%%%%%%%%%%%%%%%%%%%%%%%%%%%%%%%%%%%%%%%%%%%%%%%%%%%%%%%%%%%%%
\subsection{Technical results on a junction}

In order to prove Lemma~\ref{lem::3}, we need the following one. 
%-----------------------------------------------------------------------
\begin{lem}[A priori control at the same time]\label{lem::4}
Let $T>0$ and let $u$ be a sub-solution and $w$ be a super-solution as
in Theorem~\ref{th::2}. Then there exists a constant $C_T>0$ such that
for all $t\in [0,T), x,y \in J$, we have
\begin{equation}\label{eq::28}
u(t,x)\le w(t,y) + C_T(1 + d(x,y)).
\end{equation}
\end{lem}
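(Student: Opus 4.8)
The plan is to prove the estimate \eqref{eq::28} by a doubling-of-variables argument in the space variable only (the time being kept fixed), penalizing with the geodesic distance. First I would fix $T>0$, fix the constants $C_T>0$ coming from the growth bounds \eqref{eq::27}, and consider, for a large constant $C$ to be chosen and for $\eta,\alpha>0$ small parameters, the quantity
\[
M_{\alpha,\eta}=\sup_{t\in[0,T),\ x,y\in J}\Big\{u(t,x)-w(t,y)-C\sqrt{1+d^2(x,y)}-\frac{\eta}{T-t}-\alpha\big(d^2(0,x)+d^2(0,y)\big)\Big\}.
\]
The goal is to show $M_{\alpha,\eta}\le 0$ for $C$ large enough, uniformly in $\alpha,\eta$; letting $\alpha\to 0$ and then $\eta\to 0$ then yields \eqref{eq::28}. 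The superlinear-in-nothing penalization $C\sqrt{1+d^2}$ is chosen merely Lipschitz in $d(x,y)$ so that the $x$- and $y$-gradients of the test function are bounded by $C$; the quadratic terms $\alpha d^2(0,\cdot)$ together with the linear growth bounds \eqref{eq::27} force the supremum to be attained at a finite point $(t_*,x_*,y_*)$.

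Next I would run the usual case analysis on the maximizer $(t_*,x_*,y_*)$. If $t_*=0$, the initial-data inequalities $u(0,\cdot)\le u_0\le w(0,\cdot)$ (from Definition~\ref{defi::1} ii)) together with the uniform continuity of $u_0$ give $M_{\alpha,\eta}\le u_0(x_*)-u_0(y_*)\le\omega_0(d(x_*,y_*))$, and since $C\sqrt{1+d^2(x_*,y_*)}$ is being subtracted, for $C$ large this is $\le 0$. If $t_*>0$, one writes the viscosity inequalities: the test function $\varphi(t,x,y)=w(t_*,y)+C\sqrt{1+d^2(x,y)}+\frac{\eta}{T-t}+\alpha(d^2(0,x)+d^2(0,y))$ touches $u$ from above in $(t,x)$ at $(t_*,x_*)$, and symmetrically for $w$. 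One must be slightly careful about whether $x_*$ (resp. $y_*$) is the junction point or lies in the interior of some branch, and use the shorthand \eqref{eq::17}; in all cases the $x$-gradient $p$ of $C\sqrt{1+d^2(x,y)}$ at the maximizer satisfies $|p|\le C$ (and likewise $|q|\le C$ for the $y$-gradient, with $p$ and $q$ related componentwise by $q_i=-p_i$ on a common branch, up to the $\alpha$-perturbation). Subtracting the two viscosity inequalities gives
\[
\frac{\eta}{(T-t_*)^2}\le H(y_*,-\varphi_y)-H(x_*,\varphi_x)\le 2\sup\big\{|H(z,\xi)|:z\in J,\ |\xi|\le C+2\alpha\,\mathrm{diam}\big\},
\]
and here the right-hand side is finite and bounded uniformly in $\alpha$ (for $\alpha\le 1$, say) by a constant $C_1$ depending only on $C$, using the local boundedness of each $H_i$ and of $F_A$ for bounded arguments (the coercivity/structure \eqref{eq::3bis}). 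Choosing $C$ larger than $C_1+$ (the initial-data constant) contradicts $M_{\alpha,\eta}>0$ once one observes $M_{\alpha,\eta}\ge M_{\alpha,\eta}$ at any fixed admissible point would be positive — more precisely, I argue by contradiction: if \eqref{eq::28} fails for all $C$, then $M_{\alpha,\eta}>0$ for suitable small $\alpha,\eta$, the $t_*=0$ case is excluded, and the $t_*>0$ case produces the bound $\eta/T^2\le C_1$, which is false for $\eta$ not too small — contradiction.

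The main obstacle is the handling of the penalization term near the junction point: the geodesic distance $d(x,y)$ is not smooth when $x$ and $y$ lie on different branches and one of them passes through $0$, so $C\sqrt{1+d^2(x,y)}$ is not a legitimate $C^1$ test function there. The standard fix (already used implicitly in the paper's comparison proof) is that on each pair of branches $J_i\times J_j$ the function $(x,y)\mapsto d(x,y)$ is either $|x-y|$ or $|x|+|y|$, each of which is smooth away from its own singular locus; one treats the maximizer branch-by-branch, and the only genuinely singular configurations ($x=y$ on a common branch, or $x=0$ or $y=0$) are either excluded when $M_{\alpha,\eta}>0$ forces $d(x_*,y_*)$ bounded away from a bad set, or handled by replacing $d$ with a smooth approximation (e.g. $\sqrt{\delta^2+d^2}$) and letting $\delta\to0$ at the end, or — most cleanly — by noting that at a strict interior maximizer with $x_*\ne y_*$ the relevant one-branch expression is already $C^1$. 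I would carry out this branch decomposition carefully but defer the routine verifications; everything else is a direct transcription of the proof of Theorem~\ref{th::2}, but easier because no vertex test function is needed (the penalization couples $x$ and $y$ only through $d$, and the compatibility it would require is trivially satisfied since $H$ does not depend on the matched gradients in a way that creates an obstruction once $|p|,|q|$ are bounded — this is exactly why the *same-time* bound is elementary whereas the full comparison principle is not).
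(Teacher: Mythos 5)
Your overall doubling-in-space strategy is the same as the paper's, but there is a genuine gap in the mechanism that is supposed to produce the contradiction when $t_*>0$. In your functional there is no term growing in $t$: after subtracting the two viscosity inequalities you only get $\frac{\eta}{(T-t_*)^2}\le H(y_*,\cdot)-H(x_*,\cdot)\le C_1(C)$, and this is \emph{not} contradictory for small $\eta$ -- yet your plan requires $\eta\to 0$ at the end to recover \eqref{eq::28}, so you cannot take ``$\eta$ not too small''. (If instead you fixed $\eta$ large, say $\eta>C_1T^2$, you would only conclude $u(t,x)\le w(t,y)+C\sqrt{1+d^2(x,y)}+\frac{\eta}{T-t}+\alpha(\cdots)$, and the term $\frac{\eta}{T-t}$ cannot be removed and blows up as $t\to T^-$, so the stated estimate does not follow.) The paper's proof avoids this by inserting a term $-C_2t$ in the functional: the subtracted inequalities then read $C_2+\frac{\eta}{(T-t)^2}\le H(y,-C_1\varphi_y)-H(x,C_1\varphi_x+\alpha d(0,x))$, the right-hand side is bounded by a constant depending only on $C_1$ (thanks to $|\varphi_x|,|\varphi_y|\le 1$ and $\alpha d(0,x)\le C$), and one reaches a contradiction by choosing $C_2$ large \emph{after} $C_1$ has been fixed (by the growth bound \eqref{eq::27} and the modulus of $u_0$). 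The final constant is then of the form $C_T\sim C_1+C_2T$. Without such a term (or an equivalent device making the difference of the time-derivatives large), your Step ``$t_*>0$'' simply does not close.

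Two further remarks. First, writing the test function for $u$ as $w(t_*,y_*)+C\sqrt{1+d^2(\cdot,y_*)}+\frac{\eta}{T-\cdot}+\alpha(\cdots)$ with $w$ frozen at $t_*$ is not legitimate: since $u$ and $w$ share the time variable, the maximality only controls $u(t,x)-w(t,y_*)$, and $w(\cdot,y_*)$ is merely semicontinuous; one must either double the time variable or invoke the classical time-penalization lemma (as the paper does, producing real numbers $a,b$ with $a-b=C_2+\eta(T-t)^{-2}$). Second, your ``main obstacle'' about the non-smoothness of $d$ is largely a non-issue on a junction: on each $J_i\times J_j$ one has $d(x,y)=|x-y|$ or $|x|+|y|$, so $\sqrt{1+d^2(x,y)}$ is $C^1$ on every branch-pair (the kink of $|x-y|$ is removed by squaring), which is exactly why the paper can use it directly as a test function; the semiconcavity/branch-by-branch care you describe is only needed in the network setting.
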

%-----------------------------------------------------------------------
We first derive Lemma~\ref{lem::3} from Lemma~\ref{lem::4}. 
\begin{proof}[Proof of Lemma~\ref{lem::3}]
Let us fix some $\eps>0$ and let us consider the sub-solution
$u^-_\eps$ and super-solutions $u^+_\eps$ defined in
\eqref{eq::35}. Using \eqref{eq::34}, we see that we have for all
$(t,x),(s,y)\in [0,T)\times J$
\begin{equation}\label{eq::36}
u^+_\eps(t,x)-u^-_\eps(s,y)\le 2C_\eps T + 2\eps + L_\eps d(x,y)
\end{equation}
We first apply Lemma~\ref{lem::4} to control $u(t,x)-u^+_\eps(t,x)$,
and then apply Lemma~\ref{lem::4} to control $u^-_\eps(s,y)-w(s,y)$.
Finally we get the control on $u(t,x)-w(s,y)$, using \eqref{eq::36}.
\end{proof}
%-----------------------------------------------------------------------
We now turn to the proof of Lemma~\ref{lem::4}.
\begin{proof} [Proof of Lemma~\ref{lem::4}]
Let us define
\[\varphi(x,y)=\sqrt{1+ d^2(x,y)}.\]
Then $\varphi\in C^1(J^2)$ and satisfies
\begin{equation}\label{eq::32}
|\varphi_x(x,y)|,\ |\varphi_y(x,y)|\le 1.
\end{equation}
For constants $C_1,C_2>0$ to be chosen, let us consider
\[M=\sup_{t\in [0,T),\ x,y\in J} \left(u(t,x)-w(t,y) -C_2t -C_1\varphi(x,y)\right).\]
The result follows if we show that $M$ is non-positive for $C_1$ and
$C_2$ large enough. Assume by contradiction that $M>0$ for any $C_1$
and $C_2$.  Then for $\eta,\alpha >0$ small enough, we have
$M_{\alpha,\eta}\ge M/2>0$ with
\begin{equation}\label{eq::30}
M_{\eta,\alpha}=\sup_{t\in [0,T),\ x,y\in J} \left(u(t,x)-w(t,y) -C_2t
  -C_1\varphi(x,y)-\frac{\eta}{T-t}-\alpha\frac{d^2(x_0,x)}{2}\right).
\end{equation}
From \eqref{eq::27}, we have 
\[u(t,x)-w(t,y) \le C_T(2 + d(0,x) + d(0,y))\]
which shows that the supremum in \eqref{eq::30} is reached at a point
$(t,x,y)$, assuming $C_1>C_T$. Moreover, we have (for $0<\alpha\le 1$)
\begin{equation}\label{eq::31}
\alpha d(0,x)\le C=C(C_T).
\end{equation}
From the uniform continuity of the initial data $u_0$, there exists a
constant $C_0>0$ such that
\[u_0(x)-u_0(y)\le C_0\varphi(x,y)\]
and therefore $t>0$, assuming $C_1>C_0$. Then the classical time
penalization (or doubling variable technique) implies the existence of
$a,b\in\R$ (that play the role of $u_t$ and $v_t$) such that we have
the following viscosity inequalities
\[\left\{\begin{array}{l}
\displaystyle a+H\left(x,C_1\varphi_x(x,y)+ \alpha d(x_0,x)\right)\le 0,\\
b + H(y, -C_1\varphi_y(x,y))\ge 0
\end{array}\right.\]
(using the shorthand notation \eqref{eq::17} and writing $\alpha
d(x_0,x)$ for $\alpha \left(d^2(x_0,x)/2\right)_x$ for the purposes of
notation) with $a-b=C_2+\eta (T-t)^{-2}$. Substracting these inequalities
yields
\begin{equation}\label{eq::33}
C_2+\frac{\eta}{(T-t)^2}\le H(y, -C_1\varphi_y(x,y))- H\left(x,C_1\varphi_x(x,y)+ \alpha d(0,x)\right).
\end{equation}
Using bounds \eqref{eq::32} and \eqref{eq::31}, this yields a
contradiction in \eqref{eq::33} for $C_2$ large enough.
\end{proof}

%%%%%%%%%%%%%%%%%%%%%%%%%%%%%%%%%%%%%%%%%%%%%%%%%%%%%%%%%%%%%%%%%%%%%%%%%%
\subsection{Technical results on a network}

\subsubsection*{Proof of Lemma~\ref{lem::l146}}

\begin{proof}[Proof of Lemma~\ref{lem::l146}]
(H1) and (H2) imply the uniform boundedness of the $p_e^0(t,x)$,
  \textit{i.e.} \eqref{eq::l136bis}. We also notice that because of
  \eqref{eq::l136bis}, there exists a constant $C_0>0$ such that for
  all $t\in [0,T]$, $e\in {\mathcal E}$ and $n\in \partial e$,
\begin{equation}\label{eq::l141}
|H_e(t,n,p^0_e(t,n))| \le C_0
\end{equation}
from which \eqref{eq::l156} is easily derived.

We now turn to the proof of \eqref{eq::l139}. In view of the
definition of $F_A$ and (A2), (H5), we see that it is enough to prove
that for all for $n\in {\mathcal V}$, $t,s\in [0,T]$, $p=(p_e)_{e\in
  {\mathcal E}_n} \in \R^{\text{Card}\ {\mathcal E}_n}$, $x\in
{\mathcal V}$,
\begin{equation}\label{eq::l140}
A_n^0(t,p)-A_n^0(s,p)\le\tilde\omega_T\Big(|t-s|(1+\max(0,A_n^0(s,p)))\Big).
\end{equation}
where
\[A_n^0(t,p):=\max_{e\in {\mathcal E}_n^-} H_e^-(t,n,p_e) \ge A^0_n(t)\]
or 
\[A_n^0(t,p):=\max_{e\in {\mathcal E}_n^+} H_e^+(t,n,p_e) \ge A^0_n(t).\]
We only treat the first case, since the second case reduces to the
first one by a simple change of orientation of the network.

We have 
\[A_n^0(a,p)=  H_{e_a}^-(a,n,p_{e_a})\quad \text{for}\quad a = t,s.\]
Let us assume that we have (otherwise there is nothing to prove)
\[0\le I(t,s):=A_n^0(t,p)-A_n^0(s,p).\]
We also have
\[H_{e_s}^-(t,n,p_{e_s})\le A_n^0(t,p)=H_{e_t}^-(t,n,p_{e_t})\]
and
\[H_{e_t}^-(s,n,p_{e_t})\le A_n^0(s,p)=H_{e_s}^-(s,n,p_{e_s}).\]
We now distinguish three cases.

\paragraph{Case 1: $H_{e_t}^-(s,n,p_{e_t})<H_{e_t}(s,n,p_{e_t})$.}
We first note that
\begin{equation}\label{eq::l145}
0\le I(t,s)\le A_n^0(t,p)-A_n^0(s).
\end{equation}
Let us define 
\[\tau = \left\{\begin{array}{ll}
\inf \left\{\sigma\in [t,s],\quad H_{e_t}^-(\sigma,n,p_{e_t})
<H_{e_t}(\sigma,n,p_{e_t})\right\} & \quad \text{if}\quad t<s,\\
\sup \left\{\sigma\in [s,t],\quad H_{e_t}^-(\sigma,n,p_{e_t})
<H_{e_t}(\sigma,n,p_{e_t})\right\} & \quad \text{if}\quad t\ge s.
\end{array}\right.\]
Let us consider a optimizing sequence $\sigma_k\to \tau$ such that
\[H_{e_t}^-(\sigma_k,n,p_{e_t})<H_{e_t}(\sigma_k,n,p_{e_t}).\]
Then we have
\[H_{e_t}^-(\sigma_k,n,p_{e_t})=
H_{e_t}(\sigma_k,n,p_{e_t}^0(\sigma_k,n)) \le A_n^0(\sigma_k)\le
A_n^0(\sigma_k,p).\] Then passing to the limit $k\to +\infty$, we get
(by convergence of the minimum values of the Hamiltonians, even if the
map $\bar t\mapsto p^0_e(\bar t,n)$ is discontinuous)
\begin{equation}\label{eq::l143}
H_{e_t}^-(\tau,n,p_{e_t})= H_{e_t}(\tau,n,p_{e_t}^0(\tau,n))\le
A_n^0(\tau)\le A_n^0(\tau,p).
\end{equation}
  If $\tau=t$, then \eqref{eq::l143} implies that
$A_n^0(t,p) = A_n^0(t)$ (keeping in mind the definition of $p_{e_t}$).
\medskip

\noindent {\sc Subcase 1.1: $\tau\not= t$.} This shows that
\[H_{e_t}(\tau,n,p_{e_t})\le A_n^0(\tau) \quad \text{and}\quad 
H_{e_t}(t,n,p_{e_t})\ge A_n^0(t).\]
We now choose some $\bar \tau$ in between $t$ and $\tau$ such that
\[H_{e_t}(\bar \tau,n,p_{e_t})= A_n^0(\bar \tau)\]
and estimate, using \eqref{eq::l145} and \eqref{eq::l141} and (H5)-(H6),
\[\begin{array}{ll}
0\le I(t,s) & \le \left\{A_n^0(t,p)-H_{e_t}(\bar
\tau,n,p_{e_t})\right\} + \left\{A_n^0(\bar \tau)-A_n^0(s)\right\}\\
& \le \left\{H_{e_t}(t,n,p_{e_t})-H_{e_t}(\bar \tau,n,p_{e_t})\right\} 
+ \left\{A_n^0(\bar \tau)-A_n^0(s)\right\}\\
& \le  \bar \omega_T(|t-\bar \tau|(1+\max(A_n^0(\bar \tau),0)))
+ \bar \omega_T(|\bar \tau - s|)\\
& \le  \bar \omega_T(|t-s|(1+C_0))+ \bar \omega_T(|t- s|).
\end{array}\]
\medskip

\noindent {\sc Subcase 1.2: $\tau= t$.}
Then $A_n^0(t,p)=A_n^0(t)$. Using \eqref{eq::l145}, this gives directly
\[0\le I(t,s) \le A_n^0(t)-A_n^0(s)\le \bar \omega_T(|t-s|).\]

\paragraph{Case 2: $H_{e_t}^-(s,n,p_{e_t})=H_{e_t}(s,n,p_{e_t})$ 
and $H_{e_t}^-(t,n,p_{e_t})=H_{e_t}(t,n,p_{e_t})$.}

We have
\[\begin{array}{ll}
0\le I(t,s) & = H_{e_t}^-(t,n,p_{e_t}) - A_n^0(s,p)\\
& \le H_{e_t}^-(t,n,p_{e_t})-H_{e_t}^-(s,n,p_{e_t})\\
& =H_{e_t}(t,n,p_{e_t})-H_{e_t}(s,n,p_{e_t})\\
&\le \bar \omega_T(|t-s|(1+ \max(H_{e_t}(s,n,p_{e_t}),0)))\\
&\le \bar \omega_T(|t-s|(1+ \max(H_{e_t}^-(s,n,p_{e_t}),0)))\\
&\le \bar \omega_T(|t-s|(1+ \max(A_0^n(s,p),0))).
\end{array}\]

\paragraph{Case 3: $H_{e_t}^-(s,n,p_{e_t})=H_{e_t}(s,n,p_{e_t})$ 
and $H_{e_t}^-(t,n,p_{e_t})<H_{e_t}(t,n,p_{e_t})$.}

Then
\[p_{e_t}^0(t,n)< p_{e_t}\le p_{e_t}^0(s,n).\]
Because of \eqref{eq::l141} and the uniform bound on the Hamiltonians
for bounded gradients, (H2), we deduce that
\[H_{e_t}(s,n,p_{e_t})\le C_1\]
for some constant $C_1>0$ only depending on our
assumptions. Therefore, we have
\[\begin{array}{ll}
0\le I(t,s) & = H_{e_t}^-(t,n,p_{e_t}) - A^0_n(s,p)\\
& \le H_{e_t}^-(t,n,p_{e_t})-H_{e_t}^-(s,n,p_{e_t})\\
& < H_{e_t}(t,n,p_{e_t})-H_{e_t}(s,n,p_{e_t})\\
&\le \bar \omega_T(|t-s|(1+ C_1)).
\end{array}\]
The proof is now complete.
\end{proof}

\subsubsection*{Semi-concavity of the distance}

In order to prove Lemmas~\ref{lem::l3} and \ref{lem::l110}, we need the
following one. 
%----------------------------------------------------------------------------
\begin{lem}[Semi-concavity of $\varphi$ and $d^2$]\label{lem::l20}
Let ${\mathcal N}$ be a network defined in \eqref{eq::N} with edges
$\mathcal E$ and vertices $\mathcal V$.  Let
\[\varphi(x,y)=\sqrt{1+ d^2(x,y)}\]
where $d$ is the distance function on the network ${\mathcal N}$.
Then $\varphi(x,\cdot)$ and $\varphi(\cdot,y)$ are $1$-Lipschitz for
all $x,y\in {\mathcal N}$.  Moreover $\varphi$ and $d^2$ are
semi-concave on $e_a\times e_b$ for all $e_a,e_b\in {\mathcal E}$.
\end{lem}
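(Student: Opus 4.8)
$\textbf{Proof plan for Lemma~\ref{lem::l20}.}$

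The plan is to handle the three assertions in increasing order of difficulty. The Lipschitz bounds on $\varphi(x,\cdot)$ and $\varphi(\cdot,y)$ are essentially immediate: the distance $d$ on the network satisfies the triangle inequality, so $|d(x,y)-d(x,y')|\le d(y,y')$, and since $s\mapsto\sqrt{1+s^2}$ is $1$-Lipschitz on $[0,+\infty)$ (its derivative $s/\sqrt{1+s^2}$ has absolute value at most $1$), the composition $y\mapsto\sqrt{1+d^2(x,y)}$ is $1$-Lipschitz. The same argument applies in the first variable.

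The core of the lemma is the semi-concavity of $d^2$ (and then of $\varphi$) on a product $e_a\times e_b$ of two edges. First I would recall that semi-concavity is a local property and that it suffices to check it in a neighbourhood of each point $(\bar x,\bar y)\in e_a\times e_b$. Fix such a point. By Remark~\ref{rem:bd-no-path}, there are only finitely many minimal paths realizing $d(x,y)$ for $(x,y)$ in a small neighbourhood of $(\bar x,\bar y)$; each such path leaves $e_a$ through one of the (at most two) endpoints of $e_a$, say reaching a vertex $n_a\in\partial e_a$, travels a distance that does not depend on $x$ (it is a minimal path in the ``core'' network between $n_a$ and $n_b$), and enters $e_b$ through an endpoint $n_b\in\partial e_b$. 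Parametrizing $e_a$ and $e_b$ by arclength and writing $x,y$ for the corresponding real coordinates, each minimal path has length of the form $|x-\alpha|+c+|y-\beta|$ where $\alpha$ is the coordinate of $n_a$, $\beta$ that of $n_b$, and $c\ge 0$ the fixed distance between $n_a$ and $n_b$ in the network (when $e_a=e_b$ one also has the ``direct'' path of length $|x-y|$). Hence, locally,
\[ d(x,y) = \min_{k} \bigl( |x-\alpha_k| + c_k + |y-\beta_k| \bigr), \]
a minimum of finitely many functions each of which is a translate of the convex function $(x,y)\mapsto|x-\alpha_k|+|y-\beta_k|$ plus a constant. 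Now I square: on the region where $d$ equals a fixed branch $\ell_k(x,y):=|x-\alpha_k|+c_k+|y-\beta_k|\ge 0$ one has $d^2=\ell_k^2$, and $\ell_k^2$ is convex (square of a nonnegative convex function), in particular $C^{1,1}$ away from its minimum and globally semi-convex — but I want semi-\emph{concavity} of the minimum. The right way is: $d=\min_k \ell_k$ with each $\ell_k$ convex, so $d$ itself is not convex, but near $(\bar x,\bar y)$ only the branches $k$ with $\ell_k(\bar x,\bar y)=d(\bar x,\bar y)$ are active, and a minimum of finitely many convex functions is semi-concave when it is also known to be, locally, of the form ``smooth plus concave''. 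Concretely: $-d = \max_k(-\ell_k)$ is a max of concave functions, hence... that is not concave either. So the cleanest route is to argue directly that $d^2$ is semi-concave because it is locally the minimum of the finitely many $C^\infty$ functions $\ell_k^2$ restricted to the relevant quadrants, and a minimum of finitely many functions with a common $C^2$-bound on their second derivatives (equivalently, each of the form $\tfrac{C}{2}(x^2+y^2)-(\text{convex})$) is semi-concave; one checks $\ell_k^2$ has this form on a neighbourhood since $\ell_k$ stays bounded there, so $D^2(\ell_k^2)\le C\,\mathrm{Id}$ for a uniform $C$. Thus $d^2=\min_k\ell_k^2$ is semi-concave near $(\bar x,\bar y)$, and by compactness we get semi-concavity on all of $e_a\times e_b$ (with, if an edge is a half-line, a possibly non-uniform constant, which is still enough since semi-concavity is asked edgewise).

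Finally, semi-concavity of $\varphi=\sqrt{1+d^2}$ follows from that of $d^2$ together with the fact that $F(s)=\sqrt{1+s}$ is smooth, increasing and concave on $[0,+\infty)$: a concave nondecreasing smooth function of a semi-concave function is semi-concave (write $d^2=\tfrac{C}{2}|{\cdot}|^2-g$ with $g$ convex; then $F(d^2)$ has a second-order expansion dominated by $F'(d^2)D^2(d^2)+F''(d^2)(\nabla d^2)^{\otimes 2}\le F'\cdot C\,\mathrm{Id}$ since $F''\le 0$ and $F'$ is bounded on the relevant range, using that $d^2$ and $\nabla d^2$ stay bounded near a given point). The main obstacle I anticipate is the bookkeeping in the ``finitely many branches'' step — making sure the decomposition $d=\min_k\ell_k$ is valid \emph{uniformly} on a neighbourhood (so that no new minimal path appears), which is exactly what Remark~\ref{rem:bd-no-path} is designed to supply, and keeping track of the corners coming from the absolute values (where each $\ell_k$ is merely Lipschitz, not $C^1$) when passing to $\ell_k^2$; squaring removes the corner only at the minimum of $\ell_k$, but there $\ell_k$ has a \emph{minimum}, so $\ell_k^2$ is actually $C^1$ there with vanishing gradient, and the semi-concavity estimate $D^2(\ell_k^2)\le C\,\mathrm{Id}$ still holds in the sense of distributions, which suffices.
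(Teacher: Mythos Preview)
Your approach is essentially the paper's: write $d$ on $e_a\times e_b$ as a minimum of finitely many simple functions $\ell_k$, and deduce semi-concavity of $d^2$ and $\varphi$. But you miss a key simplification that makes the paper's proof almost trivial. Since each $\alpha_k$ is an \emph{endpoint} of $e_a$ (and likewise $\beta_k$ for $e_b$), the coordinate $x\in e_a$ always lies on one side of $\alpha_k$, so $|x-\alpha_k|$ is simply $x$ or $l_a-x$ on all of $e_a$: the absolute value is linear, with no kink in the domain. Hence every $\ell_k$ is \emph{affine} on $e_a\times e_b$, so $\sqrt{1+\ell_k^2}$ is smooth and $\varphi=\min_k\sqrt{1+\ell_k^2}$ is a minimum of finitely many smooth functions. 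The paper does exactly this, distinguishing whether each edge is a half-line or compact (yielding $1$, $2$, or $4$ affine branches). The only genuine absolute value is the direct path $|x-y|$ when $e_a=e_b$, whose square $(x-y)^2$ is smooth.

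Your handling of the hypothetical corners is also wrong, not just unnecessary. Take $\ell(x)=|x|+c$ with $c>0$; then $\ell^2(x)=x^2+2c|x|+c^2$, whose distributional second derivative is $2+4c\,\delta_0$. This is \emph{not} bounded from above, so the claim ``$D^2(\ell_k^2)\le C\,\mathrm{Id}$ in the sense of distributions'' fails precisely at the kink (squaring a nonnegative convex function gives a convex function, and the singular part of its Hessian is nonnegative, not nonpositive). Fortunately, once you observe that the $\ell_k$ are affine on $e_a\times e_b$, the issue disappears and your argument goes through with no further work.
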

%---------------------------------------------------------------------------
\begin{proof}
  The Lipschitz properties of $\varphi$ are trivial. Since $r \mapsto
  r^2$ and $r \mapsto \sqrt{1+r^2}$ are smooth increasing functions in
  $\R^+$, the result follows from the fact that the distance function
  $d$ itself is semi-concave; it is even the minimum of a finite
  number of smooth functions.

If $e_a=e_b$, then $d^2(x,y)=(x-y)^2$ which implies that $\varphi\in
C^1(e_a\times e_a)$.  Then we only consider the cases where $e_a\not=
e_b$.

\paragraph{Case 1: $e_a$ and $e_b$ isometric to $[0,+\infty)$.}
Then for $(x,y)\in e_a\times e_b$, we have
\[d(x,y)=x+ y+ d(e_a^0,e_b^0)\]
which implies that $\varphi\in C^1(e_a\times e_b)$.

\paragraph{Case 2: $e_a$ isometric to $[0,+\infty)$ and $e_b$
    isometric to $[0,l_b]$.}
Reversing the orientation of $e_b$ if necessary, we can assume that
\[d_0:=d(e_a^0,e_b^0)\le d(e_a^0,e_b^1)=:d_1\]
and then for $(x,y)\in e_a\times e_b$, we have
\[d(x,y)=x + \min(d_0 + y,d_1 + (l_b-y)) = \min (d_0 + x+y, d_1 +x+ (l_b-y)) .\]
Then $\varphi$ is the minimum of two $C^1$ functions, it is semi-concave.

\paragraph{Case 3: $e_a$ and $e_b$ isometric to $[0,l_a]$ and $[0,l_b]$.}
Changing the orientations of both $e_a$ and $e_b$ if necessary, we can
assume that
\[d(e_a^0,e_b^0)=\min_{i,j=0,1}d_{ij}\quad \text{with}\quad d_{ij}=d(e_a^i,e_b^j).\]
Therefore
\[d(x,y)=\min (d_{00} +x+y, d_{01} + x + (l_b-y), 
d_{10} + (l_a-x) + y, d_{11} + (l_a-x)+(l_b-y))\] and again $\varphi$ is the
minimum of four $C^1$ functions, it is therefore semi-concave.
\end{proof}

%------------------------------------------------------------------------

\subsubsection*{Proof of Lemma ~\ref{lem::l3}}

\begin{proof}[Proof of Lemma ~\ref{lem::l3}]
We first prove \eqref{eq::l29} for $t=s$ by adapting in a
straightforward way the proof of Lemma~\ref{lem::4}. The only difference
is that for any $e_a,e_b\in {\mathcal E}$, the function
\[\varphi(x,y)=\sqrt{1+d^2(x,y)}\]
may not be $C^1(e_a\times e_b)$. But Lemma~\ref{lem::l20} and
Remark~\ref{rem:sc-net} ensure that this is harmless.  The remaining
of the proof of Lemma~\ref{lem::4} is unchanged.  In particular the
uniform bound on the Hamiltonians for bounded gradients is used, see
(H2).

Now \eqref{eq::l29} is obtained for $t \neq s$ by following the proof
of Lemma~\ref{lem::3} and using the barriers given in the proof of
Theorem~\ref{th::l3}.
\end{proof}

%----------------------------------------------------------------------------
\subsubsection*{Proof of Lemma~\ref{lem::l110}}

\begin{proof}[Proof of Lemma ~\ref{lem::l110}]
We do the proof for sub-solutions (the proof for super-solutions being
similar).  We consider the following barrier (similar to the ones in the proof
 of Theorem~\ref{th::l3})
\[u_\eps^+(t,x)=  u_0^\eps(x) + K_\eps t + \eps\]
with
\[|u_0^\eps - u_0|\le \eps \quad \text{and}\quad |(u_0^\eps)_x|\le L_\eps\]
and $K_\eps \ge C_\eps$ with $C_\eps$ given in \eqref{eq::l114}. 
It is enough to prove that for all $(t,x) \in [0,T) \times \mathcal{N}$,
\[  u (t,x ) \le u_\eps^+ (t,x) \]
for a suitable choice of $K_\eps \ge C_\eps$ in order to conclude. 
Indeed, this implies 
\[ u(t,x) \le u_0(x)+ f(t) \]
with
\[ f(t) =\min_{\eps >0} (K_\eps t + \eps) \]
which is non-negative, non-decreasing, concave and $f(0)=0$. 

 We consider for $0<\tau\le T$,
\[M=\sup_{(t,x)\in [0,\tau)\times {\mathcal N}} (u-u_\eps^+)(t,x)\]
and assume by contradiction that $M>0$. We know by Lemma~\ref{lem::l3}
that $M$ is finite.  Then for any $\alpha,\eta>0$ small enough, we
have $M_\alpha\ge M/2>0$ with
\[M_\alpha = \sup_{(t,x)\in [0,\tau)\times {\mathcal N}} \left\{u(t,x)
-u_\eps^+(t,x)- \frac{\eta}{\tau-t}-\alpha \psi(x)\right\}
.\]
(we recall that $\psi=d^2(x_0,\cdot)/2$). 
By the sublinearity of $u$ and $u^+_\eps$, we know that this
supremum is reached at some point $(t,x)$.  Moreover $t>0$ since
$u(0,x) \le u_0 (x) \le u_\eps^+ (0,x)$.  

This implies in particular that
\begin{align*}
0<M/2\le M_\alpha & = \displaystyle u(t,x)-u_\eps^+(t,x)-
\frac{\eta}{\tau-t}-\alpha \frac{d^2(x_0,x)}{2}\\ \\ & \le
C_T (1+ d(x_0,x))-u_0^\eps (x_0) + L_\eps d(x,x_0)-\alpha
\frac{d^2(x_0,x)}{2} \\
& \le C_T (1+ d(x_0,x)) + |u_0(x_0)| + \eps + L_\eps d(x,x_0) - \alpha 
\frac{d^2(x_0,x)}{2} \\
& \le R_\eps (1+ d(x_0,x)) - \alpha 
\frac{d^2(x_0,x)}{2} 
\end{align*}
with
\[R_\eps = C_T+ \max (L_\eps, |u_0(x_0)| + \eps).\]
Then $z=\alpha d(x_0,x)$ satisfies
\[\frac{z^2}{2}  \le R_{\eps} \alpha+  R_{\eps} z \le R_{\eps} \alpha+
R_{\eps}^2 + \frac{z^2}4\]
which implies that for $\alpha \le 1$,
\begin{equation}\label{eq::l113}
\alpha d(x_0,x)\le 2 \sqrt{R_\eps  + R_\eps^2}.
\end{equation}
Writing the sub-solution viscosity inequality, we get
\[K_\eps + H_{\mathcal N}(t,x,(u_0^\eps)_x(x) + \alpha \psi_x(x))\le 0\]
We get a contradiction for the choice
\begin{multline*}
K_\eps =1+ \\
\max \left(\sup_{t\in [0,T]}\sup_{n\in {\mathcal
    V}}| \max(A_n(t),A_n^0(t))|, \sup_{t\in[0,T]} \sup_{e\in {\mathcal
    E}} \; \sup_{x\in e} \; \sup_{|p_e|\le L_\eps + 2 \sqrt{R_\eps + R_\eps^2}}
|H_e(t,x,p_e)|\right).
\end{multline*}
\end{proof}

\section{Appendix: stationary results for networks} \label{s.stat}

This short section is devoted to the statement of an existence and
uniqueness result for the following stationary HJ equation posed on a
network $\mathcal{N}$ satisfying \eqref{eq:le-inf},
\begin{equation}\label{eq::l16-stat}
u + H_{\mathcal N}(x, u_x) =0 \quad \text{for all}\quad x \in {\mathcal N}.
\end{equation}

For each $e\in {\mathcal E}$, we consider a Hamiltonian $H_e: 
e\times \R \to \R$ satisfying
\begin{itemize}
\item \textbf{(H0-s)} (Continuity) $H_e \in C(e\times  \R)$.
\item \textbf{(H1-s)} (Uniform coercivity) 
\[\liminf_{|q|\to +\infty} H_e(x,q)=+\infty\]
uniformly with respect to  $x \in e$, $e \in \mathcal{E}$.
\item \textbf{(H2-s)} (Uniform bound on the Hamiltonians for bounded gradients) 
For all $L>0$, there exists $C_{L}>0$ such that
\[ \sup_{p\in [-L,L], x \in \mathcal{N} \setminus
  \mathcal{V}}  |H_{\mathcal{N}}(x,p)| \le C_{L}.\]
\item \textbf{(H3-s)} (Uniform modulus of continuity for bounded gradients)
For all $L>0$, there exists a modulus of continuity $\omega_{L}$
such that for all $|p|,|q|\le L$ and $x\in e\in
{\mathcal E}$, 
\[|H_e(x,p)-H_e(x,q)|\le \omega_{L}(|p-q|).\]
\item \textbf{(H4-s)} (Quasi-convexity) For all $n \in \mathcal{V}$,
  there exists a $p^0_e (n)$ such that
\[\begin{cases}
H_e(n,\cdot) \quad \text{is nonincreasing on}\quad (-\infty,p^0_e(n)],\\
H_e(n,\cdot) \quad \text{is nondecreasing on}\quad [p^0_e(n),+\infty).
\end{cases}\]
\end{itemize}
As far as flux limiters are concerned, the following assumptions will
be used.
\begin{itemize}
\item \textbf{(A1-s)} (Uniform bound on $A$)
There exists a constant $C>0$ such that for all $n\in {\mathcal V}$,
\[|A_n|\le C.\]
\end{itemize}

The following result is a straightforward adaptation of
Corollary~\ref{cor::l4}. Proofs are even simpler since the time dependance was
an issue when proving the comparison principle in the general case. 
%-------------------------------------------------------
\begin{theo}[Existence and uniqueness -- stationary case]\label{th::l3-stat}
 Assume \emph{(H0-s)-(H4-s)} and \emph{(A1-s)}. Then there exists a
 unique sublinear viscosity solution $u$ of \eqref{eq::l16-stat} in
 ${\mathcal N}$.
\end{theo}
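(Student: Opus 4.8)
The plan is to imitate the evolution case (Corollary~\ref{cor::l4}), discarding everything that involves the time variable: existence by Perron's method, uniqueness by a comparison principle. Both arguments are shorter here because there is neither a doubling of the time variable nor any need for the time-regularity hypotheses (H5)-(H6) and (A2); the vertex at \eqref{eq::l16-stat} is already written with a flux limiter, so no reduction of general junction conditions is required.

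\textbf{Existence.} First I would note that (H1-s) and (H2-s) force, exactly as in \eqref{eq::l136bis}, a uniform bound $|p^0_e(n)|\le C_0$, hence $|H_e(n,p^0_e(n))|\le C_0$; together with (A1-s) this gives $\sup_{x\in\mathcal N}|H_{\mathcal N}(x,0)|\le C_0$. Consequently the constants $u^\pm\equiv\pm C$ are respectively a super- and a sub-solution of \eqref{eq::l16-stat} as soon as $C\ge C_0$. Stability of sub-/super-solutions by supremum/infimum (the stationary analogue of Proposition~\ref{pro::2}) then lets Perron's method produce a solution $u$ with $-C\le u\le C$; being bounded, $u$ is in particular sublinear.

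\textbf{Uniqueness.} Let $u$ be a sub-solution and $w$ a super-solution, both sublinear. Arguing as in the proof of Lemma~\ref{lem::l3} (with the constant barriers above replacing the time barriers) one first gets $u(x)\le w(y)+C(1+d(x,y))$, which guarantees that the suprema below are attained. Assume by contradiction that $M:=\sup_{\mathcal N}(u-w)>0$. Fix $x_0\in\mathcal N$, put $\psi=d^2(x_0,\cdot)/2$, and let $M_\alpha=\sup_x\{u(x)-w(x)-\alpha\psi(x)\}$, attained at some $x_\alpha$ with $M_\alpha\ge 3M/4$ and $\alpha\,d(x_0,x_\alpha)\le C\sqrt\alpha$ for $\alpha$ small. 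If $x_\alpha$ is a vertex, reduce as in Lemma~\ref{lem:pi0zero-net} to ${\mathcal E}_{x_\alpha}^+=\emptyset$ and $p^0_e(x_\alpha)=0$. Then, for $\eps>0$ and $\gamma\in(0,1)$, penalize
\[ M_{\alpha,\eps}=\sup_{x,y\in\overline{B(x_\alpha,r)}}\Big\{u(x)-w(y)-\alpha\psi(x)-G^{\alpha,\gamma}_\eps(x,y)-\frac{1}{2}d^2(x,x_\alpha)\Big\}, \]
with $G^{\alpha,\gamma}_\eps(x,y)=\eps\,G^{\alpha,\gamma}(\eps^{-1}x,\eps^{-1}y)$, where $G^{\alpha,\gamma}=\frac{(x-y)^2}{2}$ if $x_\alpha\notin\mathcal V$ and $G^{\alpha,\gamma}=G^{x_\alpha,\gamma}$ is the vertex test function of Theorem~\ref{th::G} built on the junction problem associated to the vertex $x_\alpha$ if $x_\alpha\in\mathcal V$. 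Using \eqref{eq::85}, \eqref{eq::20} and \eqref{eq::19} exactly as in Lemma~\ref{lem:penal}, the maximizers $(x,y)$ satisfy $d(x,y)\le\omega(\eps)$, converge to $(x_\alpha,x_\alpha)$, lie in $B(x_\alpha,r)$, and $M_{\alpha,\eps}\ge M/2>0$. Writing the two viscosity inequalities — legitimate since $\psi$ and $d^2(\cdot,x_\alpha)$ are semi-concave by Lemma~\ref{lem::l20} — and subtracting, the zeroth-order terms $u(x)$, $w(y)$ now play the role of the difference of time derivatives:
\[ u(x)-w(y)\le H_{\mathcal N}\big(y,p_y^{\alpha,\gamma,\eps}\big)-H_{\mathcal N}\big(x,p_x^{\alpha,\gamma,\eps}+\alpha\psi_x(x)+(d^2(\cdot,x_\alpha)/2)_x\big). \]
After the truncation of the gradient components at the vertex as in Step~5 of the proof of Theorem~\ref{th::l2}, the compatibility condition \eqref{eq::17bis} of $G^{x_\alpha,\gamma}$ (or the trivial identity when $x_\alpha\notin\mathcal V$), the gradient bounds, and (H3-s) allow one to pass to the limit $(\eps,\gamma)\to(0,0)$ and obtain $u(x_\alpha)-w(x_\alpha)\le\omega_L\big(\alpha\psi_x(x_\alpha)\big)$ for a suitable $L$. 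Since $\alpha\psi_x(x_\alpha)=\alpha\,d(x_0,x_\alpha)\le C\sqrt\alpha\to0$ while $u(x_\alpha)-w(x_\alpha)=M_\alpha+\alpha\psi(x_\alpha)\ge 3M/4$, this contradicts $M>0$ for $\alpha$ small, so $u\le w$. Combined with existence this yields the theorem.

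\textbf{Main obstacle.} There is no genuinely new difficulty; as in Theorem~\ref{th::l2} the only delicate point is the localization that reduces the comparison to a single vertex and makes the vertex test function of Theorem~\ref{th::G} insertable into the doubling-of-variables scheme, and the rest is bookkeeping. The essential simplification is that the whole apparatus built around Lemma~\ref{lem::l146} and the hypotheses (H5)-(H6), (A2) — used in the evolution case to control the oscillation in time of $H_{\mathcal N}$ — disappears: only the gradient bound \eqref{eq::l136bis} (now a direct consequence of (H1-s)-(H2-s)) is needed, and the coercive zeroth-order term $u-w$ closes the contradiction for free, in place of the difference $u_t-w_t$.
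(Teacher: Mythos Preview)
Your proposal is correct and takes essentially the same approach as the paper. The paper itself does not give a proof of this theorem; it only records that the result is ``a straightforward adaptation of Corollary~\ref{cor::l4}'' and that ``proofs are even simpler since the time dependance was an issue when proving the comparison principle in the general case.'' You have carried out exactly this adaptation --- Perron with constant barriers for existence, and the localization/vertex-test-function scheme of Theorem~\ref{th::l2} with the zeroth-order term $u(x)-w(y)$ replacing $\eta/(T-t)^2$ for the comparison --- and you have correctly identified that the whole time-continuity machinery ((H5)-(H6), (A2), Lemma~\ref{lem::l146} beyond \eqref{eq::l136bis}) becomes unnecessary.
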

%----------------------------------------------------------------------------

%%%%%%%%%%%%%%%%%%%%%%%%%%%%%%%%%%%%%%%%%%%%%%%%%%%%%%%%%%%
\paragraph{Acknowledgements. }
This work was partially supported by the ANR-12-BS01-0008-01 HJnet
project. The authors thank Y. Achdou, B. Andreianov, L. Mazet,
N. Seguin and N. Tchou for enlighting discussions. They thank
J.~Guerand for pointing out a difficulty related to critical slopes in
the reduction of the set of test functions.  They also thank
G. Costeseque for his stimulating numerical simulations of
flux-limited solutions. In particular, they guided the authors towards
the explicit formula of the homogenized Hamiltonian from
Section~\ref{s.h}. The authors warmly thank the two referees for 
their attentive reading of this long paper; their valuable comments
allowed us to improve the previous version of this paper.

%%%%%%%%%%%%%%%%%%%%%%%%%%%%%%%%%%%%%%%%%%%%%%%%%%%%%%%%%%%%%%%%%%%%%%%%%%%%%%%
%%%%%%%%%%%%%%%%%%%%%%%%%%%%%%%%%%%%%%%%%%%%%%%%%%%%%%%%%%%%%%%%%%%%%%%%%%%%%%%

\bibliographystyle{plain}
\bibliography{jonction}

\def\polhk#1{\setbox0=\hbox{#1}{\ooalign{\hidewidth
  \lower1.5ex\hbox{`}\hidewidth\crcr\unhbox0}}}
  \def\polhk#1{\setbox0=\hbox{#1}{\ooalign{\hidewidth
  \lower1.5ex\hbox{`}\hidewidth\crcr\unhbox0}}}
\begin{thebibliography}{10}

\bibitem{acct}
Yves Achdou, Fabio Camilli, Alessandra Cutr{\`{\i}}, and Nicoletta Tchou.
\newblock Hamilton-{J}acobi equations constrained on networks.
\newblock {\em NoDEA Nonlinear Differential Equations Appl.}, 20(3):413--445,
  2013.

\bibitem{aot}
Yves Achdou, Salom{\'e} Oudet, and Nicoletta Tchou.
\newblock Hamilton-{J}acobi equations for optimal control on junctions and
  networks.
\newblock {\em ESAIM Control Optim. Calc. Var.}, 21(3):876--899, 2015.

\bibitem{at}
Yves Achdou and Nicoletta Tchou.
\newblock Hamilton-{J}acobi equations on networks as limits of singularly
  perturbed problems in optimal control: dimension reduction.
\newblock {\em Comm. Partial Differential Equations}, 40(4):652--693, 2015.

\bibitem{abi}
Olivier Alvarez, Emmanuel~N. Barron, and Hitoshi Ishii.
\newblock Hopf-{L}ax formulas for semicontinuous data.
\newblock {\em Indiana Univ. Math. J.}, 48(3):993--1035, 1999.

\bibitem{af}
Luigi Ambrosio and Jin Feng.
\newblock On a class of first order {H}amilton-{J}acobi equations in metric
  spaces.
\newblock {\em J. Differential Equations}, 256(7):2194--2245, 2014.

\bibitem{AC}
B.~Andreianov and C.~Canc\`es.
\newblock Vanishing capillarity solutions of {B}uckley-{L}everett equation with
  gravity in two-rocks’ medium.
\newblock {\em Comput. Geosci.}, 17 (3):551--572, 2013.

\bibitem{akr}
Boris Andreianov, Kenneth~Hvistendahl Karlsen, and Nils~Henrik Risebro.
\newblock A theory of {$L^1$}-dissipative solvers for scalar conservation laws
  with discontinuous flux.
\newblock {\em Arch. Ration. Mech. Anal.}, 201(1):27--86, 2011.

\bibitem{be}
Martino Bardi and Lawrence~C. Evans.
\newblock On {H}opf's formulas for solutions of {H}amilton-{J}acobi equations.
\newblock {\em Nonlinear Anal.}, 8(11):1373--1381, 1984.

\bibitem{bbc}
G.~Barles, A.~Briani, and E.~Chasseigne.
\newblock A {B}ellman approach for two-domains optimal control problems in
  {$\Bbb R^N$}.
\newblock {\em ESAIM Control Optim. Calc. Var.}, 19(3):710--739, 2013.

\bibitem{bbc2}
G.~Barles, A.~Briani, and E.~Chasseigne.
\newblock A {B}ellman approach for regional optimal control problems in
  {$\Bbb{R}^N$}.
\newblock {\em SIAM J. Control Optim.}, 52(3):1712--1744, 2014.

\bibitem{bc15}
Guy Barles and Emmanuel Chasseigne.
\newblock ({A}lmost) everything you always wanted to know about deterministic
  control problems in stratified domains.
\newblock {\em Netw. Heterog. Media}, 10(4):809--836, 2015.

\bibitem{bh07}
Alberto Bressan and Yunho Hong.
\newblock Optimal control problems on stratified domains.
\newblock {\em Netw. Heterog. Media}, 2(2):313--331 (electronic), 2007.

\bibitem{BKT}
R.~B\"urger, K.H. Karlsen, and J.~Towers.
\newblock An {E}ngquist-{O}sher type scheme for conservation laws with
  discontinuous flux adapted to flux connections.
\newblock {\em SIAM J. Numer. Anal.}, 47:1684--1712, 2009.

\bibitem{cm}
Fabio Camilli and Claudio Marchi.
\newblock A comparison among various notions of viscosity solution for
  {H}amilton-{J}acobi equations on networks.
\newblock {\em J. Math. Anal. Appl.}, 407(1):112--118, 2013.

\bibitem{CGG}
Yun~Gang Chen, Yoshikazu Giga, and Shun'ichi Goto.
\newblock Uniqueness and existence of viscosity solutions of generalized mean
  curvature flow equations.
\newblock {\em J. Differential Geom.}, 33(3):749--786, 1991.

\bibitem{cr}
Giuseppe~Maria Coclite and Nils~Henrik Risebro.
\newblock Viscosity solutions of {H}amilton-{J}acobi equations with
  discontinuous coefficients.
\newblock {\em J. Hyperbolic Differ. Equ.}, 4(4):771--795, 2007.

\bibitem{dzs}
Cecilia De~Zan and Pierpaolo Soravia.
\newblock Cauchy problems for noncoercive {H}amilton-{J}acobi-{I}saacs
  equations with discontinuous coefficients.
\newblock {\em Interfaces Free Bound.}, 12(3):347--368, 2010.

\bibitem{de}
Klaus Deckelnick and Charles~M. Elliott.
\newblock Uniqueness and error analysis for {H}amilton-{J}acobi equations with
  discontinuities.
\newblock {\em Interfaces Free Bound.}, 6(3):329--349, 2004.

\bibitem{dupuis}
Paul Dupuis.
\newblock A numerical method for a calculus of variations problem with
  discontinuous integrand.
\newblock In {\em Applied stochastic analysis ({N}ew {B}runswick, {NJ}, 1991)},
  volume 177 of {\em Lecture Notes in Control and Inform. Sci.}, pages 90--107.
  Springer, Berlin, 1992.

\bibitem{evans}
Lawrence~C. Evans.
\newblock The perturbed test function method for viscosity solutions of
  nonlinear {PDE}.
\newblock {\em Proc. Roy. Soc. Edinburgh Sect. A}, 111(3-4):359--375, 1989.

\bibitem{frp98}
H{\'e}l{\`e}ne Frankowska and S{\l}awomir Plaskacz.
\newblock Hamilton-{J}acobi equations for infinite horizon control problems
  with state constraints.
\newblock In {\em Calculus of variations and optimal control ({H}aifa, 1998)},
  volume 411 of {\em Chapman \& Hall/CRC Res. Notes Math.}, pages 97--116.
  Chapman \& Hall/CRC, Boca Raton, FL, 2000.

\bibitem{frp00}
H{\'e}l{\`e}ne Frankowska and S{\l}awomir Plaskacz.
\newblock Semicontinuous solutions of {H}amilton-{J}acobi-{B}ellman equations
  with degenerate state constraints.
\newblock {\em J. Math. Anal. Appl.}, 251(2):818--838, 2000.

\bibitem{gim}
Giulio Galise, Cyril Imbert, and R{\'e}gis Monneau.
\newblock A junction condition by specified homogenization and application to
  traffic lights.
\newblock {\em Anal. PDE}, 8(8):1891--1929, 2015.

\bibitem{gs}
Wilfrid Gangbo and Andrzej {\'S}wi{\polhk{e}}ch.
\newblock Metric viscosity solutions of {H}amilton-{J}acobi equations depending
  on local slopes.
\newblock {\em Calc. Var. Partial Differential Equations}, 54(1):1183--1218,
  2015.

\bibitem{gso}
M.~Garavello and P.~Soravia.
\newblock Representation formulas for solutions of the {HJI} equations with
  discontinuous coefficients and existence of value in differential games.
\newblock {\em J. Optim. Theory Appl.}, 130(2):209--229, 2006.

\bibitem{gnpt}
Mauro Garavello, Roberto Natalini, Benedetto Piccoli, and Andrea Terracina.
\newblock Conservation laws with discontinuous flux.
\newblock {\em Netw. Heterog. Media}, 2(1):159--179, 2007.

\bibitem{gh13}
Yoshikazu Giga and Nao Hamamuki.
\newblock Hamilton-{J}acobi equations with discontinuous source terms.
\newblock {\em Comm. Partial Differential Equations}, 38(2):199--243, 2013.

\bibitem{ghn}
Yoshikazu Giga, Nao Hamamuki, and Atsushi Nakayasu.
\newblock Eikonal equations in metric spaces.
\newblock {\em Trans. Amer. Math. Soc.}, 367(1):49--66, 2015.

\bibitem{higher}
Cyril Imbert and R\'egis Monneau.
\newblock Quasi-convex hamilton-jacobi equations posed on junctions: the
  multi-dimensional case.
\newblock HAL 01073954.

\bibitem{imz}
Cyril Imbert, R\'egis Monneau, and Hasnaa Zidani.
\newblock A {H}amilton-{J}acobi approach to junction problems and application
  to traffic flows.
\newblock {\em ESAIM: Control, Optimisation and Calculus of Variations},
  19:129--166, 2013.

\bibitem{in}
Cyril Imbert and Vinh~Duc Nguyen.
\newblock {Generalized junction conditions for degenerate parabolic equations}.
\newblock 26 pages, HAL preprint, 2016.

\bibitem{I}
Hitoshi Ishii.
\newblock Perron's method for {H}amilton-{J}acobi equations.
\newblock {\em Duke Math. J.}, 55(2):369--384, 1987.

\bibitem{ik}
Hitoshi Ishii and Shigeaki Koike.
\newblock A new formulation of state constraint problems for first-order
  {PDE}s.
\newblock {\em SIAM J. Control Optim.}, 34(2):554--571, 1996.

\bibitem{lions-cdf}
Pierre-Louis Lions.
\newblock Lectures at {C}oll\`ege de {F}rance, 2013-2014.

\bibitem{ls85}
Pierre-Louis Lions and Panagiotis~E. Souganidis.
\newblock Differential games, optimal control and directional derivatives of
  viscosity solutions of {B}ellman's and {I}saacs' equations.
\newblock {\em SIAM J. Control Optim.}, 23(4):566--583, 1985.

\bibitem{nakayasu}
Atsushi Nakayasu.
\newblock Metric viscosity solutions for {H}amilton-{J}acobi equations of
  evolution type.
\newblock {\em Adv. Math. Sci. Appl.}, 24(2):333--351, 2014.

\bibitem{rsz}
Z.~Rao, A.~Siconolfi, and H.~Zidani.
\newblock Transmission conditions on interfaces for
  {H}amilton-{J}acobi-{B}ellman equations.
\newblock {\em J. Differential Equations}, 257(11):3978--4014, 2014.

\bibitem{rz}
Zhiping Rao and Hasnaa Zidani.
\newblock {H}amilton-{J}acobi-{B}ellman equations on multi-domains.
\newblock {\em Control and Optimization with PDE Constraints, International
  Series of Numerical Mathematics}, 164, 2013.

\bibitem{rockafellar}
R.~Tyrrell Rockafellar.
\newblock Existence theorems for general control problems of {B}olza and
  {L}agrange.
\newblock {\em Advances in Math.}, 15:312--333, 1975.

\bibitem{schieborn}
Dirk Schieborn.
\newblock {\em Viscosity Solutions of {H}amilton-{J}acobi Equations of Eikonal
  Type on Ramified Spaces}.
\newblock PhD thesis, Eberhard-Karls-Universit¨at T¨ubingen, 2006.

\bibitem{sc13}
Dirk Schieborn and Fabio Camilli.
\newblock Viscosity solutions of {E}ikonal equations on topological networks.
\newblock {\em Calc. Var. Partial Differential Equations}, 46(3-4):671--686,
  2013.

\bibitem{soravia99a}
Pierpaolo Soravia.
\newblock Optimality principles and representation formulas for viscosity
  solutions of {H}amilton-{J}acobi equations. {I}. {E}quations of unbounded and
  degenerate control problems without uniqueness.
\newblock {\em Adv. Differential Equations}, 4(2):275--296, 1999.

\bibitem{soravia99b}
Pierpaolo Soravia.
\newblock Optimality principles and representation formulas for viscosity
  solutions of {H}amilton-{J}acobi equations. {II}. {E}quations of control
  problems with state constraints.
\newblock {\em Differential Integral Equations}, 12(2):275--293, 1999.

\end{thebibliography}

\end{document}